\definecolor{indigo}{rgb}{0.29, 0.0, 0.51}
\newtheorem{thm}{Theorem}[section]
\newtheorem{prop}[thm]{Proposition}
\newtheorem{lem}[thm]{Lemma}
\newtheorem{cor}[thm]{Corollary}
\newtheorem{question}[thm]{Question}
\theoremstyle{definition}
\theoremstyle{remark}
\newtheorem{remark}[thm]{Remark}
\numberwithin{equation}{section}
\def\dfn#1{{\em #1}} 
\newcommand{\Z}{\mathbb{Z}}  
\newcommand{\Q}{\mathbb{Q}}  
\newcommand{\N}{\mathbb{N}}  
\newcommand{\bd}{\partial}  
\DeclareMathOperator{\interior}{int}  
\newcommand{\vects}[2]{\left(\begin{smallmatrix} #1 \\ #2 \end{smallmatrix}\right)} 
\newcommand{\matrixb}[4]{{\left(\begin{matrix}#1 & #2 \\ #3 & #4\end{matrix} \right)}} 
\newcommand{\HFhat}{\widehat{{HF}}} 
\newcommand{\modp}[1]{\,(\!\!\!\!\mod #1)}
\newcommand{\case}[2]{\begin{cases}#1 \\ #2 \end{cases}}
\begin{document}

\begin{abstract}
Two of the basic questions in contact topology are which manifolds admit tight contact structures, and on those that do, can we classify such structures.  We present the first such classification on an infinite family of (mostly) hyperbolic $3$--manifolds: surgeries on the figure-eight knot.  We also determine which of the tight contact structures are symplectically fillable and which are universally tight.
\end{abstract}

\title[Tight Contact Structures on Surgeries on the Figure-Eight Knot]{Classification of Tight Contact Structures on \\ Surgeries on the Figure-Eight Knot}
\author{James Conway}
\author{Hyunki Min}
\address{Department of Mathematics \\ University of California, Berkeley \\ Berkeley \\ California}
\email{conway@berkeley.edu}
\address{School of Mathematics \\ Georgia Institute
of Technology \\  Atlanta  \\ Georgia}
\email{hmin38@gatech.edu}
\maketitle

\section{Introduction}

Ever since Eliashberg distinguished overtwisted from tight contact structures in dimension 3 \cite{Eliashberg:OT}, there has been an ongoing project to determine which closed, oriented $3$--manifolds support a tight contact structure, and on those that do, whether we can classify them.  Much work has been done: reducing the problem to prime manifolds \cite{Colin:connectsum}; determining that there are infinitely many tight contact structures if and only if the manifold is toroidal \cite{CGH, HKM:convex}; showing the existence of a tight contact structure for manifolds that support a taut foliation \cite{ET} and for those with $b_2 > 0$ \cite{Gabai, ET}; and the existence and classification on many small Seifert fibered spaces \cite{Honda:classification1, Tosun:SFS, LS:seifert, Ghiggini:seifert, GLS:e0positive, GLS:someseifert, Wu, EH:non-existence, Matkovic} and some fiber bundles \cite{Giroux:classification, Giroux:bundles, Honda:classification2}.

Although there have been isolated constructions --- mostly via contact surgery --- of tight contact structures on hyperbolic manifolds, there has not appeared any full classification result. The only partial result is a classification of {\em extremal} tight contact structures on hyperbolic surface bundles of genus $g > 1$ by Honda, Kazez, and Mati\'c \cite{HKM:extremal}: contact structures whose Euler class evaluates on the fiber to $\pm (2g-2)$. However, there might be other tight contact structures (and in some cases we know that indeed there are others). In work to appear, \cite{Min}, the second author will present a full classification of tight contact structures on the Weeks manifold.

In this paper, we present a full classification of tight contact structures for an infinite family of hyperbolic $3$--manifolds: those coming from Dehn surgery on the figure-eight knot in $S^3$.  Let $r$ be a rational number, let $K$ be the figure-eight knot in $S^3$, and let $M(r)$ be the manifold resulting from smooth $r$--surgery on $K$. Since $K$ is amphicheiral, $M(-r)$ is diffeomorphic to $-M(r)$. A classic result of Thurston \cite{Thurston} is that $M(r)$ is hyperbolic except when $r \in \{0, \pm1, \pm2, \pm3, \pm4\}$.  When $r = 0$ or $r = \pm 4$, there is an incompressible torus in $M(r)$, and hence by \cite{CGH}, there exist infinitely many tight contact structures on $M(r)$, and $M(0)$ is a torus-bundle over $S^1$, whose tight contact structures are classified by Honda \cite{Honda:classification2}.  When $r = \pm1, \pm2, \pm3$, the manifold $M(r)$ is a small Seifert fibered space. Of these cases, the classifications have already been done in two cases: $M(1)$ is diffeomorphic to the Brieskorn sphere $\Sigma(2,3,7)$, which was found to have exactly two tight contact structures by Mark and Tosun \cite[Section~2]{MT:pseudoconvex}, and $M(-1)$ is diffeomorphic to the small Seifert fibered space $M(-2;\frac12,\frac23,\frac67)$, which has a unique tight contact structure, as shown by Tosun \cite[Theorem~1.1(3)]{Tosun:SFS}.

In this paper, we classify tight contact structures on $M(r)$ up to isotopy for surgery coefficients
\[ r \in \mathcal R = \left((-\infty, -4) \cup [-3, 0)\cup [1, 4)\cup [5, \infty)\right)\cap \Q. \]  Before stating the results, we define two functions $\Phi(r)$ and $\Psi(r)$.  For $r = \frac pq \in (0, 1] \cap \Q$, write the negative continued fraction of $-\frac qp$, that is:
\[ -\frac qp = [r_0, \ldots, r_n] = r_0 - \frac1{r_1 - \frac1{\ddots \, - \frac1{r_n}}}, \]
where $r_0 \leq -1$, and $r_i \leq -2$ for $i = 1, \ldots, n$. Then define
\[ \Phi(r) = \left|r_0(r_1+1)\cdots(r_n+1)\right|.\]
We define $\Phi$ on all of $\Q$ by setting $\Phi(r+1) = \Phi(r)$.  Then we define $\Psi(r)$ for $r \in \Q$ by
\[ \Psi(r) = \left\{
\begin{array}{ll}
0, & r \geq -3 \\
\Phi(-\frac1{r+3}), & r < -3.
\end{array}
\right. \]
With these functions in hand, we can state the main theorem of this paper.

\begin{thm} \label{main classification}
Let $r \in \mathcal R$ be a rational number.  Then $M(r)$ supports
\[ \left\{
\begin{array}{ll}
2\Phi(r), & r \in [1, 4) \cup [5, \infty) \\
\Phi(r) + \Psi(r), & r \in (-\infty,-4)\cup[-3,0)
\end{array}
\right. \]
tight contact structures up to isotopy, distinguished by their Heegaard Floer contact classes.
\end{thm}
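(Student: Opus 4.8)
The plan is to establish matching upper and lower bounds for the number of tight contact structures on $M(r)$, and then to separate the resulting structures using the Heegaard Floer contact invariant. Write $M(r) = N \cup_{T^2} S_r$, where $N = S^3 \smallsetminus \nu(K)$ is the figure-eight knot complement and $S_r$ is the surgery solid torus. The essential geometric input is that $N$ fibers over $S^1$ with once-punctured torus fiber $F$ and Anosov monodromy $\phi$ (whose homological action is conjugate to $\matrixs{2}{1}{1}{1}$); this fibration drives the convex-surface analysis.

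\textbf{Upper bound.} Given a tight contact structure $\xi$ on $M(r)$, isotope the core $c$ of $S_r$ to be Legendrian and put $\xi$ in standard form, so that $N$ inherits a convex boundary $\partial N$ with two dividing curves whose slope is determined by the contact framing of $c$. Tightness, together with the Legendrian realization principle and Honda's classification of tight contact structures on solid tori, constrains this slope to a finite set; one argues that $c$ may be destabilized until its Thurston--Bennequin invariant matches one of finitely many values compatible with the surgery coefficient. Next, isotope the fiber $F$ to be convex and cut $N$ along it: a tight contact structure on $N$ with the given boundary condition is equivalent to one on $F \times [0,1]$ whose dividing sets on the two ends are intertwined by $\phi$ and restrict on $\partial F \times [0,1]$ to the prescribed configuration. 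Running the bypass / state-traversal analysis of Honda--Kazez--Mati\'c on $F\times[0,1]$, the admissible configurations are enumerated by paths in the Farey tessellation whose length and branching are governed by the negative continued fraction expansion of the surgery slope; counting them produces $2\Phi(r)$ candidates for $r \in [1,4)\cup[5,\infty)$ and $\Phi(r)+\Psi(r)$ for $r \in (-\infty,-4)\cup[-3,0)$, where the summand $\Psi(r)$ accounts for an extra family that appears once the boundary slope of $N$ passes the slope of the fiber --- this threshold is also why $-4$ is exceptional and why the negative count is asymmetric to the positive one.

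\textbf{Lower bound and distinguishing the structures.} To realize the candidates I would use contact surgery on Legendrian representatives of $K$: the figure-eight knot has a symmetric Legendrian mountain range with $\overline{tb} = -3$ and a unique maximal-$tb$ representative of rotation number $0$, and stabilizations provide representatives with every admissible pair $(tb, rot)$. Expanding the surgery coefficient as a negative continued fraction presents $M(r)$ as Legendrian surgery on a link consisting of a (stabilized) figure-eight knot together with a chain of unknots; the number of choices of stabilizations --- equivalently, of rotation numbers --- compatible with the framing $r$ is exactly the count predicted above, with the extra $\Psi(r)$ structures for $r < -3$ coming from an alternative Legendrian (or admissible transverse) presentation. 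Since each is obtained by Legendrian surgery on $(S^3,\xi_{std})$, the underlying manifold is Stein fillable and the contact invariant is nonzero. To show the constructed structures are pairwise non-isotopic, compute their contact classes in $\HFhat(-M(r))$ via the mapping-cone surgery formula, using that the knot Floer complex of the figure-eight knot is thin with $\tau(K)=0$; the resulting classes lie in distinct $\Spinc$ structures, or are distinguished within a fixed $\Spinc$ structure, yielding the required number of isotopy classes and matching the upper bound. The cases $r = \pm 1$, already settled by Mark--Tosun and Tosun, serve as consistency checks.

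\textbf{Main obstacle.} The crux is the sharpness of the upper bound: the bypass analysis on the punctured-torus bundle must be carried out carefully enough to both (i) exclude overtwisted disks along all intermediate convex surfaces and (ii) avoid overcounting, i.e. to verify that distinct admissible configurations genuinely give non-isotopic contact structures rather than collapsing. Pinning down the Thurston--Bennequin range of the Legendrian core and treating the threshold phenomenon responsible for $\Psi(r)$ (and for the exceptional slope $-4$) are the delicate points; once these are in place, the Heegaard Floer computation supplies the matching lower bound and the distinctness.
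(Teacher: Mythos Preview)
Your overall architecture (upper bound via convex decomposition of the fibered complement, lower bound via contact surgery, separation via Heegaard Floer) matches the paper's, and your upper-bound sketch, while vague, points in the right direction.  The genuine gap is in the lower bound for negative $r$.

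You assert that every candidate contact structure is ``obtained by Legendrian surgery on $(S^3,\xi_{\rm std})$''.  This cannot work for the $\Phi(r)$ structures when $r<0$.  The maximal Thurston--Bennequin invariant of a Legendrian figure-eight in $(S^3,\xi_{\rm std})$ is $-3$, so negative contact surgery on such a knot produces $M(r)$ only for $r<-3$; those are precisely the $\Psi(r)$ structures.  For $r\in[-3,0)$ you would need a Legendrian figure-eight with $tb>-3$ in $\xi_{\rm std}$, which does not exist, and for $r<-4$ you still need $\Phi(r)$ \emph{additional} structures beyond the $\Psi(r)$ you can build this way.  The paper's key move---explicitly flagged in the introduction---is to construct the $\Phi(r)$ structures on $M(r<0)$ by negative contact surgery on \emph{non-loose} Legendrian figure-eight knots $L^\pm_n$ in the \emph{overtwisted} contact structure $(S^3,\xi_1^{OT})$ supported by the figure-eight open book.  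These knots have arbitrarily large $tb$, and Theorem~\ref{surgeries on legendrianapproximations} (via the vanishing of $c(\xi_{\overline K})$) shows the surgeries are tight with nonzero $c^+_{\rm red}$.  Your proposal contains no mechanism to produce these structures or to certify their tightness.

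This gap also undermines your distinguishing argument.  Since the $\Phi$ structures on $M(r<-4)$ do not arise from a Stein cobordism out of $(S^3,\xi_{\rm std})$, the $\Spinc$/rotation-number bookkeeping you describe does not separate them from the $\Psi$ structures.  The paper instead turns the cobordism around and uses the map $F_{\overline W}\colon\HFhat(-M(r))\to\HFhat(-S^3)$: the contact classes of the $\Psi$ structures hit $c(\xi_{\rm std})\neq 0$, while those of the $\Phi$ structures hit $c(\xi_1^{OT})=0$ (Proposition~\ref{negative-surgery phi-is-not-psi}).  Without recognizing the overtwisted origin of the $\Phi$ family, you have neither their construction nor their separation from the $\Psi$ family.
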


\begin{remark}
For $r \not\in \mathcal R$, not equal to $0$ or $\pm 4$, we can construct $2\Phi(r)$ (respectively, $\Phi(r) + \Psi(r)$) tight contact structures, when $r > 0$ (respectively, $r < 0$), but we believe that there are also others. We hope to return to this in a future paper.
\end{remark}

The tight contact structures counted by $\Psi$ arise via negative contact surgery on Legendrian figure-eight knots in $(S^3, \xi_{\rm{std}})$. Those counted by $\Phi$ arise via negative contact surgery on Legendrian figure-eight knots in $(S^3, \xi_1^{OT})$, where $\xi_1^{OT}$ is the unique overtwisted contact structure on $S^3$ that has (normalized) $3$--dimensional homotopy invariant equal to 1 (see \cite{Gompf} for details on this invariant).

Combining Theorem~\ref{main classification} with the fact that $M(0)$ and $M(\pm 4)$ are toroidal, we can give the following simple counts for integer surgeries.

\begin{cor} \label{integer classification}
Let $n$ be an integer. Then, $M(n)$ supports
\[ \left\{
\begin{array}{ll}
\infty, & n = 0, \pm 4 \\
2, & n > 0, n \neq 4 \\
1, & n = -1, -2, -3 \\
|n|-2, & n \leq -5
\end{array}
\right. \]
tight contact structure(s) up to isotopy, distinguished by their Heegaard Floer contact classes.
\end{cor}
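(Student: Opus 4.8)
The plan is to read off Corollary~\ref{integer classification} from Theorem~\ref{main classification} together with the facts recalled in the introduction about the toroidal surgeries, so essentially all of the content lies in evaluating $\Phi$ and $\Psi$ at integers.  First I would treat $n \in \{0, \pm 4\}$: each of $M(0)$, $M(4)$, and $M(-4)$ contains an incompressible torus, so \cite{CGH} supplies infinitely many tight contact structures up to isotopy, which is the first line.  For every other integer $n$ one checks directly that $n \in \mathcal R$ --- the complement of $\mathcal R$ in $\Z$ is exactly $\{0, \pm 4\}$ --- so Theorem~\ref{main classification} applies and gives the count $2\Phi(n)$ when $n > 0$ and $\Phi(n) + \Psi(n)$ when $n < 0$, and it remains only to compute these numbers.

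The one computation I would make is that $\Phi(m) = 1$ for every $m \in \Z$.  Since $\Phi$ is $1$--periodic by definition, it suffices to compute $\Phi(1)$: writing $1 = \tfrac11$, the negative continued fraction of $-\tfrac11$ is the single term $[-1]$, so $\Phi(1) = 1$.  Consequently, for $n \in \{-1, -2, -3\}$ we have $\Psi(n) = 0$ (as $n \geq -3$), so the count is $\Phi(n) + \Psi(n) = 1$; and for every positive integer $n \neq 4$ the count is $2\Phi(n) = 2$.

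Finally I would handle $n \leq -5$, where $n \in (-\infty, -4)$ and the count is $\Phi(n) + \Psi(n) = 1 + \Psi(n)$.  Since $n + 3 \leq -2$, we have
\[ \Psi(n) = \Phi\!\left(-\frac{1}{n+3}\right) = \Phi\!\left(\frac{1}{|n| - 3}\right), \]
and because $|n| - 3 \geq 2$ is an integer, writing $\tfrac{1}{|n|-3} = \tfrac pq$ with $p = 1$ and $q = |n| - 3$, the negative continued fraction of $-\tfrac qp = -(|n| - 3)$ again has a single term; hence $\Psi(n) = |n| - 3$, and $M(n)$ carries $1 + (|n| - 3) = |n| - 2$ tight contact structures.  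In each finite case, Theorem~\ref{main classification} additionally records that these structures are distinguished by their Heegaard Floer contact classes, which finishes the count.

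I do not anticipate any real obstacle here: once Theorem~\ref{main classification} is available the corollary is a bookkeeping exercise, and the only point needing care is the convention that $\Phi$ is extended from $(0,1]$ to all of $\Q$ by $1$--periodicity, together with the fact that an integer $\leq -2$ has a length--one negative continued fraction.  As a consistency check, the formulas reproduce the previously known counts --- two tight contact structures on $M(1) \cong \Sigma(2,3,7)$ \cite{MT:pseudoconvex} and one on $M(-1) \cong M(-2; \tfrac12, \tfrac23, \tfrac67)$ \cite{Tosun:SFS}.
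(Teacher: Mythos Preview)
Your proposal is correct and matches the paper's approach exactly: the paper does not give a separate proof of this corollary, merely stating that it follows by ``combining Theorem~\ref{main classification} with the fact that $M(0)$ and $M(\pm 4)$ are toroidal,'' and you have correctly filled in the routine evaluations of $\Phi$ and $\Psi$ at integers.
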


We complete the classification in Theorem~\ref{main classification} by estimating an upper bound using convex surface decompositions (see Section~\ref{sec:convex}), and realize this upper bound by constructing tight contact structures via contact surgery. We use the contact class in Heegaard Floer homology to distinguish the contact structures that we construct.  Unlike many of the classification results cited above, our surgery constructions start both from tight contact structures and from an overtwisted contact structure on $S^3$.

Our analysis of the upper bound is possible due to the following facts: (1) the figure-eight knot is a fibered knot; (2) the monodromy of the fibration is pseudo-Anosov; (3) the genus of the fiber surface is one.
In particular, our approach will work for other genus--$1$ fibered knots with pseudo-Anosov monodromy (necessarily, knots in manifolds other than $S^3$). Since the complicated combinatorics precludes a general solution at this juncture, we have focused on a single knot, the figure-eight knot, but hope to return to more general questions in future work ({\em cf.\@} Question~\ref{general-high-surgery}).

In addition to classifying the tight contact structures, we also wish to understand what properties these contact structures have. Namely, we are interested in whether they are symplectically fillable (in any sense) and whether they are universally tight or virtually overtwisted.

\begin{thm} \label{fillable}
For $r \in \mathcal R$, all tight contact structures on $M(r)$ are strongly symplectically fillable. In addition, the following tight contact structures are Stein fillable:
\begin{itemize}
\item All tight contact structures counted by $\Psi$.
\item All tight contact structures, when $r \geq -9$ and $r \in \mathcal R$.
\end{itemize}
\end{thm}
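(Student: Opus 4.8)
The plan is to observe that every tight contact structure counted in Theorem~\ref{main classification} has already been produced in its proof by an explicit contact surgery, and to read off fillability from the shape of that surgery. Recall the two families: the structures counted by $\Psi$ arise from negative contact surgery on Legendrian figure-eight knots in $(S^3,\xi_{\rm{std}})$, together with the auxiliary Legendrian unknots realizing the continued fraction of the surgery coefficient; those counted by $\Phi$ arise in the same fashion but starting from $(S^3,\xi_1^{OT})$. The $\Psi$--family is then immediate: $(S^3,\xi_{\rm{std}})$ is Stein fillable, bounding the standard Stein $B^4$, and by the Ding--Geiges algorithm any negative contact surgery on a Legendrian link is a finite sequence of contact $(-1)$--surgeries, i.e.\ of Weinstein $2$--handle attachments along the stabilizations and Legendrian push-offs of the link; since Stein fillability is preserved under Weinstein handle attachment, every structure counted by $\Psi$ is Stein fillable, hence strongly symplectically fillable. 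This proves the first bulleted claim.

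For the $\Phi$--family this reasoning is unavailable, since $\xi_1^{OT}$ is overtwisted and admits no symplectic filling, so instead I would pass to compatible open books. The figure-eight knot is fibered with page the once-punctured torus $\Sigma$ and monodromy conjugate to $\phi=T_aT_b^{-1}$, where $a,b$ are dual non-separating curves and $T_\bullet$ denotes a right-handed Dehn twist; since $\phi$ has vanishing fractional Dehn twist coefficient it is not right-veering, so $(\Sigma,\phi)$ supports an overtwisted structure on $S^3$, which a $d_3$--computation identifies with $\xi_1^{OT}$. Placing the Legendrian figure-eight on a page of (a positive stabilization of) this open book and running the Ding--Geiges procedure for the relevant negative coefficient, one finds that each structure counted by $\Phi$ on $M(r)$ is supported by an open book $(\Sigma',\psi)$, where $\Sigma'$ is $\Sigma$ with one extra $1$--handle per stabilization and $\psi$ is $\phi$ post-composed with the positive Dehn twists prescribed by the surgery --- one positive boundary-parallel twist for each contact $(-1)$--surgery, together with the stabilization twists --- the book-keeping here being exactly what produces the quantities $\Phi$ and $\Psi$.

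It then remains to analyze $\psi$, and the key tool is the chain relation $(T_aT_b)^6=T_\partial$ on the once-punctured torus, which trades the single negative twist in $\phi$ against a positive boundary twist: e.g.\ $T_aT_b^{-1}T_\partial=T_aT_{a+b}(T_aT_b)^5$, a product of twelve positive Dehn twists about non-separating curves. Using this together with Hurwitz moves, I would show that for $r\geq-9$ the monodromy $\psi$ rewrites as a product of positive Dehn twists about non-separating curves, so that $(\Sigma',\psi)$ is a positive allowable Lefschetz fibration over the disk; by Giroux's correspondence and the Loi--Piergallini/Akbulut--\"Ozba\u{g}c\i\ identification of such fibrations with Stein domains, the supported contact structure is Stein fillable, proving the second bulleted claim. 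The value $r=-9$ is precisely where this rewriting breaks down --- below it the surgery supplies too few positive boundary twists to absorb all the negative stabilization twists --- and one is left with $\psi$ equal to a product of positive twists and a small, controlled number of negative twists about non-separating curves. Establishing strong fillability in this remaining range $r<-9$ is, I expect, the main obstacle: I would handle it either by realizing $(M(r),\xi_\Phi)$ via admissible transverse surgery on a transverse knot in the Stein fillable $(M(r_0),\xi_\Phi)$ for a suitable $r_0\in[-9,0)\cap\mathcal R$ lying above $r$, using that such surgeries preserve strong symplectic fillability (equivalently, since each $M(r)$ is a rational homology sphere, weak fillability, by Ohta--Ono), or by constructing a strong symplectic filling directly from the almost-positive open book $(\Sigma',\psi)$, capping off the residual negative twists. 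Pinning down which negative twists survive, and producing their filling, is the crux.
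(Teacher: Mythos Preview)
Your treatment of the $\Psi$--structures and of the positive--$r$ case matches the paper: these are Legendrian surgery diagrams in $(S^3,\xi_{\rm std})$, hence Stein fillable. For the $\Phi$--family the open-book route is also the paper's, via Theorem~\ref{constructing open books}: $(M(-m),\xi_{-m})$ is supported by $(\Sigma_m, D_\alpha^{-1}D_\beta\Delta)$, where $\Sigma_m$ is the genus--$1$ surface with $m$ boundary components and $\Delta$ is the product of the $m$ boundary twists, and non-integral $r\in(n,n+1)$ then reduces to the integer case by Legendrian surgery in $(M(n),\xi_n)$. Your description of this book is inaccurate, though: there are no ``negative stabilization twists'' --- the only negative twist present is the single $D_\alpha^{-1}$ inherited from the figure-eight monodromy. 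For Stein fillability the chain relation on the once-punctured torus is not enough, since for $m>1$ one must factor $\Delta$ on a surface with several boundary components. The paper instead invokes Korkmaz--Ozbagci, who produce explicit positive factorizations of the form $\Delta=\phi\, D_\alpha$ on $\Sigma_m$ for $1\le m\le 9$, whence $D_\alpha^{-1}D_\beta\Delta=\phi\, D_\beta$ is a product of positive twists about non-separating curves. The threshold $-9$ is exactly the range of their result, not a balance of boundary twists against negative ones.

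The genuine gap is strong fillability for integers $n\le-10$. Neither of your suggestions is a working argument. The only constructions of $(M(n),\xi_n)$ on hand are as Legendrian surgery in the overtwisted $(S^3,\xi_1^{OT})$, or as the base for the non-integral $(M(r),\xi_\Phi)$ with $r\in(n,n+1)$; neither exhibits $(M(n),\xi_n)$ as obtained from something already known to be fillable by a fillability-preserving operation. Capping off a boundary component of $(\Sigma_{m+1},D_\alpha^{-1}D_\beta\Delta)$ yields $(\Sigma_m,D_\alpha^{-1}D_\beta\Delta)$, i.e.\ it passes from $M(-m-1)$ to $M(-m)$ and hence \emph{decreases} $|n|$; it cannot reach large $|n|$ from the Stein-fillable range. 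The paper supplies the missing seed: Ghiggini's strongly-fillable-but-not-Stein manifolds $(M'_m,\xi'_m)$ are supported by explicit open books on $\Sigma_{4m+1}$ whose monodromies differ from $D_\alpha^{-1}D_\beta\Delta$ by a product of negative twists along non-separating curves; cancelling those with positive twists gives a Stein cobordism from $(M'_m,\xi'_m)$ to $(M(-4m-1),\xi_{-4m-1})$, so the latter is strongly fillable for all $m\ge1$. The Gay--Stipsicz/Wendl capping-off cobordism, together with Ohta--Ono to upgrade weak to strong convexity on a rational homology sphere, then propagates strong fillability from the arithmetic progression $\{-5,-9,-13,\dots\}$ to every negative integer.
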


\begin{remark}
We leave it as an open question whether the tight contact structures counted by $\Phi$ on $M(r <  -9)$ are Stein fillable.
\end{remark}

\begin{thm} \label{universally tight}
Let $r \in \mathcal R$. Then the number of universally tight contact structures that $M(r)$ supports is exactly
\[
\begin{cases}
	1, & \mbox{when } r < 0\mbox{ and } r \in \Z, \\
	2, & \mbox{when } r < 0\mbox{ and } r\not\in \Z\mbox{, or }r > 0\mbox{ and }r\in\Z.
\end{cases}
\] 
When $r > 0$ and $r \not\in \Z$, then $M(r)$ supports either $2$ or $4$ universally tight contact structures.
\end{thm}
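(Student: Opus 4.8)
The plan is to start from Theorem~\ref{main classification}: since every tight contact structure on $M(r)$ is isotopic to one of the explicitly constructed ones, and these are pairwise distinguished by their Heegaard Floer contact classes, the task reduces to deciding, structure by structure, which are universally tight. I would organize the analysis around the incompressible torus $T=\partial N$, where $N=S^3\setminus\nu(K)$ is the figure-eight complement, a once-punctured-torus bundle over $S^1$ with pseudo-Anosov monodromy $\phi$. Writing $M(r)=N\cup_T V_r$, a tight $\xi$ restricts to a tight contact structure $\xi|_N$ (convex $T$, two dividing curves of a slope determined by $r$) together with a tight contact solid torus $\xi|_{V_r}$, and the convex decomposition of Section~\ref{sec:convex} records $\xi|_N$ and $\xi|_{V_r}$ by strings of ``signs'' on basic slices arranged along the $S^1$-direction of the fibration. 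I would first single out the \emph{homogeneous} structures, those in which all basic-slice signs agree; these, together with (for non-integral $r$) their variants coming from the two global signs and from the independent contribution of $V_r$, are the candidates for universal tightness.

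For the lower bound I would show the homogeneous structures are universally tight. The cleanest route is that each such $\xi$ is an Eliashberg--Thurston perturbation \cite{ET} of a taut foliation of $M(r)$ carried by the fibration of $N$ --- such foliations being known to exist for every $r\in\mathcal R$ --- and that the pulled-back foliation in any finite cover is again taut, so the perturbation stays tight in every finite cover, whence $\xi$ is universally tight. Identifying these perturbations with the homogeneous structures via their contact classes, and counting homogeneous structures using the conjugation symmetry (which swaps the global sign) and, when $r\not\in\Z$, the independent tight-solid-torus contributions of $V_r$, yields exactly one universally tight structure when $r<0$ and $r\in\Z$, exactly two when $r<0$ and $r\not\in\Z$ or when $r>0$ and $r\in\Z$, and at least two when $r>0$ and $r\not\in\Z$.

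For the upper bound I would show every non-homogeneous constructed structure is virtually overtwisted. Write $r=p/q$ in lowest terms, so $H_1(M(r))\cong\Z/p$; for each $k\mid p$ there is a $k$-fold cyclic cover $\pi_k\colon\widetilde M_k\to M(r)$ whose restriction to $N$ is the cyclic cover dual to the fibration, so $\widetilde M_k$ is again the Dehn filling of a once-punctured-torus bundle (monodromy $\phi^k$, still pseudo-Anosov), to which the convex-surface analysis of Section~\ref{sec:convex} applies verbatim. Pulling back the convex decomposition, a non-homogeneous sign string for $\xi$ lifts to one for $\pi_k^*\xi$ in which a ``$+$'' basic slice and a ``$-$'' basic slice become adjacent once the monodromy identifications are unwound $k$ times; for an appropriate $k\mid p$ this forces a convex torus in $\widetilde M_k$ whose dividing set produces an overtwisted disk, so $\pi_k^*\xi$ is overtwisted and $\xi$ is virtually overtwisted. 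With the lower bound this pins the count to the stated value whenever $r\in\Z$ or $r<0$; when $r>0$ and $r\not\in\Z$ the unwinding available in the degree-$p$ cover just fails to eliminate up to two further candidates, which is why the count is stated there as $2$ or $4$.

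The main obstacle is this last step. One must (i) set up the sign-string bookkeeping for $\xi|_N$ precisely and show that a non-homogeneous configuration remains non-homogeneous and genuinely ``spreads'' under the cyclic cover, past the point where the pseudo-Anosov gluing can reabsorb it, and (ii) extract an honest overtwisted disk from the resulting configuration rather than merely positive Giroux torsion. Since $\phi$ is pseudo-Anosov rather than periodic, the relevant threshold is governed by the fractional Dehn twist coefficient of $\phi$ and its powers, and because only covers of degree dividing $p=|H_1(M(r))|$ are available --- the fibration of $N$ does not extend over any finite cover of the hyperbolic manifold $M(r)$ --- the estimate is tight only by a hair, and not quite tight enough when $r>0$ and $r\not\in\Z$, which accounts for the residual ambiguity.
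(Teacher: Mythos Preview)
Your proposal has a genuine gap in the upper bound. The description of the classification data as ``strings of signs on basic slices arranged along the $S^1$-direction of the fibration'' is not what the convex decomposition actually produces: the tight structures on the knot complement $C$ are not encoded this way (there are only two on $C(-3)$ and four on $C(\infty)$, determined by subtler data on the fiber $\Sigma$), and the sign strings live entirely on the solid torus $V_r$. This matters because for $r<-4$ there are two families, counted by $\Phi$ and by $\Psi$, and \emph{both} contain structures whose solid-torus sign strings are homogeneous. Your dichotomy cannot separate them, yet the $\Psi$ candidates must be shown virtually overtwisted while the $\Phi$ candidate is the universally tight one. Nothing in the ``spreading under cyclic covers'' heuristic addresses this, since there is no inhomogeneity to spread.

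The paper handles the upper bound with two different tools you are missing. First, the core of the surgery solid torus has infinite order in $\pi_1(M(r))$, so the solid torus lifts to the universal cover; hence any structure whose restriction to $V_r$ is virtually overtwisted is itself virtually overtwisted. Combined with Honda's classification of universally tight solid tori, this immediately cuts the candidates down to the all-same-sign ones on $V_r$ --- no cyclic-cover bookkeeping needed. Second, and this is the real work, Proposition~\ref{Psi-virtually-OT} shows that \emph{every} $\Psi$ structure (homogeneous or not) is virtually overtwisted, by taking the $p$-fold cyclic cover of $C(s)$ for $s\in(r,-4)$ and producing explicit contractible dividing curves via a prime-by-prime analysis ($p=3,4,5,\geq 7$) of annuli in the cover. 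This is the step your proposal gestures at but does not supply, and it does not follow from any general sign-spreading principle. Your lower-bound idea (taut foliations via Roberts plus the contactomorphism symmetry to pair up candidates) is essentially what the paper does, though the paper uses process of elimination rather than directly identifying the perturbed foliation.
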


When $n$ is an integer, the universally tight contact structure on $M(n \leq -5)$ is the one counted by $\Phi$ (recall that $\Phi(n) = 1$ for any integer $n$).  In Section~\ref{sec:properties}, we describe for general $r \in \mathcal R$ which contact structures are the universally tight ones.

We have the following existence result for contact structures on hyperbolic homology spheres. This family is given by $M(-\frac1n)$, for $n > 1$.

\begin{cor} \label{reciprocal classification}
There is an infinite family of hyperbolic integer homology spheres that admit exactly two contact structures up to isotopy, both of which are Stein fillable and universally tight (in fact, they are contactomorphic).
\end{cor}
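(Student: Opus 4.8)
The plan is to specialize Theorems~\ref{main classification}, \ref{fillable}, and \ref{universally tight} to the slopes $r = -\tfrac1n$ for $n > 1$, and then to produce the asserted contactomorphism from the inversion symmetry of the figure-eight knot. First I would check that these slopes lie in the range where the classification applies and describe the resulting manifolds: for $n \geq 2$ we have $-\tfrac1n \in [-\tfrac12, 0) \subset [-3, 0) \cap \Q \subset \mathcal R$. Since the surgery coefficient $-\tfrac1n$ has numerator $\pm 1$, each $M(-\tfrac1n)$ is an integer homology sphere; and since $-\tfrac1n \notin \{0, \pm1, \pm2, \pm3, \pm4\}$, Thurston's theorem \cite{Thurston} shows $M(-\tfrac1n)$ is hyperbolic. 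These manifolds are pairwise distinct (for instance, their Casson invariants are $\pm n$, hence distinct, or their hyperbolic volumes are distinct for all but finitely many $n$ by hyperbolic Dehn surgery), so $\{M(-\tfrac1n)\}_{n > 1}$ is an infinite family of hyperbolic integer homology spheres.

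Next I would compute the two counting functions at $r = -\tfrac1n$. Since $-\tfrac1n \geq -3$, we have $\Psi(-\tfrac1n) = 0$. For $\Phi$, periodicity gives $\Phi(-\tfrac1n) = \Phi(\tfrac{n-1}{n})$, and the negative continued fraction of $-\tfrac{n}{n-1}$ is $[\,\underbrace{-2, \dots, -2}_{n-1}\,]$, so
\[ \Phi\left(-\tfrac1n\right) = \Phi\left(\tfrac{n-1}{n}\right) = \left|\, (-2)\,\underbrace{(-1)\cdots(-1)}_{n-2}\, \right| = 2. \]
Hence Theorem~\ref{main classification} gives that $M(-\tfrac1n)$ carries exactly $\Phi(-\tfrac1n) + \Psi(-\tfrac1n) = 2$ tight contact structures up to isotopy, say $\xi_1$ and $\xi_2$. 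Because $-\tfrac1n \geq -9$ and $-\tfrac1n \in \mathcal R$, Theorem~\ref{fillable} shows both are Stein fillable. Because $-\tfrac1n < 0$ and $-\tfrac1n \notin \Z$, Theorem~\ref{universally tight} shows $M(-\tfrac1n)$ supports exactly two universally tight contact structures; since it supports only two tight contact structures in total, both $\xi_1$ and $\xi_2$ are universally tight.

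It remains to see that $\xi_1$ and $\xi_2$ are contactomorphic, which I expect to be the only delicate point. The figure-eight knot $K$ is invertible: there is an orientation-preserving diffeomorphism $\iota$ of $S^3$ with $\iota(K) = K$ reversing the orientation of $K$, and on the peripheral torus $\partial(S^3 \setminus N(K))$ the induced map is $-\mathrm{Id}$ on homology. In particular $\iota$ preserves the surgery slope $-\tfrac1n$ (up to sign), so it extends over the Dehn filling to an orientation-preserving self-diffeomorphism $f$ of $M(-\tfrac1n)$. Since $\iota$ is orientation-preserving on $S^3$, the contact structure $\iota_*\xi_1^{OT}$ is overtwisted with the same (normalized) $3$--dimensional homotopy invariant as $\xi_1^{OT}$, hence isotopic to $\xi_1^{OT}$ by \cite{Eliashberg:OT}; after an isotopy we may take $\iota$ to be a contactomorphism of $(S^3, \xi_1^{OT})$. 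By the construction underlying Theorem~\ref{main classification}, $\xi_1$ and $\xi_2$ are obtained by negative contact surgery on two Legendrian figure-eight knots in $(S^3, \xi_1^{OT})$ that $\iota$ interchanges (they differ by the orientation of $K$, equivalently by the sign of the rotation number), so $f$ carries $\xi_1$ to $\xi_2$ and the two are contactomorphic. The main obstacle is precisely this last paragraph: one must extract from the surgery descriptions in Sections~\ref{sec:convex} and \ref{sec:properties} enough information to confirm that $\iota$ genuinely swaps the two Legendrian knots (rather than fixing each up to Legendrian isotopy), and to check that $\iota$ can be realized as a contactomorphism of the overtwisted $(S^3, \xi_1^{OT})$ compatibly with this.
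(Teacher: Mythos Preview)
Your derivation of the count, Stein fillability, and universal tightness from Theorems~\ref{main classification}, \ref{fillable}, and \ref{universally tight} is correct and is exactly how the corollary is meant to follow; the paper does not give a separate proof beyond identifying the family $M(-\tfrac1n)$, $n>1$.

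The one place where your argument diverges from the paper, and where your self-diagnosed obstacle is real, is the contactomorphism. Your route starts with the smooth inversion $\iota$ of the figure-eight and then invokes Eliashberg's classification of overtwisted structures to upgrade $\iota$ to a contactomorphism of $(S^3,\xi_1^{OT})$. The problem is that this upgrade is only defined up to postcomposition with a contact isotopy, so you lose control of what the resulting contactomorphism does to the specific non-loose Legendrians $L^\pm$; knowing that it reverses the orientation of the smooth knot type is not enough to conclude that it swaps these particular Legendrian representatives.

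The paper sidesteps this by producing an \emph{explicit} contactomorphism rather than an abstract one. In the proof of Theorem~\ref{universally tight}, $(S^3,\xi_1^{OT})$ is presented as contact $(+1)$--surgery on both components of the standard Legendrian Hopf link, and the $180^\circ$ rotation of this diagram is already a contactomorphism of the surgered manifold. Because this symmetry visibly reverses the signs of all rotation numbers (hence swaps positive and negative stabilizations), it carries $L^+$ to $L^-$ and therefore exchanges the two surgeries. This is what makes the contactomorphism claim immediate, and it replaces the delicate last paragraph of your proposal.
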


As part of our proofs, we deal with a large number of \dfn{non-loose} (or \dfn{exceptional}) figure-eight knots in overtwisted contact structures in $S^3$.  These are Legendrian knots in overtwisted contact manifolds, but where the complement of a standard neighborhood is tight.  (If the complement were instead overtwisted, we would call the knot \dfn{loose}.) Although our work does not lend itself immediately to a classification of non-loose figure-eight knots, several properties of such knots can be extracted from our results.  For example: any non-loose figure-eight knot $L$ with $tb(L) \not\in \{-3, 0, 1, 5\}$ destabilizes. This follows from the convex surface theory analysis in Section~\ref{sec:upperbound}.

Finally, it is interesting to compare our results to existing classification results for surgeries on knots in $S^3$, namely, to surgeries on torus knots. Let $K$ be a positive torus knot or the negative trefoil, let $n$ be a positive integer, and let $r \in [0, 1)$ be rational. It follows from the classification results that Tosun proved in \cite{Tosun:SFS} that the number of isotopy classes of tight contact structures on $S^3_{n+r}(K)$ is independent of $n$, for $n$ sufficiently large (how large depends on the torus knot in question). This is a property shared by the figure-eight knot, as we see in Theorem~\ref{main classification}, which leads us to ask:

\begin{question} \label{general-high-surgery}
If $K \subset S^3$ is a fibered knot, and $r \in [0, 1)$ is rational, is the number of isotopy classes of tight contact structures on $S^3_{n+r}(K)$ independent of $n \in \N$ for sufficiently large $n$?
\end{question}

\subsection{Organization of Paper}

After going over the requisite background material in Section~\ref{sec:background}, we turn to the calculation of the upper bound for the number of tight contact structures in Section~\ref{sec:upperbound}. Then in Section~\ref{sec:lowerbound}, we construct as many distinct tight contact structures as we calculated might appear, and in Section~\ref{sec:properties}, we determine which are fillable and which are universally tight.

\subsection{Acknowledgements}

The authors thank John Etnyre and B\"ulent Tosun for many helpful conversations. The first author was partially supported by NSF grant DMS-1344991; the second author was partially supported by NSF grant DMS-1608684.

\section{Contact Geometry Background}
\label{sec:background}

We assume that the reader has an understanding of the basic definitions and results in $3$--dimensional contact geometry \cite{Geiges:book}, including Legendrian knots and their invariants \cite{Etnyre:knots}, Legendrian surgery \cite{Eliashberg:stein}, open book decompositions \cite{Etnyre:OBlectures}, convex surface theory \cite{Etnyre:convex}, and Heegaard Floer homology \cite{OS:hf1, OS:hf2, OS:contact}.  We suffice ourselves with citing more recent results, along with classical results that we will make use of often.

\subsection{Convex Surface Theory}
\label{sec:convex}

As mentioned above, we assume the reader is familiar with convex surfaces, and bring only results that we will repeatedly cite.  We extend a word of warning to the reader who is familiar with the conventions from convex surface theory: our convention (and the common one today) is that slopes are given by $\frac{\rm{meridian}}{\rm{longitude}}$, and not the inverse. This has led to some differences between the way we cite results and the way they were originally presented.

We will often make use of several properties of convex surfaces without explicitly mentioning them, to make it easier on the reader to follow the argument: perturbing a surface to be convex, realizing a particular foliation that is divided by the given dividing set, and in particular, using the Legendrian realization principle.  We assume throughout that the boundary of any convex surface $\Sigma$ is Legendrian, if non-empty, and we denote the dividing set of $\Sigma$ by $\Gamma_\Sigma$.  When $\bd \Sigma$ is a single knot, then $\bd \Sigma$ is null-homologous, and we can measure $tb(\bd \Sigma)$. Kanda \cite{Kanda:tb} proved that
\[
	tb(\bd \Sigma) = -\frac{1}{2}\left|\bd \Sigma \cap \Gamma_\Sigma\right|.
\]
If $\bd \Sigma$ is oriented as the boundary of $\Sigma$, then $rot(\bd \Sigma) = \chi(\Sigma_+) - \chi(\Sigma_-)$, where $\Sigma_\pm$ are the positive/negative regions of the convex surface.  If $\Sigma$ is a convex surface with Legendrian boundary properly embedded in a contact manifold with non-empty convex boundary, then the  {\em relative Euler class} of the contact structure evaluates to $\chi(\Sigma_+) - \chi(\Sigma_-)$ on $\Sigma$.

Giroux \cite{Giroux:convex} proved that if $\Sigma$ is a convex surface with a tight neighborhood, then if $\Sigma \not\cong S^2$, then no component of $\Gamma_\Sigma$ is contractible, and if $\Sigma \cong S^2$, then $\Gamma_\Sigma$ is a single closed curve.

Let $\Sigma_0$, $\Sigma_1$ be convex surfaces with Legendrian boundary, and assume they have a common boundary component $L$ along which they intersect transversely.
In this case, Honda showed in \cite{Honda:classification1} that $\Gamma_{\Sigma_i}$ look like the left-hand picture in Figure~\ref{edge-rounding}. We can consider $\Sigma_0 \cup \Sigma_1$ as a surface with a corner, and ``round the corner'' of $\Sigma_0 \cup \Sigma_1$ to obtain a smooth convex surface, see the right-hand picture in Figure~\ref{edge-rounding}.

\begin{figure}[htbp]
	\begin{center}
	\begin{overpic}[scale=1,tics=20]{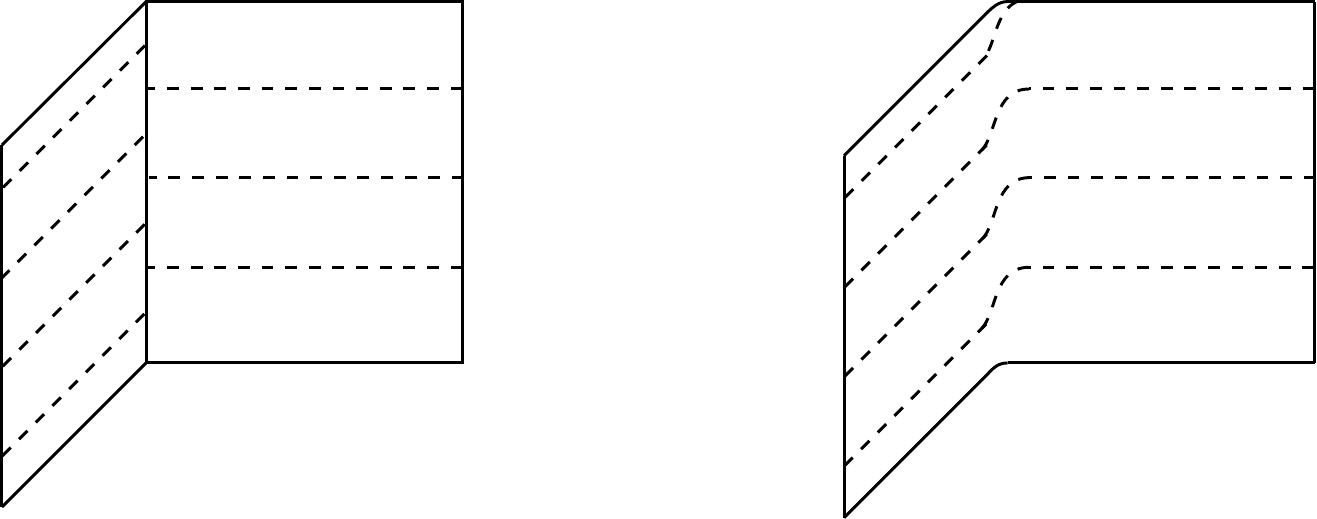}
	\end{overpic}
	\caption{The left-hand picture is prior to edge-rounding. The right-hand picture is after edge-rounding.}
	\label{edge-rounding}
	\end{center}
 \end{figure}
 
There is another way to modify a convex surface, due to Honda \cite{Honda:classification1}: attaching a \dfn{bypass}. A bypass is a particular type of a convex surface; in fact, it is half of an overtwisted disc. Consider a convex overtwisted disc whose dividing set consists of one closed curve. Take an arc in the disc whose endpoints lie on the boundary and that intersects the dividing curve in two points. Apply the Legendrian realization principle to realize the arc as a Legendrian arc $\gamma$, and cut the disc along $\gamma$; each half-disc is called a bypass. Now, suppose a bypass $D$ intersects a convex surface $\Sigma$ such that $D \cap \Sigma = \gamma$. Since the dividing sets interleave, $\gamma \cap \Gamma_\Sigma$ is three points. We call such a Legendrian arc $\gamma$ on a surface an \dfn{attaching arc}. After edge-rounding, the convex boundary of a neighborhood of $D \cup \Sigma$ is a surface isotopic to $\Sigma$ but with its dividing set changed in a neighborhood of the attaching arc as in Figure~\ref{bypass-attachment}. We call this process a \dfn{bypass attachment along $\gamma$}. Note that Figure~\ref{bypass-attachment} is drawn for the case that the bypass $D$ is attached ``from the front'', that is, sitting above the page; if we attach a bypass ``from the back'' of $\Sigma$, the result will be Figure~\ref{bypass-attachment} reflected over a vertical line.

\begin{figure}[htbp]
	\begin{center}
	\begin{overpic}[scale=1,tics=20]{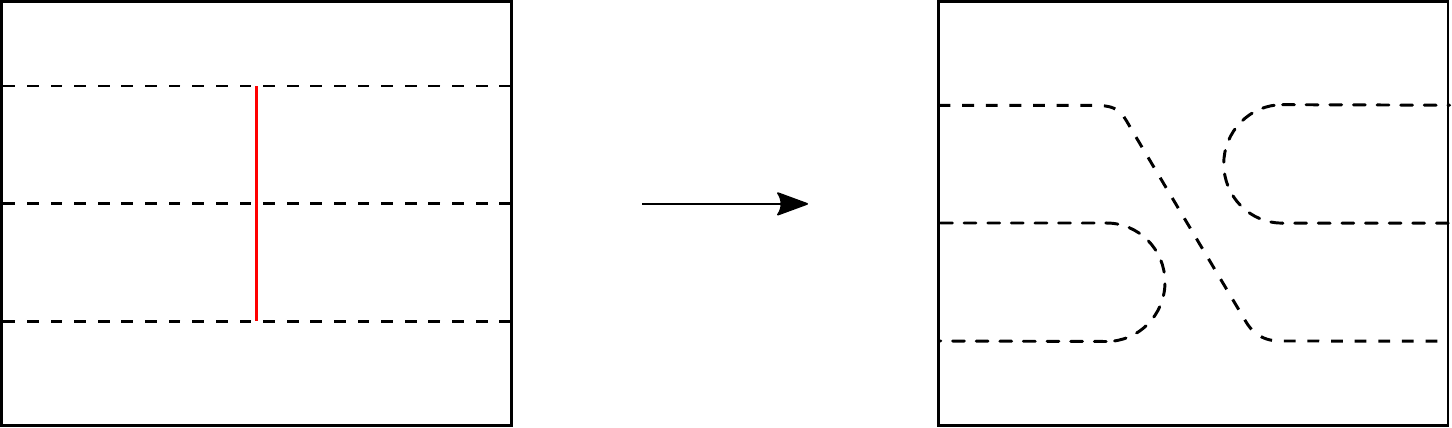}
	\put(78,78){$\gamma$}
	\end{overpic}
	\caption{How the dividing set (dashed lines) changes in a neighborhood of the attaching arc (solid red line) for a bypass attachment.}
	\label{bypass-attachment}
	\end{center}
 \end{figure}
 
There are several results about bypasses which we will make frequent use of:

\begin{thm}[Bypass Rotation, Honda--Kazez--Mati\'c \cite{HKM:pinwheels}] \label{bypass-rotation} Suppose that there is a bypass for $\Sigma$ from the front along an attaching arc $\gamma$. If $\gamma'$ is a Legendrian arc as in Figure~\ref{fig:bypass-rotation}, then there exists a bypass for $\Sigma$ from the front along $\gamma'$.
\end{thm}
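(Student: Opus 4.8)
\emph{Proof proposal.} The plan is to build the bypass along $\gamma'$ out of the given bypass along $\gamma$ by enlarging it across a product region, and then to check that what we obtain is still an honest bypass. First I would fix an invariant neighborhood $N = \Sigma \times [-\epsilon, 0]$ of $\Sigma = \Sigma \times \{0\}$, chosen so that the bypass $D$ is attached to $\Sigma$ from the front and meets $N$ only along its attaching arc $\gamma \subset \Sigma \times \{0\}$; since the neighborhood of $\Sigma$ is tight, we may take $N$ tight. Comparing $\gamma$ with $\gamma'$ as in Figure~\ref{fig:bypass-rotation}, there is an embedded disc $R \subset \Sigma$ cobounded by subarcs of $\gamma$, $\gamma'$, and $\Gamma_\Sigma$, across which $\gamma$ is ``rotated'' to $\gamma'$; the hypothesis on the direction of rotation is precisely what keeps $\Gamma_\Sigma \cap R$ to a controlled family of arcs with no returning or nested pattern. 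Inside the product $R \times [-\epsilon, 0] \subset N$ I would use the Legendrian realization principle to realize $\gamma'$ as a Legendrian arc and to find a convex half--disc $E$ with $\partial E = \gamma \cup \gamma' \cup (\text{an arc in } \Sigma \times \{-\epsilon\})$, whose dividing set $\Gamma_E$ is dictated by the product structure together with $\Gamma_\Sigma \cap R$.

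Second, I would glue $E$ to $D$ along their common Legendrian arc $\gamma$ and round corners. The bypass $D$ is half of an overtwisted disc, so $\Gamma_D$ is a single arc with both endpoints on $\gamma$, interleaved with the three points of $\gamma \cap \Gamma_\Sigma$; combining this with the dividing arc of $E$ coming from the product, the rounded surface $D'$ built from $D \cup E$ is a convex half--disc with Legendrian boundary $\gamma' \cup (\text{an arc in } \Sigma \times \{-\epsilon\})$. The point to verify is that $D'$ is again a bypass: that $\Gamma_{D'}$ is a single arc meeting $\gamma'$ in exactly two points, interleaved with the three points of $\gamma' \cap \Gamma_\Sigma$, so that $\gamma'$ is an attaching arc and $D'$ meets $\Sigma$ in $\gamma'$ alone after a small isotopy. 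This is a local computation with $\Gamma_D$, $\Gamma_E$, and $\Gamma_\Sigma \cap R$, and it is exactly here that the orientation in Figure~\ref{fig:bypass-rotation} is used: rotating the other way would produce a closed contractible component of the dividing set on $D'$ (or on $\Sigma$ after the attachment), which is impossible in a tight neighborhood by Giroux's criterion quoted above. Since $D$ was attached from the front and $E$ was constructed on the same side of $\Sigma$, the bypass $D'$ is also from the front.

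The step I expect to be the main obstacle is the combinatorial bookkeeping in the second paragraph: when $\gamma'$ wraps past several dividing curves of $\Sigma$ the dividing set on $R$ need not be a single arc, and one must check that $\gamma'$ can be Legendrian realized and $E$ constructed without destroying convexity or tightness and with the predicted $\Gamma_E$. Rather than handle a general $\gamma'$ at once, I would likely first prove the ``elementary'' rotation of Figure~\ref{fig:bypass-rotation}, where a single endpoint of $\gamma$ moves past one dividing curve, and then obtain the general statement by iterating, each step re-applying the elementary case to the arc produced by the previous one. Finally I would confirm that $D'$ has a tight neighborhood, which holds because $D'$ lies in $N$ together with a neighborhood of $D$, and this region is tight; this completes the proof that a bypass for $\Sigma$ from the front along $\gamma'$ exists.
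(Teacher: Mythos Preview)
The paper does not prove this statement; it is quoted as background from Honda--Kazez--Mati\'c \cite{HKM:pinwheels}, so there is no proof here against which to compare your proposal.

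That said, your construction has a genuine geometric problem. You place the auxiliary disc $E$ inside $N = \Sigma \times [-\epsilon,0]$, which lies \emph{behind} $\Sigma = \Sigma \times \{0\}$, while the given bypass $D$ lies in front of $\Sigma$. Gluing $D$ to $E$ along $\gamma$ therefore yields a disc $D'$ that passes through $\Sigma$ rather than sitting on one side of it, so $D'$ is not a bypass for $\Sigma$ from the front along $\gamma'$; at best it is a bypass for $\Sigma \times \{-\epsilon\}$ along some other arc, and you would still need to transport that back to $\Sigma$ and identify the attaching arc with $\gamma'$. There is also a bookkeeping issue: for $\partial E = \gamma \cup \gamma' \cup (\text{arc in }\Sigma\times\{-\epsilon\})$ to be a single closed curve bounding a disc, the arcs $\gamma$ and $\gamma'$ must share endpoints in a specific way that you have not arranged, and in the configuration of Figure~\ref{fig:bypass-rotation} they typically do not. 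The argument in \cite{HKM:pinwheels} stays entirely in front of $\Sigma$: attach the given bypass along $\gamma$ to obtain a parallel copy $\Sigma'$, and then locate the bypass for $\Sigma$ along $\gamma'$ inside the layer between $\Sigma$ and $\Sigma'$, where it appears as (the complement of) a trivial bypass for $\Sigma'$ from behind, which always exists. If you want to salvage your gluing idea you must build $E$ on the front side, but then keeping $E$ disjoint from $D$ forces you into essentially the same layer picture.
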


\begin{thm}[Bypass Sliding, Honda \cite{Honda:classification1}] \label{bypass-sliding}
	Suppose that there is a bypass for $\Sigma$ from the front along an attaching arc $\gamma$. If $\gamma'$ is a Legendrian arc that is isotopic to $\gamma$ relative to $\Gamma_\Sigma$, then there is a bypass for $\Sigma$ from the front along $\gamma'$.
\end{thm}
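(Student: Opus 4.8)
\emph{Proof proposal.} The plan is to realize the isotopy carrying $\gamma$ to $\gamma'$ by an ambient contact isotopy that fixes $\Sigma$ setwise, and then carry the bypass half-disc along it. By hypothesis there is an ambient isotopy $\phi_t$ of $\Sigma$ with $\phi_0 = \mathrm{id}$, $\phi_1(\gamma) = \gamma'$, and $\phi_t(\Gamma_\Sigma) = \Gamma_\Sigma$ for all $t$; in particular each arc $\gamma_t := \phi_t(\gamma)$ is an attaching arc, meeting $\Gamma_\Sigma$ transversely in exactly three points. First I would use the Legendrian realization principle to realize the whole family $\{\gamma_t\}$ by Legendrian arcs on $\Sigma$ — the non-isolating condition holds throughout because the combinatorics of $\gamma_t \cap \Gamma_\Sigma$ never changes — so that we may assume each $\gamma_t$ is Legendrian, giving a Legendrian isotopy from $\gamma$ to $\gamma'$ inside $\Sigma$.

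Next I would promote $\phi_t$ to an ambient contact isotopy $\psi_t$ of a tubular neighborhood $\Sigma \times [-1,1]$ of $\Sigma = \Sigma \times \{0\}$, with $\psi_0 = \mathrm{id}$, $\psi_t(\Sigma) = \Sigma$, and $\psi_1(\gamma) = \gamma'$. To produce $\psi_t$: on an $I$-invariant neighborhood of $\Sigma$ the contact structure is determined up to contact isotopy by the dividing set $\Gamma_\Sigma$, and pushing this germ forward by $\phi_t \times \mathrm{id}$ yields another $I$-invariant contact structure with the same dividing set (since $\phi_t$ preserves $\Gamma_\Sigma$); Giroux flexibility then allows us to glue this back in and interpolate, producing $\psi_t$ with $\psi_t|_\Sigma$ inducing $\phi_t$ (and a final application of Legendrian realization ensures $\psi_1(\gamma)$ is $\gamma'$ on the nose rather than merely isotopic to it rel $\Gamma_\Sigma$).

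Finally, let $D$ be the bypass for $\Sigma$ from the front along $\gamma$. Then $\psi_1(D)$ is again a bypass, since being a bypass is preserved by contactomorphisms; it meets $\psi_1(\Sigma) = \Sigma$ exactly along $\psi_1(\gamma) = \gamma'$; and because $\psi_t$ is isotopic to the identity it preserves the coorientation of $\Sigma$, so $\psi_1(D)$ is attached from the front. Hence there is a bypass for $\Sigma$ from the front along $\gamma'$. The main obstacle is the second step: Legendrian isotopy extension by itself moves $\gamma$ to $\gamma'$ but loses control of $\Sigma$, whereas Giroux flexibility controls $\Sigma$ but not the arc, and the content lies in interleaving the two — which is precisely where the hypothesis ``relative to $\Gamma_\Sigma$'' (as opposed to merely ``isotopic inside $\Sigma$'') is needed. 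An alternative, more computational route avoids this: subdivide the isotopy into small steps so that $\gamma$ and $\gamma_t$ cobound a thin bigon $B \subset \Sigma$ meeting $\Gamma_\Sigma$ in a single arc, attach $D$ together with a pushoff of $B$ into the front region, and verify by edge-rounding that the resulting half-disc still carries the dividing set of a bypass; this reduces the lemma to a bounded local computation with dividing curves.
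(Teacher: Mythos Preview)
The paper does not prove this statement; it is cited as background from Honda \cite{Honda:classification1} and stated without proof. So there is no ``paper's own proof'' to compare against here.

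That said, your proposal is essentially the standard argument and is correct in outline. The key content is exactly what you identify in the second step: Giroux's flexibility theorem says the germ of the contact structure along a convex surface is determined, up to isotopy rel boundary, by the isotopy class of the dividing set; since $\phi_t$ preserves $\Gamma_\Sigma$, the pushed-forward germ is isotopic to the original one, and this isotopy can be chosen to restrict to $\phi_t$ on $\Sigma$. One small caution: your phrase ``use the Legendrian realization principle to realize the whole family $\{\gamma_t\}$ by Legendrian arcs'' suggests a parametric version of LeRP, which is not quite what is available; it is cleaner to first extend $\phi_t$ to a contact isotopy $\psi_t$ preserving $\Sigma$ (via Giroux flexibility, as you do), and only then observe that $\psi_1(\gamma)$ is automatically Legendrian on $\Sigma$ and isotopic rel $\Gamma_\Sigma$ to $\gamma'$, with a final application of LeRP to adjust. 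Your alternative bigon route also works and is closer in spirit to how the lemma is often used in practice.
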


\begin{figure}[htbp]
\begin{center}
\begin{overpic}[scale=1,tics=20]{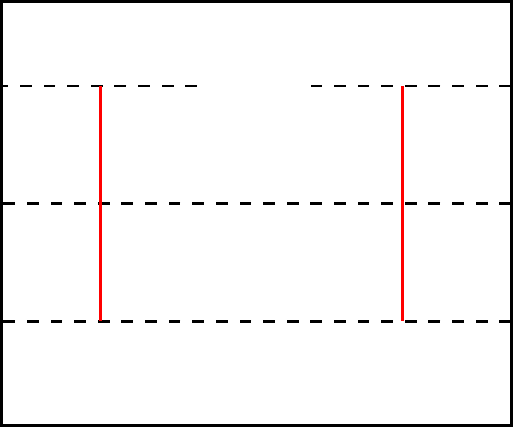}
\put(120,78){$\gamma$}
\put(32,78){$\gamma'$}
\end{overpic}
\caption{The attaching arc for a bypass rotation.}
\label{fig:bypass-rotation}
\end{center}
\end{figure}
 
Consider now a convex torus $T$ with dividing set $\Gamma$. Choose coordinates for $H_1(T)$, and let $\vects{p}{q}$ be written as $\frac pq$.  Let $\gamma$ be an attaching arc for $T$ from the front.  If $\gamma$ intersects three different dividing curves, then it is easy to see that the bypass attachment will reduce $\left|\Gamma\right|$. If a bypass intersects exactly two dividing curves, then Honda completely studied what happens. In order to describe the important case, we need to briefly give our conventions for the Farey graph. Recall, the Farey graph is the 1-skeleton of the triangulation of the disc model of the hyperbolic plane by ideal triangles, whose vertices are labeled by rational numbers; $\frac pq$ and $\frac rs$ are connected via an edge if $\vects{p}{q}$ and $\vects{r}{s}$ is a basis for $\Z^2$, and $0$, $-1$, and $1$ appear in counter-clockwise order around the circle.

\begin{thm}[Honda \cite{Honda:classification1}] \label{bypass on torus}
	Suppose a convex torus $T$ has two dividing curves of slope $s$, and let $\gamma$ be an attaching arc of slope $r$ of a bypass for $T$ from the front (along a ruling curve for $T$). Let $T'$ be the convex torus obtained from $T$ by attaching a bypass along $\gamma$. Then $\Gamma_{T'}$ consists of two dividing curves of slope $s'$, where $s'$ is the label on the Farey graph that is the furthest clockwise of $s$ and counter-clockwise of $r$ that is connected to $s$ by an edge (and if $s$ and $r$ are connected by an edge, then $s' = r$).
\end{thm}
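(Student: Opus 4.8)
The plan is to normalize the coordinates on $T$, settle the ``adjacent'' case $s'=r$ by an explicit computation in a standard local model, and then reduce the general case to it (this is in essence Honda's original argument \cite{Honda:classification1}). Since attaching a bypass, edge-rounding, and the combinatorics of the Farey graph are all natural under the $SL(2,\Z)$--action on $H_1(T)$, I would first change basis so that $s=\infty$, i.e.\ $\Gamma_T$ is two curves of slope $\tfrac10$. The vertices joined to $\infty$ by an edge are exactly the integers, and unwinding the stated Farey conventions shows that in these coordinates the predicted output is $s'=\lfloor r\rfloor$. A further change of basis $\matrixs{1}{n}{0}{1}$ fixes $\infty$ and translates $r$ and $\lfloor r\rfloor$ together by $n$, so I may assume $r=0$ (adjacent case) or $r\in(0,1)$ with predicted answer $s'=0$ (general case).

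For the adjacent case I would set up the standard local model: a convex torus with two dividing curves of slope $\infty$, together with an attaching arc of slope $0$ put into standard position (via the Legendrian realization principle) so that it meets $\Gamma_T$ in three points. Thickening $T$ to $T\times[0,1]$ and attaching the bypass $D$ from the front on $T\times\{1\}$, the dividing set on $T\times\{0\}$ is obtained from $\Gamma_T$ by the local change of Figure~\ref{bypass-attachment} followed by edge-rounding as in Figure~\ref{edge-rounding}; reading off this picture yields two parallel curves of slope $0$. Along the way, Giroux's criterion --- a convex surface with a tight neighbourhood that is not $S^2$ carries no contractible dividing curve --- is what discards any spurious contractible curves the naive rounding might suggest. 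This adjacent case is the basic building block.

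For the general case ($r\in(0,1)$, predicted $s'=0$), the key claim is that attaching the bypass along the slope-$r$ arc has the same effect on $\Gamma_T$ as attaching it along the slope-$0$ arc of the adjacent case. One shows this by using Theorem~\ref{bypass-sliding} to move the attaching arc within its isotopy class relative to $\Gamma_T$ and Theorem~\ref{bypass-rotation} to rotate it across the ruling of $T$, until it is brought into the standard position of the previous paragraph; the adjacent-case computation then gives slope $0$, which is precisely the Farey vertex furthest clockwise of $s$ that is still counterclockwise of $r$. An essentially equivalent route is to note that $N(T\cup D)$ is a tight, minimally twisting $T^2\times[0,1]$ with boundary slopes $s$ and $s'$, so minimality of a single bypass attachment forces $s'$ to be joined to $s$ by an edge, and the adjacent-case model then identifies which such vertex occurs.

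The step I expect to be the main obstacle is this last reduction: keeping careful control of the isotopy class of the attaching arc relative to $\Gamma_T$ as one moves between a ruling of slope $r$ and the standard slope-$0$ configuration, and --- which is really what the careful phrasing of the statement exists to encode --- fixing all of the orientation conventions (clockwise versus counterclockwise on the Farey circle, front versus back attachment, the chosen identification of $H_1(T)$) consistently, so that one lands on $s'$ and not on its mirror image.
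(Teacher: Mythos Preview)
The paper does not prove this theorem: it is quoted from Honda \cite{Honda:classification1} as a background result, with no argument given.  So there is no ``paper's own proof'' to compare against; I will instead comment on the soundness of your outline.

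Your normalization to $s=\infty$ and the adjacent-case computation are fine and are exactly how Honda sets things up.  The gap is in your reduction of the non-adjacent case.  You propose to use bypass sliding (Theorem~\ref{bypass-sliding}) and bypass rotation (Theorem~\ref{bypass-rotation}) to carry an attaching arc of slope $r\in(0,1)$ to one of slope $0$, but neither tool does this.  Bypass sliding only moves the arc within its isotopy class \emph{relative to $\Gamma_T$}, and an arc along a ruling of slope $p/q$ is not isotopic rel $\Gamma_T$ to one along a ruling of slope $0$: the sub-arcs in the two annular components of $T\setminus\Gamma_T$ lie in different homotopy classes (rel endpoints) as soon as $p\neq 0$.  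Bypass rotation is a one-directional statement --- it produces new bypasses from old, it does not say two given bypasses have the same effect --- so it cannot be used to ``undo'' the extra winding either.

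The honest fix is the one Honda actually uses: with $s=\infty$ and two vertical dividing curves, an attaching arc along a slope-$r$ ruling hits three consecutive intersection points (curve $A$, then $B$, then $A$), and one simply performs the local move of Figure~\ref{bypass-attachment} and reads off the resulting global dividing set on $T$ --- it is two curves of slope $\lfloor r\rfloor$.  Your ``essentially equivalent route'' via minimal twisting of the bypass layer is also viable, but note it is not automatic that $N(T\cup D)$ is tight and minimally twisting; that is itself something one checks from the model, so it does not really bypass the computation.
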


In particular, adding a bypass from the front changes the slope clockwise, whereas adding a bypass from the back changes the slope counter-clockwise.  All of our discussions above are only useful when we can find a bypass. In general, we can construct a bypass by finding a boundary-parallel dividing arc along a transverse surface. Honda described situation in which we can apply this method. Note that when we say a dividing arc, it means a properly embedded arc.

\begin{thm}[Honda \cite{Honda:classification1}] \label{imba} Let $\Sigma$ be a convex surface.
	\begin{itemize}
		\item Suppose $A = S^1 \times [0,1]$ is a convex surface with Legendrian boundary and $\Sigma \cap A = S^1 \times \{0\}$. If $\left|\Gamma_A \cap \Sigma\right| > \left|\Gamma_A \cap S^1 \times \{1\}\right|$, then there exists a bypass for $\Sigma$ along an attaching arc in $S^1 \times \{0\}$.
		\item Suppose $D$ is a convex disc with Legendrian boundary and $\Sigma \cap D = \bd D$. If $tb(\bd D) < -1$, there exists a bypass for $\Sigma$ along an attaching arc in $\bd D$.
		\item Suppose $S$ is a convex surface with Legendrian boundary and $\Sigma \cap S = \bd S$. Assume further that $S$ is not a disc. If there exists a boundary-parallel dividing arc, then there exists a bypass for $\Sigma$ along an attaching arc in $\bd S$.
	\end{itemize}
\end{thm}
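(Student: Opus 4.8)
The plan is to isolate the single engine behind all three bullets --- that a boundary-parallel dividing arc on a convex surface meeting $\Sigma$ along (part of) its boundary produces a bypass for $\Sigma$ --- prove that engine once by edge-rounding, and then feed each of the three hypotheses into it.

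First I would establish the core claim: suppose $S$ is convex, $\beta$ is a Legendrian sub-arc of $\partial S$ lying on $\Sigma$, and $\delta\subset\Gamma_S$ is a boundary-parallel dividing arc whose endpoints lie on $\beta$, cutting off a half-disc $D_0\subset S$ with $D_0\cap\Gamma_S=\delta$ and $D_0\cap\Sigma=\beta$; assume in addition the non-degeneracy condition $\Gamma_S\setminus\delta\neq\emptyset$. Then, after a $C^\infty$-small perturbation making $D_0$ convex and an application of the Legendrian realization principle, $D_0$ is a bypass for $\Sigma$ along an attaching arc $\gamma$ obtained by extending $\beta$ slightly inside $\Sigma$. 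The verification is pure edge-rounding bookkeeping: the two endpoints of $\delta$ are precisely where two strands of $\Gamma_\Sigma$ meet $\partial S$, the non-degeneracy supplies a third local strand of $\Gamma_\Sigma$ near $\beta$ across which $\gamma$ can be routed, and rounding the corner of $D_0\cup\Sigma$ realizes the local dividing-set change of Figure~\ref{bypass-attachment}. I expect this step --- making the picture precise and checking that $\gamma\cap\Gamma_\Sigma$ consists of exactly three points --- to be the main obstacle; it is also where the non-degeneracy hypothesis proves essential, since without it $D_0$ is the trivial bypass (a standard $tb=-1$ half-disc) and nothing is gained. Throughout I would assume no component of $\Gamma_S$, $\Gamma_A$, or $\Gamma_D$ is contractible: by Giroux's result quoted above this holds whenever the contact structure is tight near the surface, and if it is not, a contractible dividing curve bounds an overtwisted disc and half of it is a bypass directly; in particular, on a disc $\Gamma$ consists of arcs only.

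With the core claim in hand, the three bullets follow quickly. For the third bullet, since $S$ is not a disc its topology (extra genus or extra boundary components, each supporting a dividing curve by the no-contractible-curve assumption) forces $\Gamma_S\setminus\delta\neq\emptyset$, so the non-degeneracy is automatic and the core claim applies. For the second bullet, Kanda's formula gives $|\partial D\cap\Gamma_D|=-2\,tb(\partial D)\geq 4$, so $\Gamma_D$ has at least two dividing arcs; an innermost arc on the disc $D$ is boundary-parallel with a clean half-disc, and the complementary disc still carries another dividing arc, so non-degeneracy holds and the argument of the core claim goes through verbatim (the disc case being excluded from the third bullet only because $tb(\partial D)=-1$ would violate non-degeneracy).

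For the first bullet, where $\Sigma\cap A=S^1\times\{0\}$ is only part of $\partial A$, I would run an Euler-characteristic/pigeonhole count: write $a$, $b$, $c$ for the numbers of dividing arcs of $\Gamma_A$ with both endpoints on $S^1\times\{0\}$, both on $S^1\times\{1\}$, and one on each, respectively, so that $|\Gamma_A\cap\Sigma|=2a+c$ and $|\Gamma_A\cap S^1\times\{1\}|=2b+c$. The imbalance hypothesis then gives $a>b\geq 0$, hence $a\geq 1$, so $\Gamma_A$ has a dividing arc with both feet on $S^1\times\{0\}$; an innermost such arc is boundary-parallel with half-disc lying on the $\Sigma$-side (no closed dividing curve can sit inside that disc, since a closed curve in the annulus is core-parallel, and no other arc can, by innermost-ness). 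As $A$ is not a disc, non-degeneracy is automatic, and the core claim produces a bypass for $\Sigma$ along an attaching arc in $S^1\times\{0\}$, as required.
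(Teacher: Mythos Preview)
The paper does not give a proof of this statement; it is cited from Honda \cite{Honda:classification1} and used as a black box throughout Section~\ref{sec:upperbound}. Your proposal is the standard argument and essentially Honda's: the half-disc $D_0$ cut off by a boundary-parallel arc $\delta\subset\Gamma_S$ is a bypass for $\Sigma$ along $D_0\cap\partial S$, and each bullet is reduced to producing such an arc. The imbalance count $2a+c>2b+c\Rightarrow a\geq 1$ for the annulus and the innermost-arc argument for the disc are both correct and exactly how this is done.

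Your non-degeneracy condition $\Gamma_S\setminus\delta\neq\emptyset$ is a red herring, and the justification you give for it in the third bullet --- that extra genus or extra boundary components force extra dividing curves by Giroux's criterion --- is not right: a once-punctured torus can carry a single boundary-parallel dividing arc in a tight neighbourhood, and Giroux's criterion says nothing against that. Fortunately you do not need the condition. By the interleaving of $\Gamma_S$ and $\Gamma_\Sigma$ along the common Legendrian boundary, the arc $\beta=D_0\cap\Sigma$ already has its two endpoints at corners where, after edge-rounding, strands of $\Gamma_\Sigma$ terminate, and it has exactly one interior crossing with $\Gamma_\Sigma$; so the three-point attaching-arc condition is automatic and $D_0$ is a bypass regardless. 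What an extra arc endpoint of $\Gamma_S$ on the same boundary component (a stronger condition than your non-degeneracy) would buy you is that the bypass is visibly \emph{non-trivial}, but the theorem as stated does not ask for that.
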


\subsection{Classification results} In Section~\ref{sec:upperbound}, we will break up our manifold $M(r)$ into simpler pieces, and use some of the following classification results to get an upper bound on the number of tight contact structures $M(r)$ supports. Let $M = T^2 \times I$, and fix a singular foliation $\mathcal F$ on $\bd M$ that is divided by two dividing curves of slope $s_i$ on $T \times \{i\}$, where $s_0$ and $s_1$ are connected by an edge in the Farey graph.  Say that a contact structure on $M$ is \dfn{minimally twisting} if for any convex boundary-parallel torus $T \subset M$, the slope of $\Gamma_T$ is clockwise of $s_0$ and counter-clockwise of $s_1$ on the Farey graph.

\begin{thm}[Honda \cite{Honda:classification1}] \label{basic-slice}
If $M = T^2 \times I$ and $\mathcal F$ are as above, then there are exactly two minimally twisting tight contact structures on $M$ that induce $\mathcal F$ on the boundary, up to isotopy fixing $\mathcal F$.
\end{thm}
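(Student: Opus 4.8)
The plan is to follow Honda's convex-decomposition strategy, cutting $M$ down to a $3$--ball. Since $\mathrm{SL}(2,\Z)$ acts transitively on the edges of the Farey graph, I can apply a diffeomorphism of $T^2$, extended to $M = T^2\times I$, to assume that $(s_0,s_1)$ is a convenient Farey-adjacent pair, say $s_0 = \infty$ and $s_1 = 0$, with the orientation conventions arranged so that minimal twisting says: \emph{every} convex boundary-parallel torus in $M$ has dividing slope $\infty$ or $0$. This is the crucial rigidity --- since $s_0$ and $s_1$ are joined by an edge, no other rational number, hence no other possible dividing slope of a convex torus, lies on the prescribed arc of the Farey graph. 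For the lower bound I would exhibit two tight contact structures on $M$ inducing $\mathcal F$ --- via explicit contact forms on $T^2\times I$, say --- and distinguish them by their relative Euler classes, which are opposite nonzero elements of $H^2(M,\partial M)\cong\Z^2$ and are invariant under isotopy fixing $\mathcal F$.

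For the upper bound, let $\xi$ be a minimally twisting tight contact structure on $M$ inducing $\mathcal F$. First perturb $\partial M$ to be convex with the prescribed dividing sets, two curves on each $T_i$, and use the Legendrian realization principle so that the ruling curves on $T_0$ and $T_1$ all have a common slope $\rho$ that is Farey-adjacent to both $\infty$ and $0$ (so $\rho \in \{1,-1\}$). Then a convex vertical annulus $A$ with one ruling boundary curve on each $T_i$ meets $\Gamma_{T_0}$ and $\Gamma_{T_1}$ minimally, in two points each, so $\Gamma_A$ has exactly four endpoints. The next step is to pin down $\Gamma_A$:
\begin{itemize}
\item No component is a contractible closed curve, since $A \not\cong S^2$ has a tight neighborhood (Giroux's criterion).
\item No component is an essential closed curve: by the Imbalance Principle (Theorem~\ref{imba}) such a curve yields a bypass along some $T_i$, and by Theorem~\ref{bypass on torus} attaching it moves the dividing slope of $T_i$ off the arc between $s_0$ and $s_1$, contradicting minimal twisting.
\item A boundary-parallel dividing arc also yields, by Theorem~\ref{imba}, a bypass along some $T_i$; by Theorem~\ref{bypass on torus} and the Farey-adjacency of $s_0,s_1$, attaching it either moves the slope off the arc (again impossible) or leaves it unchanged, in which case the bypass is trivial and can be deleted by an isotopy of $A$.
\end{itemize}
After removing trivial bypasses, $\Gamma_A$ is two through-arcs, and there are at most two such configurations up to isotopy of $A$ fixing $\partial A$. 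For each configuration I cut $M$ along $A$ and round the corners to obtain a solid torus whose convex boundary carries a completely determined dividing set; a convex meridian disc then cuts this solid torus down to a $3$--ball, on which the tight contact structure is unique up to isotopy rel boundary by Eliashberg's theorem. Tracking the cuts and the edge-rounding, any two minimally twisting tight structures on $M$ that induce $\mathcal F$ and have the same $\Gamma_A$ are isotopic fixing $\mathcal F$. Hence $M$ supports at most two such structures, and together with the lower bound, exactly two.

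The technical heart --- and the main obstacle --- is the analysis of $\Gamma_A$: proving that every boundary-parallel dividing arc either contradicts minimal twisting or is a genuinely removable trivial bypass, and that each of the two surviving configurations extends over $M$ in a unique way. This is where the interplay between bypass attachments (Theorems~\ref{imba} and~\ref{bypass on torus}), edge-rounding, and the resulting dividing sets has to be handled with care, and it relies on Honda's trivial-bypass lemma together with the uniqueness of tight contact structures on the $3$--ball.
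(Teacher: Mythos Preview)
The paper does not prove this theorem; it is cited from Honda as background. Your strategy---normalize by $\mathrm{SL}(2,\Z)$, cut along a vertical convex annulus, and reduce to a solid torus and then a ball---is indeed Honda's approach. However, your key rigidity claim is false: minimal twisting does \emph{not} force every boundary-parallel convex torus to have slope $s_0$ or $s_1$. The condition constrains slopes to an arc of the boundary circle of the Farey tessellation (clockwise of $s_0$, counter-clockwise of $s_1$), and even when $s_0$ and $s_1$ are Farey-adjacent that arc contains infinitely many rationals. For example, a basic slice with $s_0=\infty$, $s_1=0$ contains convex boundary-parallel tori of every negative rational slope---it splits at slope $-1$ into two smaller basic slices. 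So your justification for the third bullet (``either off the arc or trivial'') does not stand as written; with $\rho=-1$, a bypass along $T_0$ moves the slope from $\infty$ to $-1$, which is neither off the arc nor unchanged. (Your second bullet is also misattributed: the Imbalance Principle does not apply when both boundary components of $A$ meet $\Gamma$ in two points; rather, an essential closed curve on $A$ forces all arcs to be boundary-parallel, reducing to the third bullet.)

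The actual mechanism is this: choose the ruling slope $\rho$ to be Farey-adjacent to both $s_0$ and $s_1$ \emph{and lying outside the minimal-twisting arc}. Then by the parenthetical clause in Theorem~\ref{bypass on torus}, a bypass along either $T_i$ produces a boundary-parallel torus of slope exactly $\rho$, contradicting minimal twisting outright. Honda's normalization $s_0=-1$, $s_1=0$, $\rho=\infty$ does this cleanly; with your normalization you must take $\rho=1$ (not $-1$), and you must invoke this adjacency, not the nonexistent emptiness of the arc. Once boundary-parallel arcs on $A$ are thereby excluded, $\Gamma_A$ consists of two through-arcs in one of two configurations, and the rest of your outline is correct.
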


The two different contact structures given by Theorem~\ref{basic-slice} can be distinguished by their relative Euler class, and after picking an orientation, we call them \dfn{positive} and \dfn{negative basic slices}.

Let $M = S^1 \times D^2$ and choose coordinates for $H_1(\bd M)$ such that $0$ describes a positively oriented longitude and $\infty$ is a positive meridian. Fix a singular foliation $\mathcal F$ on $\bd M$ that is divided by two dividing curves of slope $r = \frac pq$.  Let $k$ be the unique integer such that $\frac{p+kq}{q} \in [-1,0)$, and let $\frac q{p+kq}$ have the negative continued fraction decomposition $[r_0, \ldots, r_n]$, where $r_n \leq -1$, and $r_i \leq -2$ for $i=0,\ldots,n-1$.

\begin{thm}[Honda \cite{Honda:classification1}] \label{solid torus}
If $M = S^1 \times D^2$ and $\mathcal F$ are as above, then there are exactly \[ \left|(r_0+1)\cdots(r_{n-1}+1)r_n\right| \] tight contact structures on $M$ that induce $\mathcal F$ on the boundary, up to isotopy fixing $\mathcal F$.
\end{thm}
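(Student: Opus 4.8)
The plan is to establish matching upper and lower bounds for the number $N := \left|(r_0+1)\cdots(r_{n-1}+1)r_n\right|$. First I would normalize: the asserted count depends on $r = p/q$ only through $q/(p+kq)$, hence only on $r\bmod 1$, and a change of the longitude framing by an integer is induced by a self-diffeomorphism of $S^1\times D^2$ carrying $\mathcal F$ to the correspondingly-framed foliation; so it suffices to treat $r\in[-1,0)$, write the negative continued fraction $q/p = [r_0,\dots,r_n]$ as in the statement, and induct on $q$ (equivalently, on the complexity of this continued fraction). In the base case $q=1$, i.e.\ $r=-1$, one has $q/p=[-1]$ and $N=1$: a convex meridional disc $D$ then has $tb(\partial D)=-1$, so $\Gamma_D$ is a single arc, and cutting $S^1\times D^2$ along $D$ produces a contact $3$--ball whose boundary dividing set is connected; by Eliashberg's uniqueness theorem the contact structure on the ball --- hence on $S^1\times D^2$ --- is determined up to isotopy, which gives the upper bound, while any tight structure (e.g.\ the standard neighborhood of a Legendrian core with $tb=-1$) gives the lower bound.

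For the inductive step of the upper bound, let $\xi$ be tight on $M=S^1\times D^2$ inducing $\mathcal F$ of slope $r$ with $q\ge 2$. I would put $\partial M$ in standard convex form, take a convex meridional disc $D$ with $\partial D$ a Legendrian ruling curve so that $tb(\partial D)=-q<-1$, and invoke the second bullet of Theorem~\ref{imba} to find a bypass for $\partial M$ along an arc of $\partial D$. Attaching this bypass from the interior produces a convex torus $T'\subset\interior M$ parallel to $\partial M$ that splits $M$ into a basic slice (between $T'$ and $\partial M$, carrying one of the two signs of Theorem~\ref{basic-slice}) and a solid torus $M'$ of boundary slope $s'$, where by Theorem~\ref{bypass on torus} $s'$ is one of the finitely many Farey neighbors of $r$ lying between $r$ and $\infty$. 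Since each such $M'$ is strictly simpler (smaller $q'$), induction bounds the number of tight structures it supports; summing over the admissible $s'$ and the sign of the peeled basic slice, then quotienting by the bypass-sliding and bypass-rotation isotopies (Theorems~\ref{bypass-sliding}--\ref{bypass-rotation}), which permute adjacent same-signed basic slices only when these lie within a common continued-fraction block, the bookkeeping should telescope to exactly $N$ --- the confinement of shuffling to within blocks being what produces the asymmetry between the factors $|r_i+1|$ for $i<n$ and the last factor $|r_n|$.

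For the lower bound I would construct, for each combinatorial pattern counted by $N$ (an assignment of signs to the basic slices of a fixed Farey-path decomposition of $M$ running from boundary slope $r$ inward to a slope Farey-adjacent to $\infty$, taken modulo within-block shuffling), a contact structure on $M$ by stacking the corresponding signed basic slices of Theorem~\ref{basic-slice} and capping the innermost piece with a tight neighborhood of a Legendrian core. Each such $\xi$ is tight (a stack of basic slices with matching boundary foliations is tight, e.g.\ by a gluing/state-traversal argument, or because it embeds into a tight contact lens space), and the $N$ of them are pairwise non-isotopic because the upper-bound argument already shows that at most $N$ isotopy classes exist while these constructions realize all the data that argument uses to distinguish them --- concretely, the slope to which $\Gamma_{\partial M}$ first jumps under thickening, together with the relative Euler classes of the intermediate parallel convex tori.

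The hard part will be the combinatorial core: showing that the a priori count of decorated Farey-path decompositions --- equivalently, of chord diagrams $\Gamma_D$ on the meridional disc --- modulo \emph{exactly} the relations coming from bypass sliding and rotation is equal to $N$ on the nose. One direction requires that reordering same-signed basic slices within a continued-fraction block is the only relation, so that nothing collapses below $N$ (securing the lower bound); the other requires that across blocks the configuration is rigid, so that the naive exponential upper bound is genuinely cut down to $N$. Carrying this out is essentially Honda's induction, and it is precisely here that the continued fraction $[r_0,\dots,r_n]$ --- and the precise way the junctions between blocks constrain the sign assignments --- enters in an essential way.
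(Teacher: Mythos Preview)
The paper does not give its own proof of this theorem; it is quoted as Honda's result \cite{Honda:classification1}, followed only by a one-paragraph informal description of how the tight structures arise (stacking basic slices along the Farey path from slope $r$ down to an integer, then filling with a solid torus, while avoiding adjacent slices of opposite sign that would create overtwistedness). So there is no detailed argument here against which to compare.

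That said, your sketch is essentially Honda's own strategy and is consistent with the paper's summary: normalize the framing, peel off basic slices via bypasses found on a meridional disc (Theorem~\ref{imba}), induct on the denominator, and identify the continued-fraction blocks as the loci where same-sign shuffling gives the factors $|r_i+1|$ (with the last block contributing $|r_n|$). Your identification of the ``hard part'' is accurate: the substance of Honda's proof is precisely the combinatorial analysis that (a) within a continued-fraction block, sign assignments are determined only up to reordering, and (b) across block boundaries, mixing signs forces overtwistedness, so no further collapsing or proliferation occurs. You have correctly located where the work lies but have not carried it out; as written, the proposal is a faithful outline rather than a proof.

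One minor correction: in the upper-bound step you say the bypass produces a torus of slope $s'$ that is ``one of the finitely many Farey neighbors of $r$ lying between $r$ and $\infty$.'' In fact the bypass found on a meridional disc has slope $\infty$, so by Theorem~\ref{bypass on torus} the resulting slope $s'$ is the \emph{unique} Farey neighbor of $r$ counter-clockwise of $\infty$ (the next vertex on the shortest Farey path from $r$ to $\infty$), not one of several. This determinism is what makes the induction clean.
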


As is explained in \cite{Honda:classification1}, each tight contact structure on $M$ is constructed by starting with a basic slice with dividing curve slopes $s_0 < s_1 = r$.  Then, we glue another basic slice onto the boundary component that has dividing curves of slope $s_0$ to lower $s_0$, and we repeat this until we arrive at a $T^2 \times I$ with dividing curves on $T^2 \times \{0\}$ of slope $n \in \Z$.  Finally, we glue on a solid torus to $T^2 \times \{0\}$. As we are looking for a tight contact structure on $M$, we must take care never to glue two basic slices of different signs to arrive at dividing curve slopes that are adjacent on the Farey graph, since this would give us an overtwisted contact structure, by Theorem~\ref{basic-slice}.

\subsection{Gluing contact structures and Giroux torsion} In Section~\ref{sec:upperbound}, we will construct contact structures by gluing two tight contact structures along torus boundaries. To show that the resulting contact structure is tight, Colin's theorem \cite{Colin:gluing} is useful, which states that under suitable conditions, gluing two universally tight contact structures results in a tight contact structure. However, it requires that the torus boundary must be pre-Lagrangian. We will make use of a version of this theorem for convex tori.

\begin{thm}[Honda--Kazez--Mati\'c \cite{HKM:convex}] \label{gluing} Let $(M,\xi)$ be an oriented, irreducible contact $3$--manifold and $T \subset M$ an incompressible convex torus. If $\xi|_{M \setminus T}$ is universally tight and there exists $T^2\times[-1,1]\subset M$ where $T=T^2\times\{0\}$, $\xi|_{T^2\times[-1,1]}$ is universally tight, and $T^2\times[-1,0]$ and $T^2\times[0,1]$ are both rotative, then $\xi$ is universally tight.
\end{thm}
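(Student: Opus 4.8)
The plan is to reduce Theorem~\ref{gluing} to Colin's gluing theorem \cite{Colin:gluing}. Colin's theorem already asserts that gluing universally tight contact manifolds along an incompressible torus yields a universally tight manifold, provided the gluing torus is \emph{pre-Lagrangian} and the contact structure is suitably rotative in a collar of it; the only gap between that statement and Theorem~\ref{gluing} is that the distinguished torus $T$ is here convex rather than pre-Lagrangian. So the substance of the argument is to trade $T$ for a pair of parallel pre-Lagrangian tori sitting on either side of it inside the given collar $T^2\times[-1,1]$, and then to check that the two pieces into which $M$ is cut are universally tight.

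First I would exploit the hypothesis that $T^2\times[-1,0]$ and $T^2\times[0,1]$ are rotative. By Honda's classification of tight contact structures on $T^2\times I$ \cite{Honda:classification1}, a rotative such piece contains genuine twisting: it is assembled from at least one basic slice (or contains Giroux torsion), and in the standard model of a basic slice every interior torus parallel to the boundary is linearly (pre-Lagrangian) foliated. After a $C^0$-small perturbation this produces a pre-Lagrangian torus $P_+\subset\interior(T^2\times[0,1])$ parallel to $T$; I would choose it so that the two sub-collars $T^2\times[0,P_+]$ and $T^2\times[P_+,1]$ into which it cuts $T^2\times[0,1]$ remain rotative. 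Symmetrically I obtain $P_-\subset\interior(T^2\times[-1,0])$ parallel to $T$ with $T^2\times[-1,P_-]$ and $T^2\times[P_-,0]$ rotative. Since $P_\pm$ are isotopic to $T$, they are incompressible in $M$.

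Next I would decompose $M=M^{\mathrm{out}}\cup_{P_-\sqcup P_+}N$, where $N:=T^2\times[P_-,P_+]$ is the piece containing $T$ and $M^{\mathrm{out}}$ is the closure of the complement. Each piece is universally tight: the inclusion $N\hookrightarrow T^2\times[-1,1]$ induces an isomorphism on $\pi_1$, so the universal cover of $N$ is precisely the (connected) preimage of $N$ in the universal cover of $T^2\times[-1,1]$, an open submanifold of a tight manifold since $\xi|_{T^2\times[-1,1]}$ is universally tight; hence $\xi|_N$ is universally tight. Likewise, since $T\subset\interior(N)$ we have $M^{\mathrm{out}}\subset M\setminus T$, and $M\setminus T$ deformation retracts onto $M^{\mathrm{out}}$ (collapse the half-open collars $T^2\times[P_-,0)$ and $T^2\times(0,P_+]$ onto $P_-$ and $P_+$), so $M^{\mathrm{out}}\hookrightarrow M\setminus T$ is a $\pi_1$-isomorphism and the same covering-space argument, applied to the universally tight $\xi|_{M\setminus T}$, shows $\xi|_{M^{\mathrm{out}}}$ is universally tight. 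Irreducibility of the pieces follows from irreducibility of $M$ and incompressibility of $P_\pm$: $T^2\times I$ is irreducible, and any embedded sphere in $M^{\mathrm{out}}$ bounds a ball in $M$ which cannot meet the incompressible tori $P_\pm$ and so lies in $M^{\mathrm{out}}$.

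Finally I would apply Colin's gluing theorem \cite{Colin:gluing} to $M=M^{\mathrm{out}}\cup_{P_-\sqcup P_+}N$: the pieces are universally tight, the gluing torus $P_-\sqcup P_+$ is pre-Lagrangian and incompressible in the irreducible manifold $M$, and by construction the contact structure is rotative in a collar of each of $P_-$ and $P_+$ on both sides; the conclusion is that $(M,\xi)$ is universally tight. The step I expect to be the main obstacle is the one in the second paragraph: making precise that a rotative $T^2\times I$ can be divided by an interior parallel pre-Lagrangian torus into two pieces that are \emph{still} rotative in the exact sense Colin's theorem requires, so that the rotativity hypothesis is available on both sides of each of $P_\pm$. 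This is where Honda's structure theory for tight contact structures on $T^2\times I$ and the interchangeability of convex and pre-Lagrangian tori in the presence of twisting carry the argument; the remaining verifications are routine.
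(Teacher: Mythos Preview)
The paper does not prove this statement: Theorem~\ref{gluing} is quoted in the background section as a result of Honda--Kazez--Mati\'c \cite{HKM:convex}, with no argument supplied, so there is no proof in the paper to compare your proposal against.

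Your reduction to Colin's pre-Lagrangian gluing theorem \cite{Colin:gluing} is nonetheless the standard route by which the convex version is deduced, and the key idea---using rotativity on each side of $T$ to produce nearby pre-Lagrangian tori $P_\pm$, so that the middle piece $N$ embeds $\pi_1$-injectively in the universally tight $T^2\times[-1,1]$ while the outer piece $M^{\mathrm{out}}$ embeds $\pi_1$-injectively in the universally tight $M\setminus T$---is correct. One point worth tightening: you cut along the disconnected locus $P_-\sqcup P_+$ and then invoke Colin once, but his theorem is stated for a single incompressible pre-Lagrangian torus. The fix is to apply it iteratively: first to $P_-$ inside the (irreducible) manifold $M$ cut along $P_+$, whose complement is $M^{\mathrm{out}}\sqcup N$ and hence universally tight by your argument, concluding that $M\setminus P_+$ is universally tight; then to $P_+$ inside $M$ itself. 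Incompressibility of $P_\pm$ and irreducibility of the intermediate pieces persist for the usual $3$--manifold reasons, so this is routine.
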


A contact structure on $T^2 \times [0,1]$ is called {\em rotative} if there is a convex boundary-parallel torus with dividing slope different from the slope on the boundary. Recall that \dfn{half Giroux torsion} is an embedding of $T^2 \times I$ that is contactomorphic to $(T^2 \times [0, 2], \xi)$, where the contact structure defines a negative basic slice on $[0, 1]$ with slopes $s_0 = 0$ and $s_1 = 1$, and a negative basic slice on $[1, 2]$ with slopes $s_1 = 1$ and $s_2 = 0$. This contact structure is tight by Theorem~\ref{gluing}. For any $s \in \Q \cup \{\infty\}$, we can find a convex torus with dividing curves and Legendrian divides of slope $s$.

\subsection{Legendrian Knots and Surgery}
\label{sec:legendrianknots}

\subsubsection{Surgery on Legendrian knots} \label{sec:surgery} Given a Legendrian knot $L$ in a contact manifold $(M, \xi)$, we describe an algorithm --- due to Ding and Geiges \cite{DG:surgery} --- to turn a contact $(r)$--surgery on $L$ into a sequence of Legendrian surgeries on the components of a Legendrian link, for $r < 0$. First, write $r = [r_0, \ldots, r_n]$ as a negative continued fraction, as in the introduction.  Then, take a push-off of $L$, and stabilize it $|r_0+1|$ times: this is $L_0$. Then, for $i = 1, \ldots, n$, to get $L_i$, we take a push-off of $L_{i-1}$, and stabilize it $|r_i+2|$ times.  The result of Legendrian surgery on all of the $L_i$ is the same as contact $(r)$--surgery on $L$.  There are in general many choices of stabilization, each leading to potentially different contact structures.

\subsubsection{Stein cobordisms} A given topological cobordism $W$ from $M$ to $M'$ will in general support many different Stein structures, even assuming that it restricts to a given contact structure on $M$.  However, the following result allows us to understand the contact structure on $M'$ that arises from different Stein structures on $W$.

\begin{thm}[{Lisca--Mati\'c \cite[Theorem~1.2]{LM}, Plamenevskaya \cite[Theorem~2]{Plamenevskaya2004}, Simone \cite[Theorem~1.1]{Simone}}] \label{steincobordism}
Suppose $(M, \xi)$ has non-vanishing Heegaard Floer contact invariant, and $(W, J_i)$ is a Stein cobordism from $(M, \xi)$ to contact manifolds $(M', \xi'_i)$, for $i = 1, 2$. If $\xi'_1$ and $\xi'_2$ are isotopic, then the Spin$^c$ structures induced by $J_1$ and $J_2$ are isomorphic (and in particular, have the same $c_1$). Indeed, if $J_1$ and $J_2$ are non-isomorphic, then the Heegaard Floer contact invariants of $\xi'_i$ are distinct.
\end{thm}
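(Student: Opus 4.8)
The plan is to deduce this from the naturality of the Heegaard Floer contact invariant under Stein cobordisms, in a sharpened form that also pins down the $\spinc$ structure. Write $c(\xi) \in \HFhat(-M, \spins_\xi)$ for the contact invariant, let $\overline W$ be $W$ with its orientation reversed, viewed as a cobordism from $-M'$ to $-M$, and let $\spins_{J_i}$ denote the canonical $\spinc$ structure of $(W, J_i)$ (equivalently, of $\overline W$), which restricts to $\spins_\xi$ on $M$ and to $\spins_{\xi'_i}$ on $M'$. The key claim I would establish is that for every $\spinc$ structure $\spint$ on $\overline W$ restricting to $\spins_{\xi'_i}$ on $-M'$,
\[
F_{\overline W,\, \spint}\bigl(c(\xi'_i)\bigr) =
\begin{cases}
\pm\, c(\xi), & \spint = \spins_{J_i},\\
0, & \spint \neq \spins_{J_i}.
\end{cases}
\]

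To prove the claim I would use Eliashberg's theorem that a Stein cobordism decomposes into Weinstein $1$-- and $2$--handles, so that $\overline W$ is a composition of corresponding handle cobordisms, and argue one handle at a time, using functoriality of the $\HFhat$ cobordism maps under composition and the fact that the canonical $\spinc$ structures glue. For a Weinstein $2$--handle --- i.e.\ Legendrian surgery on a Legendrian knot $L$ in the relevant intermediate contact manifold --- that the canonical $\spinc$ structure carries the contact class to the contact class is Ozsv\'ath--Szab\'o's naturality theorem, while the vanishing for every other $\spinc$ structure restricting correctly on the outgoing end is the refinement due to Lisca--Mati\'c and Plamenevskaya: on the contact class the reversed surgery-cobordism map is supported in the single $\spinc$ summand selected by $rot(L)$ on the capped-off core disc of the handle, which is forced by the adjunction inequality for $\HFhat$ cobordism maps together with the Thurston--Bennequin inequality. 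For a Weinstein $1$--handle the corresponding statement is the behaviour of the contact invariant under connected sum with $S^1 \times S^2$.

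Granting the claim, the theorem follows quickly. Suppose $\xi'_1$ and $\xi'_2$ are isotopic. Then $\spins_{\xi'_1} = \spins_{\xi'_2} =: \spint_0$, and by the isotopy-invariance of the contact invariant $c(\xi'_1) = c(\xi'_2) =: c'$. Applying the claim to $(W, J_1)$ gives $F_{\overline W,\, \spins_{J_1}}(c') = \pm c(\xi) \neq 0$, using the hypothesis that $c(\xi) \neq 0$ (so, incidentally, $c' \neq 0$). Applying the claim to $(W, J_2)$ gives $F_{\overline W,\, \spins_{J_2}}(c') = \pm c(\xi) \neq 0$. But $\spins_{J_2}$ restricts to $\spins_{\xi'_2} = \spint_0$ on $-M'$, so the vanishing half of the claim for $(W, J_1)$ forces $\spins_{J_2} = \spins_{J_1}$; in particular $c_1(\spins_{J_1}) = c_1(\spins_{J_2})$. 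The last sentence is the contrapositive: if $\spins_{J_1} \not\cong \spins_{J_2}$ but $c(\xi'_1) = c(\xi'_2) = c'$, the same computation produces $\spins_{J_1} = \spins_{J_2}$, a contradiction, so the contact invariants must differ.

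I expect the only real difficulty to be the sharp vanishing in the $2$--handle step. The fact that the canonical $\spinc$ structure \emph{hits} the contact class is the classical Lisca--Mati\'c / Ozsv\'ath--Szab\'o naturality; the subtlety is that \emph{every other} $\spinc$ structure kills the contact class, which requires pushing the adjunction-type vanishing for $\HFhat$ cobordism maps --- applied to the capped-off core discs of the Weinstein $2$--handles --- to its sharpest form, so that the rotation number isolates a unique $\spinc$ summand. The rest is bookkeeping over the handle decomposition and a one-line deduction from isotopy-invariance and nonvanishing of the contact invariant.
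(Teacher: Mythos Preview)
The paper does not prove this theorem at all: it is quoted from the literature (Lisca--Mati\'c, Plamenevskaya, Simone) and used as a black box, with only a one-line remark afterwards noting that Simone's version has weaker hypotheses. So there is no ``paper's own proof'' to compare against.

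That said, your outline is a faithful sketch of how the cited results (especially Plamenevskaya's) actually establish the statement. The key claim you isolate --- that $F_{\overline W,\spint}(c(\xi'_i))$ vanishes for every $\spint \neq \spins_{J_i}$ restricting correctly on the outgoing end, and hits $\pm c(\xi)$ for $\spint = \spins_{J_i}$ --- is exactly Plamenevskaya's refinement of Ozsv\'ath--Szab\'o naturality, and your identification of the adjunction-type vanishing on the capped-off cores of the Weinstein $2$--handles as the crux is correct. One small point: in the final sentence of the theorem the paper writes ``if $J_1$ and $J_2$ are non-isomorphic'', but what your argument (and the cited references) actually deliver is the conclusion under the hypothesis that the induced $\spinc$ structures are non-isomorphic; this is also how the paper applies the theorem later (distinguishing Stein structures via $c_1$), so your reading is the intended one.
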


\begin{remark} In fact, \cite[Theorem~1.1]{Simone} only requires the non-vanishing of a particular twisted Heegaard Floer contact invariant, but what we have stated is sufficient for our purposes.\end{remark}

\subsection{Open Book Decompositions}
\label{sec:obd}

In this section, we describe results about Legendrian approximations of bindings of open books and surgery on these Legendrian approximations.  All of the results admit more general statements to general manifolds, but we only need their statements for $S^3$.

\subsubsection{Legendrian approximations} Given a null-homologous fibered knot $K \subset S^3$ that defines an open book decomposition, denote by $\phi_K$ the monodromy of the open book, and by $\xi_K$ the contact structure supported by the open book.  The knot $K$ gets a natural orientation, and becomes a positive transverse knot (also denoted $K$) in $(S^3, \xi_K)$, with self-linking equal to $sl(K) = 2g-1$, where $g$ is the genus of the pages of the open book (see, for example, \cite{Etnyre:OBlectures}).  We are interested in what Legendrian approximations of $K$ we can find in $(S^3, \xi_K)$.  In general, we can always find approximations $L$ with $tb(L) < 0$, but in certain cases we can say more.

\begin{thm}[\!{\cite[Theorem~1.8 and Lemma~4.2]{Conway:admissible}}] \label{legendrianapproximations oneside}
If $\phi_K$ is not right-veering, then there exist Legendrian approximations $L_n$ of $K$ for each $n \in \Z$, where
\[ tb(L_n) = n\mbox{\hspace{.4cm}and\hspace{.4cm}}rot(L_n) = 1 - 2g + n. \]
\end{thm}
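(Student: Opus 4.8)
\emph{Proof plan.} The idea is to decouple the two invariants. Observe first that once we have produced, for a given $n$, a Legendrian knot $L_n$ in $(S^3,\xi_K)$ whose positive transverse push-off is transversely isotopic to $K$ and satisfies $tb(L_n)=n$, the rotation number is forced: the self-linking formula gives $sl(K)=tb(L_n)-rot(L_n)$, and since $K$ is the binding of a genus-$g$ open book, $sl(K)=2g-1$, whence $rot(L_n)=tb(L_n)-(2g-1)=1-2g+n$. So the whole statement reduces to producing Legendrian approximations of $K$ of every Thurston--Bennequin invariant $n\in\Z$ --- and in particular, of arbitrarily large $n$, which is impossible in a tight contact structure. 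The hypothesis that $\phi_K$ is not right-veering enters exactly here: by Honda--Kazez--Mati\'c's right-veering criterion it guarantees that $\xi_K$ is overtwisted, and, more to the point, it provides a properly embedded arc $\gamma$ on a page $\Sigma$, with $\partial\gamma\subset K$, such that $\phi_K(\gamma)$ lies strictly to the left of $\gamma$ at one of its endpoints on the binding.

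Next I would set up the local picture near the binding. Write $S^3=N\cup_T Y$ with $N$ a tubular neighborhood of $K$, convex boundary $T$, and $Y=S^3\setminus\interior N$ a collar of the mapping torus of $\phi_K$ carrying the open-book complement contact structure. Convex tori parallel to $T$ control the Thurston--Bennequin invariant of the Legendrian curves we will build: a curve smoothly isotopic to $K$ lying on such a torus (with two dividing curves) has $tb$ determined by the dividing slope, attaching a bypass to such a torus (Theorem~\ref{bypass on torus}) moves that slope to an adjacent vertex of the Farey graph and changes the resulting $tb$ by $1$, and peeling off a basic slice (Theorems~\ref{basic-slice} and~\ref{solid torus}) does the same in the decreasing direction. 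Thus everything comes down to producing the requisite bypasses, and here the arc $\gamma$ is the engine: its leftward image under $\phi_K$ yields, inside $Y$ and near the binding, a convex annulus (or disc) with a boundary-parallel dividing arc, which by Theorem~\ref{imba} produces a bypass for $T$ attached so as to \emph{increase} $tb$. Since all of this is supported in a collar of $K$, the page and monodromy of the complementary open book are unchanged, so the arc $\gamma$ survives and the bypass attachment can be iterated: this yields approximations $L_n$ of $K$ with $tb(L_n)=n$ for every $n$ at least some base value $n_0\le 0$. For the remaining $n<n_0$ I would negatively stabilize a fixed $L_{n_0}$: negative stabilization lowers $tb$ by one and preserves the positive transverse push-off, so the stabilized knot is still an approximation of $K$, and by the first paragraph it automatically has $rot=1-2g+n$.

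The step I expect to be the crux is twofold and lies in the middle paragraph. First, one must turn the \emph{combinatorial} fact that $\phi_K(\gamma)$ is left of $\gamma$ into an \emph{honest} convex surface near $K$ with a boundary-parallel dividing arc --- a translation between the open-book/monodromy language and convex surface theory that requires a careful local model for $\xi_K$ along the binding (in the spirit of the standard neighborhood theorem for transverse knots). Second, and more delicate, one must check that after each re-thickening the new Legendrian core is not merely smoothly isotopic to $K$ but is a genuine Legendrian approximation of the transverse knot $K$ --- that is, that the positive transverse push-off does not drift to another transverse representative with the same self-linking number. Controlling this transverse isotopy type across the bypass moves, by keeping the modifications $C^0$-small and localized near the binding, is the real work; the Farey arithmetic, the self-linking computation, and the stabilization argument are then routine given the machinery recalled in Section~\ref{sec:background}.
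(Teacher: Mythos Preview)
The paper does not give its own proof of this statement; it is quoted from \cite{Conway:admissible} (Theorem~1.8 and Lemma~4.2) and used as a black box. So there is no in-paper argument to compare against directly. That said, your outline matches the approach one can reconstruct from the hints the present paper drops elsewhere: in the proof of Theorem~\ref{legendrianapproximations} the authors say that ``the $L^-_n$ are Legendrian approximations of $K$ defined by thickening a standard neighborhood of a Legendrian approximation of $K$ to arbitrarily high slope,'' and in the proof of Lemmas~\ref{thickening} and~\ref{complement count -1n} they invoke (from Lemma~4.2 of \cite{Conway:admissible}) that the complement of $L^-_n$ is a negative basic slice glued to $C(\infty)$. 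This is precisely your picture --- iterated bypass/basic-slice thickening of the binding neighborhood, driven by a non--right-veering arc.

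Your reduction via $sl(K)=tb-rot$ is the right way to handle the rotation number, and your identification of the two delicate points is accurate. Two remarks on those. First, the translation from ``$\phi_K(\gamma)$ is left of $\gamma$'' to an honest bypass is the standard one: the rectangle $\gamma\times[0,1]$ in the cut-open mapping torus, after edge-rounding against $\partial\Sigma\times[0,1]$, acquires a boundary-parallel dividing arc exactly because of the leftward displacement at the endpoint; this is the local mechanism underlying the Honda--Kazez--Mati\'c overtwistedness criterion. Second, the worry that the thickened core might fail to be an approximation of the \emph{same} transverse $K$ is less serious than you suggest: each thickening glues a basic slice onto the solid torus, and the positive transverse push-off of the new Legendrian core is carried, through the product structure of that basic slice, to the old push-off. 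No $C^0$-smallness argument is needed. The iteration step is also clean: after one thickening the complement is still (topologically) the same $\Sigma$-bundle with the same monodromy $\phi_K$, so a fresh copy of the page carries the same non--right-veering arc $\gamma$ and yields a fresh bypass.
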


When $\phi_K$ is not right-veering then $\xi_K$ is overtwisted, by \cite[Theorem~1.1]{HKM:right1}, and so both the transverse knot $K$ and each Legendrian approximation $L_n$ are non-loose knots, that is the complement of any standard neighborhood of the knot is tight.

Given a Legendrian knot $L$, we can reverse the orientation to get a new Legendrian knot $-L$ with $tb(-L) = tb(L)$ and $rot(-L) = -rot(L)$. If $L$ is non-loose, then $-L$ is also non-loose. In the case that the orientation-reversal $-K$ of $K$ is smoothly isotopic to $K$, then we can upgrade Theorem~\ref{legendrianapproximations oneside}. This is indeed the case for the figure-eight knot considered in this paper.

\begin{thm} \label{legendrianapproximations}
If $\phi_K$ is not right-veering and $-K$ is smoothly isotopic to $K$, then there exist non-loose Legendrian knots $L^\pm_n$ in the smooth knot type of $K$ for each $n \in \Z$, and:
\begin{itemize}
\item $tb(L^\pm_n) = n$ and $rot(L^\pm_n) = \mp(1 - 2g + n)$.
\item $S^\pm(L^\pm_n)$ is isotopic to $L^\pm_{n-1}$.
\item $S^\pm(L^\mp_n)$ is loose.
\item The complement of $L_n$ has no Giroux torsion, no boundary-parallel half Giroux torsion, and is universally tight.
\end{itemize}
Here, $S^\pm(L)$ denotes a positive/negative stabilization of $L$.
\end{thm}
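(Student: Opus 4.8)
The plan is to build the Legendrian knots $L^\pm_n$ directly from the one-sided family $L_n$ provided by Theorem~\ref{legendrianapproximations oneside}, and then use the smooth amphicheirality of $K$ (i.e.\ $-K \simeq K$) to account for the sign bookkeeping in the rotation numbers. First I would set $L^+_n := L_n$, so that $tb(L^+_n) = n$ and $rot(L^+_n) = 1 - 2g + n$; since $g = 1$ for the figure-eight knot this reads $rot(L^+_n) = n - 1$, though I would keep $g$ general to match the statement. Then I would set $L^-_n := -(L_n)$, the orientation reversal, which by the remark just before the theorem has $tb(L^-_n) = tb(L_n) = n$ and $rot(L^-_n) = -rot(L_n) = -(1-2g+n)$, matching the asserted formula with the lower sign. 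Non-looseness of both families is immediate: $L_n$ is non-loose by the discussion following Theorem~\ref{legendrianapproximations oneside} (since $\phi_K$ not right-veering makes $\xi_K$ overtwisted and the complement of $K$, hence of each approximation, tight), and orientation reversal does not change the complement, so $L^-_n$ is non-loose as well. The hypothesis $-K \simeq K$ guarantees that $L^-_n$ still lies in the smooth knot type of $K$, which is what lets us regard both as families of $K$-knots rather than of $K$ and its mirror.

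Next I would verify the stabilization relations. The key point is the standard fact that positive/negative stabilization decreases $tb$ by one and changes $rot$ by $\pm 1$, and that it interacts with orientation reversal by $S^\pm(-L) = -(S^\mp(L))$. For $S^\pm(L^\pm_n)$: from $L^+_n = L_n$ we want $S^+(L_n) \simeq L_{n-1} = L^+_{n-1}$; this should follow because the one-sided family in Theorem~\ref{legendrianapproximations oneside} (coming from \cite{Conway:admissible}) is constructed precisely as an increasing-in-$tb$ tower of approximations, so the smaller ones are stabilizations of the larger ones, and the rotation numbers $rot(L_n) = 1-2g+n$ decrease by one as $n$ decreases, forcing the stabilization to be positive (under the convention that $S^+$ decreases $rot$). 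Dualizing via $S^\pm(-L) = -(S^\mp(L))$ then gives $S^-(L^-_n) = S^-(-L_n) = -(S^+(L_n)) = -(L_{n-1}) = L^-_{n-1}$, which is the second bullet with the lower sign. For the third bullet, $S^\pm(L^\mp_n)$ is loose: $S^-(L^+_n) = S^-(L_n)$ should be loose because stabilizing $L_n$ ``against'' the direction in which the family was built produces a destabilizable — in fact loose — knot; concretely, in \cite{Conway:admissible} the non-loose approximations are characterized by their $(tb, rot)$ pair lying on the extremal line $rot = 1-2g+tb$, and the opposite stabilization moves off that line into the loose regime. I would cite the relevant statement from \cite{Conway:admissible} (or re-derive it from the fact that after such a stabilization the knot bounds an overtwisted disc in its complement) rather than reprove it. The case $S^+(L^-_n)$ is handled by the same orientation-reversal duality.

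Finally, for the last bullet — the complement of $L_n$ has no Giroux torsion, no boundary-parallel half Giroux torsion, and is universally tight — I would again lean on the structural description in \cite{Conway:admissible}: the complement of the non-loose approximation $L_n$ is identified there with an explicit tight contact structure on the knot complement $S^3 \setminus K$ built from the open book $(K, \phi_K)$, and one reads off universal tightness and the torsion-freeness from that model (for instance, universal tightness of these complements follows because the contact structure is a perturbation of a taut foliation / fibration-compatible structure, and Giroux torsion would contradict the explicit basic-slice decomposition of the complement). Since the excerpt allows me to assume results stated earlier, and Theorem~\ref{legendrianapproximations oneside} is quoted from \cite{Conway:admissible} together with ``Lemma~4.2'', I expect this last bullet to be essentially a citation plus a short argument that orientation reversal preserves all three properties (which is immediate, as it does not change the complement as a contact manifold). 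The main obstacle I anticipate is not any single deep step but rather getting the sign conventions internally consistent: one must fix once and for all which stabilization is ``$S^+$'', how it acts on $rot$, and how it commutes with orientation reversal, and then check that the four bullets are mutually compatible with the $(tb, rot)$ formulas — a bookkeeping task that is easy to get backwards. I would do this by pinning down the conventions in a single sentence at the start of the proof and then propagating them mechanically through the four bullets.
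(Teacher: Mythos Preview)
Your overall architecture matches the paper's: take the one-sided family $L_n$ from Theorem~\ref{legendrianapproximations oneside}, reverse orientation to get the other side, and use $-K \simeq K$ to stay in the same knot type. (A minor point: with the paper's sign convention $rot(L^\pm_n) = \mp(1-2g+n)$, it is $L^-_n$ that equals the original $L_n$, not $L^+_n$; you have this swapped, but you already flagged the bookkeeping hazard and it does not affect the content.)

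The substantive gaps are in bullets three and four. For looseness of $S^\pm(L^\mp_n)$, you propose either to cite an unspecified statement from \cite{Conway:admissible} or to ``re-derive it from the fact that after such a stabilization the knot bounds an overtwisted disc in its complement'' --- but the latter is just a restatement of what is to be proved, and ``moving off the extremal line'' is a heuristic, not an argument. The paper's actual mechanism is concrete and short: by \cite[Lemma~4.2]{Conway:admissible}, the complement of a standard neighborhood $N$ of $L^-_n$ contains a \emph{negative} basic slice from $\partial N$ out to a torus with meridional dividing curves; the wrong-sign stabilization $S^+(L^-_n)$ glues a \emph{positive} basic slice onto $\partial N$, and the union of these two mixed-sign slices (with endpoint slopes adjacent in the Farey graph) is overtwisted by Theorem~\ref{basic-slice}. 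You should give this argument rather than hope for a citation.

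For the last bullet, your justifications do not land. Saying Giroux torsion ``would contradict the explicit basic-slice decomposition'' is not a valid argument; the paper instead invokes Etnyre--Vela-Vick \cite{EVV:torsion}, which shows the complement of the binding of an open book has no Giroux torsion. More importantly, you do not address \emph{half} Giroux torsion at all, and this is the subtle point: the paper excludes it by a forward reference, observing that boundary-parallel half Giroux torsion would make Legendrian surgery on $L^\pm_n$ overtwisted, whereas Theorem~\ref{surgeries on legendrianapproximations} guarantees that surgery is tight. Finally, universal tightness is not because the contact structure is a ``perturbation of a taut foliation'' but simply because the $L_n$ are Legendrian approximations of the binding $K$, and the binding complement in a supported contact structure is universally tight by construction.
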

\begin{proof}
The first two bullet-points follow from Theorem~\ref{legendrianapproximations oneside} and the fact that the $L^-_n$ are Legendrian approximation of $K$ defined by thickening a standard neighborhood of a Legendrian approximation of $K$ to arbitrarily high slope.  The proof of Lemma~4.2 in \cite{Conway:admissible} shows that given a standard neighborhood $N$ of $L^-_n$, there is a negative basic slice in the complement of $\interior(N)$ with one boundary equal to $\partial N$ and the other having meridional dividing curves. Since the complement of a standard neighborhood of $S^+(L^-_n)$ is equal to the complement of $\interior(N)$ with a positive basic slice glued on, we can glue this positive basic slice to the negative basic slice to get a contact structure on $T^2 \times I$ which must be overtwisted, by Theorem~\ref{basic-slice}.
The statement for $L^+_n$ follows from that of $L^-_n$.
The statement about Giroux torsion follows from the fact that the complement of $K$ has no Giroux torsion, by Etnyre and Vela-Vick \cite[Theorem~1.2]{EVV:torsion}. No boundary-parallel half Giroux torsion can be present if Legendrian surgery on $L^\pm_n$ is tight, which it is by Theorem~\ref{surgeries on legendrianapproximations}. The complements are universally tight as they are Legendrian approximations of bindings of open books, which by construction have universally tight complements.
\end{proof}

Theorem~\ref{legendrianapproximations} gives two Legendrian knots $L^\pm_{2g-1}$ with the same invariants. However, since $S^+(L^+_{2g-1})$ is non-loose while $S^+(L^-_{2g-1})$ is loose, they must be non-isotopic.

\subsubsection{Surgery on the Legendrian approximations} Let $\overline K \subset S^3$ denote the mirror of $K$, and let $\xi_{\overline K}$ be the contact structure on $S^3$ with binding $\overline K$. Under certain circumstances, we can use information about $\xi_{\overline K}$ to conclude information about surgeries on $L^\pm_n$.

\begin{thm}[\!{\cite[Theorem~1.8]{Conway:admissible}}] \label{surgeries on legendrianapproximations}
Under the hypotheses from Theorem~\ref{legendrianapproximations}, and assuming that the Heegaard Floer contact invariant $c(\xi_{\overline K}) = 0$, then the result of Legendrian surgery on $L^\pm_n$ for any $n$ is tight, and has non-vanishing reduced Heegaard Floer contact invariant $c^+_{\rm{red}}$.
\end{thm}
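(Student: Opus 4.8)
The plan is to reduce to a single Legendrian surgery, recognize the resulting contact manifold as a transverse surgery on the binding whose complement is the (universally tight) complement of $K$, and then locate its contact class Floer-theoretically. First I would treat the two families at once: $L^+_n$ is $L^-_n$ with its orientation reversed, and contact $(-1)$--surgery depends only on the underlying framed knot, so Legendrian surgery on $L^+_n$ and on $L^-_n$ produce the same contact manifold $(M_n,\xi_n)$, with $M_n = S^3_{n-1}(K)$. By the proof of Theorem~\ref{legendrianapproximations}, the complement of a standard neighborhood of $L^-_n$ is the universally tight complement of the binding $K$ together with a chain of negative basic slices; equivalently, $(M_n,\xi_n)$ is realized by an admissible transverse surgery on the transverse binding $K\subset(S^3,\xi_K)$ of coefficient $n-1$ relative to the page framing, and in particular $\xi_n$ restricts to the universally tight $S^3\setminus N(K)$ away from the surgery solid torus.

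Next I would give $\xi_n$ an open book model: admissible transverse surgery on a binding is performed by capping off the binding component of $(F,\phi_K)$ and re-drilling along a core of the capping disc with the appropriate fractional twisting, producing a new open book $(F',\phi'_n)$ whose monodromy depends only on $\phi_K$ and $n$. For the figure-eight the cap-off itself occurs among the $\xi_n$ --- it is the torus bundle $M(0)$ with its fibered contact structure, which has non-vanishing contact invariant (indeed it is fillable) --- and this, together with any other $n$ for which $(F',\phi'_n)$ is visibly fillable, furnishes base cases where $\xi_n$ is tight with $c^+_{\rm{red}}(\xi_n)\neq0$.

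To reach the remaining $n$, I would compare $\xi_n$ with $\xi_{n\pm1}$: the monodromies $\phi'_n$ and $\phi'_{n\pm1}$ differ by one boundary Dehn twist, that is, by attaching a basic slice to the surgery solid torus, so the closed manifolds fit into a surgery exact triangle along which the contact classes are carried compatibly (via the contact surgery formula, with Theorem~\ref{steincobordism} keeping the $\Spinc$--structures straight). Passing to mirrors places $(S^3,\xi_{\overline K})$ at one vertex of this triangle, and the hypothesis $c(\xi_{\overline K}) = 0$ is precisely what forces the map propagating along the triangle to be injective on the relevant class; this transports non-vanishing of $c^+_{\rm{red}}$ from the base cases to every $n$, and non-vanishing in turn forces tightness. (One could alternatively argue directly with sutured contact invariants in the style of Honda--Kazez--Mati\'c: the universally tight complement of $K$ contributes a non-zero class and the surgery solid torus a non-zero class, and the obstruction to their gluing sending the product to something non-zero is governed by $c(\xi_{\overline K})$.)

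I expect the main obstacle to be the second step made precise --- writing $(F',\phi'_n)$ down explicitly and pinning down which $n$ give fillable, or at least right-veering, models --- together with the bookkeeping in the third step: tracking $\Spinc$--structures and the location of the contact class through the surgery triangle uniformly in $n$. The pseudo-Anosov, genus-one hypotheses on $\phi_K$ are precisely what keep the monodromy combinatorics finite and make the triangle manageable, while the Floer-theoretic input $c(\xi_{\overline K}) = 0$ is what upgrades ``tight'' to the sharper ``$c^+_{\rm{red}}\neq0$'' that the classification in Theorem~\ref{main classification} needs in order to distinguish these contact structures.
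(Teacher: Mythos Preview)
The paper does not prove this theorem. It is quoted from \cite[Theorem~1.8]{Conway:admissible}, and the only thing the paper adds is the remark immediately following the statement: the result is phrased in \cite{Conway:admissible} in terms of admissible transverse surgery on the binding, and one translates to Legendrian surgery on the $L^\pm_n$ via \cite[Theorem~3.1]{BE:transverse}. There is therefore no proof in the paper for your proposal to be compared against.

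That said, your first reduction is exactly the content of that remark: identifying Legendrian surgery on $L^\pm_n$ with an admissible transverse surgery on the binding $K$ (and noting that the sign of $L^\pm_n$ is irrelevant to the surgered manifold). Beyond that point you are sketching a proof of the cited theorem itself, which lies outside this paper. Two cautions about that sketch: first, midway through you specialize to the figure-eight (``the cap-off itself \ldots is the torus bundle $M(0)$'') and later invoke ``pseudo-Anosov, genus-one hypotheses,'' but the theorem as stated only assumes $\phi_K$ is not right-veering, $-K\simeq K$, and $c(\xi_{\overline K})=0$; your argument should not lean on the figure-eight or on genus one. Second, the heart of the matter --- why $c(\xi_{\overline K})=0$ forces the contact class of the surgered manifold to be non-vanishing in $HF^+_{\rm red}$ --- is in your proposal only a gesture toward a surgery triangle; making that precise (which vertex carries $\xi_{\overline K}$, and why vanishing there gives injectivity on the class you want) is the actual content of \cite[Theorem~1.8]{Conway:admissible}, and you would need to consult that paper rather than this one to fill it in.
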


\begin{remark} This is stated in \cite{Conway:admissible} in terms of admissible transverse surgery, but can be put into this form via the translation between admissible transverse surgery on a transverse knot and Legendrian surgery on its Legendrian approximations, from \cite[Theorem~3.1]{BE:transverse}. \end{remark}

\subsubsection{Open books for surgery on the Legendrian approximations} When $n \leq 0$, we can construct abstract open books that describe the result of Legendrian surgery on $L^-_n$, following \cite[Proposition~3.9]{Conway:transverse} and \cite[Example~5.2]{BEVHM}.

\begin{thm} \label{constructing open books}
Given an abstract open book $(\Sigma, \phi)$ corresponding to $K \subset S^3$, and let $n \leq 0$.  Let $\Sigma_n$ be a genus--$0$ surface with $2-n$ boundary components $B_0$, $\ldots$, $B_{1-n}$, and let $\Sigma'$ be the result of gluing $\partial \Sigma$ to $B_0$.  Then $(\Sigma', \phi\circ \Delta)$ is an abstract open book for the result of Legendrian surgery on $L^-_n$, where we extend $\phi$ over $\Sigma_n$ by the identity, and $\Delta$ is the composition of a positive Dehn twist around each boundary component of $\Sigma'$.
\end{thm}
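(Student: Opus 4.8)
The plan is to recognize the statement as a special case of the open book descriptions of admissible transverse surgery on a fibered binding given in \cite[Proposition~3.9]{Conway:transverse} and \cite[Example~5.2]{BEVHM}, combined with the dictionary between admissible transverse surgery on a transverse knot and Legendrian surgery on its Legendrian approximations from \cite[Theorem~3.1]{BE:transverse}. Concretely, I would first recall the basic mechanism: if $c$ is a simple closed curve on a page of an open book $(\Sigma_0,\phi_0)$ that can be Legendrian realized with $\phi_0$ the identity near $c$, then contact $(-1)$-surgery (that is, Legendrian surgery) on that Legendrian realization is supported by the open book $(\Sigma_0,\phi_0\circ\tau_c)$, where $\tau_c$ is a right-handed Dehn twist about $c$ (Gay; Etnyre--Ozbagci; see also \cite{Etnyre:OBlectures}). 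So it suffices to exhibit, for each $n \le 0$, an open book supporting $(S^3,\xi_K)$ together with a Legendrian knot on one of its pages that is Legendrian isotopic to $L^-_n$, and such that composing the monodromy with the prescribed right-handed Dehn twists produces exactly $(\Sigma',\phi\circ\Delta)$.

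I would build the model in two stages. In the first stage, pass from $(\Sigma,\phi)$ to the enlarged open book obtained by gluing the planar surface $\Sigma_n$ along $B_0 = \partial\Sigma$ and extending $\phi$ by the identity. Attaching $\Sigma_n$ amounts to $-n$ successive band attachments near the binding, each creating one new boundary component; with the identity extension the resulting open book no longer supports $(S^3,\xi_K)$, but $-n$ of the $1-n$ right-handed boundary twists comprising $\Delta$ are exactly the twists missing from turning those band attachments into positive stabilizations, so after incorporating them the open book again supports $(S^3,\xi_K)$, now with the old binding appearing as a curve $c_n$ on a page. In the second stage, I would check that the Legendrian realization of $c_n$ on this page is Legendrian isotopic to $L^-_n$ and that the single remaining right-handed twist in $\Delta$ is a Dehn twist about $c_n$, so that $(\Sigma',\phi\circ\Delta)$ is precisely the result of Legendrian surgery on $L^-_n$. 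For the identification I would compute $tb(c_n)$ from the page framing of $c_n$ in $\Sigma'$ --- the count $2-n$ of boundary components of $\Sigma_n$ is calibrated precisely so that this page framing equals $n$, giving $tb(c_n) = n$ --- compute the rotation number from the open book (relative Euler class) data and match it with $rot(L^-_n) = 1-2g+n$ as in Theorem~\ref{legendrianapproximations oneside}, and then appeal to the construction of $L^-_n$ in the proof of Theorem~\ref{legendrianapproximations}, namely as the Legendrian approximation of the binding obtained by thickening a standard neighborhood, of which $c_n$ on the page is the open book avatar. Since for $n \le 0$ the family $L^+_n$ has the opposite rotation number, no confusion with that family can arise, and looseness is excluded because the complement of $c_n$ is identified with the universally tight complement described in Theorem~\ref{legendrianapproximations}.

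The main obstacle is the middle bookkeeping: verifying that the $1-n$ right-handed boundary Dehn twists, interacting with the identity-extended monodromy over $\Sigma_n$, split correctly into the $-n$ twists that rebuild a (positively stabilized) open book for $(S^3,\xi_K)$ and the single twist realizing the Legendrian surgery, and --- more delicately --- pinning down the resulting Legendrian knot on the page as $L^-_n$ rather than some other Legendrian approximation of $K$ with the same Thurston--Bennequin invariant. The framing computation that extracts $tb(c_n) = n$ from the count $2-n$, together with the rotation-number and complement arguments drawn from Theorems~\ref{legendrianapproximations oneside} and \ref{legendrianapproximations}, are what close this gap; the rest is a routine translation of the constructions in \cite{Conway:transverse} and \cite{BEVHM} into the notation used here.
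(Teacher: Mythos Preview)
Your proposal is essentially correct and follows the route the paper indicates --- the paper does not supply its own proof here but simply records the statement as following from \cite[Proposition~3.9]{Conway:transverse} and \cite[Example~5.2]{BEVHM}, and your outline is a reasonable unpacking of those constructions together with the translation from admissible transverse surgery to Legendrian surgery via \cite[Theorem~3.1]{BE:transverse}.

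One point in your write-up needs sharpening, however. The phrase ``the old binding appearing as a curve $c_n$ on a page'' is ambiguous, and the most literal reading --- that $c_n$ is the interior curve $B_0 = \partial\Sigma \subset \Sigma'$ along which $\Sigma_n$ was glued --- would break the argument in two places. First, the twists comprising $\Delta$ are about the boundary components $B_1,\ldots,B_{1-n}$ of $\Sigma'$, and $B_0$ is not one of them, so your ``single remaining twist'' could not be about $B_0$. Second, $B_0$ bounds the subsurface $\Sigma$ inside $\Sigma'$, so its page framing agrees with the Seifert framing and a Legendrian realization of $B_0$ would have $tb = 0$, not $n$. What you actually need --- and what your framing computation is implicitly using --- is that $c_n$ is the Legendrian push-in of the one boundary component $B_*$ of $\Sigma'$ left over after the $-n$ trivial positive stabilizations, namely the binding component that remains smoothly isotopic to $K$ in $S^3$. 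Then the remaining twist in $\Delta$ really is $\tau_{B_*}$, and since $[B_*'] = -\sum_i [U_i]$ in $H_1(S^3\setminus B_*)$ (where the $U_i$ are the $-n$ small unknotted binding components, each linking $B_*$ once), the page framing of $B_*$ is $-\sum_i lk(B_*,U_i) = n$, giving $tb(c_n)=n$ as you claim. With $c_n$ pinned down this way, the rest of your outline --- the rotation-number match and the identification with $L^-_n$ via the complement description in Theorem~\ref{legendrianapproximations} --- goes through.
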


\section{Upper Bound}
\label{sec:upperbound}

In this section, we will apply convex decomposition theory to compute the upper bound (Theorem~\ref{upper bound}) for isotopy classes of tight contact structures supported by $M(r)$ for $r\in \mathcal{R}$.  The strategy is to first decompose $M(r)$ into simpler manifolds with convex boundary, then normalize the dividing curves on the boundaries, and finally find a small upper bound on the number of tight contact structures on each piece.  Ultimately, our upper bound on the total number of tight contact structures for $M(r)$ will be the product of the upper bounds found for each piece.  When piecing together a contact structure on $M(r)$ from contact structures on the pieces, we only concern ourselves with the dividing curves on the convex boundaries, and not the characteristic foliations, even though the latter are what actually determine the germ of the contact structure at the boundary. This is because any desired characteristic foliation that is divided by a given dividing set can be achieved by a small isotopy of the surface, see \cite{Giroux:convex}.

\subsection{Convex decomposition} \label{sec:decomp} Our first step is to decompose $M(r)$ into simpler pieces. Fortunately, $M(r)$ has a nice structure, since the figure-eight knot is a genus--$1$ fibered knot. Let $C$ be the closure of the figure-eight knot complement in $S^3$; this is a once-punctured torus bundle over $S^1$. If we denote a fiber surface by $\Sigma$, then there is a symplectic basis of $H_1(\Sigma)$ such that the action of the monodromy is given by

\[
\phi = \matrixb{2}{1}{1}{1}.
\]

We assume $\phi$ is the identity near the boundary and that there is no boundary twisting. Putting this all together, we have described $C$ by
\[ C = \Sigma \times [0,1]\big/(x,1) \sim (\phi(x),0). \]

Let $N$ be the solid torus that is the closure of the complement of $C$ in $M(r)$.  We will describe elements of $H_1(\bd N)$ by rational numbers: we let $\mu$ be the slope such that Dehn filling $C$ along $\mu$ gives $S^3$, and we let $\lambda$ be isotopic to the boundary of a fiber surface $\Sigma$; we orient $\mu$ to agree with the orientation of $[0, 1]$ in the fibration, and we orient $\lambda$ to agree with the orientation of $\bd \Sigma$.  Then, the element $p \mu + q\lambda \in H_1(\bd N)$ can be described as $\frac{p}{q}$.

Note that we build $M(r)$ from $S^3$ by removing $N \subset S^3$ and regluing it such that the slope $r$ curve on $-\bd C$ bounds a disc in $N$.

Our strategy to normalize the dividing curves is as follows: we assume that $M(r)$ has been given a tight contact structure, and we isotope the core $L$ of $N$ to be Legendrian.  We assume that $N$ is a standard neighborhood of $L$, whose convex boundary then has two dividing curves of slope $s$ (where we will always measure slopes on $\bd N = -\bd C$ assuming that $C$ is sitting inside $S^3$). Since the contact planes give a framing of $L$, we know that $s$ and $r$ are connected by an edge in the Farey graph.  Let $\mathcal S(r)$ be the set of possible slopes $s$ for a given surgery coefficient $r$ that are clockwise of $r$ and counter-clockwise of $\infty$ on the Farey graph ($\mathcal S(r)$ may include $\infty$). When $C$ or $N$ have contact structures with convex boundaries with two dividing curves of slope $s$, we will find it helpful to denote them by $C(s)$ and $N(s)$.

Our first normalization result is to \dfn{thicken} $C(s)$ to a standard form. This means finding a $C(s') \subset C(s)$ such that the complement is $T^2 \times I$ with one boundary component equal to $\bd C(s)$ and the other boundary component equal to $\bd C(s')$. We say that $C(s)$ \dfn{thickens to $C(s')$}, or that $C(s)$ \dfn{thickens to slope $s'$}.  According to our slope conventions, if $C(s)$ thickens to $C(s')$, then $s'$ is clockwise of $s$ on the Farey graph, and for small thickening (such that $C(s)\setminus C(s')$ has no half Giroux torsion), we have $s' \geq s$.  If $C(s)$ thickens to slope $s'$ implies that $s' = s$, then we say that $C(s)$ \dfn{does not thicken}. 
\footnote{We say that $C$ {\em thickens}, even though $C$ is actually shrinking; we have still found this terminology sensible, since the slope is indeed increasing. Indeed, we are thickening $N$, but for most of the section, the statements are independent of $N$.}
We will thicken $C(s)$ by attaching bypasses to its boundary from the back. However, since we measure the slope $s$ according to the opposite orientation (\textit{ie.@} when it is considered as the boundary of $N$), the slope increases after attaching a bypass.

\begin{lem} \label{thickening}
Let $r \in \mathcal R$ and $s \in \mathcal S(r)$, and assume that there is no boundary-parallel half Giroux torsion in $C(s)$.  Then if $s \neq 0$, then $C(s)$ thickens to slope $-3$ or $\infty$, and does not thicken further. The same is true for $s = 0$ if $C(s)$ embeds in a tight contact structure on $M(r)$.
\end{lem}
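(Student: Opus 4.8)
\emph{Proof idea.} The plan is to realize the thickening by attaching bypasses to $\bd C(s)$ \emph{from the back}, manufactured from convex incompressible surfaces in $C$ via Theorem~\ref{imba}. Besides the genus--$1$ fiber $\Sigma$, whose boundary has slope $0$, the figure-eight complement contains two further essential surfaces $\Sigma_{\pm 4}$, non-orientable, of boundary slopes $\pm 4$; together with $\Sigma$ these will serve as our supply of bypasses. When $s \neq 0$ the boundary slope of $\Sigma$ differs from $s$, so we Legendrian-realize $\bd\Sigma$ and make $\Sigma$ convex; when $s = 0$ we instead use the surfaces $\Sigma_{\pm 4}$ (whose boundary slopes differ from $0$), and it is precisely here that the hypothesis ``$C(0)$ embeds in a tight $M(r)$'' enters, to exclude the exceptional dividing configurations discussed below.

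Fix such a convex surface $S \in \{\Sigma, \Sigma_{4}, \Sigma_{-4}\}$. Since $S$ is not a sphere, Giroux's criterion forbids contractible components of $\Gamma_S$. If $\Gamma_S$ carries a $\bd$--parallel dividing arc, then (as $S$ is not a disc) Theorem~\ref{imba} produces a bypass for $\bd C(s)$ along an arc in $\bd S$; after possibly rotating the attaching arc using Theorem~\ref{bypass-rotation}, attaching this bypass from the back changes the dividing slope of $\bd C(s)$ to a slope $s'$ prescribed by Theorem~\ref{bypass on torus}, the attaching-arc slope being the boundary slope of $S$ (or its rotate). A direct computation on the Farey graph shows that, for an appropriate choice of $S$ and of the rotated arc, $s'$ is strictly clockwise of $s$ and strictly ``closer'' to $\{-3,\infty\}$; a finite induction then drives $s$ to $-3$ or $\infty$. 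This proves the existence of the thickening, \emph{provided} one of $\Gamma_\Sigma, \Gamma_{\Sigma_{4}}, \Gamma_{\Sigma_{-4}}$ always carries a $\bd$--parallel dividing arc.

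The crux — and the step I expect to be the main obstacle — is to justify that proviso: if none of these three dividing sets has a $\bd$--parallel dividing arc, then $s \in \{-3,\infty\}$. In that case each $\Gamma_S$ is a union of parallel essential arcs (a separate argument, using tightness of the cut-open pieces, removes any essential closed curves), so cutting $C$ along the fiber presents $C(s)$ as $\Sigma\times I$ reglued by the monodromy $\phi$ with \emph{non-rotative} dividing data on both faces, while simultaneously the tori parallel to $\bd C(s)$ coming from $\Sigma_{\pm 4}$ are non-rotative. Combining Honda's classifications of tight contact structures on $T^2\times I$ (Theorem~\ref{basic-slice}) and on $\Sigma\times I$ with the pseudo-Anosov dynamics of $\phi$ — no periodic rational slopes, North--South attracting behaviour on the circle of slopes — should pin $s$ down to the two extremal values $-3$ and $\infty$; this is where the genus--one, fibered, pseudo-Anosov hypotheses (facts (1)--(3) of the Introduction) are indispensable.

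Finally, the ``does not thicken further'' clause is the same statement read at $s \in \{-3,\infty\}$: there each $\Gamma_S$ is forced into the minimal all--essential--arc configuration, so no $\bd$--parallel dividing arc is available, and one checks via Theorems~\ref{basic-slice} and \ref{bypass on torus} (together with the no--half--Giroux--torsion hypothesis) that any other bypass on $\bd C(s)$ — one meeting three dividing curves, or one coming from a convex annulus as in Theorem~\ref{imba} — either leaves the slope unchanged or produces an overtwisted disc. For $s = 0$, applying the whole analysis to $\Sigma_{\pm 4}$ and invoking tightness of the ambient $M(r)$ (whose surgery solid torus has meridian $r \neq 0$) to kill the exceptional configuration on those surfaces completes the reduction to the case $s \neq 0$.
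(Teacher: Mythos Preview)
Your proposal takes a genuinely different route from the paper, but the central step is missing and the suggested mechanism for it does not work.

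The paper uses \emph{only} the fiber surface $\Sigma$ to manufacture bypasses. The heart of its argument is a careful, case-by-case normalization of $\Gamma_\Sigma$ (Propositions~\ref{normalizing-dividing-set}--\ref{bypass-positive-slope}): after cutting $C(s)$ along $\Sigma$, one chooses explicit compressing discs in $\Sigma\times[0,1]$, counts intersections with the dividing set, and extracts a bypass for $\Sigma\times\{0\}$ that produces a $\bd$--parallel arc. This analysis shows that a $\bd$--parallel arc exists for every $s$ except a short list of exceptional slopes, and those exceptional slopes are ruled out for $s\in\mathcal S(r)$ with $r\in\mathcal R$ (or are handled ad~hoc, as with $s=-\frac1n$). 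For $s=0$ the paper does \emph{not} look for bypasses at all: since $s=0$ forces $r=-\frac1n$, one stabilizes the core of $N$ once to convert $C(0)$ into $C(-\frac{1}{n+1})$ and falls back to the $s\neq 0$ analysis. Finally, ``does not thicken further'' is proved by appealing to external classification results (Etnyre--Honda's description of non-thickening $C(-3)$ and the first author's analysis of positive contact surgeries on the figure-eight), not by a direct bypass-nonexistence argument.

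Your proposed substitute --- bringing in the non-orientable spanning surfaces $\Sigma_{\pm 4}$ and then arguing that if none of $\Gamma_\Sigma,\Gamma_{\Sigma_{4}},\Gamma_{\Sigma_{-4}}$ carries a $\bd$--parallel arc then $s\in\{-3,\infty\}$ --- has a real gap at exactly the point you flag as the ``main obstacle.'' The pseudo-Anosov dynamics of $\phi$ act on slopes on the \emph{fiber} $\Sigma$, with irrational attracting/repelling directions; there is no evident reason this North--South dynamics should single out the \emph{boundary} slopes $-3$ and $\infty$ on $\bd C$, which are not fixed points of anything. In particular, your statement that in the no-$\bd$-parallel-arc case ``each $\Gamma_S$ is a union of parallel essential arcs'' is already false for $S=\Sigma$: Proposition~\ref{normalizing-dividing-set} shows $\Gamma_\Sigma$ may contain an essential closed curve, and the paper's argument in fact hinges on analyzing exactly that configuration. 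Moreover, $\Sigma_{\pm 4}$ are not fibers, so cutting along them does not present $C$ as a mapping torus, and the monodromy $\phi$ plays no role in normalizing their dividing sets; you would need an entirely separate normalization analysis for those punctured Klein bottles, together with a careful treatment of convex-surface theory in the non-orientable setting. None of this is supplied, and the paper's explicit computations suggest that the exceptional (no-bypass) slopes are governed by delicate arithmetic, not by a clean dynamical dichotomy.
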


One of the necessary ingredients --- both for thickening $C(s)$ and counting the possible number of tight contact structures on $C(s)$ --- will be to understand how to normalize the possible dividing curves on a fiber surface $\Sigma$ in $C(s)$. The possibilities will depend heavily on $s$, since the number of arcs in $\Gamma_\Sigma$ will depend on how many times $\bd \Sigma$ intersects the dividing set on $\bd C(s)$.

If there were boundary-parallel half Giroux torsion in $C$, then we could find a boundary-parallel convex torus with Legendrian divides of slope $r$, which then would bound overtwisted discs in $M(r)$. Thus, if we are looking for tight contact structures on $M(r)$, we know that we cannot have any boundary-parallel half Giroux torsion.

After Lemma~\ref{thickening}, we need to understand the number of contact structures on $C(-3)$ and $C(\infty)$ that cannot thicken.

\begin{lem}[Etnyre--Honda \cite{EH:knots}] \label{complement count -3}
There are at most two tight contact structures on $C(-3)$ that do not thicken, up to isotopy fixing a given singular foliation on the boundary. They are complements of a standard neighborhood of isotopic Legendrian figure-eight knots in $(S^3, \xi_{\rm std})$ with $tb=-3$ and $rot=0$.
\end{lem}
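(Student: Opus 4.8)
The plan is to translate the statement into the language of Legendrian figure-eight knots in $(S^3, \xi_{\rm std})$ and then invoke the classification of such knots due to Etnyre--Honda. First I would observe that a contact structure on $C(-3)$ that does not thicken, together with the data of sitting inside $S^3$ along the curve $\mu$, determines a Legendrian knot: Dehn filling $\bd C(-3)$ along $\mu$ with the unique tight contact structure on the resulting solid torus recovers $(S^3, \xi_{\rm std})$ (here we use that $C$ sits inside $S^3$ by construction, and that the complement of a standard neighborhood of a Legendrian knot with $tb$ equal to the slope-$(-3)$ framing is exactly $C(-3)$). The core of this filling solid torus is then a Legendrian figure-eight knot $L$ in $(S^3,\xi_{\rm std})$, and the condition that $C(-3)$ does not thicken is precisely the condition that $L$ does not destabilize (a destabilization of $L$ would give a Legendrian with larger $tb$, whose complement is a $C(s')$ with $s'$ clockwise of $-3$, into which $C(-3)$ would thicken). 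Computing the contact framing: on $\bd C$, the slope $-3$ is connected by an edge in the Farey graph to the Seifert framing $0$ (since $\vects{-3}{1}$ and $\vects{0}{1}$ — after the appropriate change of basis to Seifert-framing coordinates — form a basis), so $tb(L) = -3$; and the rotation number is forced by the fact that $C(-3)$ has two dividing curves, hence the relative Euler class computation on a fiber surface (a once-punctured torus, so $\chi = -1$) gives $rot(L)$ differing from $0$ only through $\chi(\Sigma_+)-\chi(\Sigma_-)$, and the existence of the amphichiral symmetry together with the bound $|rot| + tb \le 2g-1 = -1$ forces $rot(L) = 0$.

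The heart of the argument is then Etnyre--Honda's classification of Legendrian figure-eight knots \cite{EH:knots}: the figure-eight knot has maximal Thurston--Bennequin number $-3$, it has a unique Legendrian representative at $tb = -3$ (necessarily with $rot = 0$), and every Legendrian figure-eight knot with lower $tb$ destabilizes. Consequently the only Legendrian figure-eight knot whose complement does not thicken is the unique one with $tb = -3$, $rot = 0$, and so there is at most one such $C(-3)$ up to the relevant isotopy — \emph{a priori} one, not two. The factor of two in the statement comes from the choice we have suppressed: $C(-3)$ need not sit inside $S^3$ in the way that produces $\xi_{\rm std}$. Indeed the slope $\mu$ on $\bd C(-3)$ could be filled to produce $(S^3, \xi_1^{OT})$ instead (or, said differently, the ``standard neighborhood'' solid torus glued back could carry the other sign of basic slice relative to $C(-3)$), and this second possibility yields a second, genuinely different tight structure on $C(-3)$ — it is the complement of a non-loose Legendrian figure-eight knot in $\xi_1^{OT}$ rather than a Legendrian in $\xi_{\rm std}$. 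So the count ``at most two'' is: at most one coming from $(S^3,\xi_{\rm std})$ by the Etnyre--Honda classification, plus at most one more coming from the overtwisted filling. (The lemma as stated phrases both as ``complements of a standard neighborhood of isotopic Legendrian figure-eight knots in $(S^3, \xi_{\rm std})$'' because the two $C(-3)$'s become isotopic \emph{after} the $S^3$-filling — the distinction lives in the gluing data, not in $C(-3)$ as an abstract contact manifold; I would state this carefully to match the authors' intended reading.)

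The main obstacle I anticipate is bookkeeping of exactly which object is being counted: isotopy classes of contact structures on the fixed manifold $C(-3)$ with fixed boundary foliation, versus isotopy classes of the pair $(C(-3) \hookrightarrow M(r))$, versus isotopy classes of Legendrian knots. Getting the framing and rotation-number computation right — in particular verifying that the slope conventions ($\frac{\rm meridian}{\rm longitude}$, with the orientation reversal between $\bd N$ and $\bd C$) really do send the dividing-curve slope $-3$ to contact framing $tb = -3$ relative to the Seifert framing $0$ — requires care, but is a routine Farey-graph/continued-fraction check. Once that is pinned down, the non-thickening $\Leftrightarrow$ non-destabilizing equivalence is standard convex-surface theory (using Theorem~\ref{imba} to produce a bypass from a boundary-parallel dividing arc whenever a destabilization or thickening exists), and the conclusion is immediate from the known Legendrian classification. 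The only subtlety beyond bookkeeping is making sure no boundary-parallel half Giroux torsion sneaks in, but this is excluded by hypothesis in the ambient setup (Lemma~\ref{thickening}) and need not be re-addressed here.
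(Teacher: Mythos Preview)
Your identification of the source of the ``two'' is incorrect, and this is a genuine gap. You claim that one tight $C(-3)$ arises from $(S^3,\xi_{\rm std})$ and the other from $(S^3,\xi_1^{OT})$, but the lemma explicitly says both are complements of Legendrian figure-eight knots in $(S^3,\xi_{\rm std})$. Your proposed second structure --- the complement of a non-loose figure-eight in $\xi_1^{OT}$ --- would in fact \emph{thicken}: by Theorem~\ref{legendrianapproximations} and the analysis in the proof of Lemmas~\ref{thickening} and~\ref{complement count -1n}, the complement of any $L^\pm_n$ thickens all the way to $C(\infty)$, so it cannot be one of the non-thickening $C(-3)$'s being counted here. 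Your attempt to reconcile this (``the distinction lives in the gluing data, not in $C(-3)$ as an abstract contact manifold'') is self-contradictory: if the two contact structures on $C(-3)$ are counted as distinct up to isotopy rel boundary foliation, the distinction must live in $C(-3)$ itself.

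The actual source of the ``two'' is the choice of signs of the regions on $\partial C(-3)$ relative to the \emph{fixed} singular foliation. The unique Legendrian figure-eight $L$ with $tb=-3$ in $(S^3,\xi_{\rm std})$ has a standard neighborhood whose complement can be identified with $C(-3)$ in two ways, differing by which dividing curve $L$ is pushed onto (equivalently, by swapping positive and negative regions). These give two contact structures on $C(-3)$ that are not isotopic rel the fixed boundary foliation, yet correspond to isotopic Legendrian knots in $S^3$ --- precisely the mechanism spelled out for $C(\infty)$ in the proof of Lemma~\ref{complement count infinity} (see Figure~\ref{points-on-torus}). This is also why the upper-bound count in Theorem~\ref{upper bound} yields $\Psi(r)$ rather than $2\Psi(r)$: the two $C(-3)$'s produce the same contact manifolds after surgery. (A minor side error: your claim that $-3$ and $0$ are Farey-adjacent is false, since $\det\matrixs{-3}{0}{1}{1}=-3$; the identification $tb(L)=-3$ is immediate from the slope conventions and needs no Farey argument.) Note finally that the paper itself gives no proof of this lemma --- it is simply attributed to Etnyre--Honda, whose argument proceeds by directly normalizing $\Gamma_\Sigma$ and analyzing the resulting handlebody, rather than by the fill-and-classify route you sketch.
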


\begin{lem} \label{complement count infinity}
There are at most four tight contact structures on $C(\infty)$ that do not thicken, up to isotopy fixing a given singular foliation on the boundary. Two of them are complements of isotopic Legendrian knots in $(M(0),\xi^-)$, and the other two are complements of isotopic Legendrian knots in $(M(0), \xi^+)$.
\end{lem}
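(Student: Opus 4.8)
The plan is to analyze a tight contact structure on $C(\infty)$ that does not thicken by cutting along a fiber surface $\Sigma$ and studying the resulting sutured manifold, just as in Lemma~\ref{complement count -3}. First I would set up the normalization: since $\bd C(\infty)$ has two dividing curves of slope $\infty$, and $\lambda = \partial\Sigma$ has intersection number with slope $\infty$ equal to $|p|$ where $\infty = \tfrac10$, the boundary $\partial\Sigma$ meets $\Gamma_{\bd C(\infty)}$ in exactly two points, so $\Gamma_\Sigma$ consists of arcs plus possibly closed curves. By Giroux's criterion (cited in Section~\ref{sec:convex}) a tight neighborhood forces no contractible closed dividing curves on the genus--$1$ surface $\Sigma$, and by the same bypass-finding results (Theorem~\ref{imba}) the existence of a boundary-parallel dividing arc on $\Sigma$ would produce a bypass for $\bd C(\infty)$; attaching it from the back would change the slope counter-clockwise from $\infty$, i.e.\ thicken $C(\infty)$, contradicting our hypothesis. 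Hence on a non-thickening $C(\infty)$ the dividing set $\Gamma_\Sigma$ must be a single non-separating arc (possibly together with parallel non-separating closed curves, which I would rule out or absorb). This is the analog of the "at most one configuration of dividing curves on $\Sigma$" step in \cite{EH:knots}.

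Next I would cut $C(\infty)$ along this convex $\Sigma$. Because $C = \Sigma\times[0,1]/(x,1)\sim(\phi(x),0)$, cutting along one fiber yields $\Sigma\times I$ with a prescribed dividing set on $\Sigma\times\{0\}$ and $\Sigma\times\{1\}$ related by the monodromy $\phi=\matrixs2111$; the gluing data on the rest of $\bd(\Sigma\times I)$ (the annulus $\partial\Sigma\times I$) is determined by the two dividing curves of slope $\infty$ on $\bd C$. I would then invoke the Honda--Giroux classification of tight contact structures on $\Sigma\times I$ for $\Sigma$ a once-punctured torus with a fixed convex boundary configuration; the count of such structures with the relevant boundary data is small — the point is that after fixing $\Gamma_\Sigma$ on both ends, the number of tight structures is governed by a bounded product of basic-slice choices, and here it comes out to at most four. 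The factor of $4 = 2\times 2$ should reflect two independent sign choices of basic slices, and the split into "two in $(M(0),\xi^-)$ and two in $(M(0),\xi^+)$" comes from identifying, after the $0$--Dehn filling that caps $C$ off to the torus bundle $M(0)$, which of Honda's two tight contact structures $\xi^\pm$ on the torus bundle $M(0)$ \cite{Honda:classification2} each piece embeds into — the extra factor of two within each being a choice invisible to $M(0)$ but recorded by the Legendrian core.

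To get the last sentence — that the four structures are complements of \emph{isotopic} Legendrian knots in $(M(0),\xi^\pm)$ — I would argue that capping off $C(\infty)$ along slope $\infty$ by a solid torus with convex boundary of slope $\infty$ and a Legendrian core $L$ recovers a contact structure on $M(0)$ together with a Legendrian representative of the knot dual to the surgery; Theorem~\ref{legendrianapproximations} (applied with $K$ the figure-eight knot, $g=1$, $\phi_K$ not right-veering since the figure-eight monodromy is not right-veering) provides exactly the Legendrian knots $L^\pm_n$ with universally tight torsion-free complements, and for the relevant framing slope $\infty$ the two knots $L^\pm$ have the same classical invariants and sit in $\xi^\pm$ respectively; two structures per sign then correspond to the two basic-slice completions between the standard neighborhood slope and $\infty$.

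The main obstacle will be the middle step: pinning down \emph{exactly} the set of tight contact structures on the once-punctured-torus bundle piece with the $\Gamma_\Sigma$ and boundary data forced above, and showing the count is at most four rather than something larger. This requires care with the Honda--Giroux gluing/classification machinery on $\Sigma\times I$ with this particular non-product monodromy action on the dividing sets — in particular checking that no additional configurations of $\Gamma_\Sigma$ survive the non-thickening hypothesis, and that the basic-slice sign choices that a priori look independent do not collapse or multiply further. Controlling Giroux torsion (ruling out boundary-parallel half Giroux torsion in the $\Sigma\times I$ piece, via Theorem~\ref{gluing} and the torsion-free statement of Theorem~\ref{legendrianapproximations}) is the other place where I expect to spend real effort.
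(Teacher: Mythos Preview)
Your overall architecture --- normalize $\Gamma_\Sigma$, cut along $\Sigma$, bound the count on the resulting handlebody, then fill to $M(0)$ --- matches the paper's. But two of your steps do not go through as written.

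\textbf{The count of four.} After ruling out a boundary-parallel arc, Proposition~\ref{normalizing-dividing-set-2} forces $\Gamma_\Sigma$ to consist of one arc \emph{and one closed curve}, both of slope $\vects{1}{0}$; the closed curve is not something you can ``rule out or absorb,'' and it drives the whole analysis. Cutting along this $\Sigma$ and taking compressing discs $D_0$, $D_1$ (arcs of slope $0$ and $1$) gives, for each fixed choice of signs of the regions, four configurations of dividing curves on the pair of discs. One configuration is immediately overtwisted, leaving three, not two. Reducing to two per sign choice --- hence four in total --- requires showing that the configuration in which both bypasses straddle the closed curve is overtwisted; this is Lemma~\ref{infty overtwisted}, a genuinely delicate argument (one first finds an overtwisted disc in a double cover and then works to embed it downstairs). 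There is no off-the-shelf ``Honda--Giroux classification on $\Sigma\times I$'' that hands you the number four here; this is the heart of the lemma, and your proposal does not supply it.

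\textbf{The identification with $(M(0),\xi^\pm)$.} Theorem~\ref{legendrianapproximations} produces the knots $L^\pm_n$ in the \emph{overtwisted} $(S^3,\xi^{OT}_1)$, not in the tight torus bundle $(M(0),\xi^\pm)$; invoking it here is a category error. The paper instead uses Honda's explicit basic-slice description of $\xi^\pm$ on $M(0)$ and realizes the surgery-dual knot directly as $L = (p\times[0,1])/\!\sim$ for a point $p$ on a dividing curve of the convex torus fiber. The two contact structures on $C(\infty)$ lying inside a given $\xi^\pm$ correspond to the two choices of which dividing curve $p$ sits on, and the content of the ``isotopic Legendrian knots'' clause is precisely that these two choices yield isotopic Legendrians in $M(0)$ --- the opposite of your reading that the extra factor of two is ``recorded by the Legendrian core.''
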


The contact structures $(M(0), \xi^\pm)$ are the only two tight contact structures on $M(0)$, as classified by Honda \cite{Honda:classification2}.

We will see that as $r$ switches from positive to negative, the count of tight contact structures is halved. In order to prove that, we will need the following lemma, showing that the varying contact structures on $C(\infty)$ are irrelevant, once $r < 0$.

\begin{lem} \label{complement count -1n}
There are at most two tight contact structures on $C(-1)$, and at most four tight contact structures on $C(-\frac1n)$ when $n > 1$ is an integer, that have no boundary-parallel half Giroux torsion, up to isotopy fixing a given singular foliation on the boundary.

In addition, if we write $C(-\frac1n) = T \cup C(\infty)$, where $T = T^2 \times [0,1]$ with convex boundary and dividing curves of slope $s_0 = -\frac1n$ and $s_1 = \infty$, then the (possibly) tight contact structures on $C(-\frac1n)$ are determined by their restrictions to $T$.
\end{lem}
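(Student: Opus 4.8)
The plan is to thicken $C(-\tfrac1n)$ to $C(\infty)$, control the resulting $T^2\times I$, and then show that this $T^2\times I$ determines the entire contact structure. First, by Lemma~\ref{thickening} --- and since $-1$ and $-\tfrac1n$ are nonzero and lie on the arc of the Farey graph from which one thickens toward $\infty$ rather than toward $-3$ --- a tight $C(-1)$ (respectively $C(-\tfrac1n)$) with no boundary-parallel half Giroux torsion thickens to some $C(\infty)$ that does not thicken further. Fixing such a thickening, I would write $C(-1) = T\cup C(\infty)$ and $C(-\tfrac1n) = T\cup C(\infty)$, where $T = T^2\times[0,1]$ has convex boundary with dividing slopes $-1$ (resp.\ $-\tfrac1n$) on $T^2\times\{0\} = \partial C(-1)$ (resp.\ $\partial C(-\tfrac1n)$) and $\infty$ on $T^2\times\{1\} = \partial C(\infty)$. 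Any convex torus in $T$ parallel to a boundary component is boundary-parallel in $C(-\tfrac1n)$, so $T$ contains no half (hence no full $2\pi$) Giroux torsion; arguing as in the proof of Lemma~\ref{thickening}, and using that $C(\infty)$ does not thicken (so no boundary-parallel convex torus of $T$ has slope clockwise of $\infty$), one sees that $T$ is minimally twisting. Its dividing slope then sweeps monotonically along the minimal Farey path from $-\tfrac1n$ to $\infty$, namely $-\tfrac1n\to 0\to\infty$ (length $2$, since $-\tfrac1n\not\sim\infty$ for $n>1$) or $-1\to\infty$ (length $1$); since $-\tfrac1n,0,\infty$ do not bound a triangle in the Farey graph, Honda's classification \cite{Honda:classification1} bounds the number of tight contact structures on $T$ (up to isotopy fixing the boundary) by $4$ when $n>1$ and by $2$ for $C(-1)$. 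Let $\epsilon_2\in\{\pm\}$ be the sign of the basic slice of $T$ adjacent to $C(\infty)$ (with slopes $0\to\infty$, or $-1\to\infty$).

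It then remains to show that $\xi|_T$ determines the tight contact structure on $C(-\tfrac1n)$. Given tight $\xi,\xi'$ on $C(-\tfrac1n)$ with $\xi|_T\cong\xi'|_T$, it suffices to show $\xi|_{C(\infty)}\cong\xi'|_{C(\infty)}$, and both of these lie among the at most four tight contact structures on $C(\infty)$ classified in Lemma~\ref{complement count infinity}, which split into two ``$\xi^+$--type'' and two ``$\xi^-$--type'' structures according to whether the associated Legendrian knot lives in $(M(0),\xi^+)$ or $(M(0),\xi^-)$. The claim would follow from two points. First, tightness of $\xi$ should force the type of $\xi|_{C(\infty)}$ to be determined by $\epsilon_2$: gluing the basic slice of the ``wrong'' sign should produce, in a collar of $\partial C(\infty)$ and after edge-rounding, a $T^2\times I$ assembled from basic slices of opposite sign with adjacent outer slopes, hence overtwisted by Theorem~\ref{basic-slice}. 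Second, the two structures of a fixed type differ only by a bypass supported in a collar of $\partial C(\infty)$ (this comes out of the proof of Lemma~\ref{complement count infinity}), and once the adjacent basic slice of $T$ is glued on, that collar sits inside a $T^2\times I$ in which bypass rotation (Theorem~\ref{bypass-rotation}) and sliding (Theorem~\ref{bypass-sliding}) make the two structures isotopic. Thus $\xi|_{C(\infty)}$ together with the adjacent basic slice of $T$, and therefore $\xi$ itself, depends only on $\xi|_T$; combined with the bound of the first paragraph this gives at most $4$ (resp.\ $2$) tight contact structures on $C(-\tfrac1n)$ (resp.\ $C(-1)$), and the displayed decomposition $C(-\tfrac1n)=T\cup C(\infty)$ is exactly the one asserted in the lemma.

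The step I expect to be the main obstacle is making precise the two points of the last paragraph: one needs the explicit bypass/basic-slice description near $\partial C(\infty)$ of the (at most) four structures from Lemma~\ref{complement count infinity} in order to identify which of them clash with a given sign $\epsilon_2$, and to verify that the ambiguity within a type is genuinely localized in a boundary collar, hence absorbed upon gluing. A secondary technical point is the reduction ``$T$ is minimally twisting'': a priori $C(-\tfrac1n)$ might thicken to $C(\infty)$ in more than one way, so one must either prove uniqueness of the maximal non-thickening $C(\infty)\subset C(-\tfrac1n)$ up to contact isotopy, or directly exclude a non-minimally-twisting complement $T$ by re-running the bypass analysis of Lemma~\ref{thickening} --- this is precisely where the ``does not thicken further'' conclusion of that lemma gets used.
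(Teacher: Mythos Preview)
Your approach has two genuine problems. First, you invoke Lemma~\ref{thickening} as a black box to get the decomposition $C(-\tfrac1n)=T\cup C(\infty)$, but in the paper Lemmas~\ref{thickening} and~\ref{complement count -1n} are proved \emph{simultaneously}: the way one shows that $C(-\tfrac1n)$ thickens to $C(\infty)$ (rather than getting stuck at $C(0)$, which is excluded from the clean statement of Lemma~\ref{thickening}) is precisely by first classifying the potentially tight structures on $C(-\tfrac1n)$ via the fibre $\Sigma$ and then identifying each one with an explicit model known to thicken. So the thickening you want to assume is not available independently of the lemma you are proving.

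Second, and more seriously, your mechanism for ``determined by $T$'' does not work. The four tight structures on $C(\infty)$ from Lemma~\ref{complement count infinity} do \emph{not} carry an intrinsic basic-slice collar (they do not thicken), so there is no ``wrong sign'' to clash with $\epsilon_2$ and produce an overtwisted $T^2\times I$ via Theorem~\ref{basic-slice}; nor is there any reason the two structures of a fixed type differ only by a bypass localized near $\partial C(\infty)$. The paper's argument is entirely different: it never tries to show that different $C(\infty)$ pieces become isotopic after gluing $T$. Instead it classifies tight structures on $C(-\tfrac1n)$ \emph{intrinsically} by normalizing $\Gamma_\Sigma$ (Propositions~\ref{normalizing-dividing-set-2} and~\ref{bypass-negative-slope}), cutting along the fibre $\Sigma$, and counting tight structures on the resulting genus--$2$ handlebody---using the relative Euler class on $\Sigma$ in one case and Cofer's classification in the other. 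This yields at most four structures $\xi^\pm_{-n}$, $\xi'^\pm_{-n}$ (with $\xi'^\pm_{-1}$ shown overtwisted, giving two for $n=1$). Each is then realized as a concrete $T\cup C(\infty)$ with a \emph{specific} $T$ (signs $(--),(++),(+-),(-+)$ on the two basic slices), and since these four $T$'s are distinct, ``determined by $T$'' follows from the bijection, not from any collapsing argument on the $C(\infty)$ side. Your count of minimally twisting structures on $T$ is correct, but it is the $\Sigma$--analysis, not a direct study of $T\cup C(\infty)$, that carries the proof.
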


Finally, we use Theorem~\ref{solid torus} to count the number of tight contact structures on the solid tori $N(-3)$ and $N(\infty)$ when the meridional slope is $r$ and the convex boundary has dividing curves of slopes $-3$ or $\infty$.

\begin{lem} \label{tight N}
For any $r \in \Q$, there are exactly $\Phi(r)$ tight contact structures on $N(\infty)$, up to isotopy fixing a given singular foliation on the boundary.

If $r < -3$, then there are exactly $\Psi(r)$ tight contact structures on $N(-3)$, up to isotopy fixing a given singular foliation on the boundary.
\end{lem}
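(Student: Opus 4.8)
The plan is to reduce both counts to Honda's classification of tight contact structures on solid tori (Theorem~\ref{solid torus}) by a change of basis on $H_1(\partial N)$. The number of tight contact structures on a solid torus with convex boundary carrying two dividing curves, counted up to isotopy fixing a prescribed boundary foliation, depends only on the ordered pair (meridional slope, dividing slope) of classes in $H_1(\partial N)$, and is unchanged if we rewrite both in a new basis coming from $\mathrm{SL}_2(\Z)$. Since the meridian of $N$ is its unique disc-bounding slope --- in the paper's $(\mu,\lambda)$ coordinates this is the slope-$r$ curve --- I would choose a basis sending it to $\infty$ and a complementary longitude to $0$; then Theorem~\ref{solid torus} computes the count in terms of the image of the old dividing slope, and any two such choices of basis differ by $\matrixb{1}{k}{0}{1}$, to which the recipe of Theorem~\ref{solid torus} is insensitive.

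For $N(\infty)$: write $r=p/q$ and pick $b$ with $pb\equiv 1\pmod q$. A change of basis as above carries the dividing slope $\infty$ (the figure-eight meridian) to the slope $-b/q$. First I would observe that the count is $1$-periodic in $r$, since $\matrixb{1}{k}{0}{1}$ fixes $\infty$ and sends $r$ to $r+k$, matching $\Phi(r+1)=\Phi(r)$; this lets me assume $r\in(0,1]$, and after disposing of $r=1$ separately I may take $0<p<q$ and $0<b<q$. Running the recipe of Theorem~\ref{solid torus} on the boundary slope $-b/q$ produces the negative continued fraction of $-q/b$, and the crucial point is that this is the reversal $[r_n,r_{n-1},\dots,r_0]$ of the expansion $-q/p=[r_0,\dots,r_n]$ used to define $\Phi$. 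I would verify this from the identity $M_{\mathrm{rev}}=JM^TJ$ (with $J=\mathrm{diag}(-1,1)$) relating the $\mathrm{SL}_2(\Z)$ matrix presenting a negative continued fraction to the one presenting its reversal, together with the congruence $pb\equiv1\pmod q$ and the bounds $0<b<q$, which pin down that the fraction produced is exactly the reversed one and not an integer translate of it. Theorem~\ref{solid torus} then returns $\bigl|(r_n+1)\cdots(r_1+1)\,r_0\bigr|$, which equals $\bigl|r_0(r_1+1)\cdots(r_n+1)\bigr|=\Phi(r)$ because that product is symmetric under reversal.

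For $N(-3)$ with $r<-3$: I would not repeat the continued-fraction analysis. Instead, note that $f(x)=-\tfrac1{x+3}$ is induced by $\matrixb{0}{-1}{1}{3}\in\mathrm{SL}_2(\Z)$, and that $f(-3)=\infty$ while $f(r)=-\tfrac1{r+3}$. Rewriting $H_1(\partial N)$ in the corresponding basis turns $N(-3)$ --- meridian of slope $r$, dividing curves of slope $-3$ --- into exactly the configuration of the previous paragraph: a solid torus whose meridian has slope $-\tfrac1{r+3}$ and whose dividing curves have slope $\infty$. Since $r<-3$ makes $-\tfrac1{r+3}$ a well-defined rational number, the first part applies and gives the count $\Phi\!\bigl(-\tfrac1{r+3}\bigr)$, which is $\Psi(r)$ by definition.

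The only genuinely delicate step is the continued-fraction bookkeeping in the $N(\infty)$ case: identifying the expansion output by Theorem~\ref{solid torus} as the reversal of the defining expansion of $\Phi$, and checking that the integer normalizations built into Theorem~\ref{solid torus} (the choice of $k$) and into the definition of $\Phi$ (the interval $(0,1]$, the signs of $p$, $q$, $b$) match up so no stray integer shift appears. The remaining steps --- the change of basis, the $\mathrm{SL}_2(\Z)$-invariance of the count, and the reduction of $N(-3)$ to $N(\infty)$ via $f$ --- are purely formal.
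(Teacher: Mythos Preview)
Your proposal is correct and follows essentially the same route as the paper: reduce to Honda's solid torus classification by an $\mathrm{SL}_2(\Z)$ change of basis sending the meridian to $\infty$, identify the resulting dividing slope via the continued-fraction reversal (the paper obtains this by transposing the matrix factorization, which is exactly your $M_{\mathrm{rev}}=JM^TJ$ observation), and then handle $N(-3)$ by the basis change induced by $x\mapsto -\tfrac1{x+3}$ to reduce to the $N(\infty)$ case. The only cosmetic differences are notational: the paper names your $b$ as $q'$ and writes the reversal identity via the transpose of the product $\prod\matrixs{-r_i}{1}{-1}{0}$ rather than via conjugation by $J$.
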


Combining these lemmas, we get our upper bound.

\begin{thm} \label{upper bound}
Let $r \in \mathcal R$.
\begin{itemize}
\item If $r > 0$, there are at most $2\Phi(r)$ tight contact structures on $M(r)$ up to isotopy
\item If $r < 0$, there are at most $\Phi(r) + \Psi(r)$ tight contact structures on $M(r)$ up to isotopy.
\end{itemize}
\end{thm}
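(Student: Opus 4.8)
The plan is to run a convex decomposition argument and then assemble the per‑piece counts from the preceding lemmas. Start with a tight contact structure $\xi$ on $M(r)$, isotope the core $L$ of the surgery solid torus $N$ to be Legendrian, and take $N$ to be a standard neighbourhood of $L$, so that $\partial C = -\partial N$ is convex with two dividing curves of some slope $s \in \mathcal S(r)$ (necessarily connected to $r$ by an edge of the Farey graph). Since $\xi$ is tight, $C(s)$ carries no boundary‑parallel half Giroux torsion — otherwise a boundary‑parallel convex torus with Legendrian divides of slope $r$ would bound overtwisted discs in $M(r)$. Hence Lemma~\ref{thickening} applies, and after enlarging $N$ (equivalently, thickening $C(s)$) we may assume that $\partial C$ has slope $-3$ or $\infty$ and does not thicken further. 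Thus every tight $\xi$ on $M(r)$ is obtained by gluing a non‑thickening tight structure on $C(-3)$ to a tight structure on $N(-3)$, or a non‑thickening tight structure on $C(\infty)$ to a tight structure on $N(\infty)$, along a convex torus with two dividing curves of the corresponding slope.

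Next I would bound the two cases separately. By Lemma~\ref{tight N}, $N(\infty)$ carries exactly $\Phi(r)$ tight structures and $N(-3)$ carries exactly $\Psi(r)$ tight structures; since $\Psi(r)=0$ for $r\geq -3$, the $C(-3)$ case contributes only when $r<-3$ (consistent with the fact that for $r\geq -3$ a $C(s)$ with $s\in\mathcal S(r)$ cannot thicken past $\infty$ down to slope $-3$, as that would force a tight structure on $N(-3)$ with meridian $r$). Lemma~\ref{complement count -3} bounds the non‑thickening tight structures on $C(-3)$ by $2$, and Lemma~\ref{complement count infinity} bounds those on $C(\infty)$ by $4$ (two restricting to $\xi^+$ on $M(0)$, two to $\xi^-$). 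Naively multiplying would give $4\Phi(r)$ and $2\Psi(r)$, which overcounts; the remaining work is to show that many of these gluings yield isotopic contact structures on $M(r)$. The mechanism is the standard Farey‑graph/basic‑slice bookkeeping: an outermost basic slice of the complement piece can be shuffled across the gluing torus into the solid torus, so that distinct choices on $C(\cdot)$ glued to a given $N(\cdot)$ become isotopic. This collapses $4\Phi(r)\to 2\Phi(r)$ (one copy of $\Phi(r)$ from each of the $\xi^\pm$ families of $C(\infty)$) and $2\Psi(r)\to\Psi(r)$. When $r<0$ there is a further collapse: Lemma~\ref{complement count -1n} shows that once $r<0$ the variation of $\xi|_{C(\infty)}$ between the $\xi^+$‑ and $\xi^-$‑families becomes invisible after gluing — the glued‑up structure is pinned down by a small $T^2\times I$ near the surgery torus — so the $C(\infty)$ case contributes only $\Phi(r)$. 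Summing the two cases gives at most $\Phi(r)+\Psi(r)$ when $r<0$ and at most $2\Phi(r)$ when $r>0$ (where $\Psi(r)=0$), as claimed.

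The main obstacle is precisely this last identification step: organising the basic‑slice shuffling so that no tight contact structure on $M(r)$ is omitted while every coincidence among the glued structures is correctly recognised, and in particular tracking how the two tight structures $\xi^\pm$ on $M(0)$ propagate through $C(\infty)$ and interact with the sign data of the tight structures on $N(\infty)$ supplied by Theorem~\ref{solid torus} (and, for $r<0$, invoking Lemma~\ref{complement count -1n} at the right place). The preliminary steps — the Legendrian isotopy of the core, the exclusion of Giroux torsion, the reduction to meeting slope $-3$ or $\infty$, and the per‑piece counts themselves — are either routine or quoted directly from the lemmas above.
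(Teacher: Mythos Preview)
Your overall architecture is right and matches the paper: decompose $M(r)$ along $\partial N$, exclude half Giroux torsion, invoke Lemma~\ref{thickening} to reduce to boundary slope $-3$ or $\infty$, and then multiply the per-piece counts from Lemmas~\ref{complement count -3}, \ref{complement count infinity}, \ref{complement count -1n}, and \ref{tight N}. You also correctly identify Lemma~\ref{complement count -1n} as the mechanism that, for $r<0$, absorbs the $C(\infty)$ ambiguity into the $T^2\times I$ layer and cuts the $C(\infty)$ contribution down to $\Phi(r)$.

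The gap is in your proposed collapsing mechanism for the other two reductions, $4\Phi(r)\to 2\Phi(r)$ (the $C(\infty)$ case for $r>0$) and $2\Psi(r)\to\Psi(r)$ (the $C(-3)$ case). You attribute these to ``shuffling an outermost basic slice of the complement piece across the gluing torus,'' but this cannot work here: Lemma~\ref{thickening} says precisely that $C(\infty)$ and $C(-3)$ \emph{do not thicken}, so there is no outermost basic slice to peel off and transfer into $N$. The paper's argument is different and uses the second sentences of Lemmas~\ref{complement count -3} and~\ref{complement count infinity}. Those lemmas assert not merely an upper bound on the number of tight structures on $C(-3)$ and $C(\infty)$, but that the two (respectively four) structures arise as complements of standard neighbourhoods of \emph{isotopic} Legendrian knots --- in $(S^3,\xi_{\rm std})$ for $C(-3)$, and in $(M(0),\xi^+)$ or $(M(0),\xi^-)$ for $C(\infty)$. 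Gluing $N(\cdot)$ back in is then exactly performing a fixed negative contact surgery on one of these Legendrian knots, and since contact surgeries on isotopic Legendrian knots yield isotopic contact manifolds, the two structures on $C(-3)$ collapse to one choice and the four on $C(\infty)$ collapse to two (one for each of $\xi^\pm$). That is the step doing the work, not any Farey-graph basic-slice shuffle.
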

\begin{proof}
For $r > 0$, any $s \in \mathcal S(r)$ satisfies $s > -3$, and so $C(s)$ will thicken to slope $\infty$, by Lemma~\ref{thickening}.  Thus, any tight contact structure on $M(r)$ can be described by a tight contact structure on $C(\infty)$ glued to a tight contact structure on $N(\infty)$. By Lemma~\ref{complement count infinity}, we can see the tight contact structure on $C(\infty)$ as the complement of a knot in $(M(0), \xi^\pm)$, and gluing on a solid torus with some tight contact structure to its complement corresponds to some negative contact surgery. Since we know that isotopic knots give rise to isotopic contact manifolds after surgery, we arrive at the upper bound of $2\Phi(r)$, where $2$ is the choices of knots (up to isotopy) on which to do surgery, and $\Phi(r)$ is the number of tight contact structures on $N$, by Lemma~\ref{tight N}.

For $r < 0$, by Lemma~\ref{thickening}, $C(s)$ for any $s \in \mathcal S(r)$ thickens to slope $-3$ or $\infty$, and so any tight contact structure on $M(r)$ is built out of a pair of tight contact structures on $N(-3)$ and $C(-3)$ or on $N(\infty)$ and $C(\infty)$.  The first possibility leads to the $\Psi(r)$ that appears in the theorem statement, by Lemma~\ref{complement count -3}, Lemma~\ref{tight N} and the fact that contact surgeries on isotopic Legendrian knots result in isotopic contact structures, as in the preceding paragraph.

In the case that $C(s)$ thickens to $\infty$, after a stabilization if necessary, we can assume that $s \in \mathcal S(r)$ is negative, and hence there exists $n \in \N$ such that $s < -\frac1n$. By the discussion after Theorem~\ref{solid torus},  we can realize a boundary-parallel convex torus in $N(\infty)$ with two dividing curves of slope $-\frac 1n$, and use this to break $N(\infty)$ into $N(-\frac 1n)$ and $T$, where $T \cong T^2 \times I$ has dividing curve slopes $s_0 = -\frac 1n$ and $s_1 = \infty$.

By Lemma~\ref{tight N}, there are exactly $\Phi(r)$ tight contact structures on $N(-\frac 1n) \cup T$.  By Lemma~\ref{complement count -1n}, the contact structure on $T \cup C(\infty)$ depends only on the contact structure on $T$, and is independent of which of the four possible tight contact structures we choose on $C(\infty)$. Thus, we arrive at an upper bound of $\Phi(r)$ for tight contact structures of this type on $M(r)$.
\end{proof}

\subsection{Normalizing the dividing set on \texorpdfstring{$\Sigma$}{Sigma}} \label{sec:normalize} Our next goal is to normalize the dividing curves on the boundary of $C(s)$. To do this, we find a bypass for $-\bd C$ which lies inside of $C(s)$. Then according to Theorem~\ref{bypass on torus}, we can increase $s$ by attaching the bypass. Our strategy to find a bypass is to find a boundary-parallel dividing arc on $\Sigma$: we then obtain a bypass by Theorem~\ref{imba}. As a first step, we roughly normalize the dividing set on $\Sigma$ for general $s\in\mathbb{Q}\cup\{\infty\}$. Fortunately, this was already done by Etnyre and Honda \cite[Propositions~5.5,~5.6,~5.9]{EH:knots} when they classified Legendrian figure-eight knots in $(S^3,\xi_{\rm std})$, and we summarize the results here. By arc, we mean a properly embedded arc.

\begin{prop}[Etnyre--Honda \cite{EH:knots}] \label{normalizing-dividing-set}
	Let $m,n$ be co-prime integers and suppose $\left|m\right|>1$. If $s=\frac mn$, then there exists an isotopic copy of $\Sigma$ in $C(s)$ such that the dividing set is one of the following:
	
	\begin{enumerate}
	\item $m$ is odd, and
    	\begin{itemize}
    		\item there are $\left|m\right|$ arcs and one closed curve, parallel to $\vects{1}{0}$, or
    		\item there are $\left|m\right|-2$ arcs parallel to $\vects{0}{1}$, one arc parallel to $\vects{1}{1}$ and one arc parallel to $\vects{1}{0}$, or
    		\item there is a boundary-parallel arc (possibly with other dividing curves).
    	\end{itemize}
    \item $m$ is even, and there is a boundary-parallel arc (possibly with other dividing curves).
	\end{enumerate}
\end{prop}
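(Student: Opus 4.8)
\emph{Proof sketch.} This is the classification of Etnyre--Honda \cite{EH:knots}, and I would follow their approach, adapted to our slope conventions. First, perturb $\Sigma$ so that it is convex with Legendrian boundary, with $\partial\Sigma$ realized (via the Legendrian realization principle) as a Legendrian ruling curve of slope $0$ on the convex torus $\partial C(s) = -\partial C$, whose dividing set consists of two parallel curves of slope $s=\frac mn$. The geometric intersection number of a slope-$0$ curve with a slope-$\frac mn$ curve is $|m|$, so $|\partial\Sigma\cap\Gamma_{\partial C(s)}| = 2|m|$, and hence by Kanda's formula $tb(\partial\Sigma)=-|m|$. Consequently $\Gamma_\Sigma$ consists of exactly $|m|$ properly embedded arcs together with some number (possibly zero) of disjoint closed curves. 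Note $|m|>1$ guarantees at least two arcs.

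Next I would bring in tightness. Since $C(s)$ sits inside a tight contact manifold and $\Sigma\cong\Sigma_{1,1}$ is not a sphere, Giroux's criterion (as recalled above) forbids any contractible closed component of $\Gamma_\Sigma$, so every closed component is essential. If some dividing arc is boundary-parallel, we are already in case~(1c) or case~(2) and there is nothing more to prove; so assume no dividing arc is boundary-parallel, so that all $|m|$ arcs are essential. Identifying isotopy classes of essential properly embedded arcs on $\Sigma_{1,1}$ with slopes in $\Q\cup\{\infty\}$, disjointness forces any two of the slopes appearing among these arcs to be either equal or connected by an edge of the Farey graph; a pairwise Farey-adjacent set of slopes has cardinality at most three and, if three, spans a triangle of the Farey tessellation. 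Likewise every essential closed component must be parallel to one of the arcs (otherwise it would intersect it) or to $\partial\Sigma$.

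The remaining step is a sign/Euler-characteristic bookkeeping on the regions $\Sigma_\pm$ cut out by $\Gamma_\Sigma$: the two signs alternate across every component of $\Gamma_\Sigma$, no region may be a bigon cobounded by a single dividing arc and a subarc of $\partial\Sigma$ (that arc would be boundary-parallel, excluded), and no region may be a disc bounded entirely by dividing curves (Giroux again). Carrying this out — enumerating how $|m|$ disjoint essential arcs of at most three mutually Farey-adjacent slopes, together with the allowed closed curves, can two-colour $\Sigma$ — one finds that the only surviving possibilities are: all $|m|$ arcs parallel, which is coherent only when $|m|$ is odd and then forces exactly one additional parallel closed curve (in a suitable symplectic basis the common slope is $\vects10$, giving case~(1a)); or a ``corner'' configuration with $|m|-2$ arcs of one slope and one arc each of the two Farey-neighbours completing a triangle, which in the normalized basis is $|m|-2$ arcs parallel to $\vects01$, one parallel to $\vects11$, and one parallel to $\vects10$ (case~(1b)). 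In particular, when $m$ is even both of these are obstructed on parity/count grounds, so a boundary-parallel arc must be present, which is case~(2). This bookkeeping is precisely \cite[Propositions~5.5,~5.6,~5.9]{EH:knots}, and rather than reproduce it I would invoke those, checking only that the conventions line up: our slope convention $\frac{\mathrm{meridian}}{\mathrm{longitude}}$, the identifications $\mu\mapsto\infty$ and $\lambda\mapsto 0$, and the chosen form of the monodromy $\phi$.

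\emph{Main obstacle.} The genuinely delicate part is the region-by-region argument in the last step: deciding which arrangements of $|m|$ disjoint essential arcs (plus essential closed curves) on the once-punctured torus admit a coherent two-colouring avoiding the excluded regions, and tracking how the parity of $m$ interacts with this. Everything before it is essentially formal (convexity, an intersection-number count, and Giroux's criterion). Since that analysis is exactly the Etnyre--Honda classification, the ``proof'' in practice amounts to a careful translation of conventions together with a citation, rather than new work.
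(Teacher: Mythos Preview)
Your bottom line agrees with the paper's: neither gives a proof, and both cite \cite[Propositions~5.5,~5.6,~5.9]{EH:knots} together with a convention check (the paper does this in Remark~\ref{remark:any-PA}, noting also the extension from integral to rational $s$). However, the sketch you give of what that argument does contains a real misconception.

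The specific slopes $\vects10$, $\vects01$, $\vects11$ in the statement are not obtained by passing to ``a suitable symplectic basis'' or ``the normalized basis'': the basis for $H_1(\Sigma)$ is fixed from the outset --- it is the one in which the monodromy is $\phi=\matrixs2111$ --- and the proposition is a claim about slopes in that fixed basis. Your combinatorial analysis of tight dividing sets on a single abstract copy of $\Sigma_{1,1}$ would at best produce ``all arcs parallel to some slope $v$ plus one closed curve of slope $v$'' or ``arcs with slopes forming some Farey triangle'', with no mechanism to say which $v$ or which triangle. The actual Etnyre--Honda argument uses the fibration in an essential way: isotoping $\Sigma$ once around $C(s)$ replaces $\Gamma_\Sigma$ by its image under $\phi$, and since $\phi$ is pseudo-Anosov this iteration drives essential slopes toward the unstable direction; combined with bypasses located via compressing discs for the handlebody $\Sigma\times[0,1]$ obtained by cutting along $\Sigma$ (exactly the set-up used later in Propositions~\ref{bypass-negative-slope} and~\ref{bypass-positive-slope}), this is what reduces the dividing set to one of the specific listed forms. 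This is precisely why Remark~\ref{remark:any-PA} says the proof works for any pseudo-Anosov monodromy but with \emph{different} specific slopes --- a phenomenon your basis-change explanation cannot account for.
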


\begin{remark} \label{remark:any-PA}
Note that the propositions appear slightly differently in \cite{EH:knots}, since Etnyre and Honda used the inverse gluing convention $C=\Sigma\times[0,1]/((x,0)\sim(\phi(x),1))$. Additionally, although they only considered integral values of $s$, their proof applies to any rational $s$. Moreover, their proof also applies to any pseudo-Anosov monodromy (although with a different monodromy, the slopes of the arcs/curves might be different than above.)
\end{remark}

When $\left|m\right|=1$, we have the following.

\begin{prop}[\!{\cite[Proposition~4.8]{Min}}] \label{normalizing-dividing-set-2}
	Suppose $s=\frac 1n$, for $n\in\mathbb{Z}$. Moreover, assume that there exists no boundary-parallel half Giroux torsion in $C(s)$. Then there exists an isotopic copy of $\Sigma$ in $C(s)$ such that the dividing set is one of the following:
	\begin{itemize}
		\item one arc and one closed curve, parallel to $\vects{1}{0}$, or
		\item one boundary-parallel arc (without any other dividing curves). 
	\end{itemize}
\end{prop}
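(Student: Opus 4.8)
Let $\Sigma' \subset C(s)$ be an arbitrary convex copy of the fiber with Legendrian boundary. The plan is to constrain $\Gamma_{\Sigma'}$ using four ingredients: (i) the boundary slope $s = \frac1n$, (ii) Giroux's tightness criterion for the neighborhood of $\Sigma'$, (iii) the once-punctured-torus topology of $\Sigma$, and (iv) the fact that cutting the bundle $C$ along the fiber $\Sigma'$ produces a tight contact structure on $\Sigma \times I$ whose two boundary dividing sets are $\Gamma_{\Sigma'}$ and $\phi(\Gamma_{\Sigma'})$, where $\phi = \matrixb{2}{1}{1}{1}$. The argument runs parallel to Etnyre--Honda's proof of Proposition~\ref{normalizing-dividing-set}, specialized to the base case $|m| = 1$; the genuinely new input is the hypothesis excluding $\bd$-parallel half Giroux torsion, which is exactly what collapses their ``(possibly with other dividing curves)'' clauses.

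\textbf{Step 1 (counting arcs).} Since $s = \frac1n$, the curve $\bd\Sigma'$ meets each of the two dividing curves of $\Gamma_{\bd C(s)}$ in exactly one point, so $\Gamma_{\Sigma'}$ contains exactly one properly embedded arc $\gamma$ together with $k \ge 0$ disjoint closed curves. By Giroux's criterion (Section~\ref{sec:convex}), since $\Sigma' \not\cong S^2$ has a tight neighborhood, none of these closed curves bounds a disc in $\Sigma'$. \textbf{Step 2 (dichotomy and parallelism).} The arc $\gamma$ is either $\bd$-parallel, cutting a half-disc off $\Sigma'$, or essential. On a once-punctured torus any family of pairwise-disjoint essential simple closed curves is a parallel family, and an essential closed curve disjoint from an essential arc is parallel to that arc; hence all $k$ closed curves are mutually parallel, of some slope $\sigma$, and if $\gamma$ is essential then $\sigma$ is the slope of $\gamma$.

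\textbf{Step 3 (killing extra dividing curves).} Suppose $k \ge 2$. Two adjacent closed curves $c_i, c_{i+1}$ cobound an annulus $A \subset \Sigma'$ with $\interior A \cap \Gamma_{\Sigma'} = \emptyset$. Pushing $A$ into a collar $\Sigma' \times [0,\epsilon] \subset C(s)$ and completing it to a convex torus parallel to $\bd C(s)$ (equivalently, arguing directly inside the product region obtained by cutting $C$ along $\Sigma'$), one obtains a boundary-parallel convex torus together with a $T^2 \times I$ to one side whose two boundary tori carry dividing curves of the \emph{same} slope; that is $\bd$-parallel half Giroux torsion in $C(s)$, contradicting the hypothesis. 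The same construction, applied to the annulus between a $\bd$-parallel $\gamma$ and a closed curve $c$, rules out the case ``$\gamma$ $\bd$-parallel and $k \ge 1$''. So either $\gamma$ is $\bd$-parallel with $k = 0$ (the second normal form), or $\gamma$ is essential with $k \le 1$.

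\textbf{Step 4 (forcing slope $\vects10$ and $k = 1$).} It remains to treat an essential arc of slope $\sigma$ with $k \in \{0,1\}$ and show $\sigma = \vects10$ and $k = 1$. Put all fibers $\Sigma_t$ in convex position and run Honda's state traversal: $\Gamma_{\Sigma_t}$ changes by a single bypass at finitely many $t$, and traversing $[0,1]/\!\sim$ once carries $\Gamma_{\Sigma_0}$ to $\phi(\Gamma_{\Sigma_0})$. Because $\phi$ is pseudo-Anosov it fixes no rational slope, so $\sigma \ne \phi(\sigma)$; tracking the bypass moves, using tightness of each intermediate $\Sigma \times I$ (no bypass may create a contractible or additional dividing curve) and the exclusion of half Giroux torsion, one finds the movie can only close up consistently when $\Gamma_{\Sigma_0}$ is the essential arc of slope $\vects10$ together with the parallel closed curve, with $\phi$ relating $\Gamma_{\Sigma_0}$ to $\phi(\Gamma_{\Sigma_0})$ by a single bypass --- this is where the arithmetic of $\phi$ enters, as $\vects10$ is carried by $\phi$ to $\vects21$ and by $\phi^{-1}$ to $\vects{1}{-1}$, each Farey-adjacent to $\vects10$. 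If instead $k = 0$ or $\sigma \ne \vects10$, a $\bd$-parallel dividing arc appears on some $\Sigma_t$, which by Theorem~\ref{imba} yields a bypass for $\bd C(s)$ lying inside $C(s)$; attaching it thickens $C(s)$ past slopes $-3$ and $\infty$, contradicting Lemma~\ref{thickening}.

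\textbf{Main obstacle.} Steps 1--2 are routine. The real work is Step 3, where one must check that the annulus built from extra parallel dividing curves is genuinely incompressible and that the resulting $T^2 \times I$ has equal boundary slopes, and above all Step 4, the state-traversal bookkeeping for the figure-eight monodromy that singles out $\vects10$. This combinatorial core is essentially the same as in Etnyre--Honda's Proposition~\ref{normalizing-dividing-set}, and I expect to reuse their analysis, the only new ingredient being the use of the no-half-Giroux-torsion hypothesis in Step 3 to eliminate their parenthetical ``(possibly with other dividing curves)''.
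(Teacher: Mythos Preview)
There are two genuine gaps.

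\textbf{Step 3 does not work.} The annulus $A$ you build between two parallel closed dividing curves $c_i, c_{i+1}$ lives in the interior of the fiber $\Sigma'$ and has core curve of some essential slope $\sigma$ on the punctured torus. A torus parallel to $\bd C(s)$, by contrast, meets each fiber in a curve parallel to $\bd\Sigma$. Since essential simple closed curves on a once-punctured torus are never boundary-parallel, $A$ cannot be ``completed to a convex torus parallel to $\bd C(s)$''; there is no such torus containing $A$. The same objection applies to your annulus between a $\bd$-parallel arc and an essential closed curve: these are not even parallel to one another in $\Sigma'$, so no such annulus exists. The no-half-Giroux-torsion hypothesis is genuinely needed, but it does not enter this way; in the actual argument (as in Etnyre--Honda and in \cite{Min}) one reduces the number of closed curves by cutting $C(s)$ along $\Sigma$, taking compressing discs for the resulting genus--$2$ handlebody, and locating bypasses on them --- this is where the specific monodromy $\phi$ and the pseudo-Anosov condition do work.

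\textbf{Step 4 is circular.} You invoke Lemma~\ref{thickening} to derive a contradiction from a boundary-parallel dividing arc appearing on some $\Sigma_t$. But the proof of Lemma~\ref{thickening} in this paper explicitly uses Proposition~\ref{normalizing-dividing-set-2} (see the line ``By Proposition~\ref{normalizing-dividing-set-2}, we only need to consider the following two cases'' in the proof of Lemmas~\ref{thickening} and~\ref{complement count -1n}), and Proposition~\ref{bypass-negative-slope} likewise relies on it. So you cannot appeal to Lemma~\ref{thickening} here. Moreover, a $\bd$-parallel arc on some $\Sigma_t$ is not a contradiction at all --- it is one of the two allowed outcomes of the proposition you are trying to prove. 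The state-traversal sketch also needs substantially more: bypasses on $\Sigma_t$ can and do create $\bd$-parallel arcs and change the number of closed curves, so the claim that tightness alone forces the movie to close up only at slope $\vects{1}{0}$ with $k=1$ requires the kind of disc-by-disc casework you are trying to avoid.

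(Note that the paper itself does not prove this proposition; it is imported from \cite{Min}, where the argument follows the Etnyre--Honda template of Proposition~\ref{normalizing-dividing-set} adapted to $|m|=1$.)
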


\begin{remark}
	Although the second author used different monodromy for $C(s)$ in \cite{Min}, the proof applies to any pseudo-Anosov monodromy (although with a different monodromy, the closed curve might not be parallel to $\vects{1}{0}$).
\end{remark}

\subsection{Thickening \texorpdfstring{$C(s)$}{C(s)}} \label{sec:thickening} In this section, we will prove Lemma~\ref{thickening} and Lemma~\ref{complement count -1n}. As discussed above, we first find a bypass for $-\bd C(s)$ along $\bd\Sigma$ so that by attaching this bypass we can thicken $C(s)$. Since all the bypasses will be found in a copy of $\Sigma$, the bypasses have slope $0$ on $\bd C(s)$. Recall also that thickening $C(s)$ changes $s$ clockwise on the Farey tessellation.

We do so via the following propositions, which generalize \cite[Propositions~5.8,~5.9]{EH:knots}, which finds bypasses for integral $s < -3$.  This first proposition deals with negative slopes $s$ that might occur in $\mathcal S(r)$, for $r \in \mathcal R$.

\begin{prop} \label{bypass-negative-slope}
	Suppose $s<0$ and $s \notin \{-\frac{4n-1}{n} \mid n \in \mathbb{N}\}$. Then there exists an isotopic copy of $\Sigma$ in $C(s)$ that contains a boundary-parallel dividing arc.
\end{prop}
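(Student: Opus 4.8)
The plan is to use the rough normalization from Proposition~\ref{normalizing-dividing-set} (and Proposition~\ref{normalizing-dividing-set-2} when $|m|=1$) and then argue that, after possibly attaching a first bypass, we can always arrange a boundary-parallel dividing arc on an isotopic copy of $\Sigma$. Write $s = \frac mn$ in lowest terms with $n > 0$. The case $|m|$ even is immediate: Proposition~\ref{normalizing-dividing-set}(2) already gives a boundary-parallel arc, so we may assume $m$ is odd (and handle $|m|=1$ via Proposition~\ref{normalizing-dividing-set-2}, whose first case has one arc and one closed curve parallel to $\vects10$). So the substantive work is when $m$ is odd, $|m|\ge 3$, and the dividing set on $\Sigma$ is in one of the two ``non-boundary-parallel'' normal forms: either (a) $|m|$ parallel arcs (plus a closed curve) of slope $\vects10$, or (b) $|m|-2$ arcs of slope $\vects01$, one of slope $\vects11$, one of slope $\vects10$.

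The key step is to find a bypass for $-\partial C(s)$ supported in such a $\Sigma$ and to compute its effect via Theorem~\ref{bypass on torus}. In either normal form, the $|m|$ (or $|m|-2$) parallel arcs are \emph{not} boundary-parallel, but adjacent parallel arcs cobound a region; on a once-punctured torus, attaching the bypass coming from the outermost such arc (realized Legendrian via the Legendrian realization principle) moves $s$ clockwise to the Farey neighbor determined by the arc slope. Concretely: the $\vects10$-arcs produce bypasses of slope $0$ on $\partial C(s)$ (all our bypasses, living in $\Sigma$, have slope $0$), and Theorem~\ref{bypass on torus} then sends $s=\frac mn$ to the label furthest clockwise of $s$, counterclockwise of $0$, and connected to $s$ by an edge. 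The point is to show that after this single thickening step the resulting slope $s'$ is one at which Proposition~\ref{normalizing-dividing-set} forces a boundary-parallel arc — i.e.\ $s'$ has even numerator, or $s'$ lands in the boundary-parallel case — \emph{unless} $s$ was of the excluded form $-\frac{4n-1}{n}$. The excluded family $-\frac{4n-1}n$ is exactly the set of slopes Farey-adjacent to $-3$ from the appropriate side (note $-\frac{4n-1}{n} \to -4$ and each is connected to $-3$), which is precisely where the thickening stalls at $C(-3)$ rather than continuing — hence it is correctly excluded from this proposition and handled separately (it is the content of Lemma~\ref{thickening}'s ``$-3$'' alternative).

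The main obstacle — and where the real combinatorics lives — is the bookkeeping in normal form (b) and in showing the thickened slope's numerator parity flips as claimed. I would handle (b) by first using bypass rotation (Theorem~\ref{bypass-rotation}) and bypass sliding (Theorem~\ref{bypass-sliding}) to convert the mixed-slope configuration into one where an outermost arc is boundary-parallel directly, or else to reduce $|\Gamma_\Sigma|$; since $\Sigma$ has genus one, Euler-characteristic/parity constraints (via $tb(\partial\Sigma) = -\tfrac12|\partial\Sigma\cap\Gamma_\Sigma|$ from Kanda's formula) sharply limit the configurations, so the argument terminates. The delicate point throughout is that all bypasses must be found \emph{inside} $C(s)$ and with the correct (front vs.\ back) attaching side so that Theorem~\ref{bypass on torus} gives a clockwise move on the Farey graph; I would track orientations carefully using the convention fixed in Section~\ref{sec:decomp} that slopes on $\partial C$ are measured as the boundary of $N$, so attaching a bypass from the back of $-\partial C$ increases $s$. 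Once a boundary-parallel dividing arc is produced on an isotopic copy of $\Sigma$ sitting in $C(s)$ (possibly after the one thickening step, which still keeps us inside the original $C(s)$), the proposition follows. Finally, I would double-check the edge cases $|m| = 3$ by hand, since there the count $|m|-2 = 1$ degenerates and normal form (b) has no $\vects01$-arcs, making the $\vects11$ and $\vects10$ arcs the only data — exactly the situation where a small explicit Farey-graph computation confirms the excluded slopes are $-3$'s neighbors $-\frac{11}{3}, -\frac{15}{4}, \dots$ and nothing else.
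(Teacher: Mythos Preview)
Your proposal has a fundamental gap: you never explain how to produce the initial bypass for $-\partial C(s)$. You write that ``adjacent parallel arcs cobound a region'' and propose ``attaching the bypass coming from the outermost such arc,'' but this is not how bypasses arise. By Theorem~\ref{imba}, a bypass for $-\partial C(s)$ along $\partial\Sigma$ comes precisely from a \emph{boundary-parallel} dividing arc on $\Sigma$ --- an arc that cuts off a half-disc. In normal form (a), the $|m|$ arcs are all essential (slope $\vects{1}{0}$), and in normal form (b) the arcs are essential as well; none is boundary-parallel. So your scheme of ``thicken once, then check the new numerator parity'' is circular: the thickening step already requires the conclusion of the proposition.

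Your description of the excluded set is also incorrect. The slopes $-\frac{4n-1}{n}$ are $-3, -\frac72, -\frac{11}{3}, -\frac{15}{4},\ldots$; only $-\frac72$ among these (besides $-3$ itself) is Farey-adjacent to $-3$, so they are not ``exactly the slopes Farey-adjacent to $-3$.'' The actual reason these slopes are excluded is arithmetic, not Farey-combinatorial: it is where a certain intersection count on a compressing disc becomes balanced, as the paper's argument reveals.

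The paper's proof proceeds quite differently. One cuts $C(s)$ along $\Sigma$ to obtain the handlebody $\Sigma\times[0,1]$ with convex boundary, and then takes \emph{compressing discs} $D_0$, $D_1$ (products of suitable arcs with $[0,1]$). After perturbing so that $\partial D_i$ avoids the dividing curves on $\partial\Sigma\times[0,1]$, one counts intersections of $\partial D_i$ with $\Gamma_0$ on $\Sigma\times\{0\}$ and invokes Theorem~\ref{imba} on the disc to find a bypass \emph{for $\Sigma\times\{0\}$} (not for $-\partial C(s)$). Attaching that bypass produces an isotopic copy of $\Sigma$ with a modified dividing set, and one checks that this new dividing set contains a boundary-parallel arc. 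The delicate case is when the dividing curves on $D_0$ could be nested (so the only visible bypass straddles the closed dividing curve and does not help); this occurs exactly when a difference $d$ of intersection counts equals $1$, forcing $m\in\{4n-1,\,4n-3,\,1\}$. The cases $m=4n-3$ and $m=1$ are then handled by passing to the other compressing disc $D_1$ and, in the smallest cases, by explicit bypass rotation and sliding. The case $m=4n-1$ is exactly the excluded family. None of this mechanism --- cutting to a handlebody, using compressing discs, the nested-versus-unnested dichotomy --- appears in your outline.
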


\begin{proof}
	Let $s=-\frac{m}{n}$ for a pair of co-prime positive integers $m,n$. Note that there are $m$ properly embedded dividing arcs on $\Sigma$. Decompose $C(s)$ along $\Sigma$ into a genus--$2$ handlebody $\Sigma \times [0,1]$ and round the edges to obtain a smooth convex boundary. The dividing curves on $\Sigma \times \{0\}$ are the image under the monodromy of the dividing curves on $\Sigma \times \{1\}$. The dividing curves divide $\bd\Sigma$ into $2m$ intervals; label these intervals $1$ through $2m$ following the orientation on the boundary of the surface induced by $\bd \left(\Sigma \times\{1\}\right)$. The dividing curves on $\bd\Sigma\times [0,1]$ connect the intervals on $\Sigma \times \{0\}$ and $\Sigma \times \{1\}$, but introduce a twist. More precisely, the $i$-th interval on $\Sigma \times \{1\}$ is connected to the $(i+2n-1)$-th interval (mod $2m$) on $\Sigma \times \{0\}$. See Figure~\ref{negative-edge-rounding}, for example. 

	\begin{figure}[htbp]
		\begin{center}
		\begin{overpic}[scale=0.9,tics=20]{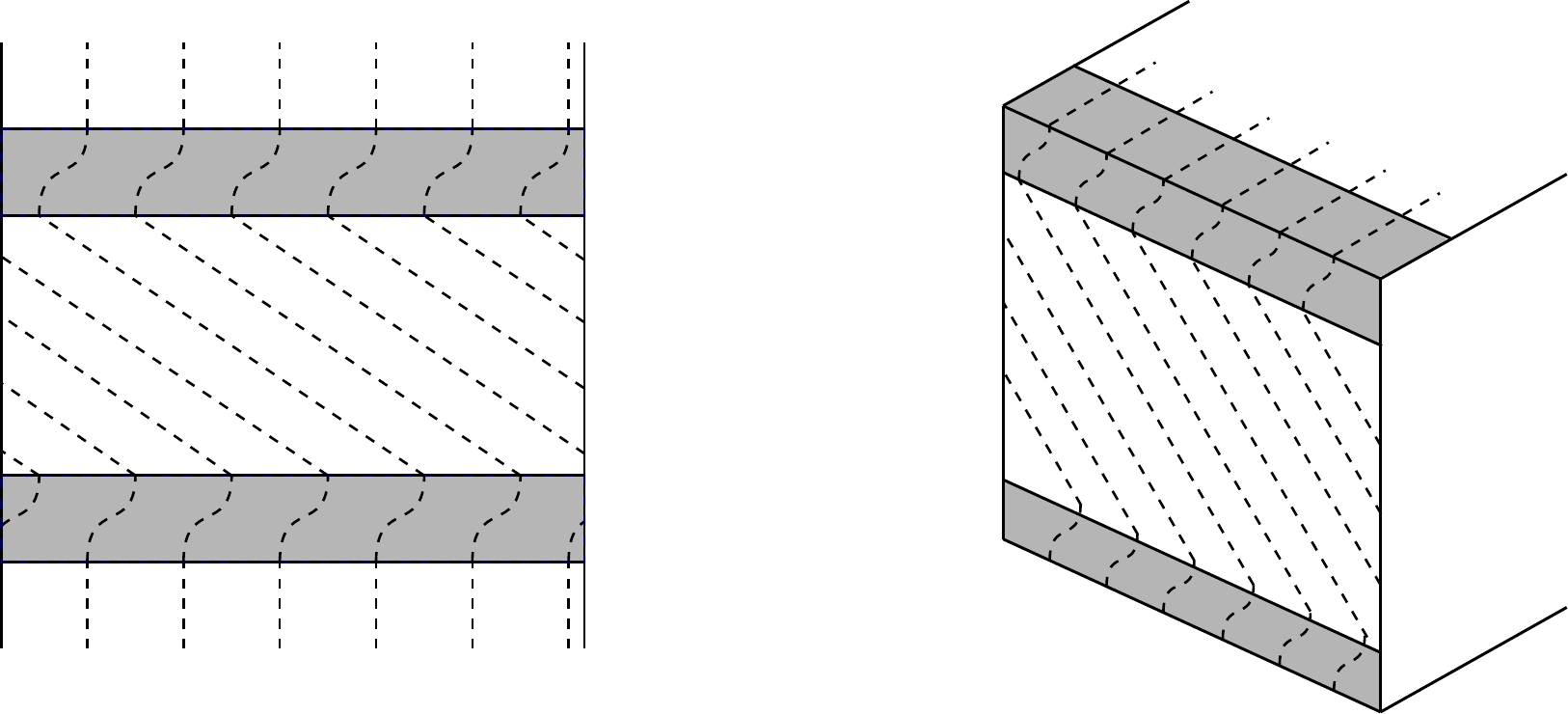}
		\put(-50,165){$\Sigma\times\{1\}$}
		\put(-50,25){$\Sigma\times\{0\}$}
		\put(10,170){$1$}
		\put(35,170){$2$}
		\put(60,170){$3$}
		\put(85,170){$4$}
		\put(112,170){$5$}
		\put(138,170){$6$}
		\put(10,20){$1$}
		\put(35,20){$2$}
		\put(60,20){$3$}
		\put(85,20){$4$}
		\put(112,20){$5$}
		\put(138,20){$6$}
		\end{overpic}
		\caption{Edge rounding when $s=-\frac 32$. In each drawing, the left side is identified with the right side.}
		\label{negative-edge-rounding}
		\end{center}
	 \end{figure}
	 
	 Let $\Gamma_i$ be a set of dividing curves on $\Sigma\times\{i\}$ for $i=0,1$. Although after edge-rounding, the entire dividing set consists of closed curves, we will continue to call dividing curves that pass through $\bd\Sigma$ dividing arcs, for convenience. By Proposition~\ref{normalizing-dividing-set} and Proposition~\ref{normalizing-dividing-set-2}, we only need to consider the following two cases.

	 \begin{figure}[htbp]
		\begin{center}
		\begin{overpic}[scale=1.2,tics=10]{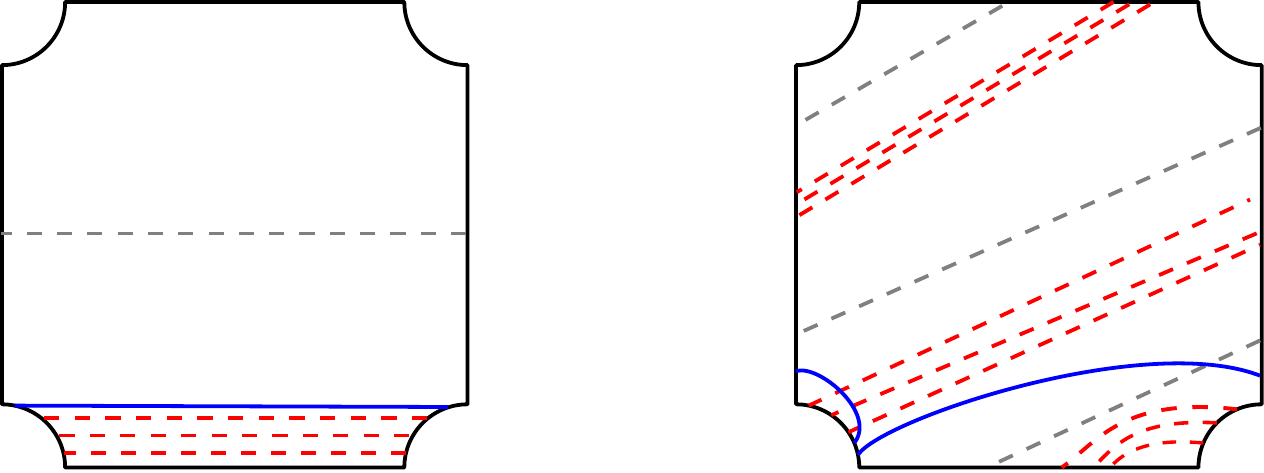}
		\put(60,-20){$\Sigma\times\{1\}$}
		\put(340,-20){$\Sigma\times\{0\}$}
		\put(5,16){\tiny $1$}
		\put(12.5,11.5){\tiny $2$}
		\put(16,6){\tiny $3$}
		\put(17.5,0){\tiny $4$}
		\put(10,148){\tiny $4$}	
		\put(150,148){\tiny $4$}
		\put(141.5,0){\tiny $4$}
		\put(142.7,6){\tiny $5$}
		\put(146,11.5){\tiny $6$}
		\put(154,16){\tiny $1$}
		\put(275.5,17){\tiny $1$}
		\put(281,15.5){\tiny $2$}
		\put(288,12){\tiny $3$}
		\put(291,5){\tiny $4$}
		\put(285,148){\tiny $4$}	
		\put(425,148){\tiny $4$}
		\put(417.5,2){\tiny $4$}
		\put(421,9.5){\tiny $5$}
		\put(425,14){\tiny $6$}
		\put(432,17){\tiny $1$}
		\end{overpic}
		\vspace{0.7cm}
		\caption{$\Sigma \times \{0, 1\}$ in $C(-\frac 32)$. The dotted lines are dividing curves. The blue lines are the intersections of the disc $D_0$ and $\Sigma \times \{0, 1\}$. In each drawing, the top and bottom are identified, and so are the left and right sides. In this figure, as well as in the rest of the paper, we adopt the convention of drawing $\Sigma \times \{0\}$ with the orientation induced from the fibration $C$, which disagrees with that induced by $\Sigma \times [0,1]$. Also in this picture (and this picture only), we have enumerated the regions of $\bd \Sigma$ to better see the twisting of the dividing set as it runs over $\bd\Sigma \times [0,1]$.}
		\label{negative-punctured-torus-1}
		\end{center}
	 \end{figure}
	
	(1) {\it $\Gamma_1$ contains $m$ arcs and one closed curve parallel to $\vects{1}{0}$}: Take an arc $\alpha$ on $\Sigma\times\{1\}$ which is parallel to $\vects{1}{0}$ and does not intersect any dividing curves, as shown in the left drawing of Figure~\ref{negative-punctured-torus-1}. Then $D_0=\alpha\times[0,1]$ is a compressing disc in $\Sigma\times[0,1]$ (the subscript $0$ here indicates that the slope of $\alpha$ is $0$). Perturb $D_0$ so that its boundary does not intersect any dividing curves on $\bd\Sigma\times[0,1]$. On $\Sigma\times\{0\}$, this will manifest as sliding the basepoints of $\alpha$ $2n+1$ spots along the positive orientation of $\bd \Sigma$, as described in the preceding paragraphs; see the right drawing of Figure~\ref{negative-punctured-torus-1}. After perturbing $D_0$ to be convex with Legendrian boundary, $\bd D_0$ will only intersect the dividing curves in $\Gamma_0$. Observe that $D_0$ intersects a closed dividing curve exactly once, and this intersection point separates $\alpha\times\{0\}$ into two sides: on one side, it intersects $|2n-1|$ dividing arcs, and on the other side, it intersects $|m-(2n-1)|$ dividing arcs. Since the total number of intersections is larger than $2$ (except when $m=n=1$, which we address below), we can find a bypass in the disc by Theorem~\ref{imba}. The difference between the numbers of intersections is 

	 \[ d = \left| |m-(2n-1)|-|2n-1| \right| = \case{|m-(4n-2)|, & \text{for } m\geq2n-1,}{|m|, & \text{for } m<2n-1.} \] 

	 If $d\neq1$, the dividing curves in $D_0$ cannot be nested like they are in the left drawing of Figure~\ref{dot-balancing}, so we can always find a bypass which does not straddle the closed dividing curve. After attaching this bypass, we obtain an isotopic copy of $\Sigma$ containing a boundary-parallel dividing arc.

	 \begin{figure}[htbp]
		\begin{center}
		\begin{overpic}[scale=1.2,tics=20]{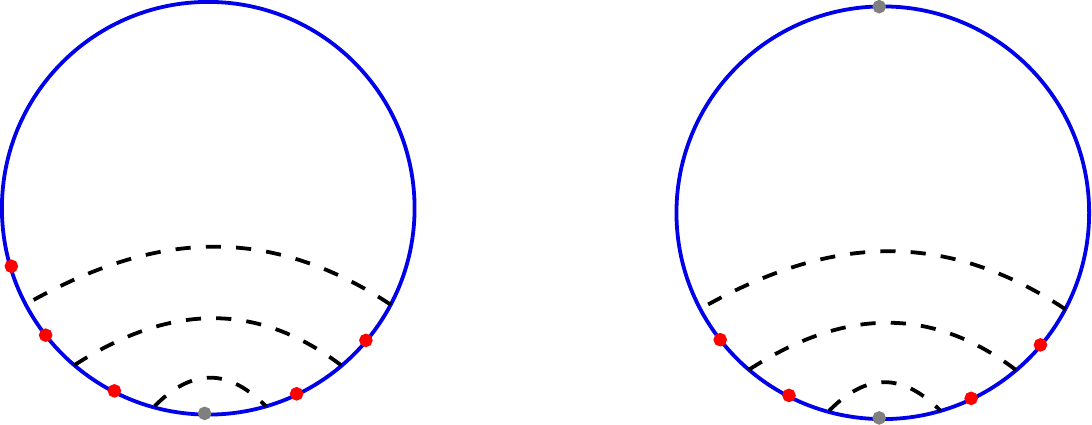}
		\end{overpic}
		\caption{Nested dividing curves in $D_0$.}
		\label{dot-balancing}
		\end{center}
	 \end{figure}
	 
	 \begin{figure}[htbp]
		\begin{center}
		\begin{overpic}[scale=1.2,tics=20]{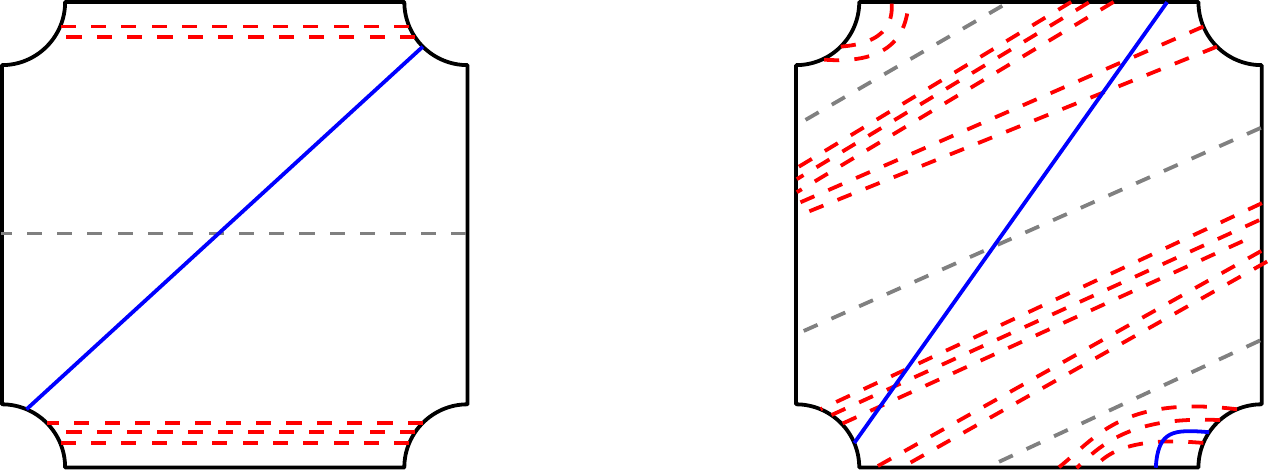}
		\put(60,-20){$\Sigma\times\{1\}$}
		\put(340,-20){$\Sigma\times\{0\}$}	
		\end{overpic}
		\vspace{0.7cm}
		\caption{$\Sigma \times \{0, 1\}$ in $C(-\frac 52)$. The blue lines are the intersections between $D_1$ and $\Sigma \times \{0, 1\}$.}
		\label{negative-punctured-torus-2}
		\end{center}
	 \end{figure}
	 
	 	 \begin{figure}[htbp]
		\begin{center}
		\begin{overpic}[scale=1.2,tics=20]{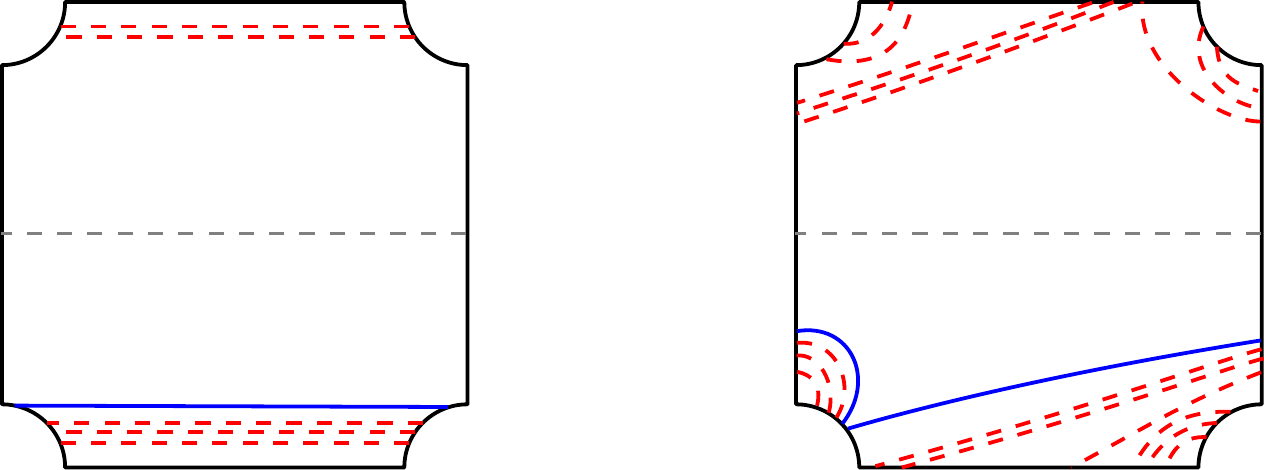}
		\put(60,-20){$\Sigma \times \{1\}$}
		\put(340,-20){$\Sigma \times \{\frac12\}$}	
		\end{overpic}
		\vspace{0.7cm}
		\caption{$\Sigma \times \{\frac12, 1\}$ in $C(-\frac 52)$ after attaching nested bypasses to $\Sigma \times \{0\}$. The blue lines are the intersections between an overtwisted disc and $\Sigma \times \{\frac12, 1\}$.}
		\label{negative-punctured-torus-3}
		\end{center}
	 \end{figure}
	 
	 If $d=1$, the dividing curves in $D_0$ might be nested. Moreover, we must have $m=4n-1$, $m=4n-3$, or $m=1$. First suppose $m=4n-3$ and $n>1$. Take an arc $\beta$ parallel to $\vects{1}{1}$ on $\Sigma \times \{1\}$ that only intersects the closed dividing curve once, see Figure~\ref{negative-punctured-torus-2}. Take a compressing disc $D_1=\beta \times [0,1]$ (with subscript $1$ due to the slope of $\beta$) in $\Sigma \times [0,1]$ and perturb it so that it does not intersect the dividing curves on $\bd \Sigma \times[0,1]$; the disc then intersects $4n-1$ dividing curves in $\Gamma_0$. The total number of intersections is clearly larger than $4$, so we can find a bypass in the disc.
	 
	 If there is a bypass on $\Sigma \times \{0\}$ which does not straddle the closed dividing curve, we obtain a boundary-parallel dividing arc after attaching the bypass, as before, so assume that there does not exist a such bypass. This implies that the dividing curves in the disc are nested as shown in the right drawing of Figure~\ref{dot-balancing}. Attach all bypasses successively to $\Sigma \times \{0\}$ to obtain $\Sigma \times \{\frac12\}$ with dividing curves parallel to $\vects{1}{0}$ but with some boundary twisting, see Figure~\ref{negative-punctured-torus-3}. (To be precise, we create $\Sigma\times\{1/2\}$ by first pushing $\Sigma\times\{0\}$ over the bypass in the neighborhood of the bypass, and then isotoping the convex surface using the contact vector field such that it is disjoint from $\Sigma\times\{0\}$.) Then as we demonstrate in Figure~\ref{negative-punctured-torus-3}, we can find an overtwisted disc by perturbing $\alpha \times [1/2,1]$, so this cannot happen.
	 
	 Now consider the case $m=1$ and $n>1$. Take $D_1$ to be as before, see Figure~\ref{negative-punctured-torus-4}. If there exists a bypass for $\Sigma\times\{0\}$ which does not straddle the closed dividing curve, we obtain a boundary-parallel dividing arc after attaching the bypass. If not, attach the bypass to $\Sigma\times\{0\}$ which straddles the closed dividing curve to obtain $\Sigma\times\{\frac13\}$, as shown in the first drawing of Figure~\ref{negative-punctured-torus-5}.
	 
	 Observe that the number of intersection points between the disc and dividing curves is $4n-4$. Since $n>1$, we can find a bypass for $\Sigma \times \{\frac13\}$ along a disc found by perturbing $\alpha \times [1/3,1]$ and we obtain a boundary-parallel dividing arc after attaching the bypass, see the last drawing of Figure~\ref{negative-punctured-torus-5}.
	 
	 \begin{figure}[htbp]
		\begin{center}
		\begin{overpic}[scale=1.2,tics=20]{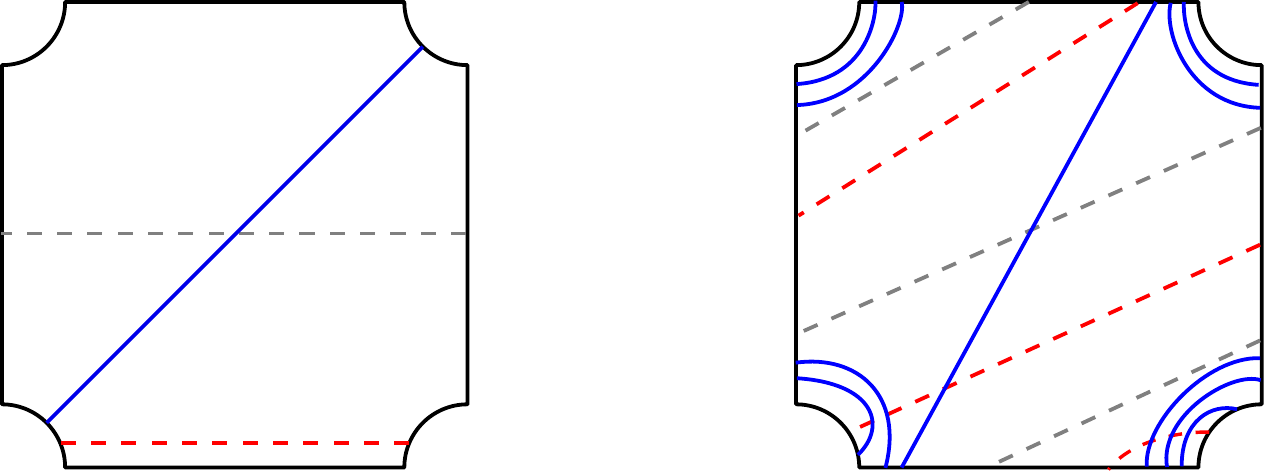}
		\put(60,-20){$\Sigma\times\{1\}$}
		\put(340,-20){$\Sigma\times\{0\}$}	
		\end{overpic}
		\vspace{0.7cm}
		\caption{$\Sigma \times \{0, 1\}$ in $C(-\frac 12)$. The blue lines are the intersections between $D_1$ and $\Sigma \times \{0, 1\}$.}
		\label{negative-punctured-torus-4}
		\end{center}
	 \end{figure}

	 \begin{figure}[htbp]
		\begin{center}
		\begin{overpic}[scale=1.2,tics=20]{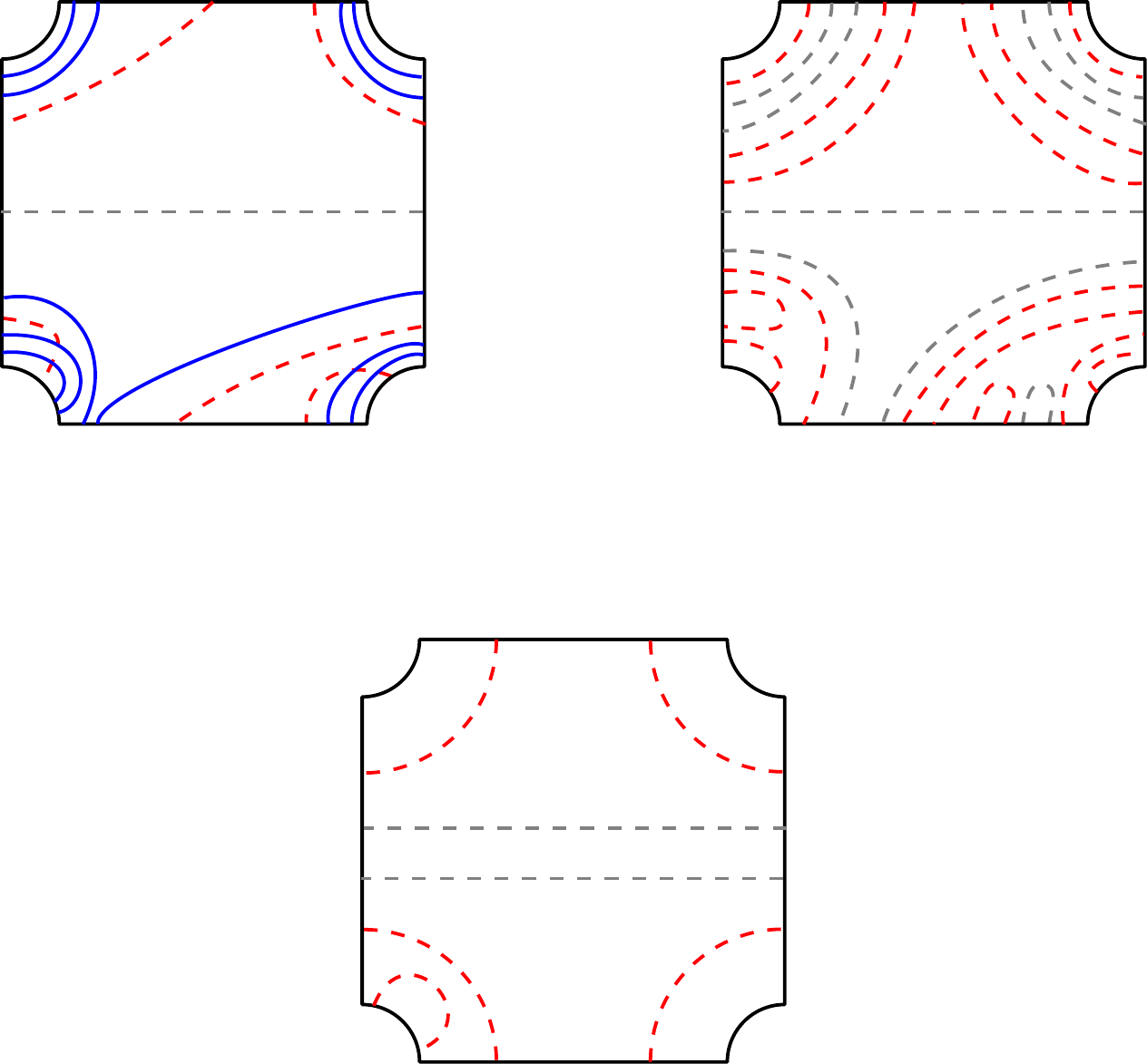}
		\put(60,220){$\Sigma\times\{\frac13\}$}
		\put(340,220){$\Sigma\times\{\frac23\}$}
		\put(200,-20){$\Sigma\times\{\frac23\}$}
		\end{overpic}
		\vspace{0.7cm}
		\caption{Top left: $\Sigma \times \{\frac13\}$ in $C(-\frac 12)$ after attaching a bypass to $\Sigma \times \{0\}$ which straddles the gray curve from Figure~\ref{negative-punctured-torus-4}. The blue lines are the intersections between the disc and $\Sigma \times \{\frac 13\}$. Top right: $\Sigma \times \{\frac23\}$ in $C(-\frac 12)$ after attaching a bypass to $\Sigma \times \{\frac13\}$ in the figure on the left. Bottom: the same as the top right, but after simplifying the dividing set.}
		\label{negative-punctured-torus-5}
		\end{center}
	 \end{figure}
	 
	 Now suppose $m=1$ and $n=1$. Take a compressing disc $D_1$ as before. This disc intersects dividing curves at four points, so we can find a bypass. If a bypass straddles the closed dividing curve on $\Sigma\times\{0\}$, attach this bypass and take the disc $D_0$. Since the number of intersections between the disc and dividing curves is $4n-4=0$, it is an overtwisted disc. Suppose a bypass straddles a dividing arc. Note that the attaching arc for this bypass is not contained entirely on $\Sigma\times\{0\}$, as shown in Figure~\ref{negative-punctured-torus-6}, but by bypass rotation and sliding (Theorem~\ref{bypass-rotation} and Theorem~\ref{bypass-sliding}), we can find a new bypass that is attached to $\Sigma \times \{0\}$, see Figure~\ref{negative-punctured-torus-7}. (Note that we rotate the bypass from left to right while on $\Sigma \times \{1\}$ since the bypass is attached from the back of $\Sigma\times\{1\}$.) Thus after attaching this new bypass, as in Figure~\ref{negative-punctured-torus-7}, we obtain an isotopic copy of $\Sigma$ containing one boundary-parallel arc and one boundary-parallel closed curve, as desired.

	 \begin{figure}[htbp]
		\begin{center}
		\begin{overpic}[scale=1.2,tics=20]{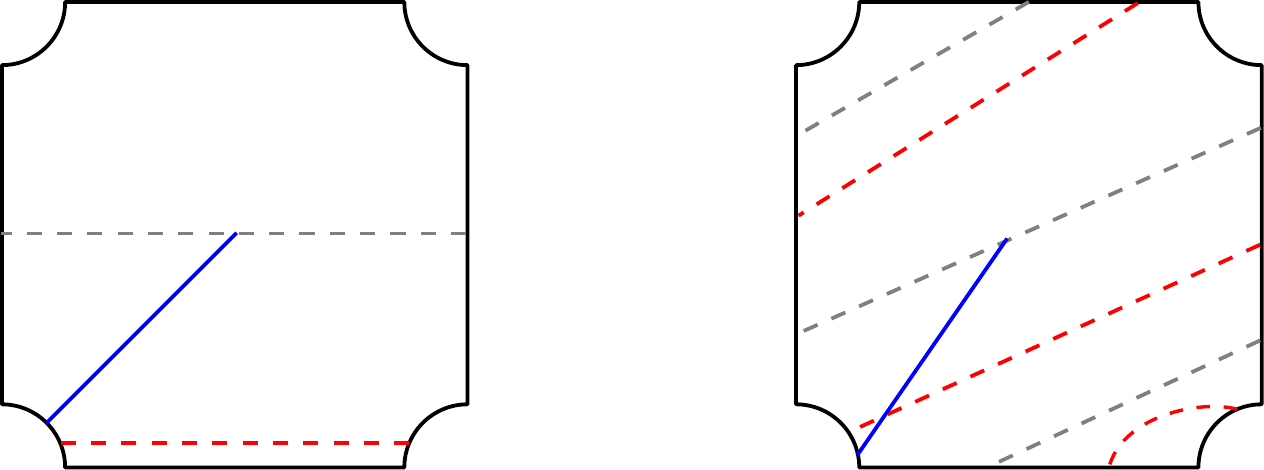}
		\put(60,-20){$\Sigma\times\{1\}$}
		\put(340,-20){$\Sigma\times\{0\}$}	
		\end{overpic}
		\vspace{0.7cm}
		\caption{$\Sigma \times \{0, 1\}$ in $C(-1)$. The blue lines are attaching arcs of a bypass.}
		\label{negative-punctured-torus-6}
		\end{center}
	 \end{figure}

	 \begin{figure}[htbp]
		\begin{center}
		\begin{overpic}[scale=1.2,tics=20]{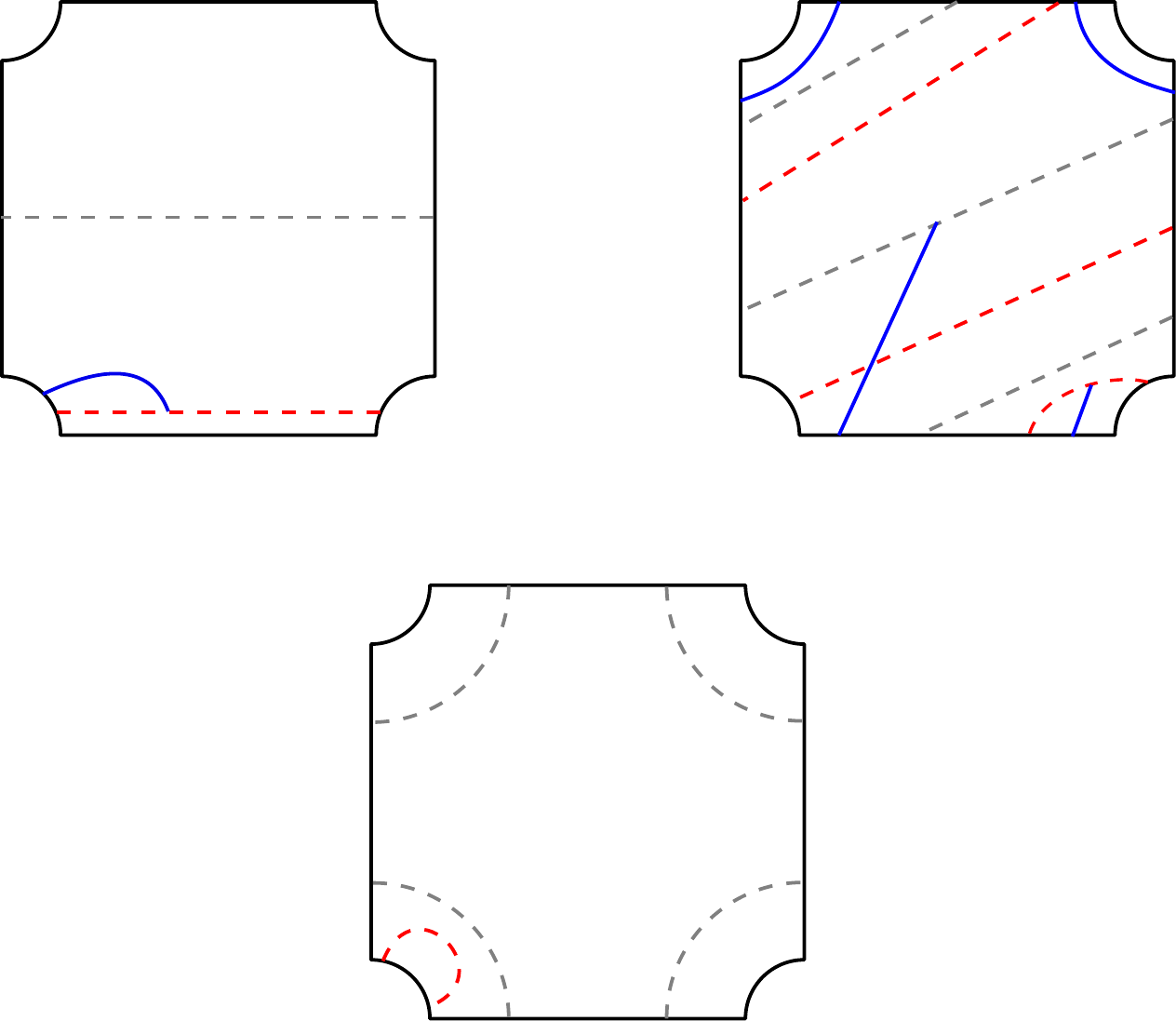}
		\put(60,200){$\Sigma\times\{1\}$}
		\put(340,200){$\Sigma\times\{0\}$}
		\put(200,-20){$\Sigma\times\{\frac12\}$}	
		\end{overpic}
		\vspace{0.7cm}
		\caption{The blue line in the first drawing is the attaching arc of the bypass in Figure~\ref{negative-punctured-torus-6} after the bypass rotation. The blue lines in the second drawing is the attaching arc of the bypass after the bypass sliding. The last drawing is $\Sigma \times \{\frac12\}$ in $C(-1)$ after attaching the bypass to $\Sigma \times \{0\}$.}
		\label{negative-punctured-torus-7}
		\end{center}
	 \end{figure}

	 (2) {\it $\Gamma_1$ contains $(m-2)$ arcs parallel to $\vects{0}{1}$, one arc parallel to $\vects{1}{1}$, and one arc parallel to $\vects{1}{0}$}: Take $\beta$ and $D_1$ as before, see Figure~\ref{negative-punctured-torus-8}. Perturb $D_1$ it so that it only intersects dividing curves in $\Gamma_0$. Since the total number of intersections is $m+4n-3$, which is larger than $2$, we can find a bypass for $\Sigma\times\{0\}$. It is easy to check that every possible bypass gives rise to a boundary-parallel dividing arc.
\end{proof}

	 \begin{figure}[htbp]
		\begin{center}
		\begin{overpic}[scale=1.2,tics=20]{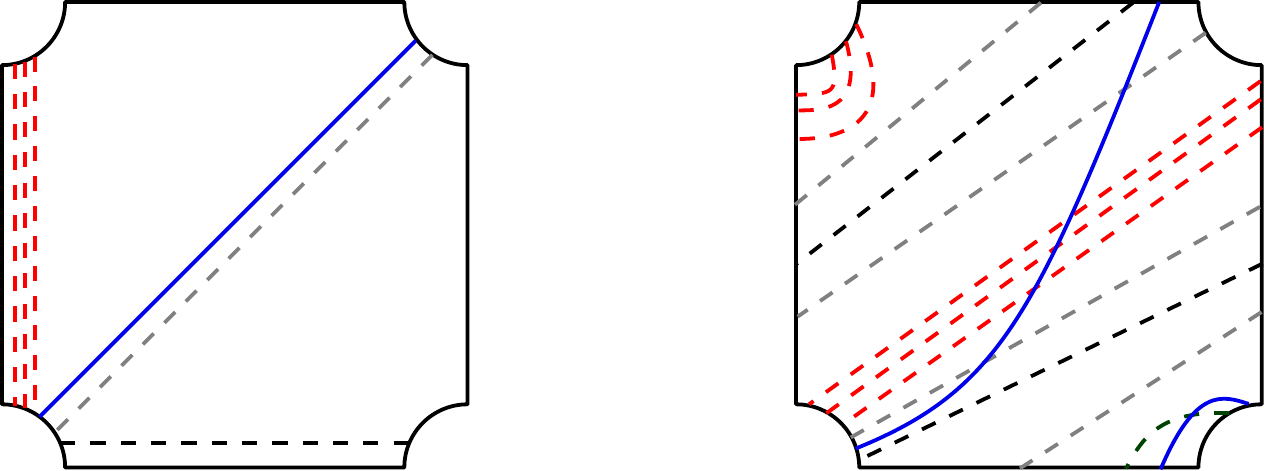}
		\put(60,-20){$\Sigma\times\{1\}$}
		\put(340,-20){$\Sigma\times\{0\}$}	
		\end{overpic}
		\vspace{0.7cm}
		\caption{$\Sigma \times \{0, 1\}$ in $C(-5)$. The blue lines are the intersections between the disc and $\Sigma \times \{0, 1\}$.}
		\label{negative-punctured-torus-8}
		\end{center}
	 \end{figure}

This second proposition deals with positive slopes $s$ that might occur in $\mathcal S(r)$, for $r \in \mathcal R$.

\begin{prop} \label{bypass-positive-slope}
	Suppose $s>0$ and $s \notin \{\frac{1}{n}, \frac{4n+1}{n} \mid n \in \mathbb{N}\}$. Then there exists an isotopic copy of $\Sigma$ in $C(s)$ that contains a boundary-parallel dividing arc.
\end{prop}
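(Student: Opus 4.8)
The plan is to run the proof of Proposition~\ref{bypass-negative-slope} again, the only structural change being that the twisting of the dividing set along $\bd\Sigma\times[0,1]$ now runs in the opposite rotational direction. Write $s=\frac mn$ with $m,n$ coprime positive integers. If $m$ is even, then Proposition~\ref{normalizing-dividing-set}(2) already gives an isotopic copy of $\Sigma$ in $C(s)$ with a boundary-parallel dividing arc, so assume $m$ is odd; the hypothesis then rules out only $m=1$ (the slopes $\frac1n$, so that Proposition~\ref{normalizing-dividing-set-2} is not needed here) and $m=4n+1$ (the slopes $\frac{4n+1}{n}$). Exactly as in Proposition~\ref{bypass-negative-slope}, cut $C(s)$ along $\Sigma$ to obtain a genus--$2$ handlebody $\Sigma\times[0,1]$, round the edges, and note that the $2m$ intervals into which the dividing curves cut $\bd\Sigma$ are joined across $\bd\Sigma\times[0,1]$ with a twist --- which for $s>0$ moves the $i$-th interval on $\Sigma\times\{1\}$ by roughly $-2n$ spots (mod $2m$) on $\Sigma\times\{0\}$, i.e.\ opposite to the negative case. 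By Proposition~\ref{normalizing-dividing-set} it remains to treat two configurations on $\Sigma\times\{1\}$ (the third possibility there being already a boundary-parallel arc): (i) $m$ arcs and one closed curve parallel to $\vects{1}{0}$, and (ii) $m-2$ arcs parallel to $\vects{0}{1}$, one parallel to $\vects{1}{1}$, and one parallel to $\vects{1}{0}$.

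For configuration (i), take an arc $\alpha$ parallel to $\vects{1}{0}$ on $\Sigma\times\{1\}$ disjoint from $\Gamma_1$, form the compressing disc $D_0=\alpha\times[0,1]$, perturb $\bd D_0$ off the dividing arcs in $\bd\Sigma\times[0,1]$ (this slides $\alpha$'s endpoints by $2n+1$ spots on $\Sigma\times\{0\}$), and make $D_0$ convex with Legendrian boundary. As in Proposition~\ref{bypass-negative-slope}, $\bd D_0$ then meets $\Gamma_0$ in one point on the closed curve together with dividing arcs whose counts on the two sides of that point differ by
\[ d = \case{|m-(4n+2)|, & \text{for } m\geq 2n+1,}{m, & \text{for } m<2n+1;} \]
the replacement of the negative case's $4n-2$ by $4n+2$ is precisely the effect of reversing the boundary twist, and it is what makes $\frac{4n+1}{n}$ (rather than $\frac{4n-1}{n}$) the exceptional slope. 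Since $m>1$, the total number of intersections is $>2$, so Theorem~\ref{imba} yields a bypass for $\Sigma\times\{0\}$ inside $D_0$; and when $d\neq1$ the dividing curves in $D_0$ cannot be nested as in the left picture of Figure~\ref{dot-balancing}, so we may pick a bypass not straddling the closed curve and, after attaching it, obtain a boundary-parallel dividing arc. If $d=1$, then $m\in\{1,4n+1,4n+3\}$, and since the first two are excluded we are reduced to $m=4n+3$.

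To dispose of $m=4n+3$ in configuration (i), and of configuration (ii), I would pass to the slope--$1$ compressing disc $D_1=\beta\times[0,1]$, where $\beta$ is an arc parallel to $\vects{1}{1}$ on $\Sigma\times\{1\}$ meeting the closed curve at most once (in case (ii), $\beta$ is the $\vects{1}{1}$ arc already present in the normal form). Perturbed off the boundary dividing arcs, $\bd D_1$ meets $\Gamma_0$ in more than four points, so Theorem~\ref{imba} again supplies a bypass. In configuration (ii) one checks directly that every such bypass gives a boundary-parallel dividing arc. In configuration (i) with $m=4n+3$: if some bypass does not straddle the closed curve we are done; otherwise the bypasses in $D_1$ are nested, and attaching them successively to $\Sigma\times\{0\}$ produces an isotopic $\Sigma\times\{\frac12\}$ with dividing curves parallel to $\vects{1}{0}$ but with boundary twisting, whereupon perturbing $\alpha\times[1/2,1]$ exhibits an overtwisted disc --- a contradiction --- so the nested case does not occur. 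Should any small value of $n$ force the relevant attaching arc off $\Sigma\times\{0\}$, it is corrected, as in the small-$n$ analysis of Proposition~\ref{bypass-negative-slope}, by bypass rotation and sliding (Theorems~\ref{bypass-rotation} and~\ref{bypass-sliding}).

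The \textbf{main obstacle} is the bookkeeping underlying the second paragraph: correctly pinning down the direction and magnitude of the boundary twist for a general positive slope $s=\frac mn$, hence the exact value of $d$ (this is what singles out $\frac{4n+1}{n}$ for exclusion), together with the nested-bypass analysis when $m=4n+3$ --- in particular producing the overtwisted disc that rules out nesting --- and checking the handful of small-$n$ base cases. Everything else is a direct transcription of the proof of Proposition~\ref{bypass-negative-slope}.
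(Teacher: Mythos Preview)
Your overall strategy is right, but the bookkeeping in configuration (i) is off, and this causes you to misattribute the role of the exceptional slopes.  For $s>0$ the reversal of the boundary twist does not simply substitute $2n+1$ for $2n-1$ in the piecewise formula from Proposition~\ref{bypass-negative-slope}.  Because the endpoints of $\alpha$ slide in the opposite direction, $\alpha\times\{0\}$ now wraps past its original position on $\bd\Sigma$, so the counts of arc-intersections on either side of the unique closed-curve intersection are $2n+1$ and $m+2n+1$; hence $d=m$ in every case, not your piecewise expression.  Since $m>1$ (as $s\neq\frac1n$), the dividing set on $D_0$ is never nested, and configuration (i) is finished immediately --- no $m=4n+3$ subcase and no nested-bypass/overtwisted-disc argument are needed.

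Consequently, the exclusion of $\frac{4n+1}{n}$ does not come from configuration (i) at all; it comes from configuration (ii), which you dismiss as a direct check.  There the total intersection number of $\bd D_1$ with $\Gamma_0$ is $|(m-1)-2(2n+1)|=|m-4n-3|$, which is certainly not always ``more than four'': it can be $0$ (an overtwisted disc) or $2$.  The value $2$ forces $m=4n+1$ or $m=4n+5$; the first is excluded by hypothesis, and the second requires a genuinely new compressing disc $D_1'$ (built from an arc $\gamma$ parallel to $\vects{1}{1}$ that meets the $\vects{1}{1}$ dividing arc once) together with bypass rotation and sliding to locate the bypass on $\Sigma\times\{0\}$.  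So the sentence ``In configuration (ii) one checks directly that every such bypass gives a boundary-parallel dividing arc'' is exactly where the work hides; the approach is correct, but you have the difficulty of the two cases reversed.
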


\begin{proof}
	Let $s=\frac mn$ for a pair of positive co-prime integers $m,n$. Then there are $m$ dividing arcs on $\Sigma$. As in Proposition~\ref{bypass-negative-slope}, we cut $C(s)$ along $\Sigma$ and round the edges to obtain $\Sigma\times[0,1]$ with a smooth convex boundary. Note that the $i$-th interval on $\Sigma\times\{1\}$ is connected to the $(i-2n-1)$-th interval (mod $2m$) on $\Sigma\times\{0\}$ via the dividing curves on $\bd\Sigma \times [0,1]$. We have two cases, by Proposition~\ref{normalizing-dividing-set}.
	
	(1) {\it $\Gamma_1$ contains $m$ arcs and one closed curve parallel to $\vects{1}{0}$}: Take $D_0$ as before, as shown in the left drawing of Figure~\ref{negative-punctured-torus-1}. Perturb the disc so that it only intersects dividing curves in $\Gamma_0$; there will necessarily be some twists on $\alpha\times\{0\}$ near the boundary (note that the direction of twisting will be opposite from that in the right drawing of Figure~\ref{negative-punctured-torus-1}).
	
	There is one intersection with the closed dividing curve, and on either side of this intersection point there are $(2n+1)$ and $(m+2n+1)$ intersection points with dividing arcs, respectively. The total number of intersections is $m + 4n + 2 > 2$, so there exists a bypass along the disc. The difference between the number of intersections on either side of the closed dividing curve is 
	\[ d = |m+(2n+1)-(2n+1)| = m. \]
	 
	Since $s \neq \frac1n$, we have $m>1$, and so the dividing curves in the disc cannot be nested as they are in Figure~\ref{dot-balancing}. Thus, there exists a bypass that does not straddle the closed dividing curve, and after attaching this bypass, we obtain an isotopic copy of $\Sigma$ containing a boundary-parallel dividing arc. 

	\begin{figure}[htbp]
		\begin{center}
		\begin{overpic}[scale=1.2,tics=20]{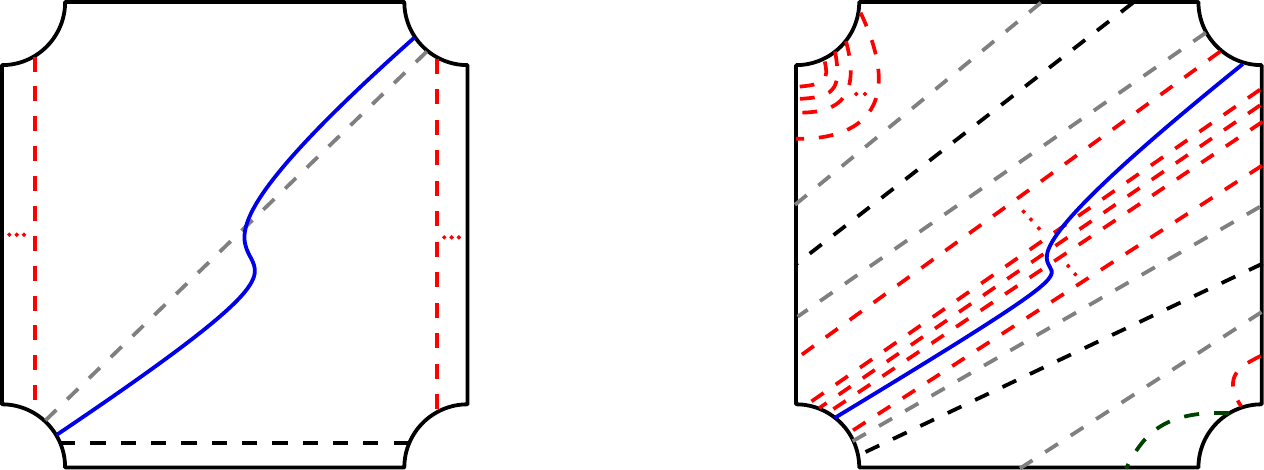}
		\put(60,-20){$\Sigma\times\{1\}$}
		\put(340,-20){$\Sigma\times\{0\}$}	
		\end{overpic}
		\vspace{0.7cm}
		\caption{$\Sigma \times \{0, 1\}$ in $C(9)$. The blue lines are the intersections between $D'_1$ and $\Sigma \times \{0, 1\}$.}
		\label{positive-punctured-torus-1}
		\end{center}
	 \end{figure}

	 \begin{figure}[htbp]
		\begin{center}
		\begin{overpic}[scale=1.2,tics=20]{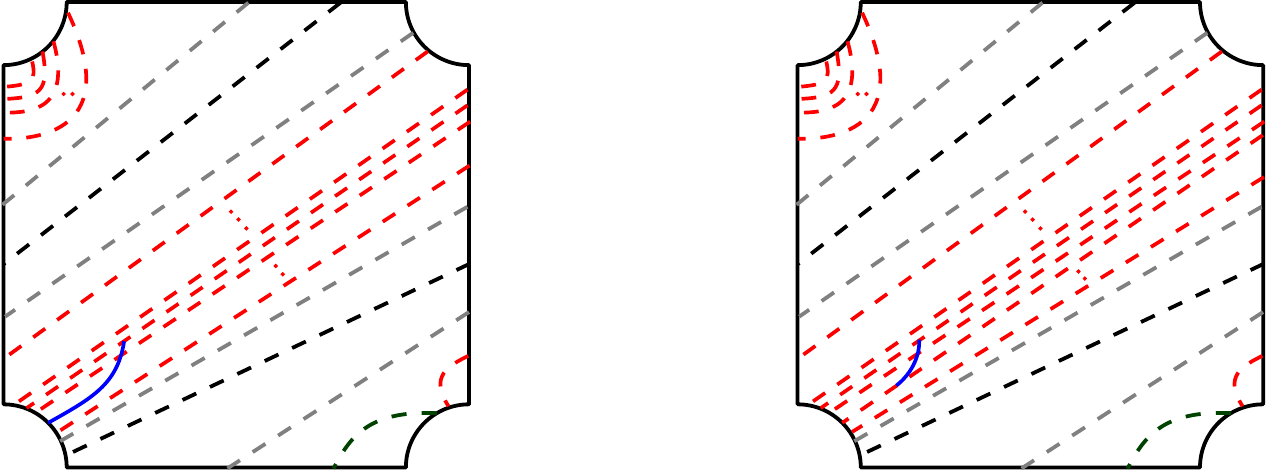}
		\put(60,-20){$\Sigma\times\{0\}$}
		\put(340,-20){$\Sigma\times\{0\}$}	
		\end{overpic}
		\vspace{0.7cm}
		\caption{$\Sigma \times \{0\}$ in $C(9)$. The blue line in the right drawing is an attaching arc of a bypass. The blue line in the left drawing is an attaching arc of a bypass after the bypass rotation.}
		\label{positive-punctured-torus-2}
		\end{center}
	 \end{figure}

	(2) {\it $\Gamma_1$ contains $(m-2)$ arcs parallel to $\vects{0}{1}$, one arc parallel to $\vects{1}{1}$ and one arc parallel to $\vects{1}{0}$}: Take $D_1$ as before such that the boundary does not intersect any dividing curves in $\Gamma_1$ or $\bd\Sigma \times[0,1]$, as shown in Figure~\ref{negative-punctured-torus-8}. It will only intersect dividing curves in $\Gamma_0$, and the total number of intersections is 
	\[ d=|(m-1)-2(2n+1)|=|m-4n-3|. \]
	If $d=0$, $D_1$ is an overtwisted disc. If $d>2$, we can find a bypass for $\Sigma\times\{0\}$ and it is easy to check this bypass gives rise to a boundary-parallel dividing arc. If $d=2$, this implies that $m = 4n+1$ or $m = 4n+5$. Suppose $m=4n+5$. Choose an arc $\gamma$ parallel to $\vects{1}{1}$ on $\Sigma\times\{1\}$ that intersects the arc parallel to $\vects{1}{1}$, see the left drawing of Figure~\ref{positive-punctured-torus-1}. Take a compressing disc $D_1' = \gamma\times[0,1]$ in $\Sigma\times[0,1]$ and perturb the disc so that its boundary does not intersect any dividing curve on $\bd\Sigma\times[0,1]$: then $D'_1$ will intersect three dividing curves in $\Gamma_0$, see the right drawing of Figure~\ref{positive-punctured-torus-1}. If there exists a bypass on $\Sigma\times\{0\}$ that intersects three dividing arcs, it will give rise to a boundary-parallel arc. If no such bypass is immediate, we can apply bypass rotation and sliding (Theorem~\ref{bypass-rotation} and Theorem~\ref{bypass-sliding}) to locate the bypass on $\Sigma\times\{0\}$, as in the proof of Proposition~\ref{bypass-negative-slope}; see Figure~\ref{positive-punctured-torus-2}. This bypass will intersect three dividing arcs, and will give rise to the desired boundary-parallel dividing arc.
\end{proof}

With the previous propositions under our belt, the only value of $s$ that might come up that we have yet to consider is $s = 0$. Since $\Sigma$ in $C(0)$ has no dividing arcs, the above techniques will not aide us in determining whether or not $C(0)$ thickens. Instead, we will see in the proof below that it is possible to entirely avoid considering the $s = 0$ case by making use of the $s = -\frac1n$ cases.

\begin{proof}[Proof of Lemmas~\ref{thickening} and ~\ref{complement count -1n}]
	Since $s \in \mathcal{S}(r)$, and $r \in \mathcal{R}$, we know that $s \notin (-4,-3] \cup (0,1] \cup (4,5]$. In case $s = 0$, then $r = -\frac1n$. We consider a knot $L$ in a tight contact structure on $M(-\frac1n)$ such that the complement of $L$ is $C(0)$. Consider the complement of a stabilization of $L$: since stabilizing is like attaching a bypass from the back with slope $-\frac1n$, the slope changes from $0$ counter-clockwise, and the complement of this stabilization is $C(-\frac1{n+1})$. We can consider this contact manifold instead of $C(0)$. (It will follow from the proof that the sign of the stabilization is irrelevant.)
	
	We first suppose that $s<0$. Then by Proposition~\ref{bypass-negative-slope}, we can find a bypass for $-\bd C(s)$ along an arc in $\bd\Sigma$. According to Theorem~\ref{bypass on torus}, we can increase $s$ by attaching this bypass to $-\bd C(s)$, and repeating this, we can thicken $C(s)$ until we obtain $C(-3)$ which does not thicken any further or we obtain $C(-\frac 1n)$, for some $n \in \N$. Assume we are in the second case, and let $\Gamma_\Sigma$ be a set of dividing curves on $\Sigma$ in $C(-\frac 1n)$. By Proposition~\ref{normalizing-dividing-set-2}, we only need to consider the following two cases.

	(1) {\it $\Gamma_\Sigma$ contains one boundary-parallel arc without any other dividing curves}: First, fix the signs of the regions of $\Sigma$ so that the sign of the bypass is negative. Observe that the relative Euler class evaluated on $\Sigma$ is $-2$. After cutting $C(-\frac 1n)$ along $\Sigma$ and rounding the edges to obtain $\Sigma\times[0,1]$ with a smooth convex boundary, we observe that the dividing set on the boundary is one closed curve parallel to $\bd\Sigma$. By choosing convex compressing discs for this handlebody whose Legendrian boundaries intersect the dividing set exactly twice, we see that there is a unique (potentially) tight contact structure on this handlebody, and thus a unique (potentially) tight contact structure on $C(-\frac 1n)$, which we denote by $\xi^-_{-n}$.
	
	It follows that any tight contact structure on $C(-\frac 1n)$ whose relative Euler class evaluated on $\Sigma$ is $-2$ is actually isotopic to $\xi^-_{-n}$, provided they have the same singular foliation on the boundary, which can be arranged by small perturbations. Hence the complement of a standard neighborhood of $L^-_{-1}$ in $(S^3,\xi_1^{OT})$ is isotopic to $\xi^-_{-1}$, by Theorem~\ref{legendrianapproximations} (we use notation from Section~\ref{sec:phi-lower-bound}) and thus that $\xi^-_{-1}$ is in fact tight. In the proof of Lemma~4.2 in \cite{Conway:admissible}, the first author showed that $(C(-1),\xi^-_{-1})$ in fact thickens to $C(\infty)$, and $C(-1)=T^- \cup C(\infty)$, where $T^-$ is a negative basic slice with dividing curve slopes $s_0=-1$ and $s_1=\infty$.  Decompose $T^-$ into $T_0^- \cup T_1^-$, where $T_0^-$ is $T^2\times[0,1]$ with dividing curve slopes $s_0=-1$ and $s_1=-\frac 1n$, and $T_1^-$ is $T^2\times[1,2]$ with dividing curve slopes $s_1=-\frac 1n$ and $s_2=\infty$. Note that each $T_i$ for $i=0,1$ is a stack of negative basic slices. Hence we obtain a tight contact structure on $C(-\frac 1n) = T_1^- \cup C(\infty)$. It is easy to check that the relative Euler class on $\Sigma$ is $-2$, so this contact structure is isotopic to $\xi^-_{-n}$. Therefore, $(C(-\frac 1n), \xi^-_{-n})$ also thickens to $C(\infty)$, for each $n$.
	
	Next, change the signs of the regions of $\Sigma$ so that the sign of the bypass is positive. In this case, the relative Euler class on $\Sigma$ is $2$. After re-running the above argument {\em mutatus mutandis}, we get a unique tight contact structure under these conditions, which we denote by $\xi^+_{-n}$.  Similarly, we build these tight contact structures $\left(C(-\frac 1n),\xi^+_{-n}\right)$ that thicken to $C(\infty)$.
	
	The above paragraphs show that these tight contact structures on $C(-\frac 1n)$ are determined by the sign of the bypass on $\Sigma$, which is in turn determined by the sign on the stacks of basic slices $T^\pm$, or equivalently the sign of $T_1^\pm$, and are thus independent of the tight contact structure on $C(\infty)$.

	\begin{figure}[htbp]
		\begin{center}
		\begin{overpic}[scale=1.2,tics=20]{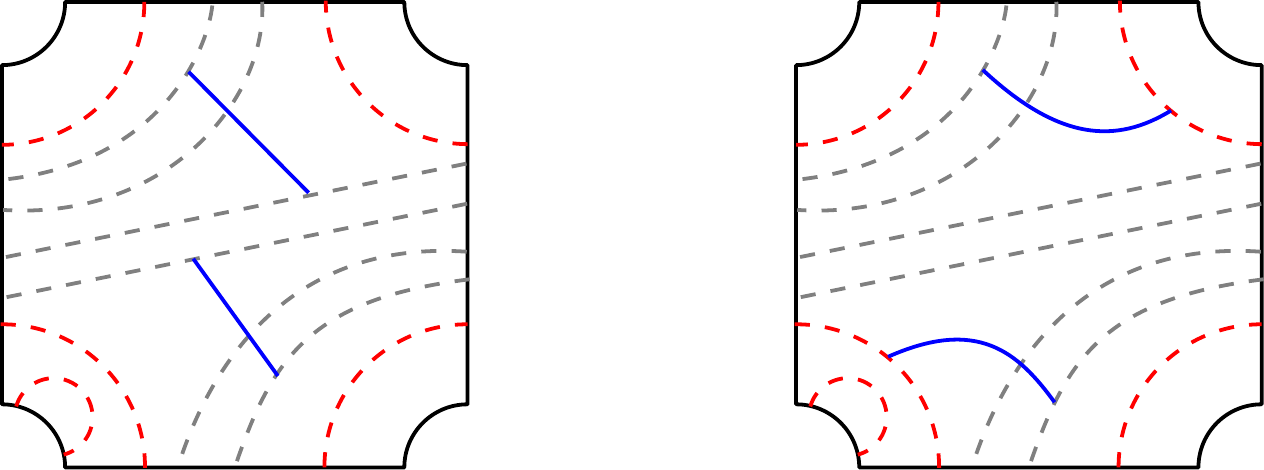}
		\put(60,-20){$\Sigma\times\{0\}$}
		\put(340,-20){$\Sigma\times\{0\}$}	
		\end{overpic}
		\vspace{0.7cm}
		\caption{The blue lines in the left picture are possible attaching arcs of a bypass on $\Sigma \times \{0\}$ along $c \times \{0\}$ (exactly one will exist, given a fixed dividing set on $c \times [0,1]$). The blue lines in the right picture are the attaching arcs after the bypass rotation.}
		\label{bypass-punctured-torus}
		\end{center}
	 \end{figure}

	 \begin{figure}[htbp]
		\begin{center}
		\begin{overpic}[scale=1.2,tics=20]{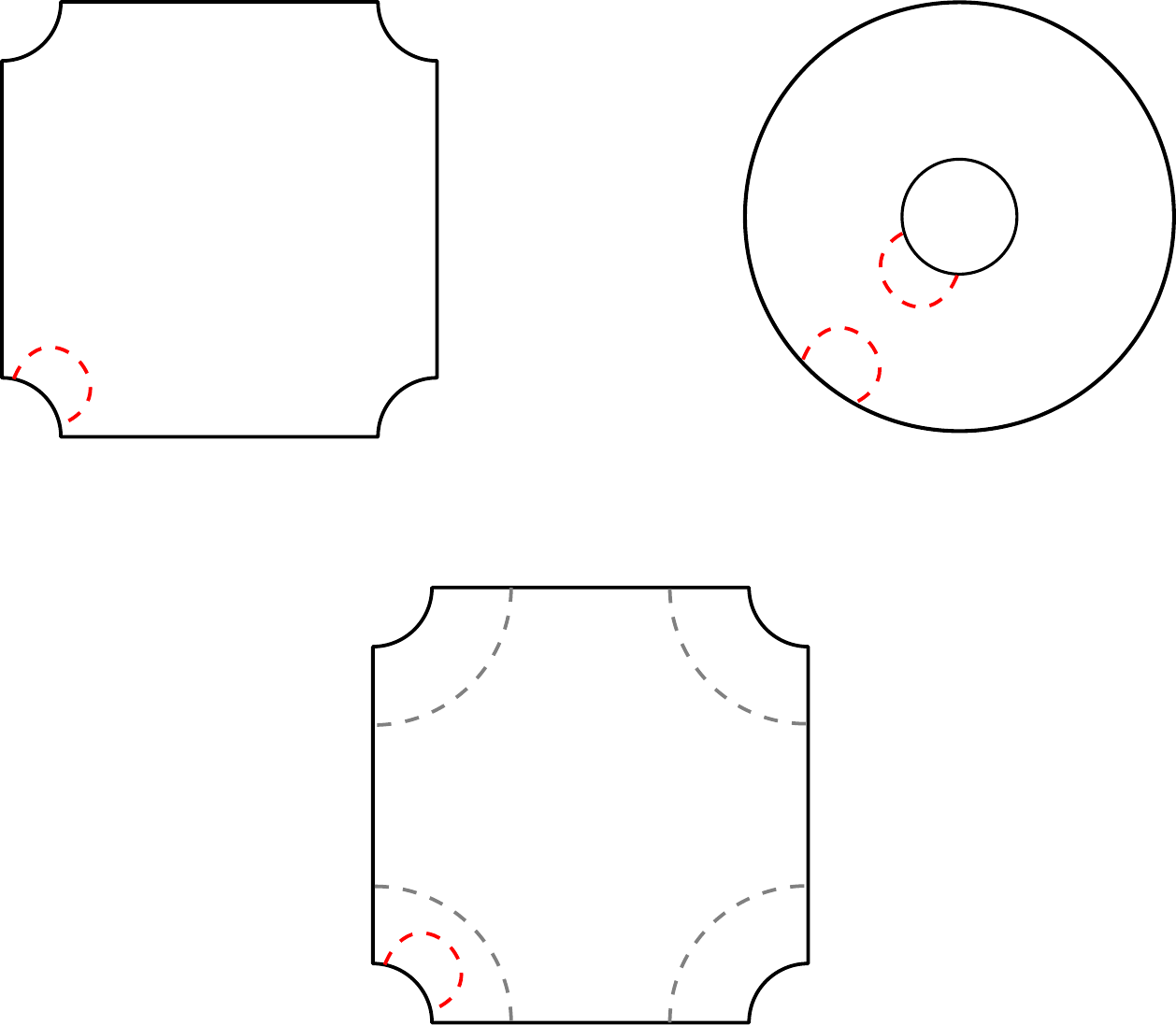}
		\put(70,200){$\Sigma$}
		\put(350,200){$A$}
		\put(215,-20){$\Sigma$}	
		\end{overpic}
		\vspace{0.7cm}
		\caption[caption]{Left: $\Sigma$ in $C(-\frac 1n)$. Right: annulus in $T^2\times[0,1]$.\\\null\hspace{2cm} Bottom: $\Sigma$ in $C(-\frac 1n)$ after gluing $T^2\times[0,1]$.}
		\label{half-Giroux-torsion}
		\end{center}
	 \end{figure}

	(2) {\it $\Gamma_\Sigma$ contains one arc and one closed curve parallel to $\vects{1}{0}$}: Observe that the relative Euler class evaluated on $\Sigma$ is $0$. In this case, we showed in the proof of Proposition~\ref{bypass-negative-slope} that there exists an isotopic copy of $\Sigma$ in $C(-\frac 1n)$ with one boundary-parallel dividing arc, one closed boundary-parallel dividing curve and perhaps two closed curves parallel to $\vects{1}{0}$, see the last drawings of Figure~\ref{negative-punctured-torus-5} and Figure~\ref{negative-punctured-torus-7}. We now show that if the two closed dividing curves parallel to $\vects{1}{0}$ are present, then they can be removed.
	
	Cut $C(-\frac 1n)$ along $\Sigma$ and round the edges to obtain $\Sigma\times[0,1]$ with a smooth convex boundary. Take an annulus $c\times[0,1]$, where $c$ is a closed curve on $\Sigma$ parallel to $\vects{0}{1}$ that intersects the dividing curves minimally: there exists a bypass on $\Sigma\times\{0\}$ by Theorem~\ref{imba}. Since $c$ intersects dividing curves on $\Sigma\times\{0\}$ at four points, there are two possible bypasses, but no matter which bypass we have, we can rotate the bypass (Theorem~\ref{bypass-rotation}) so that one end of a bypass intersects the boundary-parallel closed dividing curve. Now we can remove two dividing curves by attaching this bypass, see Figure~\ref{bypass-punctured-torus}.
	
	Now $\Gamma_\Sigma$ has one boundary-parallel arc and one boundary-parallel closed curve, which means that the dividing set of $\bd(\Sigma\times[0,1])$ consists of exactly three dividing curves parallel to $\bd\Sigma$. Fix the signs of the regions so that the bypass is negative. Contact structures on genus--$2$ handlebodies with this particular convex boundary were studied by Cofer \cite[Section~3]{Cofer} in the process of classifying contact structures on $\Sigma_2 \times I$; she determined that for fixed signs of the regions on the boundary of the handlebody (and a fixed singular foliation), there are exactly two tight contact structures up to isotopy, and both are universally tight. Thus, we can construct at most two tight contact structures on $C(-\frac1n)$ from the chosen signs of the regions.

	We claim that one of these contact structures must have a boundary-parallel half Giroux torsion layer. Indeed, consider the tight contact structure $(C(-\frac 1n),\xi^-_{-n})$ obtained in (1). If we glue a negative half Giroux torsion layer to the boundary, we obtain a tight contact structure on $C(-\frac 1n)$, by Theorem~\ref{gluing}, that contains a copy of $\Sigma$ whose dividing set is $\Gamma_\Sigma$, see Figure~\ref{half-Giroux-torsion}.

	By performing the same analysis with the opposite signs on the regions, we see that there are at most two tight contact structures on $C(-\frac 1n)$ without half Giroux torsion when the relative Euler class evaluated on $\Sigma$ is $0$, and that these contact structures are completely determined by the sign of the bypass on $\Sigma$.  Denote these contact structures by $(C(-\frac 1n),\xi'^{\pm}_{-n})$. We need to show that they thicken to $C(\infty)$ for $n > 1$ and are overtwisted for $n = 1$.
	
	Consider a contact structure $\xi_{-1}$ on $M(-1)$ obtained by Legendrian surgery on $L^{-}_0$ in $(S^3,\xi_1^{OT})$ (we use notation from Section~\ref{sec:obd}); this is tight by Theorem~\ref{surgeries on legendrianapproximations}. Let $L$ be a push-off of $L^-_0$, viewed as a knot in $(M(-1), \xi_{-1})$. Then it is not hard to see that $tb(L)=0$, $rot(L)=-1$, and the complement of a standard neighborhood of $L$ is the union of some tight contact structure on $C(\infty)$ with no half Giroux torsion and a negative basic slice (since the same is true for $L^-_{0}$).
	
	Let $L'$ be a positive stabilization of $L$. Then $tb(L')=-1$, $rot(L')=0$, and the complement of a standard neighborhood of $L'$ is $(C(-\frac 12),\xi')$ for some tight contact structure $\xi'$. Since $rot(L')=0$, the relative Euler class of $\xi'$ vanishes on $\Sigma$, so $\xi'$ is isotopic to one of $\xi'^{\pm}_{-2}$. If we thicken $C(-\frac12)$ back to $C(\infty)$, we would start by adding a positive basic slice, so we see that we constructed $\xi'^+_{-2}$. Similarly, repeating the above construction starting with $L^+_0$ (and switching all mentions of {\em negative} and {\em positive}) will construct $\xi'^-_{-2}$.
	
	Suppose $n > 2$. Then we can decompose $(C(-\frac 12), \xi'^\pm_{-2})$ into $T_0^{\pm} \cup T_1^{\pm} \cup C(\infty)$, where $T_0^{\pm} = T^2 \times [0,1]$ with dividing curve slopes $s_0=-\frac 12$ and $s_1=-\frac 1n$, and $T_1^{\pm} = T^2\times [1,2]$ with dividing curve slopes $s_1=-\frac 1n$ and $s_2=\infty$. Since $\xi'^\pm_{-2}$ restricted to $T_1^\pm \cup C(\infty)$ have the right relative Euler class evaluation and no half Giroux torsion, we conclude that these restrictions are in fact the $\xi'^\pm_{-n}$ that we are looking for, and they evidently thicken to $C(\infty)$ and are determined by the contact structure on $T$.
	
	For $n=1$, we decompose $C(-1)$ into $T^{\pm} \cup (C(-\frac 12),\xi')$, where $T^{\pm}$ is a basic slice with dividing curve slopes $s_0=-1$ and $s_1=-\frac 12$. However, this implies that $C(-1) = T^{\pm} \cup T' \cup C(\infty)$, for some $T' = T^2 \times I$ composed of two bypass layers of mixed sign. Since the dividing curve slopes on the boundary of $T^{\pm} \cup T'$ are connected by an edge in the Farey tessellation, and the signs of the bypasses in $T'$ are mixed, $T^{\pm} \cup T'$ is overtwisted, by Theorem~\ref{basic-slice}. Hence, $\xi'^\pm_{-1}$ are overtwisted, as required.
	
	Once we have thickened $C(s)$ to $C(\infty)$, we claim that it does not thicken further. If $C(\infty)$ thickens further, then as above, it thickens to $C(-3)$ and stops, or thickens all the way to $C(\infty)$. If the latter is true, then it contains half Giroux torsion, contrary to our hypothesis. If it thickens to $C(-3)$ and no further, then by Lemma~\ref{complement count -3}, the tight contact structure on $C(\infty)$ must be contactomorphic to the complement of a Legendrian figure-eight knot in $(S^3, \xi_{\rm std})$ glued to a $T^2 \times I$ such that the resulting dividing curves on the boundary are meridional. By the first author's study \cite{Conway:f8} of positive contact surgeries on the Legendrian figure-eight knot in tight $S^3$, any such contact structure on $C(\infty)$ is overtwisted. Thus, if $C(\infty)$ is tight, it cannot thicken.

	Next, consider the positive case $s>0$. Since $s \not\in (0,1] \cup (4, 5]$, we can find a bypass for $-\bd C(s)$ along an arc in $\bd\Sigma$ using Proposition~\ref{bypass-positive-slope}. As above, we can thicken $C(s)$ by attaching this bypass to $-\bd C(s)$, and by repeating this, we eventually obtain a tight contact structure on $C(\infty)$. As above, this $C(\infty)$ cannot thicken any further.
\end{proof}

\subsection{Tight contact structures on \texorpdfstring{$C(\infty)$}{C(infty)}} \label{sec:Cinfty} In this section, we study (possibly) tight contact structures on $C(\infty)$ without boundary-parallel half Giroux torsion and prove Lemma~\ref{complement count infinity}. As before, we will normalize the dividing curves on a copy of $\Sigma$ in $C(s)$, and after cutting $C(s)$ along $\Sigma$ and rounding the edges, arrive at a genus--$2$ handlebody. Denote the dividing set on $\Sigma$ by $\Gamma_\Sigma$. By Proposition~\ref{normalizing-dividing-set-2}, there are two possibilities for $\Gamma_\Sigma$.

(1) {\it $\Gamma_\Sigma$ contains a boundary-parallel arc}: In this case, there is a bypass for $-\bd C(s)$ along $\Sigma$ with slope $0$, so we can thicken $C(\infty)$ to $C(0)$ by attaching the bypass. We can find a convex torus in $C(\infty)\setminus C(0)$ with slope $-1$, and hence find $C(-1)$ inside $C(\infty)$. Then by Lemma~\ref{thickening}, we know that $C(-1)$ thickens to $C(\infty)$, which implies that $C(\infty)$ contains a boundary-parallel half Giroux torsion layer, contrary to our hypothesis.

\begin{figure}[htbp]
	\begin{center}
	\begin{overpic}[scale=1.2,tics=20]{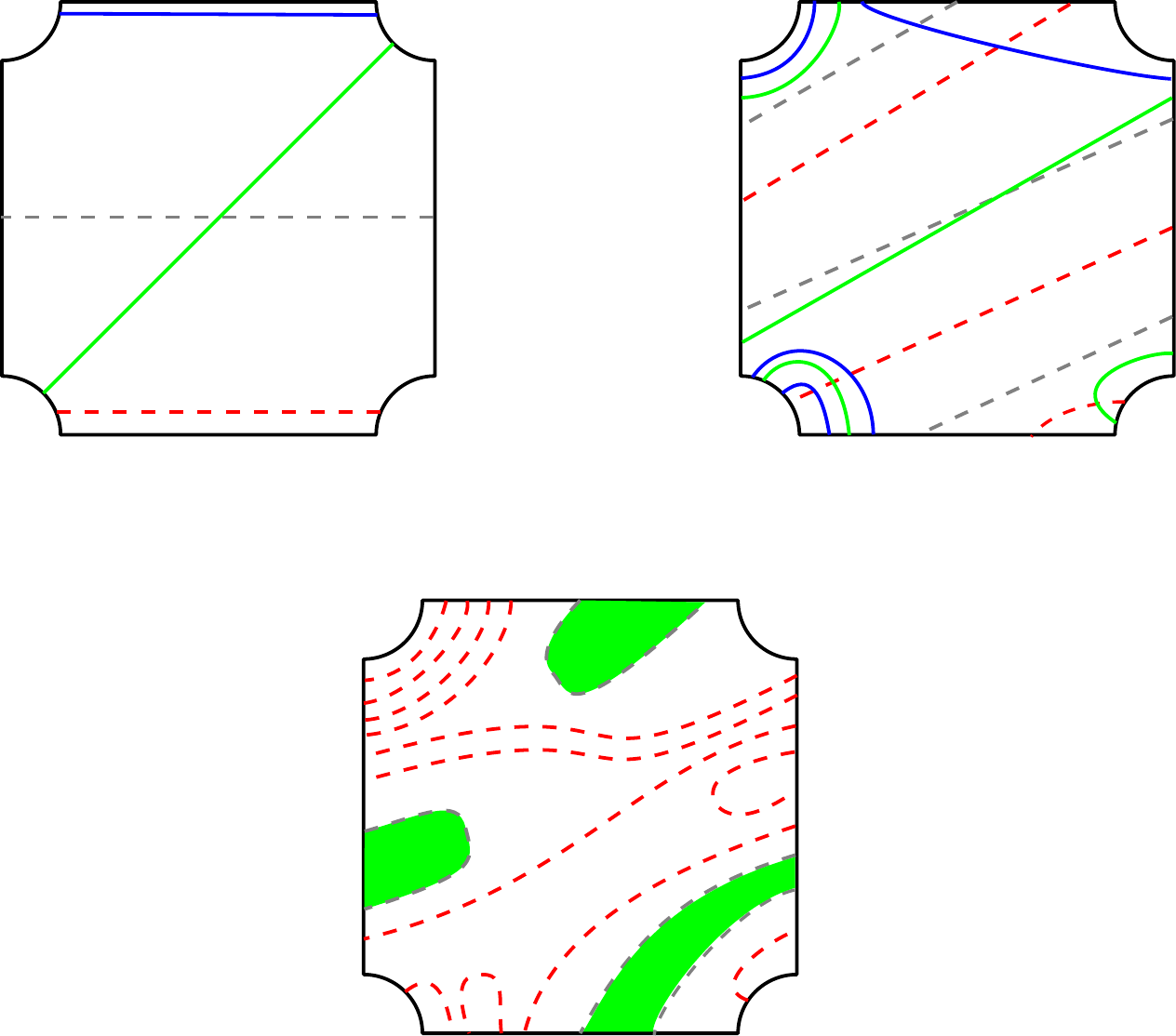}
	\put(60,205){$\Sigma\times\{1\}$}
	\put(340,205){$\Sigma\times\{0\}$}
	\put(200,-20){$\Sigma\times\{\frac12\}$}
	\put(60,370){$D_0$}
	\put(60,274){$D_1$}
	\put(323,278){$D_1$}
	\put(383,355){$D_0$}
	\end{overpic}
	\vspace{0.7cm}
	\caption{The top two drawings are $\Sigma \times \{0,1\}$ in $C(\infty)$. The dotted lines are dividing curves. The blue and green lines are the intersections of $D_0$ and $D_1$ and $\Sigma\times\{0,1\}$. The bottom drawing is $\Sigma \times \{\frac12\}$ in $C(\infty)$, after attaching bypasses along $D_0$ and $D_1$ that result in an overtwisted disc; the gray dotted lines give the contractible dividing curve.}
	\label{infty-punctured-torus-1} 
	\end{center}
 \end{figure}

(2) {\it $\Gamma_\Sigma$ contains one arc and one closed curve both parallel to $\vects{1}{0}$}: Take compressing discs $D_0$ and $D_1$ for $\Sigma\times[0,1]$ (as in Section~\ref{sec:thickening}) by taking the product of an arc and $[0,1]$, where the subscript indicates the slope of the arc, as shown in the top drawings of Figure~\ref{infty-punctured-torus-1}. After a perturbation so that the discs intersect no dividing curves on $\bd\Sigma \times [0,1]$, the boundary of each disc intersects dividing curves at four points. Suppose there exists a bypass for $\Sigma\times\{0\}$ along $\bd D_1$ that straddles the closed dividing curve and a bypass for $\Sigma\times\{0\}$ along $\bd D_0$ that straddles the dividing arc. After attaching these bypasses, we obtain a contractible dividing curve, so the contact structure is overtwisted, see the bottom of Figure~\ref{infty-punctured-torus-1}.

Fix the signs of the regions of $\Sigma$. Since we have ruled out one possible combination of dividing curves on $D_i$, there are at most three tight contact structures on $C(\infty)$ up to isotopy that induce these signs of the regions. Now suppose there exist bypasses for $\Sigma\times\{0\}$ along $\bd D_0$ and $\bd D_1$ which both straddle the closed dividing curve. This setup corresponds to one of the three contact structures on $C(\infty)$ that we denote by $\xi$. We will show that $\xi$ is overtwisted.

\begin{lem} \label{infty overtwisted}
	$(C(\infty),\xi)$ is overtwisted.
\end{lem}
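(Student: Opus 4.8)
The plan is to show that the two bypass attachments that define $\xi$ — one along $\partial D_0$ straddling the closed dividing curve on $\Sigma\times\{0\}$, and one along $\partial D_1$ also straddling the closed dividing curve — together force the appearance of a contractible dividing curve on a convex surface, which by Giroux's criterion (stated in Section~\ref{sec:convex}) forces overtwistedness. Concretely, I would first attach the bypass along $\partial D_1$ to $\Sigma\times\{0\}$, pushing across the bypass to obtain an isotopic copy $\Sigma\times\{\tfrac12\}$ sitting between $\Sigma\times\{0\}$ and $\Sigma\times\{1\}$, and carefully track how the dividing set $\Gamma_{\Sigma\times\{1/2\}}$ looks. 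Because the bypass straddled the closed curve parallel to $\vects{1}{0}$, the closed curve is preserved (possibly with some boundary twisting absorbed into $\partial\Sigma\times[\tfrac12,1]$) and the arc parallel to $\vects{1}{0}$ is reconfigured. Then I would take the compressing disc $D_0$ in the smaller handlebody $\Sigma\times[\tfrac12,1]$ and examine $\partial D_0 \cap \Gamma_{\Sigma\times\{1/2\}}$: the claim is that after the first bypass attachment, the arc $\alpha$ of slope $0$ together with the relocated dividing curves now forces $\partial D_0$ to bound a disc meeting the dividing set in a pattern that yields a contractible component after rounding — i.e., the configuration drawn in the bottom of Figure~\ref{infty-punctured-torus-1}.

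The key steps, in order: (1) Set up $\Sigma\times[0,1]$ with the normalized dividing set from case (2) of Proposition~\ref{normalizing-dividing-set-2} (one arc and one closed curve parallel to $\vects{1}{0}$), recording how the intervals on $\partial\Sigma\times\{1\}$ connect to those on $\partial\Sigma\times\{0\}$ via the $\phi$-twist, exactly as in the proof of Proposition~\ref{bypass-negative-slope}. (2) Write down the two compressing discs $D_0$ (slope $0$) and $D_1$ (slope $1$), perturb their boundaries off of $\partial\Sigma\times[0,1]$, and confirm each meets $\Gamma_0$ in four points with exactly one of these on the closed curve. (3) Identify the two distinguished bypasses — the one along $\partial D_1$ straddling the closed curve and the one along $\partial D_0$ straddling the closed curve — and attach them successively to $\Sigma\times\{0\}$, using the edge-rounding picture to produce the resulting dividing set on $\Sigma\times\{\tfrac12\}$. (4) Observe that the resulting dividing set contains a closed curve that, after the second edge-rounding, is contractible (the gray dotted curve in the bottom of Figure~\ref{infty-punctured-torus-1}); invoke Giroux's result from Section~\ref{sec:convex} that a tight convex surface other than $S^2$ has no contractible dividing curve, so $(C(\infty),\xi)$ must be overtwisted.

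I expect the main obstacle to be step (3): correctly bookkeeping how the dividing set on $\Sigma\times\{0\}$ changes under two successive bypass attachments, including the boundary twisting that gets pushed into $\partial\Sigma\times[\cdot,\cdot]$, and verifying that the two bypass arcs genuinely can be realized disjointly (or in the right order) so that the second attachment still makes sense on the modified surface. This is the step where one must be careful that ``straddling the closed dividing curve'' for $\partial D_0$ really does refer to the same closed curve after the first attachment, and that the combined effect is the overtwisted configuration rather than some innocuous rearrangement. Once the picture in Figure~\ref{infty-punctured-torus-1} is established, the conclusion is immediate from Giroux's criterion, so the entire content of the lemma is in getting that picture right; bypass rotation and sliding (Theorems~\ref{bypass-rotation} and~\ref{bypass-sliding}) may be needed to move the relevant attaching arcs entirely onto $\Sigma\times\{0\}$, as in the earlier propositions.
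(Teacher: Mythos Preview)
Your proposal has a genuine gap rooted in a misreading of the setup. The configuration in the bottom of Figure~\ref{infty-punctured-torus-1} --- the one that immediately yields a contractible dividing curve --- is \emph{not} the contact structure $\xi$. As the paragraph preceding the lemma states, that figure depicts the case where the bypass along $\partial D_1$ straddles the closed dividing curve while the bypass along $\partial D_0$ straddles the dividing \emph{arc}; that case is disposed of before the lemma is even stated. The contact structure $\xi$ is the remaining case in which \emph{both} bypasses straddle the closed dividing curve, and attaching those two bypasses to $\Sigma\times\{0\}$ does not directly produce a contractible component. Your steps (3)--(4) therefore do not go through: the ``overtwisted configuration'' you are aiming for simply is not the result of the two attachments that define $\xi$.

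The paper's actual argument is considerably more involved. After attaching the $D_0$ bypass one obtains $\Sigma_{1/2}$ with $\Gamma_{1/2}=\Gamma_1$, and the point is that $\xi$ restricted to $\Sigma\times[\tfrac12,1]$ is not $I$--invariant (because the $D_1$ bypass is nontrivial). One then finds further nontrivial bypasses using new compressing discs (Figures~\ref{infty-punctured-torus-2}--\ref{infty-punctured-torus-7}), first exhibits an overtwisted disc in the product $\Sigma\times[0,2]$ obtained by swinging once around the fibration (i.e.\ in a double cover of $C(\infty)$), and then does additional work to locate the overtwisted disc inside $C(\infty)$ itself --- in one branch invoking Cofer's classification and the already-established overtwistedness of $\xi'^\pm_{-1}$ from the proof of Lemma~\ref{thickening}. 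None of this machinery (the non-$I$-invariance argument, the passage to $\Sigma\times[0,2]$, the appeal to Cofer and to $\xi'^\pm_{-1}$) appears in your plan, and your direct route via two bypass attachments cannot substitute for it.
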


\begin{proof}
	In this proof, we use $\Gamma_t$ to denote the dividing set of a convex copy of $\Sigma_t = \Sigma \times \{t\} \subset \Sigma \times [0,1]$. (Recall that since we create $\Sigma_t$ by attaching bypasses to other boundary-parallel surfaces and then isotoping the surface to become disjoint, it follows that $\Sigma_t$ need not be a fiber in the natural fibration $\Sigma \times [0,1] \to [0,1]$.) Then we have $\Gamma_0$ and $\Gamma_1$ as pictured in Figure~\ref{infty-punctured-torus-1}. After attaching the bypass along $\bd D_0$ that straddles the closed dividing curve in $\Gamma_0$, we arrive at $\Sigma_{1/2}$, where $\Gamma_{1/2} = \Gamma_1$.

	Note that $\xi$ restricted to $\Sigma \times [\frac12, 1]$ is not $I$--invariant, since there exists a non-trivial bypass for $\Sigma_{1/2}$ along $D_1 \cap \Sigma_{1/2}$, and $I$--invariant structures only support trivial bypasses (see \cite[Proposition~4.10]{Giroux:bundles}).  Take a compressing disc $D$ for $\Sigma \times [\frac12, 1]$ as shown in Figure~\ref{infty-punctured-torus-2}. We claim that there exists a bypass for $\Sigma_1$ from the back along $D$ that straddles the closed dividing curve in $\Gamma_1$, which gives rise to a non-trivial bypass. To see this, note that a second compressing disc parallel to $\vects{1}{0}$ can be arranged to intersect only two dividing curves on $\partial(\Sigma \times [0,1])$, and thus the contact structure on $\Sigma \times [0,1]$ is determined by the dividing set on $D$. Since there are two possible dividing sets for $D$, one must correspond to the $I$--invariant structure, and one to our contact structure. Since the $I$--invariant contact structure cannot have a non-trivial bypass, it must be our contact structure that has the bypass. Now, after attaching the non-trivial bypass for $\Sigma_1$ from the back along $\partial D$, we obtain $\Sigma_{3/4}$, where $\Gamma_{3/4}$ is identical to $\Gamma_0$.

	\begin{figure}[htbp]
		\begin{center}
		\begin{overpic}[scale=1.2,tics=20]{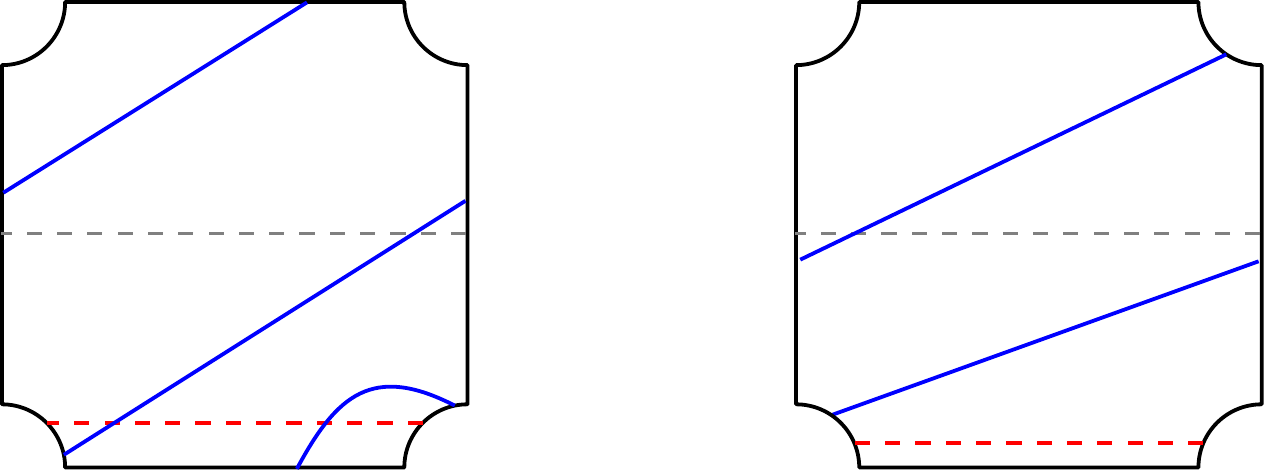}
		\put(60,-20){$\Sigma\times\{1\}$}
		\put(340,-20){$\Sigma\times\{\frac 12\}$}	
		\end{overpic}
		\vspace{0.7cm}
		\caption{The blue arcs are the intersections of the compressing disc $D$ and $\Sigma \times \{\frac12, 1\}$.}
		\label{infty-punctured-torus-2}
		\end{center}
	\end{figure}
	
	Next, let $\Sigma \times [1, \frac32]$ be $\phi^{-1}\left(\Sigma \times [0, \frac12]\right)$ and let $\Sigma \times [\frac32, 2]$ be $\phi^{-1}\left(\Sigma \times [\frac12, 1]\right)$. These naturally inherit a contact structure from $\xi$ by thinking of them as $\Sigma \times [0, \frac12]$ and $\Sigma \times [\frac12, 1]$ swung once around the fibration in $C(\infty)$, and so by gluing these pieces together we obtain a contact structure on $\Sigma \times [0, 2]$. We claim that this contact structure on $\Sigma \times [0,2]$ is overtwisted.
	
	First note that $\Gamma_1 = \Gamma_{1/2} = \Gamma_{7/4}$, the former equality from the first paragraph, and the latter since $\Gamma_{7/4} = \phi^{-1}(\Gamma_{3/4}) = \phi^{-1}(\Gamma_0) = \Gamma_1$. Now take a compressing disc for $\Sigma \times [\frac12, 1]$ as shown in Figure~\ref{infty-punctured-torus-3}. As above, since the contact structure on $\Sigma \times [\frac12, 1]$ is not $I$--invariant, we obtain a bypass along $\Sigma_{1/2}$; after attaching this bypass, we obtain $\Sigma_{t_0}$ for some $t_0 \in (\frac12,1)$, where $\Gamma_{t_0}$ contains dividing curves as in Figure~\ref{infty-punctured-torus-4}.
	
	Now take a compressing disc for $\Sigma \times [1, \frac74]$, as shown in Figure~\ref{infty-punctured-torus-2}. As above, there exists a non-trivial bypass for $\Sigma_{7/4}$ from the back along the compressing disc, and after attaching the bypass, we obtain $\Sigma_{t_1}$ for some $t_1 \in (1, \frac74)$, and $\Gamma_{t_1} = \Gamma_0$. Then Figure~\ref{infty-punctured-torus-4} shows the boundary of an overtwisted disc in $\Sigma \times [t_0, t_1]$.

	\begin{figure}[htbp]
		\begin{center}
		\begin{overpic}[scale=1.2,tics=20]{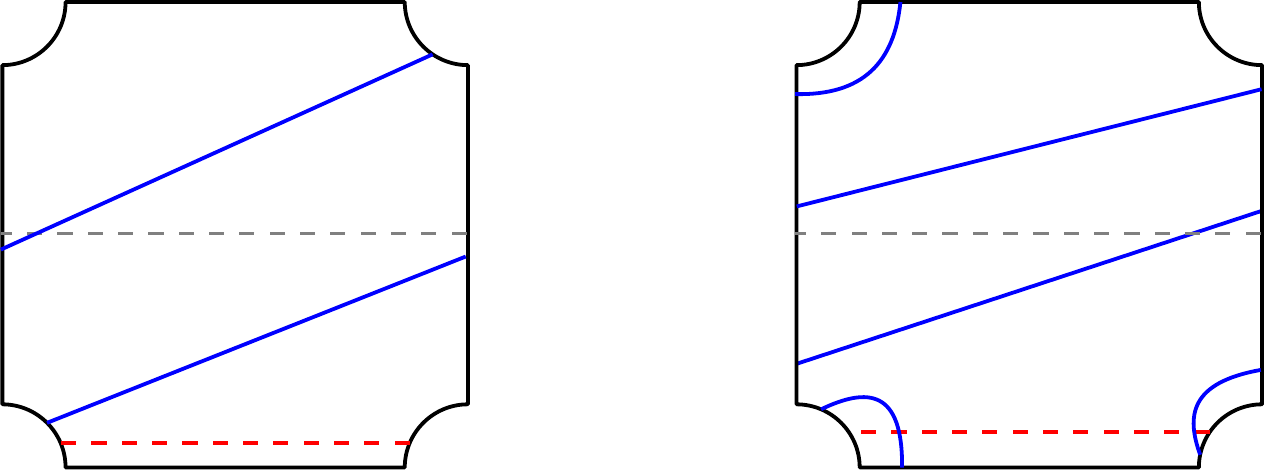}
		\put(60,-20){$\Sigma\times\{1\}$}
		\put(340,-20){$\Sigma\times\{\frac 12\}$}	
		\end{overpic}
		\vspace{0.7cm}
		\caption{The blue arcs are the intersections of the compressing disc and $\Sigma\times\{\frac 12,1\}$.}
		\label{infty-punctured-torus-3}
		\end{center}
	 \end{figure}
	
	 \begin{figure}[htbp]
		\begin{center}
		\begin{overpic}[scale=1.2,tics=20]{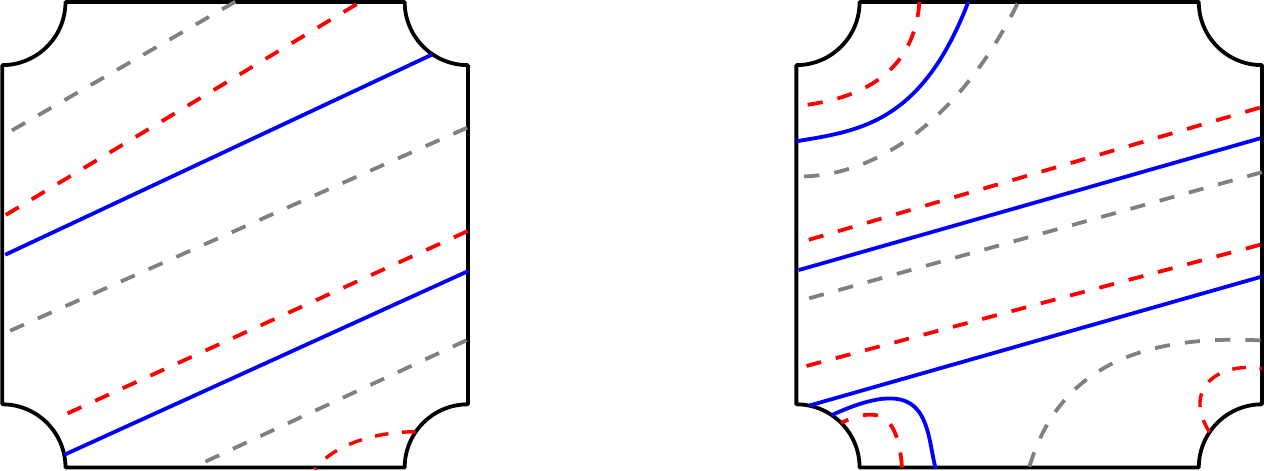}
		\put(60,-20){$\Sigma\times\{t_1\}$}
		\put(340,-20){$\Sigma\times\{t_0\}$}	
		\end{overpic}
		\vspace{0.7cm}
		\caption{The blue arcs are the intersections of an overtwisted disc and $\Sigma\times\{t_0,t_1\}$.}
		\label{infty-punctured-torus-4}
		\end{center}
	 \end{figure}

	 \begin{figure}[htbp]
		\begin{center}
		\begin{overpic}[scale=1.2,tics=20]{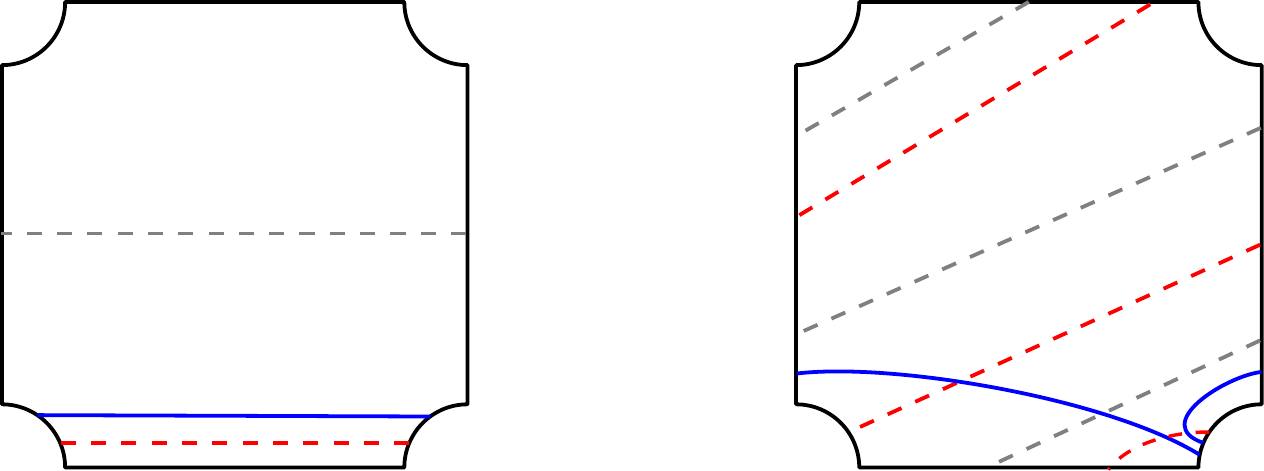}
		\put(60,-20){$\Sigma\times\{1\}$}
		\put(340,-20){$\Sigma\times\{\frac 34\}$}	
		\end{overpic}
		\vspace{0.7cm}
		\caption{The blue arcs are the intersections of the the compressing disc and $\Sigma\times\{\frac 34,1\}$.}
		\label{infty-punctured-torus-5}
		\end{center}
	 \end{figure}
	
	 \begin{figure}[htbp]
		\begin{center}
		\begin{overpic}[scale=1.2,tics=20]{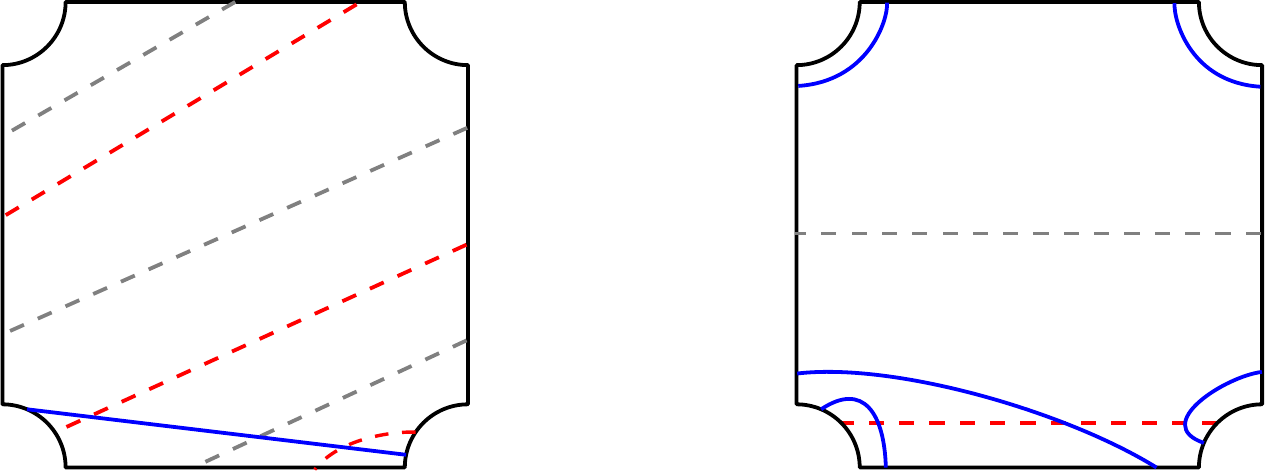}
		\put(60,-20){$\Sigma\times\{\frac 34\}$}
		\put(340,-20){$\Sigma\times\{\frac 12\}$}	
		\end{overpic}
		\vspace{0.7cm}
		\caption{The blue arcs are the intersection of the compressing disc and $\Sigma\times\{\frac 12,\frac 34\}$.}
		\label{infty-punctured-torus-6}
		\end{center}
	 \end{figure}

	 \begin{figure}[htbp]
		\begin{center}
		\begin{overpic}[scale=1.2,tics=20]{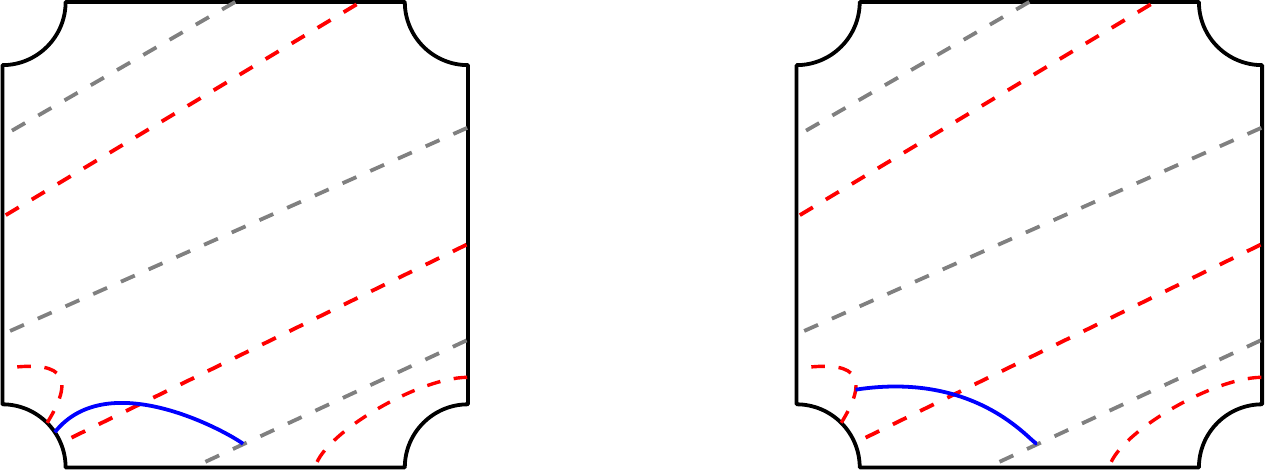}
		\put(60,-20){$\Sigma\times\{\frac 34\}$}
		\put(340,-20){$\Sigma\times\{\frac 34\}$}	
		\end{overpic}
		\vspace{0.7cm}
		\caption{The blue arc in the left picture is part of the attaching arc of a bypass from the back. The blue arc in the right picture is the attaching arc of a bypass after a bypass rotation.}
		\label{infty-punctured-torus-7}
		\end{center}
	 \end{figure}
	
	 \begin{figure}[htbp]
		\begin{center}
		\begin{overpic}[scale=1.2,tics=20]{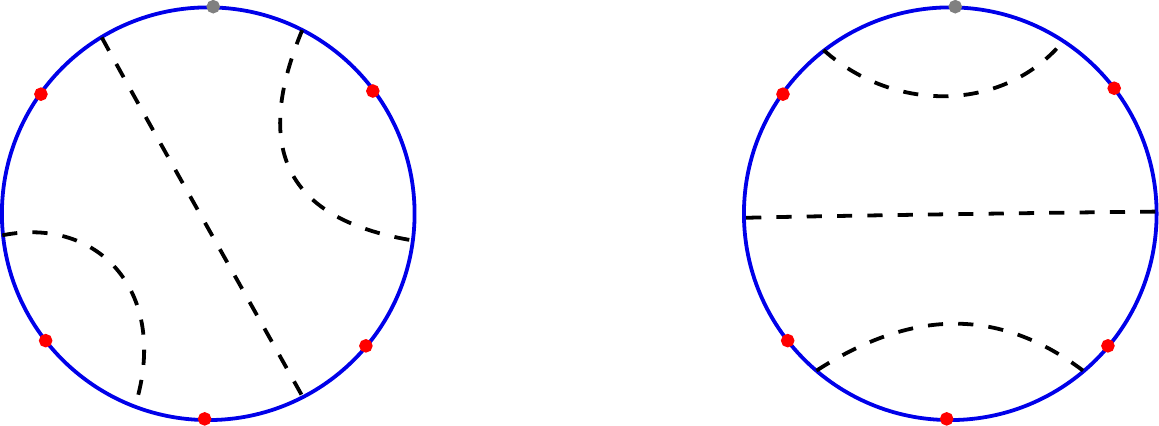}
		\end{overpic}
		\caption{Some possible dividing sets on the disc.}
		\label{bypass-disc}
		\end{center}
	 \end{figure}
	 
	We have just shown that a double cover of $(C(\infty), \xi)$ is overtwisted. We will now consider $\Sigma \times [0,1]$, and hence $(C(\infty), \xi)$ itself. We will first show that we can take $\Sigma_{3/4}$ and $\Sigma_{t_0}$ to be disjoint (recall that they are not fibers of $\Sigma \times [0,1] \to [0,1]$, but just boundary-parallel surfaces, so a priori they might intersect). Take a compressing disc for $\Sigma \times [\frac34,1]$ as shown in Figure~\ref{infty-punctured-torus-5}. If there exists a bypass for $\Sigma_{3/4}$ that straddles the dividing arc, then attaching this bypass will give us $\Gamma_{t_0}$, and hence we will have found a surface $\Sigma_{t_0}$ disjoint from $\Sigma_{3/4}$. If not, then there is a bypass that straddles the closed dividing curve.

	Now take a compressing disc for $\Sigma \times [\frac12, \frac34]$ as shown in Figure~\ref{infty-punctured-torus-6}. It is not hard to check that any configuration of dividing curves on the compressing disc will give rise to a bypass that either straddles the closed dividing curve on $\Gamma_{3/4}$ or one of the dividing arcs on $\Gamma_{3/4}$; see Figure~\ref{bypass-disc} for some examples.  If it straddles the closed dividing curve, then this bypass and the one on the other side (from the previous paragraph) glue together to form an overtwisted disc. Now assume there exists a bypass that straddles one of the arcs of $\Gamma_{3/4}$ (the bypass will not be entirely contained in either $\Sigma_{3/4}$ or $\Sigma_{1/2}$). After a bypass rotation (Theorem~\ref{bypass-rotation}), as in Figure~\ref{infty-punctured-torus-7}, we can attach a bypass to $\Sigma_{3/4}$ from the back that results in one boundary-parallel dividing arc and one boundary-parallel closed curve. Appealing again to Cofer's classification \cite{Cofer}, such a setup supports at most two tight contact structures, and ours is not the $I$--invariant one. It is straightforward to see that ours can be obtained by gluing a basic slice to $(C(-1),\xi'^\pm_{-1})$, and since we have already shown in the previous section that $\xi'^\pm_{-1}$ is overtwisted, $\xi$ is also overtwisted.
	
	Thus, we assume that $\Sigma_{t_0}$ can be taken to be disjoint from $\Sigma_{3/4}$, and hence can be taken to be in the interior of $\Sigma \times [\frac34, 1]$. Since $t_1 \in (1, \frac74)$, we know that $\Sigma_{t_1}$ is contained in the interior of $\Sigma \times [1, \frac74]$. Now by cutting open $C(\infty)$ along $\Sigma_{3/4}$ instead of along $\Sigma_0$, we see that $\Sigma \times [\frac34,\frac74]$ embeds inside $C(\infty)$. In particular, the overtwisted $\Sigma \times [t_0, t_1]$ embeds in $(C(\infty), \xi)$, showing that the latter is overtwisted.
\end{proof}

\begin{figure}[htbp]
	\begin{center}
	\begin{overpic}[scale=1.1,tics=20]{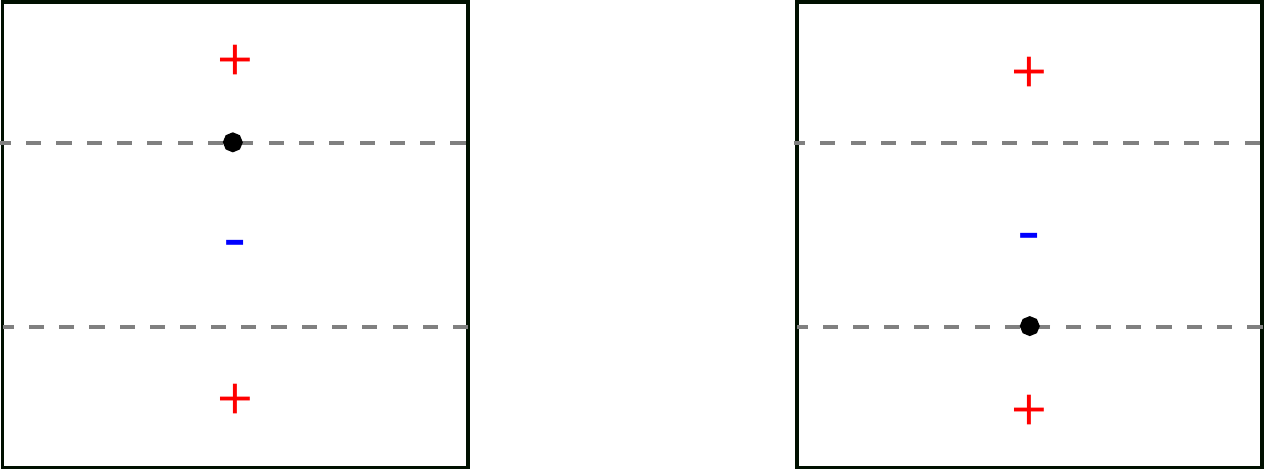}
	\put(78,94){$p$}
	\put(333,50){$p$}
	\end{overpic}
	\caption{Two possible choices of a point $p$ on a convex torus.}
	\label{points-on-torus}
	\end{center}
 \end{figure}

\begin{proof}[Proof of Lemma~\ref{complement count infinity}]
	By Lemma~\ref{infty overtwisted} and the discussion preceding it, there are at most four tight contact structures on $C(\infty)$ without boundary-parallel half Giroux torsion up to isotopy fixing a given singular foliation on the boundary: two from each choice of signs of the regions on the boundary. We will explicitly construct these contact structures from the torus bundle $M(0)$. 
	
	We first explicitly describe the two minimally twisting tight contact structures on $M(0)$ given by Honda's classification \cite[Table~2]{Honda:classification2}. Take a basic slice $T^2\times[0,1]$ with dividing curve slopes $s_0=\vects{2}{1}$ and $s_1=\vects{1}{0}$ and glue $T^2 \times \{1\}$ to $T^2 \times \{0\}$ via $\phi$. The two different signs of basic slice give rise to two different contact structures on $M(0)$, which we denote by $\xi^\pm$.

	Fix a tight contact structure on $M(0)$, and pick a point $p$ on a dividing curve of $T\times\{1\}$. Since $\phi$ is isotopic to a diffeomorphism of $T$ which fixes a neighborhood of $p$ (possibly also by isotoping the dividing curves), we can assume that the knot \[ L = \left(p\times[0,1]\right)\big/(p,1)\sim(p,0) \] is Legendrian. It is clear from the construction of $\xi^\pm$ (and it is explicit in Honda's table) that the contact planes twist through an angle smaller than $\pi$ as they traverse $L$. Thus, the contact framing of $L$ is identical to the product framing. Thus, the complement of the interior of a standard neighborhood of $L$ is a tight contact structure on $C(\infty)$.
	
	The signs of the regions on $\Sigma$ in the contact structure on $C(\infty)$ will be the same as the signs on the regions of $T \times \{1\}$, see Figure~\ref{points-on-torus}. But instead, we could have chosen $p$ to be on the other dividing curve, constructing a Legendrian knot $L'$; this would give us a contact structure on $C(\infty)$ with the opposite signs on the regions of $\Sigma$. However, it is straightforward to see that $L$ and $L'$ are isotopic Legendrian knots, which is what we desired to show.
	
	We know that we have indeed constructed all four of our contact structures on $C(\infty)$, since if the contact structures on $C(\infty)$ coming from $\xi^+$ and $\xi^-$ were isotopic, then they would produce isotopic contact structures on $M(0)$ after filling in the boundary. Since this does not happen, we know that the contact structures on $C(\infty)$ coming from $\xi^\pm$ are distinct. To show that these contact structures do not thicken, note that they do not have boundary-parallel Giroux torsion (as they arise from tight contact structures on closed $3$--manifolds), and hence do not thicken, as in the proof of Lemma~\ref{thickening}.
\end{proof}

\subsection{Tight contact structures on a solid torus} \label{sec:solid-torus} In this section, we prove Lemma~\ref{tight N}. The main ingredient of the proof is Theorem~\ref{solid torus}, which counts tight contact structures on a solid torus with $\infty$ meridional slope. Since the meridional slope $r$ of $N(\infty)$ and $N(-3)$ is not $\infty$, we first need to change the framing. Recall our convention that slopes $\frac pq$ on $T^2$ correspond to vectors $\vects{p}{q}$.

\begin{proof}[Proof of Lemma~\ref{tight N}]
	Recall that $N(\infty)$ is a solid torus with two dividing curves on its boundary, where the meridional slope is $r$ and the dividing curve slope is $\infty$. Let $r=\frac pq$, for a pair of co-prime integers $p, q$. We first assume that $\frac pq \in (0, 1]$.  Suppose $-\frac qp=[r_0,\ldots,r_n]$, where $r_0 \leq -1$ and $r_i \leq -2$, for $i=1,\ldots,n$. For some $p'$ and $q'$, we can write the following (see, for example, \cite{Rose}):
	\[ \matrixb{-r_0}{1}{-1}{0}\matrixb{-r_1}{1}{-1}{0}\cdots\matrixb{-r_n}{1}{-1}{0} = \matrixb{q}{q'}{-p}{-p'}. \]
	Then after changing coordinates by the matrix \[ \matrixb{q'}{-p'}{-q}{p},\] the solid torus $N(\infty)$ will have meridional slope $\infty$ and dividing curve slope $-\frac{q'}{q}$. The conditions on the $r_i$ imply that $-\frac{q'}{q} \in [-1,0)$, and so we can now invoke Theorem~\ref{solid torus} to conclude that $N(\infty)$ supports \[ \left|(r_n+1)\cdots(r_1+1)r_0\right| = \Phi(p/q)\] tight contact structures, since $-\frac{q}{q'}=[r_n,\ldots,r_0]$. This latter fact follows from taking the transpose of the matrix factorization above.

	If $p/q \not \in(0,1]$, then we first change coordinates via the matrix \[\matrixb{1}{k}{0}{1}\] for a unique $k \in \Z$, which fixes the dividing slope $\infty = \vects{1}{0}$ and takes the meridional slope to $\frac{p+kq}{q} \in (0, 1]$.  Following the previous steps, we arrive at the correct count when we recall that we defined $\Phi(p/q + k) = \Phi(p/q)$ for any integer $k$.

	Now consider $N(-3)$. After changing coordinates by the matrix \[ \matrixb{0}{1}{-1}{-3}, \] the solid torus $N(-3)$ will have meridional slope $-\frac{q}{p+3q}$ and dividing curve slope $\infty$. Hence there are $\Phi(-\frac{1}{3+r}) = \Psi(p/q)$ tight contact structures on $N(-3)$.
\end{proof}

\section{Lower Bound}
\label{sec:lowerbound}

In this section, we will construct enough distinct isotopy classes of contact structures on $M(r)$ to meet the upper bound found in Section~\ref{sec:upperbound}.  Our methods in fact construct tight contact structures on $M(r)$ for any rational $r$, but only for $r \in \mathcal R$ do we have an upper bound against which to compare.  We first deal with the case $r \geq 1$; we then construct those contact structures for $r < -3$ that are counted by $\Psi$; we finally construct those counted by $\Phi$ when $r < 0$, and distinguish them from those counted by $\Psi$.

\subsection{Tight contact structures when \texorpdfstring{$r \geq 1$}{r >= 1}} \label{sec:positive-surgery-lower-bound}

Consider the contact surgery diagram given in Figure~\ref{positive-surgery legendrian-diagram}.  Since all the contact surgery coefficients are negative, this describes a tight (in fact, Stein fillable) contact manifold, by \cite{Eliashberg:stein}. In this section, we will prove that all the tight contact structures on $M(r)$ are described by this contact surgery diagram, when $r \in \mathcal R$ is positive (and incidentally construct some tight contact structures on $M(r)$ for $r \in [4, 5)$).

When $r = 1$, we erase the knot $L$ from the surgery diagram; when $r > 1$, then $\frac{1}{1-r}$ is negative, so we can use the algorithm from Section~\ref{sec:surgery} to convert the surgery diagram into a new surgery diagram, where we perform Legendrian surgery on each component of a Legendrian link.  Note that there are choices (of stabilizations) to make when doing this conversion, as described in Section~\ref{sec:surgery}.

\begin{figure}[htbp]
\begin{center}
\begin{overpic}[scale=2,tics=20]{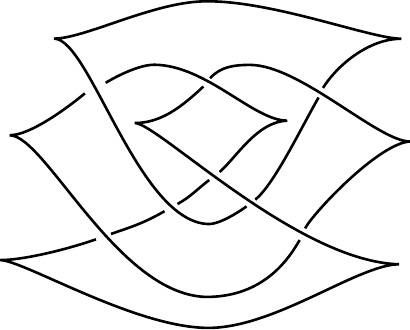}
\put(-15,165){$\left(\displaystyle\frac{1}{1-r}\right)$}
\put(-20,110){$(-2)$}
\put(240,165){$L$}
\put(240,35){$L'$}
\end{overpic}
\caption{A contact surgery diagram for a tight contact structure on $M(r)$, for $r \geq 1$. When $r = 1$, we erase $L$ from the diagram.}
\label{positive-surgery legendrian-diagram}
\end{center}
\end{figure}

\begin{prop}
For any set of stabilization choices, the contact surgery diagram in Figure~\ref{positive-surgery legendrian-diagram} gives a tight contact structure on $M(r)$, when $r \geq 1$.
\end{prop}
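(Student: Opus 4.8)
The plan is to show that the contact manifold described by Figure~\ref{positive-surgery legendrian-diagram} is diffeomorphic to $M(r)$, and that it is tight regardless of stabilization choices. Since every surgery coefficient in the diagram (after the Ding--Geiges conversion of Section~\ref{sec:surgery}) is negative, each component receives a Legendrian surgery, so by Eliashberg's theorem \cite{Eliashberg:stein} the resulting contact structure is Stein fillable, hence tight; this takes care of tightness for free, independent of the stabilization choices. The content is therefore entirely topological: I must check that the smooth surgery diagram underlying Figure~\ref{positive-surgery legendrian-diagram} yields $M(r)$.

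First I would identify the smooth picture. The knot $L'$ with contact coefficient $(-2)$ is a Legendrian unknot with $tb = -1$, so contact $(-2)$--surgery on it is smooth $(-1)$--surgery on an unknot linking the other component; blowing this down converts the remaining component into the figure-eight knot (this is the standard realization of the figure-eight as a twist knot, via one negative full twist on two strands). Under a blow-down, the surgery coefficient on the surviving knot $L$ changes by the square of the linking number; tracking this, the contact coefficient $\frac{1}{1-r}$ on $L$ (which is smooth coefficient $\frac{1}{1-r} + 1 = \frac{2-r}{1-r}$ once we pass from contact to smooth framing, using that $tb$ of the relevant Legendrian figure-eight representative in the diagram is as drawn) becomes, after the blow-down, smooth coefficient $r$ on the figure-eight knot. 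When $r = 1$ the component $L$ is erased, corresponding to the fact that $\frac{1}{1-r}$ degenerates; one checks directly that the diagram then presents $M(1) = \Sigma(2,3,7)$. The arithmetic here is the routine-but-must-be-done part: converting contact to smooth coefficients using $\mathrm{tb}$, and tracking coefficients through the blow-down via the continued-fraction/linking-number bookkeeping.

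The main obstacle I expect is precisely this coefficient bookkeeping through the Ding--Geiges algorithm combined with the blow-down: the algorithm replaces the rational coefficient $\frac{1}{1-r}$ by a chain of Legendrian push-offs with integer coefficients determined by the negative continued fraction of $\frac{1}{1-r}$ (as in Section~\ref{sec:surgery}), and one must verify that the slam-dunk/blow-down operations on this chain reproduce exactly the coefficient $r$ on the figure-eight. I would organize this by working with the rational coefficient directly (slam-dunks on the chain recover $\frac{1}{1-r}$-surgery without committing to a continued-fraction expansion), and then do a single Rolfsen twist / blow-down to pass to the figure-eight. Since all of this is a statement about the underlying smooth $4$--manifold and $3$--manifold, it is insensitive to the stabilization choices — those only affect the contact (and Stein) structure, not the diffeomorphism type — so once the smooth identification is in hand, the conclusion holds for every choice of stabilizations, completing the proof.
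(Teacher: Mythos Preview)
Your overall plan is exactly the paper's: tightness is free from Eliashberg once all contact coefficients are negative, and the content is the smooth Kirby calculus showing the result is $M(r)$. However, your guesses about the unseen figure are wrong in ways that make your proposed Kirby sequence fail. In the paper's diagram it is $L$ that is the unknot (the standard max-$tb$ Legendrian unknot, with $tb(L)=-1$), while $L'$ has $tb(L')=1$ and hence is \emph{not} an unknot. The smooth coefficients are therefore $\tfrac{1}{1-r}+(-1)=\tfrac{r}{1-r}$ on $L$ and $-2+1=-1$ on $L'$, and your proposed first move---blow down $L'$---is unavailable because $L'$ is knotted. The paper instead performs a right-handed Rolfsen twist on the unknot $L$, sending its coefficient $\tfrac{r}{1-r}$ to $r$ and simultaneously unknotting $L'$; only then does blowing down the now $(-1)$--framed unknot $L'$ produce the figure-eight with coefficient $r$. (Note also the arithmetic slip: contact $(-2)$--surgery on a $tb=-1$ knot would be smooth $(-3)$--surgery, not $(-1)$.) With these corrections your outline is precisely the paper's proof.
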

\begin{proof}
The resulting contact manifold is tight, as mentioned above. To see that the $3$--manifold is actually $M(r)$, we look at the corresponding smooth surgery diagram in Figure~\ref{positive-surgery smooth-diagram}. Since $tb(L) = -1$ and $tb(L') = 1$, we add these values to the contact surgery coefficient to get the smooth surgery coefficient.  After doing a right-handed Rolfsen twist on $L$ (see \cite{Rolfsen}), we arrive at the surgery diagram in Figure~\ref{positive-surgery rolfsen-twist}.  Blowing down $L'$, which is now a $-1$--framed unknot, gives us a smooth surgery diagram for $r$--surgery on the figure-eight knot, which is $M(r)$.
\end{proof}

\begin{figure}[htbp]
\begin{center}
\begin{overpic}[scale=1,tics=20]{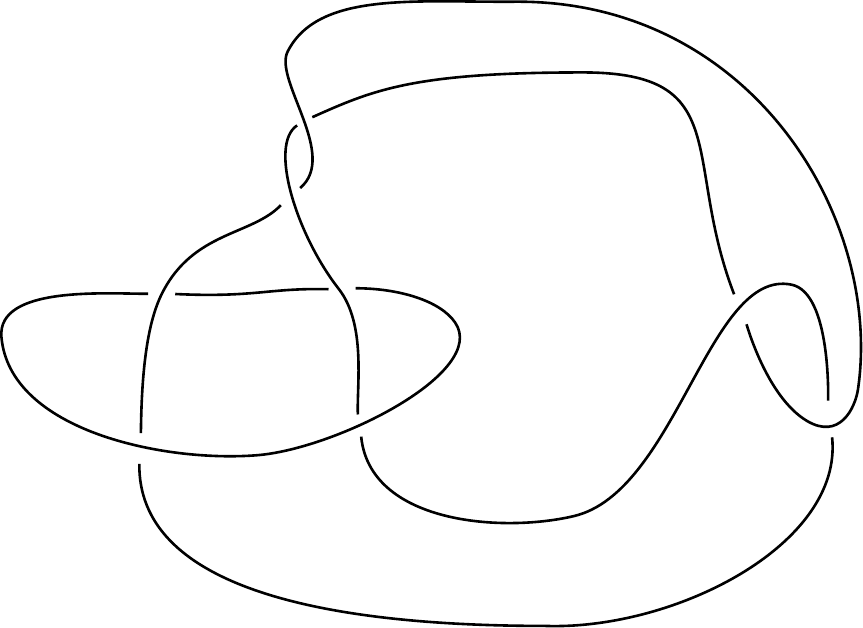}
\put(-23,65){$\displaystyle\frac{r}{1-r}$}
\put(65,160){$-1$}
\put(0,100){$L$}
\put(68,130){$L'$}
\end{overpic}
\caption{The smooth surgery diagram corresponding to the contact surgery diagram in Figure~\ref{positive-surgery legendrian-diagram}.}
\label{positive-surgery smooth-diagram}
\end{center}
\end{figure}

\begin{figure}[htbp]
\begin{center}
\begin{overpic}[scale=.8,tics=20]{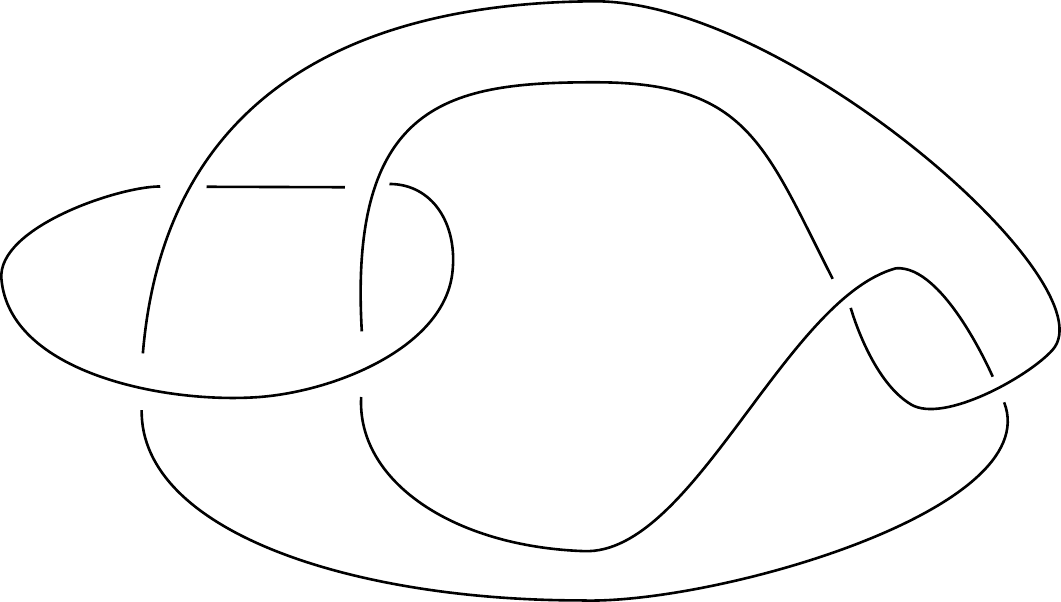}
\put(-10,65){$r$}
\put(100,142){$-1$}
\put(0,90){$L$}
\put(68,130){$L'$}
\end{overpic}
\caption{The result of a right-handed Rolfsen twist on $L$ from Figure~\ref{positive-surgery smooth-diagram}.}
\label{positive-surgery rolfsen-twist}
\end{center}
\end{figure}

\begin{prop} \label{positive-surgery stabilization}
Different sets of stabilization choices give rise to non-isotopic tight contact structures on $M(r)$, distinguished by their Heegaard Floer contact invariants.
\end{prop}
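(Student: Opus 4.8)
The plan is to realize every contact structure produced by Figure~\ref{positive-surgery legendrian-diagram} as the top of a Stein cobordism over a fixed tight base with non-vanishing Heegaard Floer contact invariant, and then to distinguish the resulting contact structures by reading off the $\mathrm{Spin}^c$ data of that cobordism from the stabilization choices, using Theorem~\ref{steincobordism}.

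First I would reorganize the surgery. By the Ding--Geiges algorithm of Section~\ref{sec:surgery}, contact $(-2)$--surgery on $L'$ is a Legendrian surgery on one stabilized push-off of $L'$, while (for $r>1$) contact $\tfrac1{1-r}$--surgery on $L$ is a sequence of Legendrian surgeries along a chain $L_0\cup\cdots\cup L_n$ of iterated stabilized push-offs, where $\tfrac1{1-r}=[r_0,\dots,r_n]$. Performing the $L'$--handle first --- and, by Theorem~\ref{legendrianapproximations}, choosing the stabilization of the push-off of $L'$ so that it remains non-loose --- the intermediate contact manifold $(Y,\xi_Y)$ is Legendrian surgery on a non-loose Legendrian figure-eight, hence is tight with $c^+_{\mathrm{red}}(\xi_Y)\neq0$ by Theorem~\ref{surgeries on legendrianapproximations}; here one uses that the figure-eight is amphichiral, so $\overline K=K$, and $c(\xi_{\overline K})=c(\xi_K)=0$ since $\phi_K$ is not right-veering. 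Moreover $Y$ is an integer homology sphere, being unit integral surgery on the figure-eight. The remaining surgeries on $L_0\cup\cdots\cup L_n$ assemble into a Stein cobordism $(W,J)$ from $(Y,\xi_Y)$ to $(M(r),\xi)$, and the \emph{smooth} cobordism $W$ does not depend on the stabilization choices: a stabilized push-off is smoothly isotopic to the knot it pushes off of, and its Thurston--Bennequin number --- hence the surgery framing --- depends only on the number, not the signs, of stabilizations. Only the Stein structure $J$, equivalently $c_1(J)$, records the signs.

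Second I would convert stabilization choices into $\mathrm{Spin}^c$ structures. Since $Y$ is a homology sphere, capping the cores of the $2$--handles with Seifert surfaces gives classes generating $H_2(W;\Z)$, and for a Weinstein $2$--handle along a Legendrian knot $\Lambda$ one has $\langle c_1(J),[\widehat\Sigma_\Lambda]\rangle=\mathrm{rot}(\Lambda)$, a stabilization of a push-off shifting its rotation number by $\pm1$ according to its sign. Thus the tuple $\bigl(\mathrm{rot}(L_0),\dots,\mathrm{rot}(L_n)\bigr)$, computed in $Y$ --- and which depends both on the chain stabilizations and, through the way $L$ sits inside $Y$, on the sign of the stabilization of the push-off of $L'$ --- is a complete invariant of $c_1(J)$, hence of the isomorphism class of $J$ as a $\mathrm{Spin}^c$ structure on $W$. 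Two sets of stabilization choices with distinct such tuples therefore give non-isomorphic $J_1,J_2$, and since $c^+_{\mathrm{red}}(\xi_Y)\neq0$, Theorem~\ref{steincobordism} then forces the contact invariants of $\xi_1$ and $\xi_2$ to differ, so these contact structures are not isotopic.

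The remaining --- and, I expect, the only genuinely delicate --- step is the bookkeeping that the admissible stabilization choices realize exactly $2\Phi(r)$ distinct tuples. Along the chain this is the computation underlying Theorem~\ref{solid torus}: $\mathrm{rot}(L_i)$ ranges freely over $\mathrm{rot}(L_{i-1})$ plus an arithmetic progression of step $2$ and length $k_i+1$ (with $k_0=|r_0+1|$, $k_i=|r_i+2|$), giving $\prod_i(k_i+1)=\Phi(r)$ tuples with no collisions, since consecutive differences recover the increments; and the two non-loose choices for the stabilization of the push-off of $L'$ supply the remaining factor of $2$, which one must check shifts the tuple. One edge case needs a separate remark: when $L$ is erased, i.e.\ $r=1$, there is no chain, and the two contact structures on $M(1)=\Sigma(2,3,7)$ should be separated using the known classification there rather than via this cobordism. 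As $2\Phi(r)$ equals the upper bound of Theorem~\ref{upper bound}, the $2\Phi(r)$ contact structures constructed are pairwise non-isotopic and distinguished by their Heegaard Floer contact invariants.
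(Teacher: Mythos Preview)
Your approach has a genuine gap, and it stems from a misreading of the surgery diagram. The link $L \cup L'$ in Figure~\ref{positive-surgery legendrian-diagram} sits in $(S^3,\xi_{\rm std})$, not in an overtwisted $S^3$; in particular $L'$ is not a non-loose figure-eight, and Theorems~\ref{legendrianapproximations} and~\ref{surgeries on legendrianapproximations} do not apply to it. The conclusion you draw, that $c(\xi_Y)\neq 0$, is nevertheless correct for the much simpler reason that $(Y,\xi_Y)$ is Stein fillable, being Legendrian surgery on a knot in $(S^3,\xi_{\rm std})$.

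The more serious problem is your treatment of the factor of $2$. The intermediate manifold $Y$ is exactly $M(1)\cong\Sigma(2,3,7)$, and the two stabilization choices on the push-off of $L'$ produce its two \emph{distinct} tight contact structures $\xi_Y^+$ and $\xi_Y^-$. Thus your Stein cobordism $(W,J)$ does not sit over a fixed base: half of the contact structures on $M(r)$ lie over $(Y,\xi_Y^+)$ and half over $(Y,\xi_Y^-)$. Theorem~\ref{steincobordism} only compares Stein structures over a common base, so your argument distinguishes $\Phi(r)$ contact structures within each family but says nothing about whether the two families overlap. Saying that the $L'$ choice ``shifts the tuple'' of rotation numbers in $Y$ does not help, since those tuples live over different contact bases and are not directly comparable.

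The paper sidesteps all of this by taking $(S^3,\xi_{\rm std})$ itself as the base. The entire cobordism $W$ from $S^3$ to $M(r)$---including the $2$--handle along the stabilized push-off of $L'$---is Stein, and $H_2(W;\Z)$ is freely generated by the capped-off cores of \emph{all} the $2$--handles. The first Chern class then records $\bigl(rot(L'),rot(L_0),\dots,rot(L_n)\bigr)$, so both the $L'$ stabilization and the chain stabilizations are captured at once, and a single application of Theorem~\ref{steincobordism} separates all $2\Phi(r)$ contact structures. No splitting, no intermediate manifold, and no special pleading for $r=1$ are needed.
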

\begin{proof}
Fix $r \geq 1$, and let $L_0, \ldots, L_n$ be the Legendrian knots involved in converting the contact $\left(\frac{1}{1-r}\right)$--surgery on $L$ into a sequence of Legendrian surgeries, as in Section~\ref{sec:surgery}.  Any two different stabilization choices lead to the same topological cobordism $W$ from $S^3$ to $M(r)$, comprised of $2$--handles attached to $L_0$, \ldots, $L_n$, and $L'$.

We claim that the different choices lead to Stein structures on the cobordism with non-isomorphic underlying Spin$^c$ structures.  Indeed, the first Chern class of the Stein structures will evaluate to $rot(L_i)$ and $rot(L')$ on the set of generators of $H_2(W; \Z)$ given by the cores of the 2--handles union Seifert surfaces for the attaching spheres, see \cite[Proposition~2.3]{Gompf}. Since different choices of stabilizations will lead to different values of $rot(L_i)$ and $rot(L')$, the Spin$^c$ structures underlying the Stein structures are not isomorphic.  By Theorem~\ref{steincobordism}, we conclude that the contact structures on $M(r)$ induced by the different Stein structures are not isotopic, and indeed have different Heegaard Floer contact invariants.
\end{proof}

\begin{prop} \label{positive-surgery count}
There are at least $2\Phi(r)$ tight contact structures on $M(r)$, for $r \geq 1$.
\end{prop}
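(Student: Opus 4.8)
We prove this by combining the two preceding propositions with a count of the Legendrian links that appear in the construction. By the first of the two preceding propositions, every choice of stabilizations made in converting Figure~\ref{positive-surgery legendrian-diagram} into a Legendrian surgery diagram produces a tight contact structure on $M(r)$, and by Proposition~\ref{positive-surgery stabilization} two of these are non-isotopic whenever they record different rotation numbers on the link components. Since a positive and a negative stabilization of a Legendrian knot commute up to Legendrian isotopy, the resulting contact structure depends only on the tuple of rotation numbers of the components; hence the number of pairwise non-isotopic tight contact structures produced equals the number of achievable rotation-number tuples, and it suffices to show this number is at least $2\Phi(r)$.

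To count: when $r = 1$ we erase $L$, leaving contact $(-2)$--surgery on $L'$, which the Ding--Geiges algorithm of Section~\ref{sec:surgery} turns into Legendrian surgery on a single once-stabilized push-off $L_0'$ of $L'$; the lone stabilization may be taken with either sign, giving $2 = 2\Phi(1)$ tuples. For $r > 1$, write the negative continued fraction $\frac{1}{1-r} = [s_0,\dots,s_m]$. The algorithm builds $L_0$ from a push-off of $L$ by $|s_0+1|$ stabilizations, $L_i$ from a push-off of $L_{i-1}$ by $|s_i+2|$ stabilizations for $i = 1,\dots,m$, and $L_0'$ as above. As $k$ stabilizations of a fixed Legendrian knot realize exactly $k+1$ rotation numbers and these choices accumulate independently along the chain, the number of achievable tuples is
\[
2\,(|s_0+1|+1)\prod_{i=1}^{m}(|s_i+2|+1)\;=\;2\,\bigl|\,s_0(s_1+1)\cdots(s_m+1)\,\bigr|,
\]
where the equality uses $s_0 \le -1$ and $s_i \le -2$ for $i \ge 1$.

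It remains to identify $\bigl|\,s_0(s_1+1)\cdots(s_m+1)\,\bigr|$ with $\Phi(r)$. Writing $r = \frac pq$ in lowest terms with $p > q > 0$, we have $\frac{1}{1-r} = -\frac{q}{p-q}$, while by its defining periodicity $\Phi(r) = \Phi(r-k)$ for the integer $k$ with $r - k \in (0,1]$, and $\Phi(r-k) = \bigl|\,t_0(t_1+1)\cdots(t_\ell+1)\,\bigr|$ for $-\frac{q}{p-kq} = [t_0,\dots,t_\ell]$. Thus the claim reduces to the purely arithmetic fact that the weight $\bigl|\,u_0(u_1+1)\cdots(u_j+1)\,\bigr|$ attached to the negative continued fraction $[u_0,\dots,u_j]$ of $-\frac qn$ depends only on $n \bmod q$ (for $n$ coprime to $q$). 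This follows by induction on $n$ from the identities $-\frac{q}{n+q} = [-1,-2,u_1,\dots,u_j]$ when $n > q$ and $-\frac qn = [-1,u_1,\dots,u_j]$, and $-\frac{q}{n+q} = [-1,\,v_0-1,\,v_1,\dots]$ when $0 < n < q$ and $-\frac qn = [v_0,v_1,\dots]$; under each of these the weight changes only by factors of $\pm 1$. (Alternatively, one can bypass this computation: the surgeries on $L_0,\dots,L_m$ realize on the glued-in surgery solid torus exactly its tight contact structures compatible with the contact framing of $L$, of which there are $\Phi(r)$ by Theorem~\ref{solid torus} --- this is the count carried out in the proof of Lemma~\ref{tight N}.) Either way, the construction yields at least $2\Phi(r)$ pairwise non-isotopic tight contact structures on $M(r)$, and, comparing with Theorem~\ref{upper bound}, exactly that many.

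The one genuinely delicate step is the last identity, $\bigl|\,s_0(s_1+1)\cdots(s_m+1)\,\bigr| = \Phi(r)$: it is elementary but bookkeeping-heavy, since it must reconcile the Ding--Geiges continued fraction of $\frac{1}{1-r}$ with the continued fraction used to define $\Phi$ on $r$ (after normalizing into $(0,1]$), and one must track the periodicity convention $\Phi(r+1) = \Phi(r)$ with care. Everything else is a direct application of the algorithm in Section~\ref{sec:surgery} together with the two preceding propositions.
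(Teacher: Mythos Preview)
Your argument is correct and follows the same strategy as the paper: count the stabilization choices on $L'$ (giving the factor of $2$) and on the chain $L_0,\dots,L_m$ arising from $\frac{1}{1-r}=[s_0,\dots,s_m]$ (giving $|s_0(s_1+1)\cdots(s_m+1)|$), then identify this last product with $\Phi(r)$. The only real difference is in that last identification: the paper sets $s=r-1$ and proves invariance under $s\mapsto s+1$ via the single identity $-1-\frac{1}{-1-1/s}=-\frac{1}{s+1}$, whereas you phrase the same invariance as ``the weight of $-\frac{q}{n}$ depends only on $n\bmod q$'' and check it with two continued-fraction identities. These are the same computation in different clothing.

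Two small points worth tightening. First, your sentence ``the number of pairwise non-isotopic tight contact structures produced \emph{equals} the number of achievable rotation-number tuples'' overstates what Proposition~\ref{positive-surgery stabilization} gives you: it only shows different tuples give different contact structures, not the converse. Since you only need a lower bound this does not matter, and you correctly say ``at least'' later; just drop the ``equals''. Second, your parenthetical alternative via Lemma~\ref{tight N} is morally right but is not a free shortcut: to invoke it you would need to know that the stabilization choices on the chain realize \emph{all} $\Phi(r)$ tight structures on the surgery torus (not merely distinct ones), which is true but is exactly Honda's argument behind Theorem~\ref{solid torus} and would need to be cited as such.
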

\begin{proof}
We count the choices that we need to make in order to turn Figure~\ref{positive-surgery legendrian-diagram} into a sequence of Legendrian surgeries.  For $L'$, we need to stabilize once, and there are two ways of doing this.  It remains to show that there are $\Phi(r)$ ways of converting contact $(\frac{1}{1-r})$--surgery on $L$ into a sequence of Legendrian surgeries, then we will be done by Proposition~\ref{positive-surgery stabilization}.

First, if $r = 1$, then we erase $L$ from the surgery diagram, and thus our construction gives two distinct tight contact structures on $M(1)$. Since $\Phi(1) = 1$, we have constructed $2\Phi(1)$ tight contact structures on $M(1)$.

Now assume that $r > 1$.  By the algorithm in Section~\ref{sec:surgery}, in order to convert the surgery on $L$ into a sequence of Legendrian surgeries, we first need to calculate the negative continued fraction of $\frac{1}{1-r} = [r_0, \ldots, r_n]$. Then the total number of stabilization choices is $\left|r_0(r_1+1)\cdots(r_n+1)\right|$.

Let $s = r-1$.  If $s \in (0, 1]$, then the negative continued fraction of $-\frac{1}{s}$ is exactly what is involved in the definition of $\Phi(s)$. But since $r = s+1$, we have that $\Phi(r) = \Phi(s)$, and so we are done if $r \in (1, 2]$. So to finish the proof of the proposition, it is enough to show that the number of stabilization choices for contact $\left(-\frac{1}{s}\right)$--surgery is the same as the number of stabilization choices for contact $\left(-\frac{1}{s+1}\right)$--surgery, for $s > 0$.  This will follow from the fact that
\[ -1-\frac{1}{-1-1/s} = -\frac{1}{s+1}. \]
If the negative continued fraction for $-\frac{1}{s}$ is $[s_0, \ldots, s_n]$, then $-\frac{1}{s+1} = [s'_0, \ldots, s'_{n+1}]$, where $s'_0 = 1$, $s'_1 = s_0 -1$, and $s'_i = s_{i-1}$ for $i = 2, \ldots, n+1$.  It follows that
\[ \left|s'_0(s'_1+1)(s'_2+1)\cdots(s'_{n+1}+1)\right| = \left|s_0(s_1+1)\cdots(s_n+1)\right|. \]
\end{proof}

\subsection{Tight contact structures counted by \texorpdfstring{$\Psi$}{Psi}, when \texorpdfstring{$r < -3$}{r < -3}} \label{sec:psi-lower-bound}

There is a unique Legendrian figure-eight knot $L$ in $(S^3, \xi_{\rm{std}})$ up to isotopy with $tb(L) = -3$ and $rot(L) = 0$, by \cite[Theorem~5.3]{EH:knots}, depicted in Figure~\ref{legendrian-figure-eight}.  Thus, we can create contact structures on $M(r)$, for $r < -3$, by performing contact $(r+3)$--surgery on $L$.  This produces tight (in fact, Stein fillable) contact structures, since $r+3$ is negative when $r < -3$.

\begin{figure}[htbp]
\begin{center}
\begin{overpic}[scale=1,tics=20]{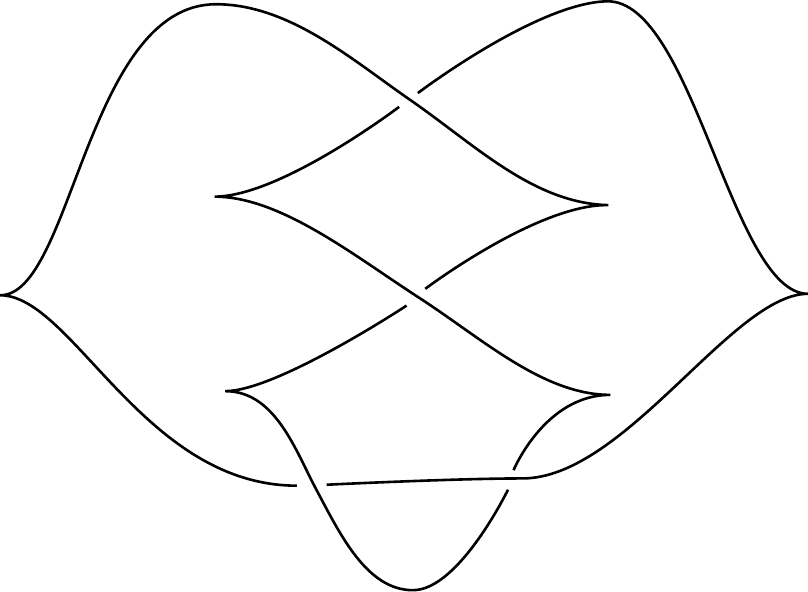}
\end{overpic}
\caption{The unique Legendrian figure-eight with $tb = -3$ and $rot = 0$ in $(S^3, \xi_{\rm{std}})$.}
\label{legendrian-figure-eight}
\end{center}
\end{figure}

As in the case $r \geq 1$, we convert the contact surgery on $L$ into a sequence of Legendrian surgeries via the algorithm in Section~\ref{sec:surgery}. As above, this potentially involves choices of the signs of stabilizations.  The following proposition is proved identically to Proposition~\ref{positive-surgery stabilization}.

\begin{prop}
Different sets of stabilization choices give rise to non-isotopic tight contact structures on $M(r)$, distinguished by their Heegaard Floer contact invariants. \qed
\end{prop}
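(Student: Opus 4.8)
The statement in question asserts that different sets of stabilization choices, when converting contact $(r+3)$--surgery on the Legendrian figure-eight knot $L$ with $tb=-3$, $rot=0$ into a sequence of Legendrian surgeries, yield non-isotopic tight contact structures on $M(r)$ for $r < -3$, and that these are distinguished by their Heegaard Floer contact invariants. The paper says this is proved ``identically to Proposition~\ref{positive-surgery stabilization}'', so the plan is to adapt that argument.

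The approach is as follows. First I would apply the Ding--Geiges algorithm (Section~\ref{sec:surgery}) to $L$, writing $r+3 = [r_0, \ldots, r_n]$ as a negative continued fraction, obtaining Legendrian knots $L_0, \ldots, L_n$ whose Legendrian surgery realizes the contact $(r+3)$--surgery; the stabilization choices at each stage are exactly the data we wish to distinguish. The key observation is that any fixed pattern of stabilizations produces the \emph{same} underlying smooth $2$--handle cobordism $W$ from $S^3$ to $M(r)$ (since the smooth framings and smooth knot types of the $L_i$ do not depend on the signs of the stabilizations), but the \emph{Stein} structure on $W$ varies: by \cite[Proposition~2.3]{Gompf}, $c_1$ of the Stein structure evaluates to $rot(L_i)$ on the generators of $H_2(W;\Z)$ coming from the cores of the $2$--handles capped off with Seifert surfaces of the attaching spheres, and different stabilization choices change the values $rot(L_i)$. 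Hence distinct stabilization patterns give Stein structures on $W$ with non-isomorphic underlying $\Spinc$ structures (in particular distinct $c_1$). Since the incoming contact structure is $(S^3, \xi_{\rm std})$, which has non-vanishing Heegaard Floer contact invariant, Theorem~\ref{steincobordism} applies and tells us the resulting contact structures on $M(r)$ have distinct Heegaard Floer contact invariants; in particular they are non-isotopic.

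One point to verify carefully, and which I expect to be the only genuine subtlety, is that distinct stabilization patterns really do produce distinct tuples of rotation numbers, so that the induced $\Spinc$ structures on $W$ are genuinely non-isomorphic rather than merely given by different-looking representatives. This requires knowing that the natural map from the set of stabilization patterns to the set of $\Spinc$ structures on $W$ (equivalently, to the relevant subset of $H^2(W;\Z) \cong \Z^{n+1}$ via evaluation of $c_1$ on the chosen homology basis) is injective. In the setup of Proposition~\ref{positive-surgery stabilization} this holds because at each stage the knot $L_i$ is obtained by stabilizing a push-off $|r_i+2|$ (or $|r_0+1|$) times, and each stabilization changes the rotation number by $\pm 1$, so the rotation number of $L_i$ ranges over an arithmetic progression whose length matches the number of stabilization choices at that stage and whose values are, together with the linking data recorded by the cobordism, enough to recover the pattern. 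The same counting bookkeeping carries over verbatim here with $L$ in place of the knots in Figure~\ref{positive-surgery legendrian-diagram}; no feature of the figure-eight beyond the existence of the $tb=-3$, $rot=0$ representative is used.

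Assembling these pieces: fix two distinct stabilization patterns, let $(W, J_1)$ and $(W, J_2)$ be the corresponding Stein cobordisms from $(S^3, \xi_{\rm std})$ to $(M(r), \xi_1)$ and $(M(r), \xi_2)$. By the rotation-number computation the $\Spinc$ structures induced by $J_1$ and $J_2$ are non-isomorphic. Since $c(\xi_{\rm std}) \neq 0$, Theorem~\ref{steincobordism} gives $c^+(\xi_1) \neq c^+(\xi_2)$, so in particular $\xi_1$ and $\xi_2$ are not isotopic. This is precisely the claim, and the main (indeed, only) obstacle is the combinatorial injectivity check, which is routine and is already implicitly handled in Proposition~\ref{positive-surgery stabilization}. \qed
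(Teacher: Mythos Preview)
Your proposal is correct and follows exactly the approach the paper intends: the paper explicitly states this proposition ``is proved identically to Proposition~\ref{positive-surgery stabilization}'', and your argument---same smooth cobordism $W$, different Stein structures distinguished by $c_1$ evaluating to the rotation numbers on the handle generators, then Theorem~\ref{steincobordism} using $c(\xi_{\rm std})\neq 0$---is precisely that proof transplanted to the Legendrian figure-eight in $(S^3,\xi_{\rm std})$. Your extra paragraph on the injectivity of the map from stabilization patterns to rotation-number tuples is a welcome elaboration (and is implicit in the paper's count $|r_0(r_1+1)\cdots(r_n+1)|$), but adds no new ingredient.
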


It remains to verify that we have produced $\Psi(r)$ different isotopy classes of tight contact structures on $M(r)$, for $r < -3$.

\begin{prop} \label{negative-surgery psi-count}
Different choices of contact $(r+3)$--surgery on $L$ give rise to $\Psi(r)$ different isotopy classes of tight contact structures on $M(r)$, for $r < -3$.
\end{prop}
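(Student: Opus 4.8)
The plan is to follow the proof of Proposition~\ref{positive-surgery count} essentially verbatim, replacing contact $\left(\frac{1}{1-r}\right)$--surgery on $L$ by contact $(r+3)$--surgery on the $tb = -3$, $rot = 0$ Legendrian figure-eight of Figure~\ref{legendrian-figure-eight}. By the preceding proposition, distinct sets of stabilization choices in the Ding--Geiges conversion of this surgery into a sequence of Legendrian surgeries (Section~\ref{sec:surgery}) yield pairwise non-isotopic contact structures on $M(r)$, so it suffices to count these choices and then to identify the count with $\Psi(r)$.

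First I would run the algorithm of Section~\ref{sec:surgery}. Since $r+3 < 0$, write its negative continued fraction $r+3 = [t_0,\dots,t_m]$, with $t_0 \le -1$ and $t_i \le -2$ for $1 \le i \le m$. The conversion produces Legendrian knots $L_0,\dots,L_m$, where $L_0$ is a push-off of $L$ stabilized $|t_0+1|$ times and, for $i \ge 1$, $L_i$ is a push-off of $L_{i-1}$ stabilized $|t_i+2|$ times. A Legendrian knot stabilized $k$ times admits $k+1$ inequivalent choices (indexed by the number of positive stabilizations), and distinct choices produce distinct tuples $(rot(L_0),\dots,rot(L_m))$ — the passage from the stabilization signs to the rotation numbers is a partial-sum map, hence injective — and therefore, by the first Chern class computation in the proof of Proposition~\ref{positive-surgery stabilization} together with Theorem~\ref{steincobordism}, distinct isotopy classes on $M(r)$. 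Hence the construction yields exactly
\[ (|t_0+1|+1)\prod_{i=1}^{m}(|t_i+2|+1) \;=\; |t_0|\prod_{i=1}^{m}|t_i+1| \;=\; \bigl|t_0(t_1+1)\cdots(t_m+1)\bigr| \]
isotopy classes, where the simplifications use $t_0 \le -1$ and $t_i \le -2$.

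It then remains to check that $\bigl|t_0(t_1+1)\cdots(t_m+1)\bigr| = \Psi(r) = \Phi\!\left(-\tfrac{1}{r+3}\right)$. For $r \in \mathcal R$ (the only range needed for Theorem~\ref{main classification}) we have $r < -4$, so $r+3 < -1$ and $-\tfrac{1}{r+3} \in (0,1)$; the negative continued fraction of $-1\big/\!\left(-\tfrac{1}{r+3}\right) = r+3$ is exactly $[t_0,\dots,t_m]$, so the defining formula for $\Phi$ on $(0,1]$ gives $\Phi\!\left(-\tfrac{1}{r+3}\right) = \bigl|t_0(t_1+1)\cdots(t_m+1)\bigr|$, as desired. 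For the remaining $r \in (-4,-3)$ (relevant only to the Remark following Theorem~\ref{main classification}) one has $r+3 \in (-1,0)$, forcing $t_0 = -1$, and a short induction on $m$ using $\Phi(x+1) = \Phi(x)$ together with the identity $-1 - \tfrac{1}{-1-1/s} = -\tfrac{1}{s+1}$ — already exploited in the proof of Proposition~\ref{positive-surgery count} — reduces to the case just treated.

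I expect the last step, the continued-fraction bookkeeping (in particular the reduction modulo $1$ when $t_0 = -1$), to be the only place requiring any care; everything else is a transcription of the arguments in Section~\ref{sec:positive-surgery-lower-bound}. Alternatively, and perhaps more cleanly, the entire count can be bypassed: contact $(r+3)$--surgery on $L$ removes a standard neighbourhood of $L$ — whose complement is $C(-3)$, since $tb(L) = -3$ — and glues back a solid torus $N(-3)$, and the stabilization choices correspond, via Honda's basic-slice description of tight contact structures on solid tori, precisely to the tight contact structures on $N(-3)$; the count is then $\Psi(r)$ directly by Lemma~\ref{tight N}.
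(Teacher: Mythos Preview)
Your proposal is correct and takes essentially the same approach as the paper: the paper's proof is a two-line remark that the argument of Proposition~\ref{positive-surgery count} goes through verbatim once one observes that computing $\Psi(r) = \Phi(-\tfrac{1}{r+3})$ requires the negative continued fraction of $-1\big/\!\left(-\tfrac{1}{r+3}\right) = r+3$, which is exactly the continued fraction governing the stabilization count. You have simply unpacked this in more detail (including the $r \in (-4,-3)$ case, which the paper glosses over), and your closing alternative via Lemma~\ref{tight N} is a clean shortcut not in the paper.
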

\begin{proof}
The proof here is as in Proposition~\ref{positive-surgery count}. The crux here is that to calculate $\Psi(r) = \Phi(-\frac{1}{r+3})$, we need to calculate the negative continued fraction of
\[ \frac{-1}{-1/(r+3)} = r+3, \]
which is the same calculation involved in counting the number of stabilization choices.
\end{proof}

\subsection{Tight contact structures counted by \texorpdfstring{$\Phi$}{Phi}, when \texorpdfstring{$r < 0$}{r < 0}} \label{sec:phi-lower-bound}

We start by considering the contact structure $\xi$ on $K$ supported by the figure-eight knot $K$.  Since the open book given by $K$ is a negative stabilization of the open book for $(S^3, \xi_{\rm{std}})$ with annular pages, we know that $\xi$ is overtwisted.  We will not need the calculation here, but it is not hard to work out that $d_3(\xi) = 1$, so that $\xi$ is isotopic to $\xi^{OT}_1$.

By Theorem~\ref{legendrianapproximations} and Theorem~\ref{surgeries on legendrianapproximations}, there exist Legendrian figure-eight knots $L^\pm_n$ in $(S^3, \xi^{OT}_1)$ such that Legendrian surgery on $L^\pm_n$ is tight and has non-vanishing Heegaard Floer contact invariant.  This latter follows from the fact that $\overline{K}$ also supports $\xi^{OT}_1$, and $c(\xi^{OT}_1) = 0$, since $\xi^{OT}_1$ is overtwisted, by \cite[Theorem~1.4]{OS:contact}.  In this section, we will show that all the tight contact structures counted by $\Phi$ on $M(r < 0)$ for $r \in \mathcal R$ arise by negative contact surgery on $L^\pm_n$.

Let $r \in [n, n+1)\cap\Q$, for some integer $n \leq -1$, and write $r = n + 1 - s$, where $s \in (0, 1]$.  Then contact $(-s)$--surgery on $L^\pm_{n+1}$ gives a contact structure on $M(r)$.  Recall that there are potentially different contact structures arising from different stabilization choices when converting the contact surgery to a sequence of Legendrian surgeries.

\begin{prop}
For any $s \in (0, 1]$ and $n \leq 0$, contact $(-s)$--surgery on $L^\pm_n$ gives a tight contact structure on $M(r)$, for any set of stabilization choices.
\end{prop}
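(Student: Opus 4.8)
The plan is to reduce the statement to Theorem~\ref{surgeries on legendrianapproximations}, applied to the first handle of the Ding--Geiges surgery presentation, and then to propagate tightness through the remaining handles. First I would identify the underlying smooth manifold: $L^\pm_n$ is a Legendrian representative of the figure-eight knot $K\subset S^3$ with $tb(L^\pm_n)=n$, so contact $(-s)$--surgery on $L^\pm_n$ is smooth surgery on $K$ with coefficient $n-s$ (the contact framing differs from the Seifert framing of $K$ by $tb(L^\pm_n)=n$); hence the result is $M(n-s)=M(r)$.

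Next, apply the Ding--Geiges algorithm of Section~\ref{sec:surgery}: write $-s=[r_0,\dots,r_m]$ as a negative continued fraction and realize contact $(-s)$--surgery on $L^\pm_n$ as Legendrian surgery on a link $L_0\sqcup L_1\sqcup\cdots\sqcup L_m$. The crucial arithmetic point is that $s\le 1$ forces $-s\in[-1,0)$, hence $r_0=-1$, so $|r_0+1|=0$ and $L_0$ is an \emph{unstabilized} Legendrian pushoff of $L^\pm_n$. (Were $r_0\le -2$, the first component would be a stabilization of $L^\pm_n$, which can be loose, and then even the first surgery could be overtwisted; this is precisely where the hypothesis $s\le 1$ is used, and why the range $[4,5)$ is handled separately.) Thus Legendrian surgery on $L_0$ is literally Legendrian surgery on $L^\pm_n$, and the knots $L_1,\dots,L_m$ --- iterated pushoff-and-stabilize copies supported in a neighborhood of $L^\pm_n$ --- become Legendrian knots in the tight contact manifold produced by that first surgery, with the remaining Legendrian surgeries yielding exactly the contact $(-s)$--surgery. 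When $s=1$ there are no further components and we are done immediately.

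Now I would invoke Theorem~\ref{surgeries on legendrianapproximations}: its hypotheses hold for $K$, since $\phi_K$ is not right-veering (the supported $\xi_K=\xi^{OT}_1$ is overtwisted), $-K\cong K$ by amphichirality, and $c(\xi_{\overline K})=c(\xi^{OT}_1)=0$ as $\overline K=K$ supports an overtwisted structure. Hence Legendrian surgery on $L^\pm_n$ is tight with nonvanishing reduced contact invariant $c^+_{\mathrm{red}}$. The remaining Legendrian surgeries on $L_1,\dots,L_m$ assemble into a Stein cobordism to the contact structure $\xi'$ on $M(r)$, and by the functoriality of the (reduced) contact invariant under Stein cobordisms underlying Theorem~\ref{steincobordism} (\cite{LM,Plamenevskaya2004,Simone}), nonvanishing of $c^+_{\mathrm{red}}$ is inherited, so $c^+_{\mathrm{red}}(\xi')\neq 0$ and $\xi'$ is tight; alternatively, tightness of $\xi'$ follows at once from Wand's theorem that Legendrian surgery preserves tightness. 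None of this depended on the stabilizations chosen for $L_1,\dots,L_m$, so the conclusion holds for every set of stabilization choices, as claimed.

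The main obstacle is the bookkeeping in the second step --- verifying that the Ding--Geiges reduction genuinely begins with an unstabilized pushoff of $L^\pm_n$, so that Theorem~\ref{surgeries on legendrianapproximations} applies on the nose --- together with citing the correct functoriality statement for the reduced contact invariant under the Legendrian-surgery Stein cobordisms (with the map directions arranged so that nonvanishing is pushed forward, not merely pulled back); this route is the one to keep, rather than Wand's theorem, since the later propositions distinguish these contact structures by their Heegaard Floer contact invariants. Everything else is routine.
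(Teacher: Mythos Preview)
Your proposal is correct and follows essentially the same approach as the paper: decompose the contact $(-s)$--surgery via the Ding--Geiges algorithm, observe that $-s\in[-1,0)$ forces $r_0=-1$ so that $L_0$ is an unstabilized push-off of $L^\pm_n$, apply Theorem~\ref{surgeries on legendrianapproximations} to obtain a tight contact structure with nonvanishing Heegaard Floer invariant after the first Legendrian surgery, and then propagate this nonvanishing through the remaining Legendrian surgeries. The only difference is that for the last step the paper cites \cite[Theorem~4.2]{OS:contact} directly (naturality of the contact invariant under Legendrian surgery) and \cite[Theorem~1.4]{OS:contact} for tightness, which is the cleaner reference; Theorem~\ref{steincobordism} as stated is about distinguishing contact structures rather than preserving nonvanishing, though the underlying naturality is indeed the same.
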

\begin{proof}
Let $L$ be the knot on which we are doing surgery.  Via the algorithm in Section~\ref{sec:surgery}, we turn contact $(-s)$--surgery on $L$ into a sequence of Legendrian surgeries on $L_0, \ldots, L_m$. Since $-s \in [-1, 0)$, it follows that $L_0$ is isotopic to $L^\pm_n$.  Thus, if we do Legendrian surgery on $L_0$ first, then we arrive at a tight contact structure on $M(n-1)$ with non-vanishing Heegaard Floer contact invariant, by Theorem~\ref{surgeries on legendrianapproximations}.  The rest of the Legendrian surgeries will preserve this non-vanishing, by \cite[Theorem~4.2]{OS:contact}, and hence preserve tightness, by \cite[Theorem~1.4]{OS:contact}.
\end{proof}

Since we wish to construct new contact structures on $M(r)$ that we have not already constructed, we now prove the following.

\begin{prop} \label{negative-surgery phi-is-not-psi}
For $r < -3$, the tight contact structures on $M(r)$ coming from surgery on $L^\pm_n$ are distinct from those enumerated in Section~\ref{sec:psi-lower-bound}. In fact, the Heegaard Floer contact invariants of one set is disjoint from those of the other set.
\end{prop}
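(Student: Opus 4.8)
The plan is to distinguish the two families by their Heegaard Floer contact invariants, using the fact that the $\Psi$-family comes from a \emph{Stein} cobordism starting at $(S^3, \xi_{\rm std})$, while the $\Phi$-family comes from a Stein cobordism starting at $(S^3, \xi^{OT}_1)$. The key asymmetry is that $c(\xi_{\rm std}) \neq 0$ but $c(\xi^{OT}_1) = 0$, so the two constructions land in genuinely different parts of $\HFplus$. First I would set up the Stein cobordisms explicitly: for the $\Psi$-family, contact $(r+3)$-surgery on the $tb=-3$, $rot=0$ Legendrian figure-eight in $(S^3,\xi_{\rm std})$ factors (via Section~\ref{sec:surgery}) as Legendrian surgery on a Legendrian link, giving a Stein cobordism $W$ from $(S^3,\xi_{\rm std})$ to $(M(r), \xi^\Psi)$; for the $\Phi$-family, contact $(-s)$-surgery on $L^\pm_{n+1} \subset (S^3,\xi^{OT}_1)$ similarly factors as a Stein cobordism $W'$ from $(S^3,\xi^{OT}_1)$ to $(M(r),\xi^\Phi)$.

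The main step is to show the two resulting contact invariants cannot coincide. For the $\Psi$-family, since $c(\xi_{\rm std}) \neq 0$ and Legendrian surgery preserves non-vanishing of the contact invariant by \cite[Theorem~4.2]{OS:contact}, we get $c(\xi^\Psi) \neq 0$ in $\HFplus(-M(r))$; moreover, by the naturality of the contact invariant under the Stein cobordism maps (the Lisca--Mati\'c/Plamenevskaya/Simone circle of ideas, Theorem~\ref{steincobordism}, together with the functoriality already used in Proposition~\ref{positive-surgery stabilization}), $c(\xi^\Psi)$ lies in the image of the cobordism map $F^+_{W, \spint}$ for the $\Spinc$ structure $\spint$ on $W$, which restricts to the unique $\Spinc$ structure on $S^3$ with non-vanishing $c^+$, i.e. the one carrying $c(\xi_{\rm std})$. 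For the $\Phi$-family, $c(\xi^{OT}_1) = 0$ (it's overtwisted), so the contact class $c(\xi^\Phi)$ cannot arise as the image of $c$ of a contact structure on $S^3$ under a cobordism map starting from $\xi^{OT}_1$; instead the statement that $\xi^\Phi$ has non-vanishing \emph{reduced} invariant $c^+_{\rm red}$ (from Theorem~\ref{surgeries on legendrianapproximations}) places $c(\xi^\Phi)$ in the image of $U$ (equivalently, in the kernel of the map to $\HFhat$), which is precisely the part of $\HFplus$ that the $\Psi$-classes avoid. Concretely, I would argue: $c^+(\xi^\Psi)$ maps to a non-zero element $\widehat{c}(\xi^\Psi) \neq 0$ in $\HFhat(-M(r))$ (because Stein fillable contact structures with non-vanishing invariant pulled back from $\xi_{\rm std}$ have non-vanishing hat-version contact class), whereas $\widehat{c}(\xi^\Phi) = 0$ since its $c^+$ is in the image of $U$. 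Hence the invariants lie in disjoint subsets, and in particular $\xi^\Psi \not\simeq \xi^\Phi$.

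The hard part will be pinning down exactly why $\widehat c(\xi^\Phi) = 0$ while $\widehat c(\xi^\Psi) \neq 0$, i.e. making precise the claim that "non-vanishing reduced invariant" for the $\Phi$-family is incompatible with the $\HFhat$-nonvanishing that the $\Psi$-family enjoys. One clean route: for $\xi^\Psi$, non-vanishing of $c(\xi_{\rm std})$ in $\HFhat(S^3) = \Z$ together with the fact that the Stein cobordism map on $\HFhat$ sends $c(\xi_{\rm std})$ to $c(\xi^\Psi)$ (Ozsv\'ath--Szab\'o functoriality, \cite[Theorem~2.3]{OS:contact}) forces $\widehat c(\xi^\Psi) \neq 0$; for $\xi^\Phi$, the source $\widehat c(\xi^{OT}_1) = 0$ in $\HFhat(S^3)$ forces the hat-version cobordism map to kill it, so $\widehat c(\xi^\Phi) = 0$, and only the plus-version $c^+_{\rm red}$ survives. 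A mild subtlety is that the contact invariant lives in Heegaard Floer homology of the orientation-reversal $-M(r)$ and one must track signs/orientations consistently across both constructions; another is ensuring that the "reduced" contact invariant $c^+_{\rm red}$ used in Theorem~\ref{surgeries on legendrianapproximations} is exactly the obstruction to $\widehat c$ being non-zero, which I would cite from \cite{Conway:admissible} rather than reprove. Once these points are in place, disjointness of the two sets of invariants — and hence of the two sets of isotopy classes — follows immediately.
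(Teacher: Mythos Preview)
Your instinct is right: the asymmetry $c(\xi_{\rm std})\neq 0$ versus $c(\xi^{OT}_1)=0$ is exactly what distinguishes the two families. But the way you try to exploit it has a genuine gap.

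You attempt to show $\widehat c(\xi^\Phi)=0$ directly, arguing either that ``the hat-version cobordism map sends $c(\xi^{OT}_1)$ to $c(\xi^\Phi)$'' or that $c^+_{\rm red}(\xi^\Phi)\neq 0$ forces $c^+(\xi^\Phi)$ into the image of $U$. Both steps fail. First, the naturality of the contact class under a Stein $2$--handle cobordism runs the \emph{other} way: one turns $W$ around to get $\overline W:-M(r)\to -S^3$, and then $F_{\overline W}(c(\xi'))=c(\xi)$ (this is \cite[Theorem~2.3]{LS:seifertsurgery}, not \cite{OS:contact}). So knowing $c(\xi^{OT}_1)=0$ tells you only that $F_{\overline{W'}}(\widehat c(\xi^\Phi))=0$, which says nothing about $\widehat c(\xi^\Phi)$ itself. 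Second, $c^+_{\rm red}\neq 0$ means $c^+$ is \emph{not} in the image of $HF^\infty\to HF^+$; it does not place $c^+$ in the image of $U$, and it certainly does not force $\widehat c=0$. In fact $\widehat c(\xi^\Phi)\neq 0$: these are precisely the classes used in Proposition~\ref{negative-surgery phi-count} to distinguish the $\Phi(r)$ contact structures from one another, so they cannot all vanish.

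The missing idea is that the two Stein cobordisms $W$ and $W'$ you build are the \emph{same smooth} $4$--manifold (both are traces of the same smooth surgery on the figure-eight), so there is a single map $F_{\overline W}:\HFhat(-M(r))\to\HFhat(-S^3)$ to which both naturality statements apply. Then $F_{\overline W}(c(\xi^\Psi))=c(\xi_{\rm std})\neq 0$ while $F_{\overline W}(c(\xi^\Phi))=c(\xi^{OT}_1)=0$, and disjointness of the two sets of contact classes follows immediately. No statement about $c^+_{\rm red}$ or about the vanishing of $\widehat c(\xi^\Phi)$ is needed.
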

\begin{proof}
Both sets of contact structures on $M(r)$ arise via surgery on knots in $S^3$, but starting from different contact structures on $S^3$. However, in both cases, the same smooth cobordism $W$ from $S^3$ to $M(r)$ is built from the surgeries.  We will use this to argue that the sets of Heegaard Floer contact invariants of the two classes of contact structures are disjoint.

Recall that given a smooth cobordism $W$ from $S^3$ to $M(r)$, we can turn it around to get a smooth cobordism (which we call $\overline{W}$) from $-M(r)$ to $-S^3$.  From this, we can build a map $F_{\overline{W}} : \HFhat(-M(r)) \to \HFhat(-S^3)$, and if $W$ is built only of $2$--handles and $J$ is a Stein structure on $W$ giving a Stein cobordism from $(S^3, \xi)$ to $(M(r), \xi')$, then $F_{\overline{W}}(c(\xi')) = c(\xi)$, by \cite[Theorem~2.3]{LS:seifertsurgery}, where $c(\cdot)$ is the Heegaard Floer contact invariant.

Let $(M(r), \xi)$ be any tight contact structure constructed in Section~\ref{sec:psi-lower-bound}.  Since we built this contact manifold via Legendrian surgery on a link in $(S^3, \xi_{\rm{std}})$, we know that
\[ F_{\overline{W}}(c(\xi)) = c(\xi_{\rm{std}}) \neq 0 \in \HFhat(-S^3). \]
Now let $(M(r), \xi')$ be any tight contact structures constructed via Legendrian surgery on $L^\pm_n$. Since $L^\pm_n$ is a Legendrian knot in an overtwisted contact structure $\xi^{OT}_1$ on $S^3$, and we know that $c(\xi^{OT}_1) = 0$, then
\[ F_{\overline{W}}(c(\xi')) = c(\xi^{OT}_1) = 0 \in \HFhat(-S^3). \]
Thus $c(\xi') \neq c(\xi)$ for any choices of $\xi$ and $\xi'$, and so $\xi$ and $\xi'$ are not isotopic, for any choices of $\xi$ and $\xi'$.
\end{proof}

Note that we have completed the classification of tight contact structures on $M(n)$ when $n \neq -4$ is a negative integer. In Section~\ref{sec:upperbound}, we proved that for $n = -1, -2, -3$, then there is at most one tight contact structure on $M(n)$, whereas when $n < -4$, there are at most $|n|-2$ tight contact structures.  Since we constructed $\Psi(n) = |n|-3$ of them in Section~\ref{sec:psi-lower-bound}, and the result $(M(n), \xi_n)$ of Legendrian surgery on $L^-_{n+1}$ is not isotopic to any of those counted by $\Psi$, by Proposition~\ref{negative-surgery phi-is-not-psi}, we have a complete list of tight contact structures on $M(n)$ for negative integers $n \neq -4$.

Since the result of Legendrian surgery on $L^+_{n+1}$ is also not isotopic to any of those counted by $\Psi$, we can conclude that the result of Legendrian surgery on $L^+_{n+1}$ is isotopic to $(M(n), \xi_n)$.  Although we will not need it here, we claim that this is also true for $n = -4$.  Indeed, recall that the upper bound of one tight contact structure counted by $\Phi$ applies to any Legendrian surgery on a knot whose standard neighborhood thickens arbitrarily. Since $L^\pm_{-3}$ are such knots, by Theorem~\ref{legendrianapproximations}, the results of Legendrian surgery on $L^\pm_{-3}$ must also be isotopic. (In fact, this argument suffices for any $n$.)

If we need to do a non-integral contact surgery on $L^\pm_{n+1}$, then by following Section~\ref{sec:surgery}, we can turn it into a Legendrian surgery on $L_0$ and a further contact surgery on $L_1$, where both $L_0$ and $L_1$ are isotopic to $L^\pm_{n+1}$ (since our contact surgery coefficient is between $-1$ and $0$). The Legendrian knot $L_1$ corresponds to a rationally null-homologous Legendrian knot $L$ of order $|n|$ in $(M(n), \xi_n)$, which is the result of Legendrian surgery on $L_0$.  Using \cite[Lemma~6.4]{Conway:transverse}, we can work out its rational Thurston--Bennequin number and rational rotation number, and we get
\[ tb_{\Q}(L) = 1 \mbox{ and } rot_{\Q}(L) = \pm 1. \]
Let $L^\pm$ denote the two Legendrian knots that we get, distinguished by $rot_{\Q}(L^\pm) = \pm 1$.  We refer the reader to \cite{BE:rational} for more details on these invariants and just recall here that a stabilization of a rationally null-homologous knot affects $rot_{\Q}$ the same way as it would affect $rot$ of an integrally null-homologous knot, namely by adding or subtracting $1$ (depending on the sign of the stabilization).

\begin{prop} \label{negative-surgery phi-count}
Contact surgery on $L^\pm$ in $(M(n), \xi_n)$ creates at least $\Phi(r)$ isotopy classes of tight contact structures on $M(r)$, when $r < 0$, distinguished by their Heegaard Floer contact invariants.
\end{prop}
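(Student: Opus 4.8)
The plan is to follow the pattern of Proposition~\ref{positive-surgery count} and Proposition~\ref{negative-surgery psi-count}, the new wrinkle being that the construction here involves the \emph{two} Legendrian knots $L^+$ and $L^-$, and one must check that the overlap between the two resulting families collapses the naive count $2\Phi(r)$ down to exactly $\Phi(r)$. Write $r = n+1-s$ with $n\le -1$ an integer and $s\in(0,1]$, so that, as recalled above, $M(r)$ arises from contact $(-s)$--surgery on $L^\pm_{n+1}$, and this factors via Section~\ref{sec:surgery} as Legendrian surgery on $L_0\simeq L^\pm_{n+1}$ --- which produces $(M(n),\xi_n)$, tight with non-vanishing $c^+_{\mathrm{red}}$ by Theorem~\ref{surgeries on legendrianapproximations} --- followed by a contact surgery on the push-off $L^\pm\subset(M(n),\xi_n)$, with $tb_\Q(L^\pm)=1$ and $rot_\Q(L^\pm)=\pm1$. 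The Heegaard Floer argument will be run relative to this intermediate $(M(n),\xi_n)$, since the original base $(S^3,\xi_1^{OT})$ is overtwisted and thus has vanishing contact invariant; consequently $rot(L_0)$ never enters the computation, and only the rational rotation numbers of the later knots matter.

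First I would apply the Ding--Geiges algorithm to the contact surgery on $L^\pm$, reading the stabilization counts off the negative continued fraction $-s=[-1,a_1,\ldots,a_m]$, where $[a_1,\ldots,a_m]=-\frac{1}{1-s}$ (with every $a_i\le-2$) is precisely the continued fraction occurring in $\Phi(1-s)=\Phi(r)$. This presents the surgery as Legendrian surgery on a chain of stabilized push-offs of $L^\pm$ --- the first a $|a_1+2|$--fold stabilization of $L^\pm$ itself, and each later one a $|a_i+2|$--fold stabilization of a push-off of its predecessor --- and so exhibits a Stein cobordism $W'$ from $(M(n),\xi_n)$ to $(M(r),\xi')$. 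Because $L^+$ and $L^-$ are smoothly isotopic in $M(n)$ with the same framing, and the sign of a stabilization changes neither the smooth knot type nor the smooth framing, the underlying smooth cobordism $W'$ is the same for every choice made in the construction.

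Next, exactly as in Proposition~\ref{positive-surgery stabilization}, the Stein structures on $W'$ arising from different choices have different first Chern classes --- computed from the $rot_\Q$ of the knots via \cite[Proposition~2.3]{Gompf} and \cite[Lemma~6.4]{Conway:transverse} --- hence non-isomorphic $\Spinc$ structures, so by Theorem~\ref{steincobordism} they induce contact structures on $M(r)$ with distinct Heegaard Floer contact invariants (and non-vanishing $c^+_{\mathrm{red}}$). What remains is the count of distinct values of $c_1$, which is the heart of the matter. The rotation number of the first knot, a $|a_1+2|$--fold stabilization of $L^\pm$, ranges over $|a_1|$ values: stabilizing $L^+$ (with $rot_\Q=+1$) gives the arithmetic progression of $|a_1|-1$ values $\{\,1-|a_1+2|,\ 3-|a_1+2|,\ \ldots,\ 1+|a_1+2|\,\}$, stabilizing $L^-$ gives the same progression shifted down by $2$, and together these form a single arithmetic progression of length $|a_1|$. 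Each subsequent knot in the chain then contributes an independent factor of $|a_i+2|+1=|a_i+1|$ choices of rotation number, for a total of $|a_1(a_2+1)\cdots(a_m+1)|=\Phi(1-s)=\Phi(r)$ distinct contact structures on $M(r)$. (When $s=1$, so $m=0$ and the surgery on $L^\pm$ is empty, one simply gets $\xi_n$, consistent with $\Phi(r)=\Phi(n)=1$.)

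The main obstacle is precisely this last count: one has to verify that the two families --- surgeries on stabilizations of $L^+$, and surgeries on stabilizations of $L^-$ --- genuinely overlap in their ``interior'' rotation numbers (so the total is $\Phi(r)$, not $2\Phi(r)$, which would contradict the upper bound of Theorem~\ref{upper bound}) while still being distinct at the extreme rotation numbers; this overlap is what accounts for the factor-of-two difference between positive and negative surgery coefficients in Theorem~\ref{main classification}. A secondary point requiring care is that the Stein cobordism argument must be based at $(M(n),\xi_n)$ and its non-vanishing reduced contact invariant, not at the overtwisted $(S^3,\xi_1^{OT})$.
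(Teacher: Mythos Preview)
Your proof is correct and follows essentially the same approach as the paper's: build a Stein cobordism $W'$ from $(M(n),\xi_n)$ to $M(r)$, distinguish Stein structures via $c_1$ evaluated on classes coming from rational Seifert surfaces (so via $rot_\Q$), and absorb the $L^+$/$L^-$ ambiguity into the first factor of the count to obtain $|a_1|\cdot|a_2+1|\cdots|a_m+1|=\Phi(r)$. The paper packages the last step slightly differently --- it imagines a phantom knot $L'$ with $rot_\Q(L')=0$ whose stabilizations are $L^\pm$, and rewrites the count as the number of contact $(-\tfrac{1}{1-s})$--surgeries on $L'$ --- but this is purely cosmetic; your direct union-of-arithmetic-progressions count is the same computation.

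One small correction: your citation of \cite[Proposition~2.3]{Gompf} for the $c_1$ evaluation is not quite right here, since the $L_i$ are only rationally null-homologous in $M(n)$; the relevant formula $\langle c_1(J),[\Sigma_i\cup|n|\cdot C_i]\rangle=|n|\cdot rot_\Q(L_i)$ (on the class built from a rational Seifert surface and $|n|$ copies of the core) is the one the paper invokes from \cite[Lemma~4.1]{LW:rationaltb}.
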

\begin{proof}
Let $r \in (n, n+1)$, let $s = n+1-r$, and let $L^\pm$ be the two Legendrian knots in $(M(n), \xi_n)$, as described above.  Smooth $r$--surgery on $L^\pm_{n+1}$ is equivalent to smooth $n$--surgery on $L^\pm_{n+1}$ followed by smooth $(n+\frac{2s-1}{s-1})$--surgery on a push-off of $L^\pm_{n+1}$ that links it $n+1$ times.  Converting this into contact surgeries, we see that the different choices of contact $(-s)$--surgery on $L^\pm_{n+1}$ give the same contact structures as the different choices of contact $(-\frac{s}{1-s})$--surgery on the knots $L^\pm \subset (M(n), \xi_n)$.

Different choices of this latter contact surgery, that is, different stabilization choices when converting it into a sequence of Legendrian surgeries on $L_1, \ldots, L_m$, build potentially different Stein structures on the same smooth cobordism $W$.  To distinguish the Stein structures, which allows us to apply Theorem~\ref{steincobordism}, we evaluate the first Chern class of the Stein structures on certain elements of $H_2(W; \Z)$.  The elements that we consider come from the union of a rational Seifert surface $\Sigma_i$ for $L_i$ glued to $|n|$ copies of the core $C_i$ of the $2$--handle attached to $L_i$.  In the Stein structure $J$ on $W$ induced by attaching Stein $2$--handles to each $L_i$, we calculate
\[ \big\langle c_1(J), \left[\Sigma_i \cup |n| \cdot C_i\right]\big\rangle = |n|\cdot rot_{\Q}(L_i), \] see \cite[Lemma~4.1]{LW:rationaltb}.
It follows from this and from Theorem~\ref{steincobordism} that we can get a lower bound on the number of distinct tight contact structures by counting the number of possible values of $rot_{\Q}$ of each $L_i$.

We perform this count just as we counted the number of stabilization choices in Proposition~\ref{positive-surgery count}, except that our count for $L_1$ is different, since we are able to choose either $L^+$ or $L^-$ to start off with. It is easy to see that this will only add $1$ to the number of possible values of $rot_{\Q}(L_1)$.  In order to more easily describe the count, we imagine that $L^\pm$ come from stabilizations of a Legendrian knot $L'$ that has $rot_{\Q}(L') = 0$, and we see that the number of contact $(-\frac{s}{1-s})$--surgeries on $L^\pm$ is equal to the number of contact $(-1-\frac{s}{1-s})$--surgeries there would be on $L'$.  Since
\[ -1-\frac{s}{1-s} = -\frac{1}{1-s}, \]
this count is equal to $\Phi(1-s)$, since $1-s \in (0, 1)$.  However, since $r \equiv 1-s\modp{1}$, by definition, $\Phi(r) = \Phi(1-s)$.  Thus, we have counted at least $\Phi(r)$ different sets of values for the $rot_{\Q}(L_i)$, and hence have constructed at least $\Phi(r)$ different isotopy classes of tight contact structures on $M(r)$, distinguished by their Heegaard Floer contact invariants.
\end{proof}

\begin{remark} By considering $L^\pm_n$ for any $n \in \Z$, the methods in this section construct at least $\Phi(r)$ tight contact structures on $M(r)$, for any $r \in \Q$. From Section~\ref{sec:upperbound}, we know that all the tight contact structures on $M(r \geq 1)$ arise via gluing solid tori to certain knot complements, and from Section~\ref{sec:positive-surgery-lower-bound}, we know that the upper bound is attained. It is not hard to identify these complements as the complements of $L^\pm_n$, and thus the construction in this section actually gives all $2\Phi(r)$ tight contact structures on $M(r \geq 1)$.
\end{remark}

\section{Fillability and Universal Tightness}
\label{sec:properties}

In this section, we will discuss the symplectic fillability (Theorem~\ref{fillable}) and universal tightness (Theorem~\ref{universally tight}) of the tight contact structures we have constructed on $M(r)$ for $r \in \mathcal R$.

\subsection{Symplectic Fillability}

\begin{proof}[Proof~of~Theorem~\ref{fillable}]
Since the contact manifolds in Sections~\ref{sec:positive-surgery-lower-bound}~and~\ref{sec:psi-lower-bound} are constructed via Legendrian surgery diagrams, they are Stein fillable, and hence also strongly fillable, by \cite{Eliashberg:stein}.  It remains to understand the contact manifolds constructed in Section~\ref{sec:phi-lower-bound}.  Additionally, when $r \in (n, n+1)$ is negative and non-integral, the contact structures on $M(r)$ are constructed via Legendrian surgery on a Legendrian link in $(M(n), \xi_n)$, the result of Legendrian surgery on $L^\pm_{n+1}$.  Since Legendrian surgery preserves Stein and strong fillability by \cite{Eliashberg:stein} and \cite{Weinstein}, respectively, it is enough to show that $(M(n), \xi_n)$ is Stein fillable for $n \in \{-9, -8, \ldots, -1\}$ and is strongly fillable for integers $n \leq -10$.

\begin{lem}
The contact manifolds $(M(n), \xi_n)$ are strongly fillable for any integer $n \leq -1$.
\end{lem}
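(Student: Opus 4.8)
The plan is to realize $(M(n),\xi_n)$ as a negative contact surgery on a Legendrian knot sitting in one of the two tight contact structures on the torus bundle $M(0)$, and then to inherit fillability from those. Recall that $\xi_n$ is defined as the result of Legendrian surgery on $L^-_{n+1}\subset(S^3,\xi^{OT}_1)$. Although $(S^3,\xi^{OT}_1)$ is overtwisted, the complement of a standard neighborhood of $L^-_{n+1}$ thickens, by Lemma~\ref{thickening}, to $C(\infty)$, which by Lemma~\ref{complement count infinity} is the complement of a Legendrian knot $K_\pm$ in $(M(0),\xi^\pm)$. Hence $(M(n),\xi_n)$ is obtained from $(M(0),\xi^\pm)$ by removing $\nu(K_\pm)$ and regluing a solid torus equipped with a tight contact structure, and --- as recorded in the proof of Theorem~\ref{upper bound} --- this regluing is a negative contact surgery on $K_\pm$.

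The first ingredient I would use is that $(M(0),\xi^\pm)$ is symplectically fillable. By Honda's classification \cite{Honda:classification2} these are the only two tight contact structures on the torus bundle $M(0)$, whose monodromy $\phi$ is hyperbolic; by the work of Ding and Geiges on symplectic fillability of tight contact structures on torus bundles over $S^1$, each of $\xi^\pm$ admits a weak symplectic filling. The second ingredient is that a negative contact surgery on $K_\pm$ is, by the Ding--Geiges algorithm recalled in Section~\ref{sec:surgery}, a finite sequence of Legendrian surgeries; attaching the corresponding Weinstein $2$--handles to a weak filling of $(M(0),\xi^\pm)$ produces a weak filling of $(M(n),\xi_n)$. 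Finally, since $M(n)$ is a rational homology sphere for $n\neq 0$, the theorems of Eliashberg and of Ohta--Ono let one deform this weak filling into a strong one, which proves the lemma. (If $(M(0),\xi^\pm)$ is in fact strongly fillable --- as one expects from the hyperbolicity of $\phi$ --- the last step can be skipped.)

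The step I expect to demand the most care is the interface between the torus-bundle picture and the surgery picture. Since the first Weinstein $2$--handle is attached along $K_\pm\subset M(0)$, and $M(0)$ is not a rational homology sphere, one should first arrange that the symplectic form of the weak filling restricts to a suitable cohomology class on $M(0)$ before enlarging it; this is possible here because $\xi^\pm$ has no Giroux torsion, $M(0)$ supporting only finitely many tight contact structures. One must also confirm, using the slope conventions of Section~\ref{sec:decomp}, that the contact surgery coefficient in question is genuinely negative (so that the Legendrian-surgery reformulation applies), which is part of the analysis in the proof of Theorem~\ref{upper bound}. (For the stronger Stein-fillability statement of Theorem~\ref{fillable} in the range $-9\le n\le -1$, a different route is needed: one takes the open book $(\Sigma',\phi\circ\Delta)$ furnished by Theorem~\ref{constructing open books}, writes $\phi=T_aT_b^{-1}$ in terms of Dehn twists about curves $a,b$ on the genus--one fiber, and produces an explicit positive Dehn-twist factorization of $\phi\circ\Delta$, using the chain relation $(T_aT_b)^6=T_\partial$ together with lantern relations among the boundary-parallel curves to absorb the single negative twist $T_b^{-1}$; the required bookkeeping breaks down once $|n|>9$, which is the source of that threshold.)
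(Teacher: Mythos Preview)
Your argument is correct in outline but follows a quite different route from the paper's. The paper never appeals to the fillability of $(M(0),\xi^\pm)$; instead it builds explicit abstract open books $(\Sigma_{|n|},D_\alpha^{-1}D_\beta\Delta)$ for $(M(n),\xi_n)$ via Theorem~\ref{constructing open books}, recognizes that adding positive Dehn twists to cancel the negative ones in Ghiggini's strongly-fillable-but-not-Stein examples $(M'_m,\xi'_m)$ produces a Stein cobordism from $(M'_m,\xi'_m)$ to $(M(-4m-1),\xi_{-4m-1})$, and then uses the capping-off cobordism of Gay--Stipsicz (together with the rational-homology-sphere upgrade of Ohta--Ono) to propagate strong fillability from $(M(-m-1),\xi_{-m-1})$ down to $(M(-m),\xi_{-m})$. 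Your approach is cleaner once the fillability of the torus bundle is granted, and it makes transparent why all the $\xi_n$ are filled simultaneously; the paper's approach avoids having to pin down the fillability class of $\xi^\pm$ and the subtlety of attaching Weinstein handles to a weak filling with $b_1>0$, at the cost of importing Ghiggini's examples and the capping-off machinery. Two small corrections to your write-up: $M(0)$ is toroidal and supports infinitely many tight contact structures, not finitely many --- the $\xi^\pm$ are the two \emph{minimally twisting} ones in Honda's table --- and you should check that the torus-bundle fillability result you cite actually covers the universally tight structures on a hyperbolic bundle (for $\phi$ with trace $3$ all tight structures are universally tight, so the relevant input is closer to Eliashberg--Thurston or Giroux's explicit models than to the virtually overtwisted case treated in parts of Ding--Geiges). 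Your parenthetical on the Stein range $-9\le n\le -1$ is also different in detail: the paper invokes the Korkmaz--Ozbagci factorizations of $\Delta$ on $\Sigma_{|n|}$, which happen to contain a $D_\alpha$ that cancels the lone $D_\alpha^{-1}$, rather than chain and lantern relations.
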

\begin{proof}
We start by constructing abstract open books that support $(M(n), \xi_n)$, for $n \leq -1$.  For a positive integer $m$, let $\Sigma_m$ be a genus--$1$ surface with $m$ boundary components.  Then the open book for $S^3$ with binding the figure-eight knot is given by $(\Sigma_1, D_\alpha^{-1} D_\beta)$, where $\alpha$ and $\beta$ are as in Figure~\ref{negative-surgery-open-book}, and $D_\gamma$ denotes a positive Dehn twist about the simple closed curve $\gamma$. Then by Theorem~\ref{constructing open books}, the open book $(\Sigma_m, D_\alpha^{-1} D_\beta \Delta)$ supports the contact manifold $(M(-m), \xi_{-m})$, where $\Delta$ is the composition of positive Dehn twists about each boundary component of $\Sigma_m$.

\begin{figure}[htbp]
\begin{center}
\begin{overpic}[scale=2.5,tics=20]{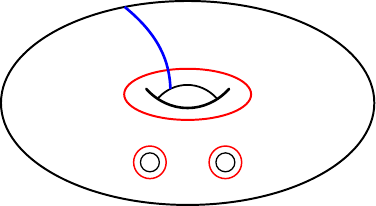}
\put(120,120){$\alpha$}
\put(80,80){$\beta$}
\put(126,26){\LARGE $\cdots$}
\end{overpic}
\caption{An abstract open book for ($M(-m), \xi_{-m}$). The red curves represent positive Dehn twists and the blue curve represents a negative Dehn twist.}
\label{negative-surgery-open-book}
\end{center}
\end{figure}

\begin{figure}[htbp]
\begin{center}
\begin{overpic}[scale=2.5,tics=20]{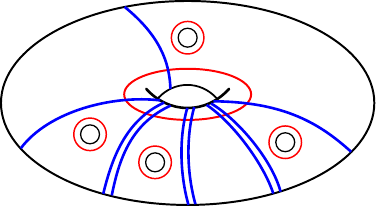}
\put(98,120){$\alpha$}
\put(90,93){$\beta$}
\put(3,32){$\gamma_1$}
\put(65,-1){$\gamma_2$}
\put(77,-2){$\gamma_3$}
\put(128,-8){$\gamma_4$}
\put(140,-8){$\gamma_5$}
\put(205,3){$\gamma_{8m-1}$}
\put(195,-5){$\gamma_{8m-2}$}
\put(257,32){$\gamma_{8m}$}
\put(147,28){\LARGE $\cdots$}
\end{overpic}
\vspace{1.5cm}
\caption{An abstract open book for ($M'_m, \xi'_m$) with page $\Sigma_{4m+1}$. Positive Dehn twists will be performed along red curves and negative Dehn twists along blue curves.}
\label{strongly-fillable-open-book}
\end{center}
\end{figure}

Consider now the surface and curves depicted in Figure~\ref{strongly-fillable-open-book}. For $m \geq 1$ an integer, we consider the abstract open book
\[ (\Sigma_{4m+1}, D_\alpha^{-1} D_\beta  D_{\gamma_1}^{-1} D_{\gamma_2}^{-1} \cdots  D_{\gamma_{8m-1}}^{-1} D_{\gamma_{8m}}^{-1}  \Delta). \]
The contact manifolds $(M'_m, \xi'_m)$ supported by these open books are known to be strongly fillable (and not Stein fillable), by work of Ghiggini \cite{Ghiggini:strongnotstein} (the open books appear in this form explicitly in \cite[Section~4.7.2]{VHM}).  Note that without the Dehn twists around the $\gamma_i$, the open book would be the same as the one given above that supports $(M(-4m-1), \xi_{-4m-1})$. We can add positive Dehn twists to the monodromy to cancel out the negative ones; this gives a new contact manifold that is built from the previous one by a sequence of Legendrian surgeries.  Thus, we can build a Stein cobordism from $(M'_m, \xi'_m)$ to $(M(-4m-1), \xi_{-4m-1})$.  Since strong fillability is preserved under Legendrian surgery by \cite{Weinstein}, we have shown that $(M(-4m-1), \xi_{-4m-1})$ is strongly fillable for any $m \geq 1$.

We briefly recall the {\em capping off} construction. Given an open book for a contact manifold $(M, \xi)$ with page $\Sigma_{m+1}$, where $m \geq 1$, one can arrive at an open book with page $\Sigma_m$ by gluing a disc onto one of the boundary components and extending the monodromy over the disc via the identity; this is called capping off.  By \cite[Theorem~2.1]{GS} (see also \cite[Theorem~5]{Wendl:nonexact}), there exists a symplectic cobordism from $(M, \xi)$ to $(M', \xi')$ (the contact manifold supported by the capped-off open book) that is strongly concave at $(M, \xi)$ and weakly convex at $(M', \xi')$.  Furthermore, by \cite{OO}, if $M'$ is a rational homology sphere, then we can perturb the symplectic structure such that it is strongly convex at $(M', \xi')$.

In our case, when $m \geq 1$, capping off any boundary component of $(\Sigma_{m+1}, D_{\alpha}^{-1}D_\beta\Delta)$ gives an open book that supports $(M(-m), \xi_{-m})$.  Since $M(m)$ is a rational homology sphere for all $m \geq 1$, we can glue the strongly symplectic cobordism from the previous paragraph to a strong filling of $(M(-m-1),\xi_{-m-1})$ to get a strong filling of $(M(-m), \xi_{-m})$.  Since we already know that $(M(-4m-1), \xi_{-4m-1})$ is strongly fillable, for all $m \geq 1$, we conclude that $(M(-m), \xi_{-m})$ is strongly fillable for all $m \geq 1$.
\end{proof}

\begin{lem}
The contact manifolds $(M(n), \xi_n)$ are Stein fillable, for any integer $-9 \leq n \leq -1$.
\end{lem}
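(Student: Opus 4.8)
The plan is to produce, for each integer $m$ with $1\le m\le 9$, an open book supporting $(M(-m),\xi_{-m})$ whose monodromy factors as a product of \emph{positive} (right‑handed) Dehn twists; since a contact $3$–manifold supported by an open book with positive monodromy is Stein fillable (Loi–Piergallini, Akbulut–Ozbagci), this gives Stein fillability of $(M(n),\xi_n)$ for each integer $n\in\{-1,\dots,-9\}$. The remaining negative non‑integral values are then automatic: for $r\in(n,n+1)$ with $-9\le n\le -1$ the $\Phi$–counted contact structures on $M(r)$ were built in Section~\ref{sec:phi-lower-bound} by Legendrian surgery on a Legendrian link in $(M(n),\xi_n)$, which preserves Stein fillability by \cite{Eliashberg:stein}; the $\Psi$–counted structures are Stein fillable by their construction in Section~\ref{sec:psi-lower-bound}; and the case $r\ge 1$ was handled in Section~\ref{sec:positive-surgery-lower-bound}. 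So the remaining claim of Theorem~\ref{fillable}---Stein fillability for $r\ge -9$---reduces to exhibiting these positive factorizations for $1\le m\le 9$.

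Recall from the previous lemma that $(M(-m),\xi_{-m})$ is supported by $(\Sigma_m,\,D_\alpha^{-1}D_\beta\Delta)$, where $\Sigma_m$ has genus one with boundary components $\partial_1,\dots,\partial_m$, the curves $\alpha,\beta$ are as in Figure~\ref{negative-surgery-open-book} (non‑separating, meeting once in the genus‑one part), and $\Delta=D_{\partial_1}\cdots D_{\partial_m}$ is central. For $m=1$ the surface $\Sigma_1$ is a one‑holed torus and $\alpha\cup\beta$ fills it, so the $2$–chain relation gives $D_{\partial_1}=(D_\alpha D_\beta)^6$, whence
\[
 D_\alpha^{-1}D_\beta\,\Delta \;=\; D_\alpha^{-1}D_\beta\,(D_\alpha D_\beta)^6 \;=\; \bigl(D_\alpha^{-1}D_\beta D_\alpha\bigr)\,D_\beta\,(D_\alpha D_\beta)^5 .
\]
Since $D_\alpha^{-1}D_\beta D_\alpha = D_{D_\alpha^{-1}(\beta)}$ is again a positive Dehn twist, the right‑hand side is a product of positive Dehn twists, so $(M(-1),\xi_{-1})$ is Stein fillable. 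For $2\le m\le 9$ I would argue in the same spirit, but now the single negative twist $D_\alpha^{-1}$ must be cancelled against the positive boundary twists $D_{\partial_i}$: one combines the chain relation with lantern (equivalently, daisy or star) relations in $\mathrm{MCG}(\Sigma_m)$---and, if needed, positive stabilizations---to rewrite $D_\alpha^{-1}D_\beta\Delta$ as an all‑positive word. I would carry this out by hand, case by case in $m$, and the bookkeeping of how many positive boundary twists are needed to ``pay for'' the negative twist is exactly what makes the construction stop at $m=9$.

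The main obstacle is this last step: producing explicit positive factorizations for $2\le m\le 9$ is a delicate, essentially case‑by‑case computation in the mapping class group, and it is precisely where the method gives out---which is why, as noted in the remark after Theorem~\ref{fillable}, Stein fillability of the $\Phi$–counted contact structures on $M(r<-9)$ is left open. One should note that all the moves used are genuine mapping‑class‑group relations, so the supported contact structure is unchanged throughout and remains $\xi_{-m}$; no separate verification of this is required.
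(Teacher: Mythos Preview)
Your strategy is the paper's: show that the monodromy $D_\alpha^{-1}D_\beta\Delta$ of the open book $(\Sigma_m,\,D_\alpha^{-1}D_\beta\Delta)$ supporting $(M(-m),\xi_{-m})$ admits a factorization into positive Dehn twists, and your $m=1$ case is essentially the paper's argument specialized to one boundary component. For $2\le m\le 9$, however, you only sketch a plan (chain, lantern, daisy relations, possibly stabilizations) and concede that carrying it out is ``the main obstacle''; as written, this is a genuine gap.

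The paper fills it uniformly, with no case-by-case computation, by citing Korkmaz--Ozbagci \cite{KO}: for each $m\le 9$ they give an explicit factorization of the boundary multi-twist $\Delta$ on $\Sigma_m$ into positive Dehn twists about non-separating curves, and (up to cyclic permutation, which is harmless since $\Delta$ is central) each such factorization takes the form $\Delta=\phi\,D_\alpha$ with $\phi$ a positive word. Then
\[
D_\alpha^{-1}D_\beta\,\Delta \;=\; \Delta\,D_\alpha^{-1}D_\beta \;=\; \phi\,D_\alpha D_\alpha^{-1}D_\beta \;=\; \phi\,D_\beta,
\]
which is manifestly a product of positive Dehn twists. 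This dispenses with the hand computation you anticipate and makes transparent why the method stops at $m=9$: that is exactly the range covered by the Korkmaz--Ozbagci factorizations. No positive stabilizations are needed; the argument stays on $\Sigma_m$ throughout.
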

\begin{proof}
Let $-9 \leq n \leq -1$ be an integer.  In \cite{KO}, Korkmaz and Ozbagci give explicit factorizations of $\Delta$ on $\Sigma_{-n}$ into a product of positive Dehn twists about non-separating curves.  Up to cyclic permutation of the Dehn twists, their factorizations take the form $\phi D_\alpha$, where $\phi$ is a product of positive Dehn twists.  Thus, the mapping class group $D_\alpha^{-1}D_\beta\Delta$ can be written as
\[ D_\alpha^{-1}D_\beta\Delta = \Delta D_\alpha^{-1}D_\beta = \phi D_\beta, \] which is the product of only positive Dehn twists around non-separating curves, and hence this describes a Stein filling of $(M(n), \xi_n)$.
\end{proof}

This concludes the proof of Theorem~\ref{fillable}.
\end{proof}

\subsection{Universal Tightness} 
\begin{proof}[Proof of Theorem~\ref{universally tight}]
	We can construct universally tight contact structures by perturbing taut foliations, by work of Eliashberg and Thurston \cite{ET}. Since Roberts \cite{Roberts} showed that $M(r)$ supports a taut foliation for any $r \in \Q$, it follows that $M(r)$ admits at least one universally tight contact structure for each $r$.
	
	Our approach to identifying the universally tight contact structure(s) will be by process of elimination.  First, note that for all $r \in \Q$, the surgery dual $K_r$ of the figure-eight knot in $M(r)$ --- which is the core of the surgery torus $N$, using notation from Section~\ref{sec:upperbound} ---  represents a non-torsion element in $\pi_1(M(r))$, and thus a neighborhood of $K_r$ lifts to the universal cover of $M(r)$. Thus, if $N$ carries a virtually overtwisted contact structure, then the overtwisted universal cover of $N$ embeds into the universal cover of $M(r)$, and hence the contact structure on $M(r)$ cannot be universally tight.
	
	By the classification work of Honda \cite[Proposition~5.1(2)]{Honda:classification1}, we know that there is a unique universally tight contact structure on a solid torus with integral meridional slope and dividing curve slope $\infty$, and exactly two universally tight contact structures when the meridional slope is non-integral. These are composed of basic slices of a consistent sign glued to a solid torus with a unique tight contact structure.
	
	Now, using the decomposition from Section~\ref{sec:upperbound} of tight contact structures on $M(r)$ for positive $r \in \mathcal R$ into tight contact structures on $C(\infty)$ glued to a solid torus, it follows that for positive $r \in \mathcal R$, there are at most two universally tight contact structures on $M(r)$ for $r \in \N$ and at most four for $r \not\in \N$. These correspond to the contact structures coming from all negative or all positive stabilizations on $L$ in Figure~\ref{positive-surgery legendrian-diagram}. Since a $180^\circ$ rotation of the surgery diagram in Figure~\ref{positive-surgery legendrian-diagram} switches signs of stabilizations on $L$ and on $L'$, this symmetry induces contactomorphisms of these contact structures in pairs. Thus for integral $r$, there are exactly two universally tight contact structures on $M(r)$, and for non-integral $r$, there are either two or four universally tight contact structures on $M(r)$.
	
	When $r < -3$, there are at most two universally tight contact structures counted by $\Psi$. These arise by choosing all negative or all positive stabilizations on contact $(r+3)$--surgery on the Legendrian knot in Figure~\ref{legendrian-figure-eight}, and by the argument above, these contact structures are contactomorphic.
	
	For any $r < 0$, there is at most one universally tight contact structure counted by $\Phi$ for integral $r$, and at most two for non-integral $r$.  These arise by choosing all negative or all positive stabilizations on contact surgery on $L^\pm_{n+1}$ (see Section~\ref{sec:phi-lower-bound}), which are knots in the overtwisted contact structure $(S^3, \xi^{OT}_1)$. We claim that there is an involution of this overtwisted contact manifold that switches $L^+_{n+1}$ and $L^-_{n+1}$ and switches the signs of the stabilization choices of surgery. Indeed, $(S^3, \xi^{OT}_1)$ can be described by contact $(+1)$--surgery on each component of the standard Legendrian Hopf link. This surgery diagram admits a $180^\circ$ rotational symmetry, inducing an involution on the manifold after surgery that evidently switches signs of stabilizations of knots in the complement of the Hopf link, as claimed.
	
	The theorem will now follow from the preceding discussion and Proposition~\ref{Psi-virtually-OT}.
\end{proof}

	\begin{prop} \label{Psi-virtually-OT}
		All contact structures on $M(r)$ for $r \in \mathcal{R}$ counted by $\Psi$ are virtually overtwisted.
	\end{prop}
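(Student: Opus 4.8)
The plan is to produce, for each contact structure $\xi$ on $M(r)$ counted by $\Psi$, a \emph{finite} cover of $(M(r),\xi)$ that is overtwisted; since $\xi$ is tight, this is exactly what it means for $\xi$ to be virtually overtwisted. Note first that the statement is vacuous unless $\Psi(r) > 0$, which for $r \in \mathcal{R}$ forces $r < -4$; in particular $|r| > 4$. Recall from Section~\ref{sec:upperbound} that such a $\xi$ is obtained by gluing a non-thickening tight contact structure on $C(-3)$ to a tight contact structure on the solid torus $N = N(-3)$, where $N$ is a neighborhood of the surgery-dual knot $K_r$. Write $r = p/q$ in lowest terms, so $|p| \geq 5$.

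First I would construct the cover. The figure-eight exterior $C$ fibers over $S^1$ with fiber the once-punctured torus $\Sigma$, dual to the meridian $\mu$; the $|p|$-fold cyclic cover of $C$ along this fibration is the once-punctured torus bundle with monodromy $\phi^{|p|}$. On $\partial C = \partial N$ this cover unwraps $\mu$ and fixes $\lambda = \partial \Sigma$, so the meridian $p\mu + q\lambda$ of $N$ lies in the relevant index-$|p|$ sublattice of $H_1(\partial C)$ (because $|p|\mid p$) and hence lifts to loops; therefore the cover extends over $N$ to a finite cover $\pi\colon \widehat{M} \to M(r)$, and $\widehat{C} := \pi^{-1}(C(-3))$ is the once-punctured torus bundle with monodromy $\phi^{|p|}$ and convex boundary, carrying the pulled-back contact structure.

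The core of the argument is to show that $(\widehat{C},\pi^*\xi)$ is overtwisted, independently of which tight structure was chosen on $N(-3)$. Cut $\widehat{C}$ along a fiber $\Sigma$ to get a genus-$2$ handlebody $\Sigma \times [0,|p|]$. Since $C(-3)$ does not thicken, Proposition~\ref{normalizing-dividing-set} shows the dividing set $\Gamma_\Sigma$ on $\Sigma \times \{0\}$ is either three arcs of distinct slopes, or three arcs of one slope together with a closed curve (the boundary-parallel case would produce a thickening bypass via Theorem~\ref{imba}); in particular $\Gamma_\Sigma$ is not boundary-parallel, while the dividing set on $\Sigma \times \{|p|\}$ is $\phi^{\pm|p|}(\Gamma_\Sigma)$. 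Because $\phi = \matrixs{2}{1}{1}{1}$ is Anosov with eigenvalue $\tfrac{3+\sqrt5}{2} > 1$, the arc slopes of $\phi^{\pm|p|}(\Gamma_\Sigma)$ are pushed far toward the attracting direction, so a compressing disc $D = \alpha \times [0,|p|]$, for $\alpha$ an arc on $\Sigma \times \{0\}$ disjoint from $\Gamma_\Sigma$, meets the dividing set of the handlebody in on the order of $\bigl(\tfrac{3+\sqrt5}{2}\bigr)^{|p|} \gg 2$ points. Feeding this into the imbalance principle (Theorem~\ref{imba}) together with bypass rotation and sliding (Theorems~\ref{bypass-rotation} and~\ref{bypass-sliding}), exactly as in the proofs of Propositions~\ref{bypass-negative-slope} and~\ref{bypass-positive-slope} and of Lemma~\ref{infty overtwisted}, I would attach a succession of bypasses along boundary-parallel copies of $\Sigma$; the imbalance that is present \emph{precisely because $\Gamma_\Sigma$ is not boundary-parallel} cannot be absorbed and must eventually force a contractible dividing curve on some boundary-parallel copy of $\Sigma$ (equivalently, a convex disc with $tb \geq 0$). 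By Giroux's criterion $(\widehat{C},\pi^*\xi)$ is overtwisted, hence so is $(\widehat{M},\pi^*\xi)$, and therefore $\xi$ is virtually overtwisted.

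I expect the last step to be the main obstacle: one must prove that the bypasses extracted from the oversized compressing discs \emph{necessarily} create a contractible dividing curve, rather than merely simplifying $\Gamma_\Sigma$ through a chain of tight configurations — the same delicate bypass bookkeeping as in Section~\ref{sec:thickening}, but now with a highly iterated monodromy and carried out uniformly for $|p| \geq 5$. It is also essential that the argument genuinely uses the boundary slope $-3$: the analogous cover of $C(\infty)$ must remain tight, since $C(\infty)$ with a universally tight filling is exactly what underlies the $\Phi$-counted universally tight structures of Theorem~\ref{universally tight}. As a consistency check — and an easier route for most of the structures — when the chosen tight structure on $N(-3)$ is itself virtually overtwisted, a high-enough cyclic cover of $N(-3)$ alone is already overtwisted and embeds in a finite cover of $M(r)$ because $K_r$ is non-torsion; this disposes of every $\Psi$-counted structure except the (at most) two whose $N(-3)$-piece is universally tight, for which the cover argument above is needed in full.
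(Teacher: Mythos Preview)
Your construction of the cover is correct, and the overall strategy --- pass to the cyclic cover of $M(r)$ of order equal to the numerator $|p|$ and find an overtwisted disc in the lift of the knot complement --- is the same as the paper's. But the central step, showing that $\pi^{-1}(C(-3))$ is overtwisted, is not carried out; you yourself flag it as ``the main obstacle'' and offer only a heuristic. That heuristic is flawed: you argue that since $\phi^{|p|}$ is strongly Anosov, compressing discs in the cut-open handlebody $\Sigma\times[0,|p|]$ meet the dividing set in exponentially many points, whence ``the imbalance \ldots\ must eventually force a contractible dividing curve''. A large supply of bypasses is, however, perfectly compatible with tightness. For $s\ll 0$ the base $C(s)$ already has arbitrarily many dividing arcs on $\Sigma$, and the arguments of Section~\ref{sec:thickening} extract many bypasses from its compressing discs --- yet $C(s)$ is tight, and those bypasses only ever produce boundary-parallel arcs that \emph{thicken} $C$. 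Nothing in your outline isolates what is genuinely different about the cover versus the base; invoking ``exactly as in the proofs of Propositions~\ref{bypass-negative-slope} and~\ref{bypass-positive-slope}'' is not enough, since those proofs end in thickening, not in overtwisted discs.

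The paper's argument is organised quite differently and never analyses the long handlebody $\Sigma\times[0,|p|]$. The point you are missing is that passing to the $p$-fold fibration cover rescales boundary slopes by $1/p$: the basic slice witnessing the thickening $C(-4)\subset C(-3)$ lifts to a boundary-parallel $T^2\times[0,1]$ in the cover with dividing slopes $-4/p$ and $-3/p$. Because this layer covers a basic slice, a relative Euler class computation controls the bypasses on a slope-$0$ vertical annulus inside it; gluing that annulus to a normalised copy of $\Sigma$ in $\widetilde{C_p}(-3/p)$ and inspecting the resulting dividing set gives a contractible curve directly. This is done case by case ($p=4$, $p=5$, and primes $p\geq 7$ uniformly); the case $p=3$ is genuinely exceptional --- the $3$-fold cover itself need not be overtwisted, only its further cyclic covers are --- so even your hoped-for uniform argument for $|p|\geq 5$ would have to treat multiples of~$3$ with care. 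The mechanism, in short, is the slope change $-3\rightsquigarrow -3/p$ interacting with a single copy of $\Sigma$ through a covered basic slice, not iteration of $\phi$ through a stack of copies.
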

	\begin{proof}
	Let $r = -\frac pq$ for a pair of co-prime positive integers $p, q$. Note that $M(r)$ has a $p$-fold cover $\widetilde{M} = M'_{-1/q}(K')$, where $M'$ is the $p$-fold branched cover of $S^3$ branched over the figure-eight knot $K$, and $K'$ is the preimage of $K$. Consider the contact structure on $\widetilde{M}$ coming from pulling back a contact structure on $M(r)$ that is one counted by $\Psi$.  This cover contains a $p$-fold cyclic cover of $C(s)$, for $s \in (r, -4)$. We will show below that this cyclic covering of $C(s)$ is overtwisted for any $p \geq 4$, and hence that the contact structure on $\widetilde M$ is overtwisted.
	
		\begin{figure}[htbp]
			\begin{center}
			\vspace{0.5cm}
			\begin{overpic}[scale=0.85,tics=20]{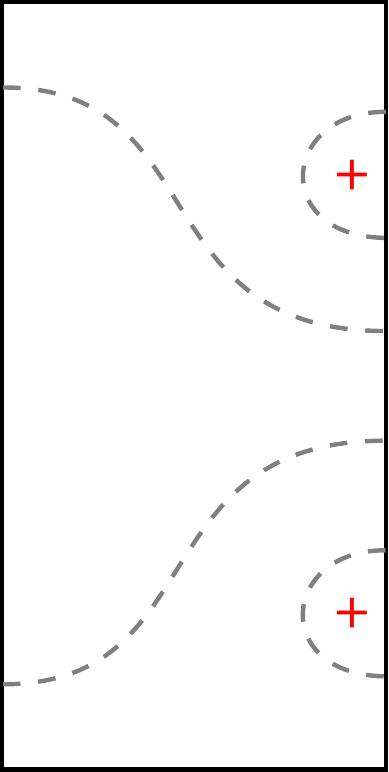}
			\put(-10,-12){\footnotesize $s=-1$}
			\put(80,-12){\footnotesize $s=-\frac 34$}
			\end{overpic}
			\vspace{0.7cm}
			\caption{Possible dividing curves on an annulus in $\widetilde{C_4}(-1)\setminus \widetilde{C_4}(-\frac 34)$.}
			\label{annulus-1}
			\end{center}
		\end{figure}
	
	Specifically, consider a contact structure on $C(s)$ for $s < -4$ that thickens to $-3$ and no further. We will show that for each integer $p \geq 4$, the $p$-fold cyclic covering of $C(s)$ is overtwisted.  It is sufficient to consider when $p$ is $4$ or an odd prime, since any composite-order cover is a composition of covers, and the required property will persist under the composition.
	
	We construct covers by cutting $C(s)$ along $\Sigma$ to get $\Sigma \times [0,1]$, letting $\Sigma \times [i, i+1]$ be $\phi^{-i}\left(\Sigma \times[0,1]\right)$, and gluing appropriately. Since $C(s)$ has two dividing curves of slope $s$ on the boundary, the $p$-fold cover will have dividing curves of slope $s/p$ on the boundary (if $p$ divides $s$, then there will be more than two dividing curves of slope $s/p$). Therefore, we denote the $p$-fold cover of $C(s)$ by $\widetilde{C_p}(\frac sp)$.

		{\bf Case $\bm{p = 4}$.} $C(s)$ thickens to $C(-4)$, which thickens to $C(-3)$. Therefore, the $4$-fold cover $\widetilde{C_4}(\frac s4)$ thickens to $\widetilde{C_4}(-1)$, which thickens to $\widetilde{C_4}(-\frac34)$. If the contact structure on the cover is tight, then the $T^2 \times I$ involved in the latter thickening would be a basic slice (since the dividing curve slopes are $-1$ and $-3/4$), so a vertical annulus would have bypasses only of one sign. By a relative Euler class computation, there must be two bypasses for $\bd \widetilde{C_4}(-\frac34)$; see Figure~\ref{annulus-1} for an example.
		
		There are three positions on the annulus where the bypasses can be located, but we claim that there are isotopic annuli in the basic slice realizing any of the combinations of bypass locations.  Indeed, given one dividing curve setup, after pushing over one of the bypasses, the other potential bypasses are trivial bypasses, so they always exist, by \cite[Lemma~1.8]{Honda:gluing}.
		
		Now we glue these annuli to copies of $\Sigma$ in $\widetilde{C_4}(-\frac34)$, where $\Sigma$ has dividing curves as in the left figure of Figure~\ref{negative-punctured-torus-1}.  The dividing set of $\Sigma$ union one of the annuli will contain a contractible dividing curve, and hence the contact structure on $\widetilde{C_4}(\frac s4)$ is overtwisted.
		
		\begin{figure}[htbp]
			\begin{center}
			\begin{overpic}[scale=0.85,tics=20]{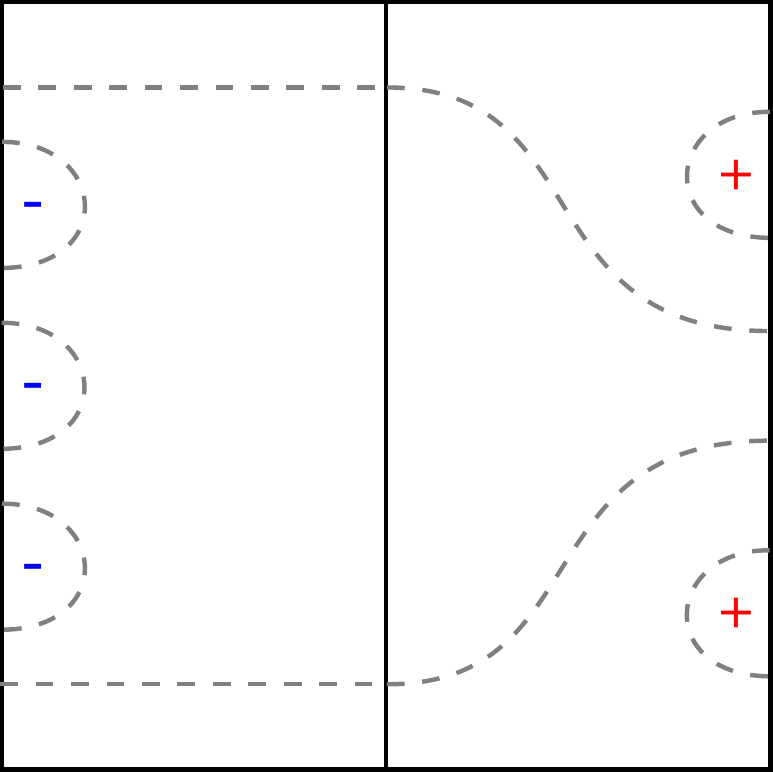}
			\put(-10,-12){\footnotesize $s=-\frac 43$}
			\put(80,-12){\footnotesize $s=-1$}
			\put(175,-12){\footnotesize $s=-1$}
			\end{overpic}
			\vspace{0.7cm}
			\caption{An annulus in $T^2\times[0,1]$ contained in $\widetilde{C_3}(\frac s3)$.}
			\label{annulus-2}
			\end{center}
		\end{figure}
		
		\begin{figure}[htbp]
			\begin{center}
			\begin{overpic}[scale=0.85,tics=20]{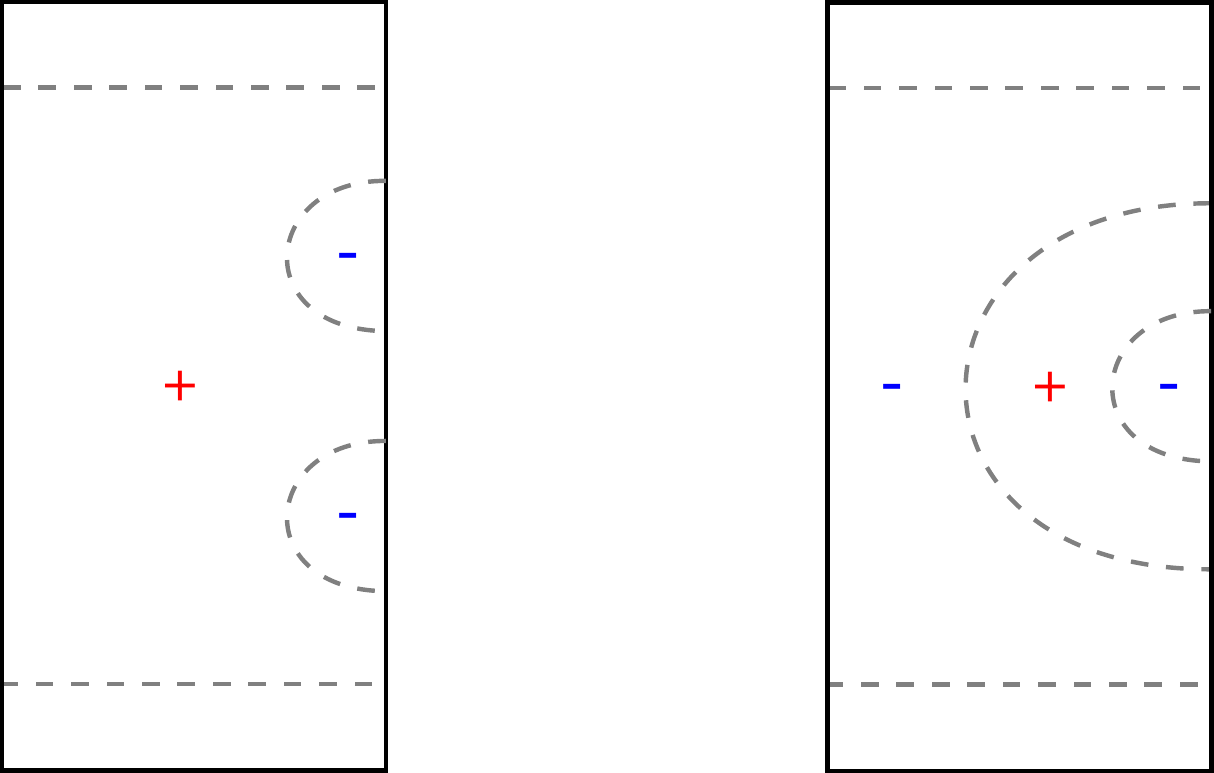}
			\put(-10,-12){\footnotesize $s=-1$}
			\put(80,-12){\footnotesize $s=-1$}
			\put(190,-12){\footnotesize $s=-1$}
			\put(280,-12){\footnotesize $s=-1$}
			\end{overpic}
			\vspace{0.7cm}
			\caption{Some possible annuli in bypass layers which reduce the number of dividing curves.}
			\label{annulus-3}
			\end{center}
		\end{figure}

		\begin{figure}[htbp]
			\begin{center}
			\begin{overpic}[scale=0.85,tics=20]{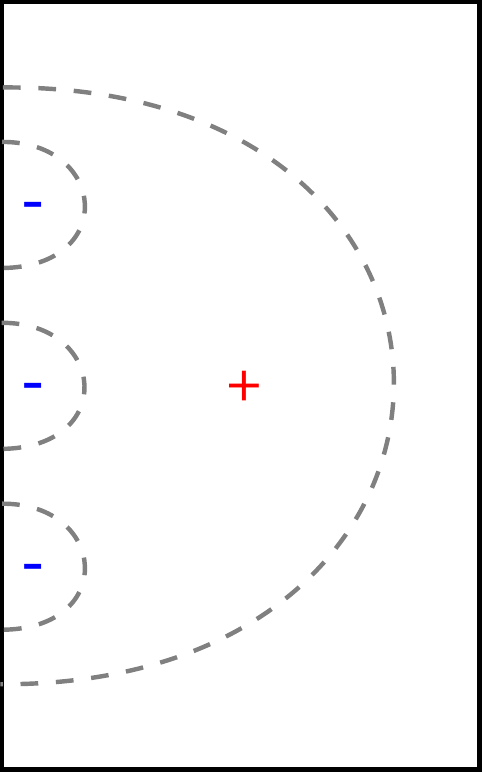}
			\put(-10,-12){\footnotesize $s=-\frac 43$}
			\put(105,-12){\footnotesize $s=0$}
			\end{overpic}
			\vspace{0.7cm}
			\caption{The annulus $A$ in $\widetilde{C_3}(-\frac 43)$.}
			\label{annulus-4}
			\end{center}
		\end{figure}

		{\bf Case $\bm{p=3}$.} We will show that the $3$-fold cover of $C(s)$ is virtually overtwisted, and that any further cyclic cover is overtwisted. Since $C(-4)$ thickens to $C(-3)$, which has two dividing curves of slope $-3$ on the boundary, $\widetilde{C_3}(-\frac 43)$ thickens to $\widetilde{C_3}(-1)$, which has six dividing curves of slope $-1$ on its boundary. This $\widetilde{C_3}(-1)$ contains a copy of $\Sigma$ with dividing set as shown in the left drawing of Figure~\ref{negative-punctured-torus-1}, and we can reduce the number of dividing curves on the boundary by attaching bypasses. More precisely, $\widetilde{C_3}(-\frac43) \setminus \widetilde{C_3}(-1)$ is a copy of $T^2\times[0,1]$ with dividing curve slopes $s_0=-\frac 43$ and $s_1=-1$ where $T^2\times\{1\}$ has six dividing curves. By \cite[Theorem~2.2]{Honda:classification1}, this can be factored into $T^2\times[0,\frac 12]$ with $s_0=-\frac 43$ and $s_{1/2}=-1$, and $T^2\times[\frac 12, 1]$ with $s_{1/2}=-1$ and $s_1=-1$ where $T^2\times\{\frac 12\}$ has two dividing curves; see Figure~\ref{annulus-2} for an example. There are several possible dividing sets on the annulus in $T^2\times[\frac 12, 1]$ (two are shown in Figure~\ref{annulus-3}), but the relative Euler class has only three possible evaluations: $0$ and $\pm2$.
		
		If the contact structure in the cover is tight, then $T^2\times[0, \frac 12]$ would be a basic slice with dividing curve slopes $-\frac43$ and $-1$, and the only possible values of the relative Euler class evaluted on the annulus are $\pm3$: this can be seen either by inspecting possible dividing curves or by changing coordinates to a standard basic slice and computing there.  If the relative Euler class evaluated to $0$ on the annulus in $T^2\times[\frac 12, 1]$, then since $T^2\times[0,1]$ covers a basic slice, the relative Euler class evaluates on the entire annulus to $\pm1$, and hence would evaluate on the annulus in $T^2 \times [0, \frac12]$ to $\pm1$, which is a contradiction.
		
		So suppose the relative Euler class evaluates to $2$ on the annulus in $T^2\times[\frac 12, 1]$ (the same argument will also work for the $-2$ case). If we glue the annulus in $T^2\times[\frac 12, 1]$ to $\Sigma$ in $\widetilde{C_3}(-1)$, the relative Euler class will evaluate to $2$ on the union, so the surface contains a positive bypass for $T^2 \times \{\frac12\}$. Now since the relative Euler class evaluates to $\pm 1$ on the annulus in $T^2\times[0,1]$, it must evaluate to $-1$ or $-3$ on the annulus in $T^2 \times [0, \frac12]$. Since we know from above that it cannot be $-1$, it must be $-3$. By considering the possible dividing sets on a vertical annulus in this $T^2 \times [0, \frac12]$, it is not hard to see that the annulus contains three negative bypasses for $T^2\times\{0\}$. After gluing $T^2\times[0,1]$ to $\widetilde{C_3}(-1)$ and gluing the annulus to $\Sigma$, we either obtain a contractible dividing curve (and hence an overtwisted disc) or an annulus $A$ with dividing set as shown in Figure~\ref{annulus-4}.
		
		\begin{figure}[htbp]
			\begin{center}
			\begin{overpic}[scale=0.8,tics=20]{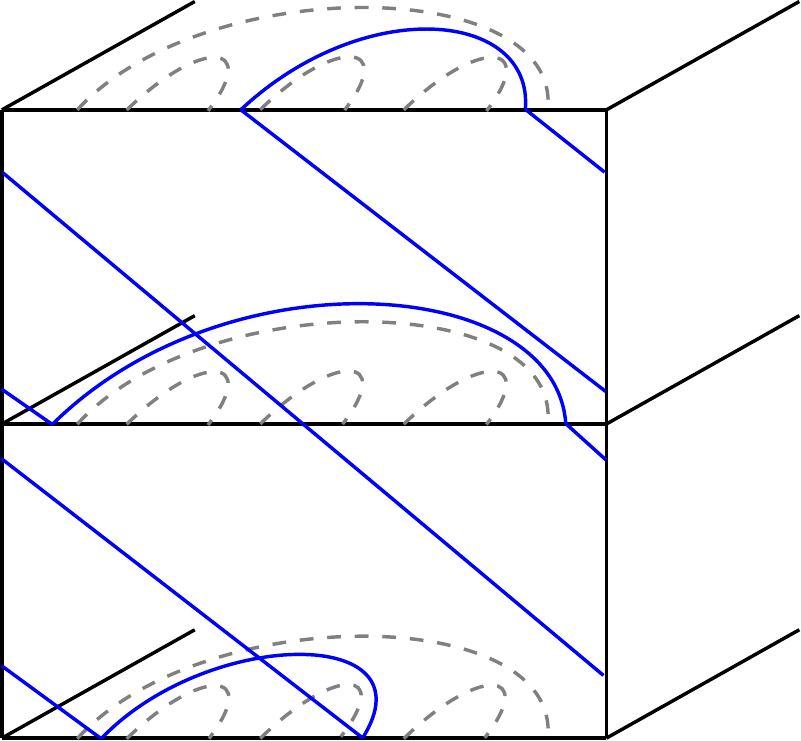}
			\end{overpic}
			\vspace{0.4cm}
			\caption{After realising the blue curve as Legendrian, it bounds an overtwisted disc in $A \times [0,2] \subset \Sigma \times [0,2]$. The dotted lines are dividing curves on three copies of $A$; the unshown dividing curves on $\partial\Sigma \times [0,2]$ are parallel to the diagonal blue arcs.}
			\label{overtwisted-cover}
			\end{center}
		\end{figure}
		
		In the latter case, we will show that any further cyclic cover of $\widetilde{C_3}(-\frac43)$, and hence of $\widetilde{C_3}(\frac s3)$, is overtwisted.  To this end, consider $A \times [0,2] \subset \Sigma \times [0,2]$: this manifold embeds in all further cyclic covers of $\widetilde{C_3}(-\frac 43)$.  As shown in Figure~\ref{overtwisted-cover}, we can find a curve in $A \times [0,2]$ that bounds a disc and does not intersect any dividing curves on $A \times [0,2]$. After Legendrian realizing this curve, it will bound an overtwisted disc, and thus $\Sigma \times [0,2]$ is overtwisted (see \cite[Proposition~5.1]{Honda:classification1} for similar). Thus, any cyclic covering of $\widetilde{C_3}(-\frac 43)$ is overtwisted.

		{\bf Case $\bm{p=5}$.} The $5$-fold cover $\widetilde{C_5}(\frac s5)$ contains a boundary-parallel $T^2\times[0,1]$ with dividing curve slopes $s_0=-\frac 45$ and $s_1=-\frac 35$. Since $-\frac 45 < -\frac 23 < -\frac 35$, we can factor $T^2\times[0,1]$ into $T^2\times[0,\frac 12]$ with $s_0=-\frac 45$ and $s_{1/2}=-\frac 23$, and $T^2\times[\frac 12,1]$ with $s_{1/2}=-\frac 23$ and $s_1 = -\frac 35$. If the contact structure on the cover is tight, then $T^2 \times [\frac 12, 1]$ would be a basic slice, and the relative Euler class would evaluate to $\pm 1$ on the annulus in $T^2 \times [\frac12, 1]$. There are several possible dividing sets for the annulus, but as in the $p=4$ case, we can realize an annulus that has (at least) two bypasses for $T^2 \times \{1\}$, and locate them where desired. After gluing $T^2\times[0,1]$ to $\widetilde{C_5}(-\frac 35)$ and gluing the annulus to $\Sigma$, we obtain a contractible dividing curve, and hence $\widetilde{C_5}(\frac s5)$ is overtwisted.

		{\bf Case $\bm{p \geq 7}$.} The $p$-fold cyclic cover $\widetilde{C_p}(\frac sp)$ contains a boundary-parallel $T^2\times[0,1]$ with dividing curve slopes $s_0=-\frac 4p$ and $s_1=-\frac 3p$. We factor this into $T^2\times[0,\frac 12]$ with $s_0=-\frac 4p$ and $s_{1/2}=-\frac 1k$, and $T^2\times[\frac 12,1]$ with $s_{1/2}=-\frac 1k$ and $s_1 = -\frac 3p$, for a positive integer $k$ such that $-\frac 4p < -\frac 1k < -\frac 3p$.
		
		Consider a vertical annulus in $T^2 \times [0,1]$. As in the case $p = 4$, there are two bypasses for $T^2 \times \{1\}$, and similarly, we can find an annulus with the bypasses located such that after gluing $T^2\times[0,1]$ to $\widetilde{C_p}(-\frac 3p)$ and gluing the annulus to $\Sigma$, we obtain a contractible dividing curve. Thus, $\widetilde{C_p}(\frac sp)$ is overtwisted.
	\end{proof}

\bibliography{references}{}

\begin{thebibliography}{10}

\bibitem{BE:rational}
Kenneth~L. Baker and John~B. Etnyre.
\newblock Rational {L}inking and {C}ontact {G}eometry.
\newblock {\em Progr. Math.}, 296:19--37, 2009.

\bibitem{BEVHM}
Kenneth~L. Baker, John~B. Etnyre, and Jeremy~Van Horn-Morris.
\newblock Cabling, {C}ontact {S}tructures and {M}apping {C}lass {M}onoids.
\newblock {\em J. Differential Geom.}, 90:1--80, 05 2012.

\bibitem{BE:transverse}
John~A. Baldwin and John~B. Etnyre.
\newblock Admissible {T}ransverse {S}urgery does not {P}reserve {T}ightness.
\newblock {\em Math. Ann.}, 357(2):441--468, 2013.

\bibitem{Cofer}
Tanya Cofer.
\newblock A class of tight contact structures on {$\Sigma_2\times I$}.
\newblock {\em Algebr. Geom. Topol.}, 4:961--1011, 2004.

\bibitem{Colin:connectsum}
Vincent Colin.
\newblock {Chirurgies d'Indice Un et Isotopies de Sph{\`e}res dans les
  Vari{\'e}t{\'e}s de Contact Tendues}.
\newblock {\em C. R. Acad. Sci. Paris S{\'e}r. I Math.}, 324:659--663, 1997.

\bibitem{Colin:gluing}
Vincent Colin.
\newblock Recollement de vari\'{e}t\'{e}s de contact tendues.
\newblock {\em Bull. Soc. Math. France}, 127(1):43--69, 1999.

\bibitem{CGH}
Vincent Colin, Emmanuel Giroux, and Ko~Honda.
\newblock {Finitude Homotopique et Isotopique des Structures de Contact
  Tendues}.
\newblock {\em {Publications M{\'a}thematiques}}, 109(1):245--293, 2009.

\bibitem{Conway:transverse}
James Conway.
\newblock {Transverse Surgery on Knots in Contact {$3$}-Manifolds}.
\newblock {\em \href{http://arxiv.org/abs/1409.7077}{\tt arxiv:1409.7077} {\rm
  [math.GT]}}, 2014.

\bibitem{Conway:admissible}
James Conway.
\newblock {Tight Contact Structures via Admissible Transverse Surgery}.
\newblock {\em \href{http://arxiv.org/abs/1508.00525}{\tt arxiv:1508.00525}
  {\rm [math.GT]}}, 2015.

\bibitem{Conway:f8}
James Conway.
\newblock {Contact Surgeries on the Legendrian Figure-Eight Knot}.
\newblock {\em \href{http://arxiv.org/abs/1610.03943}{\tt arxiv:1610.03943}
  {\rm [math.GT]}}, 2016.

\bibitem{DG:surgery}
Fan Ding and Hansj{\"o}rg Geiges.
\newblock A {L}egendrian {S}urgery {P}resentation of {C}ontact 3-{M}anifolds.
\newblock {\em Math. Proc. Cambridge Philos. Soc.}, 136:583--598, 2004.

\bibitem{ET}
Y.~Eliashberg and W.~Thurston.
\newblock {\em Confoliations}, volume~13 of {\em University Lecture Series}.
\newblock Amer. Math. Soc., Providence, 1998.

\bibitem{Eliashberg:OT}
Yakov Eliashberg.
\newblock Classification of {O}vertwisted {C}ontact {S}tructures on
  {$3$}-{M}anifolds.
\newblock {\em Invent. Math.}, 98(3):623--637, 1989.

\bibitem{Eliashberg:stein}
Yakov Eliashberg.
\newblock Topological {C}haracterization of {S}tein {M}anifolds of {D}imension
  {$>2$}.
\newblock {\em Internat. J. Math.}, 1(1):29--46, 1990.

\bibitem{Etnyre:convex}
John~B. Etnyre.
\newblock {Convex surfaces in contact geometry: class notes}.
\newblock 2004, {\rm available online at
  http://www.math.gatech.edu/\textasciitilde etnyre}.

\bibitem{Etnyre:knots}
John~B. Etnyre.
\newblock Legendrian and {T}ransversal {K}nots.
\newblock {\em Handbook of Knot Theory}, pages 105--185, 2005.

\bibitem{Etnyre:OBlectures}
John~B. Etnyre.
\newblock Lectures on {O}pen {B}ook {D}ecompositions and {C}ontact
  {S}tructures.
\newblock {\em Clay Math. Proc.}, 5:103--141, 2006.

\bibitem{EH:knots}
John~B. Etnyre and Ko~Honda.
\newblock Knots and {C}ontact {G}eometry {I}: {T}orus {K}nots and the {F}igure
  {E}ight {K}not.
\newblock {\em J. Sympl. Geom.}, 1(1):63--120, 2001.

\bibitem{EH:non-existence}
John~B. Etnyre and Ko~Honda.
\newblock On the {N}onexistence of {T}ight {C}ontact {S}tructures.
\newblock {\em Ann. of Math. (2)}, 153(3):749--766, 2001.

\bibitem{EVV:torsion}
John~B. Etnyre and David~Shea Vela-Vick.
\newblock Torsion and {O}pen {B}ook {D}ecompositions.
\newblock {\em IMRN}, pages 4385--4398, 2010.

\bibitem{Gabai}
David Gabai.
\newblock Foliations and the topology of 3-manifolds.
\newblock {\em J. Diff. Geom.}, 18:445--503, 1983.

\bibitem{GS}
David~T. Gay and Andr{\'{a}}s~I. Stipsicz.
\newblock On symplectic caps.
\newblock In {\em Progress in Mathematics}, pages 199--212. Birkhäuser Boston,
  December 2011.

\bibitem{Geiges:book}
Hansj{\"o}rg Geiges.
\newblock {\em An {I}ntroduction to {C}ontact {T}opology}, volume 109 of {\em
  Cambridge Studies in Advanced Mathematics}.
\newblock Cambridge University Press, Cambridge, 2008.

\bibitem{Ghiggini:strongnotstein}
Paolo Ghiggini.
\newblock Strongly fillable contact 3-manifolds without {S}tein fillings.
\newblock {\em Geom. Topol.}, 9:1677--1687 (electronic), 2005.

\bibitem{Ghiggini:seifert}
Paolo Ghiggini.
\newblock On tight contact structures with negative maximal twisting number on
  small {S}eifert manifolds.
\newblock {\em Algebr. Geom. Topol.}, 8(1):381--396, 2008.

\bibitem{GLS:e0positive}
Paolo Ghiggini, Paolo Lisca, and Andr{\'a}s~I. Stipsicz.
\newblock Classification of tight contact structures on small {S}eifert
  3-manifolds with {$e\sb 0\geq 0$}.
\newblock {\em Proc. Amer. Math. Soc.}, 134(3):909--916 (electronic), 2006.

\bibitem{GLS:someseifert}
Paolo Ghiggini, Paolo Lisca, and Andr{\'a}s~I. Stipsicz.
\newblock Tight contact structures on some small {S}eifert fibered 3-manifolds.
\newblock {\em Amer. J. Math.}, 129(5):1403--1447, 2007.

\bibitem{Giroux:convex}
Emmanuel Giroux.
\newblock {Convexit{\'e} en Topologie de Contact}.
\newblock {\em Comment. Math. Helv.}, 66(4):637--677, 1991.

\bibitem{Giroux:classification}
Emmanuel Giroux.
\newblock Structures de contact en dimension trois et bifurcations des
  feuilletages de surfaces.
\newblock {\em Invent. Math.}, 141(3):615--689, 2000.

\bibitem{Giroux:bundles}
Emmanuel Giroux.
\newblock Structures de contact sur les vari\'{e}t\'{e}s fibr\'{e}es en cercles
  audessus d'une surface.
\newblock {\em Comment. Math. Helv.}, 76(2):218--262, 2001.

\bibitem{Gompf}
Robert~E. Gompf.
\newblock Handlebody {C}onstruction of {S}tein {S}urfaces.
\newblock {\em Ann. Math.}, 148:619--693, 1998.

\bibitem{Honda:classification1}
Ko~Honda.
\newblock On the {C}lassification of {T}ight {C}ontact {S}tructures {I}.
\newblock {\em Geom. Topol.}, 4:309--368, 2000.

\bibitem{Honda:classification2}
Ko~Honda.
\newblock On the {C}lassification of {T}ight {C}ontact {S}tructures {II}.
\newblock {\em J. Differential Geom.}, 55(1):83--143, 2000.

\bibitem{Honda:gluing}
Ko~Honda.
\newblock Gluing tight contact structures.
\newblock {\em Duke Math. J.}, 115(3):435--478, 2002.

\bibitem{HKM:convex}
Ko~Honda, William~H. Kazez, and Gordana Mati{{\'c}}.
\newblock Convex decomposition theory.

\bibitem{HKM:extremal}
Ko~Honda, William~H. Kazez, and Gordana Mati{\'{c}}.
\newblock Tight contact structures on fibred hyperbolic 3-manifolds.
\newblock {\em Journal of Differential Geometry}, 64(2):305--358, jan 2003.

\bibitem{HKM:pinwheels}
Ko~Honda, William~H. Kazez, and Gordana Mati{{\'c}}.
\newblock Pinwheels and {B}ypasses.
\newblock {\em Algebr. Geom. Topol.}, 5:769--784, 2005.

\bibitem{HKM:right1}
Ko~Honda, William~H. Kazez, and Gordana Mati{{\'c}}.
\newblock Right-{V}eering {D}iffeomorphisms of {C}ompact {S}urfaces with
  {B}oundary {I}.
\newblock {\em Invent. Math.}, 169(2):427--449, 2007.

\bibitem{VHM}
Jeremy~Van Horn-Morris.
\newblock {\em Constructions of {O}pen {B}ook {D}ecompositions}.
\newblock PhD thesis, The University of Texas at Austin, 2007.

\bibitem{Kanda:tb}
Yutaka Kanda.
\newblock {On the Thurston-Bennequin invariant of Legendrian knots and non
  exactness of Bennequin's inequality}.
\newblock {\em Inventiones Mathematicae}, 133(2):227--242, jul 1998.

\bibitem{KO}
Mustafa Korkmaz and Burak Ozbagci.
\newblock On sections of elliptic fibrations.
\newblock {\em Michigan Math. J.}, 56(1):77--87, 2008.

\bibitem{LW:rationaltb}
Youlin Li and Zhongtao Wu.
\newblock {A bound for rational Thurston-Bennequin invariants}.
\newblock {\em \href{http://arxiv.org/abs/1705.09440}{\tt arxiv:1705.09440}
  {\rm [math.GT]}}, 2017.

\bibitem{LM}
P.~Lisca and G.~Mati{\'c}.
\newblock Tight contact structures and {S}eiberg-{W}itten invariants.
\newblock {\em Invent. Math.}, 129(3):509--525, 1997.

\bibitem{LS:seifertsurgery}
Paolo Lisca and Andr{\'a}s~I. Stipsicz.
\newblock Seifert fibered contact three-manifolds via surgery.
\newblock {\em Algebr. Geom. Topol.}, 4:199--217 (electronic), 2004.

\bibitem{LS:seifert}
Paolo Lisca and Andr{\'a}s~I. Stipsicz.
\newblock On the existence of tight contact structures on seifert fibered
  3-manifolds.
\newblock 09 2007.

\bibitem{MT:pseudoconvex}
Thomas~E. Mark and B{{\"u}}lent Tosun.
\newblock {Obstructing pseudoconvex embeddings and contractible Stein fillings
  for Brieskorn spheres}.
\newblock {\em {\tt arxiv:1603.07710} {\rm [math.GT]}}, 2016.

\bibitem{Matkovic}
Irena Matkovi{\v c}.
\newblock Classification of tight contact structures on small {S}eifert fibered
  {L}–spaces.
\newblock {\em Algebr. Geom. Topol.}, 18(1):111--152, 2018.

\bibitem{Min}
Hyunki Min.
\newblock Classification of tight contact structures on the {W}eeks manifold.
\newblock {\it In preparation}.

\bibitem{OO}
H.~Ohta and K.~Ono.
\newblock Simple singularities and topology of symplectically filling
  4-manifold.
\newblock {\em Commentarii Mathematici Helvetici}, 74(4):575--590, nov 1999.

\bibitem{OS:hf2}
Peter Ozsv{\'a}th and Zolt{\'a}n Szab{\'o}.
\newblock Holomorphic {D}isks and {T}hree-{M}anifold {I}nvariants: {P}roperties
  and {A}pplications.
\newblock {\em Ann. of Math. (2)}, 159(3):1159--1245, 2004.

\bibitem{OS:hf1}
Peter Ozsv{\'a}th and Zolt{\'a}n Szab{\'o}.
\newblock Holomorphic {D}isks and {T}opological {I}nvariants for {C}losed
  {T}hree-{M}anifolds.
\newblock {\em Ann. of Math. (2)}, 159(3):1027--1158, 2004.

\bibitem{OS:contact}
Peter Ozsv{\'a}th and Zolt{\'a}n Szab{\'o}.
\newblock Heegaard {F}loer {H}omologies and {C}ontact {S}tructures.
\newblock {\em Duke Math. J.}, 129(1):39--61, 2005.

\bibitem{Plamenevskaya2004}
Olga Plamenevskaya.
\newblock Contact structures with distinct heegaard floer invariants.
\newblock {\em Mathematical Research Letters}, 11(4):547--561, 2004.

\bibitem{Roberts}
Rachel Roberts.
\newblock Constructing taut foliations.
\newblock {\em Comment. Math. Helv.}, 70(4):516--545, 1995.

\bibitem{Rolfsen}
Dale Rolfsen.
\newblock {\em Knots and links}.
\newblock Publish or Perish Inc., Berkeley, Calif., 1976.
\newblock Mathematics Lecture Series, No. 7.

\bibitem{Rose}
H.~E. Rose.
\newblock {\em A Course in Number Theory (Oxford Science Publications)}.
\newblock Clarendon Press, 1995.

\bibitem{Simone}
Jonathan Simone.
\newblock {T}ight contact structures with no {G}iroux torsion on plumbed
  3-manifolds.
\newblock {\em \href{http://arxiv.org/abs/1710.06702}{\tt arxiv:1710.06702}
  {\rm [math.GT]}}, 2017.

\bibitem{Thurston}
W.~Thurston.
\newblock {\em {The Geometry and Topology of 3-Manifolds}}.
\newblock Princeton University, 1978.

\bibitem{Tosun:SFS}
B{{\"u}}lent Tosun.
\newblock {Tight Small Seifert Fibered Manifolds with $e_0 = -2$}.
\newblock {\em \href{http://arxiv.org/abs/1510.06948}{\tt arxiv:1510.06948}
  {\rm [math.GT]}}, 2015.

\bibitem{Weinstein}
Alan Weinstein.
\newblock Contact {S}urgery and {S}ymplectic {H}andlebodies.
\newblock {\em Hokkaido Math. J.}, 20(2):241--251, 1991.

\bibitem{Wendl:nonexact}
Chris Wendl.
\newblock Non-exact symplectic cobordisms between contact 3-manifolds.
\newblock {\em Journal of Differential Geometry}, 95(1):121--182, aug 2013.

\bibitem{Wu}
Hao Wu.
\newblock Legendrian vertical circles in small {S}eifert spaces.
\newblock {\em Commun. Contemp. Math.}, 8(2):219--246, 2006.

\end{thebibliography}
\bibliographystyle{plain}
\end{document}